\newcommand{\bb}[1]{{\mathbb{#1}}}
\newcommand{\ca}[1]{{\mathcal{#1}}}
\def\H{{\mathcal{H}}}
\def\M{{\mathcal{M}}}
\def\C{{\mathcal{C}}}
\def\L{{\mathcal{L}}}
\def\cH{{\mathcal{H}}}
\def\cM{{\mathcal{M}}}
\def\cC{{\mathcal{C}}}
\def\cL{{\mathcal{L}}}
\def\cP{\mathcal{P}}
\def\cO{\mathcal{O}}
\def\cF{\mathcal{F}}
\def\cJ{\mathcal{J}}
\def\oC{\overline{\mathcal{C}}}
\def\oM{\overline{\mathcal{M}}}
\def\oJ{\overline{\mathcal{J}}}
\newcommand{\Mbar}{\oM}
\def\oH{\overline{\mathcal{H}}}
\def\oSQ{\overline{\mathcal{SQ}}}
\def\SQ{{\mathcal{SQ}}}
\def\oGamma{{\overline{\Gamma}}}
\def\wH{\widetilde{\mathcal{H}}}
\def\CC{\mathbb{C}}
\def\ZZ{\mathbb{Z}}
\def\NN{\mathbb{N}}
\def\QQ{\mathbb{Q}}
\def\PP{\mathbb{P}}
\def\bg{{\mathbf{g}}}
\def\bn{{\mathbf{n}}}
\def\ba{{\mathbf{a}}}
\def\bP{{\mathbf{P}}}
\def\ba{{\mathbf{a}}}
\def\fR{{\mathfrak{R}}}
\def\half{{1/2}}
\def\DR{{\rm DR}}
\def\Aut{{\rm Aut}}
\def\DRL{{\rm DRL}}
\def\lcm{{\rm lcm}}
\def\rP{{\mathsf{P}}}
\def\rH{{\mathsf{H}}}
\newcommand{\spec}{\mathrm{Spec}}
\theoremstyle{definition}
\newtheorem{definition}{Definition}[section]
\newtheorem{remark}[definition]{Remark}
\theoremstyle{plain}
\newtheorem{conjecture}[definition]{Conjecture}
\newtheorem{theorem}[definition]{Theorem}
\newtheorem{proposition}[definition]{Proposition}
\newtheorem{lemma}[definition]{Lemma}
\newtheorem{corollary}[definition]{Corollary}
\begin{document}

\title[DR cycles and strata of differentials with spin parity]{Classes of strata of differentials with spin parity}
\author{David Holmes}
\address{Mathematical Institute, Leiden University, PO Box 9512, 2300 RA Leiden, The Netherlands}
\email{holmesdst@math.leidenuniv.nl}
\author{Georgios Politopoulos}
\address{Mathematical Institute, Leiden University, PO Box 9512, 2300 RA Leiden, The Netherlands}
\email{g.politopoulos@math.leidenuniv.nl }
\author{Adrien Sauvaget}
\address{Institut de Mathématiques de Toulouse, Université Paul Sabatier, 31400 Toulouse, France}
\email{adrien.sauvaget@math.cnrs.fr}

\begin{abstract} We study  classes of strata of differentials with fixed spin parity in the Chow ring of moduli spaces of curves.  We show that these classes are tautological and computable. Furthermore, we establish the refined DR cycle formula for these classes. 
\end{abstract}

\maketitle

\setcounter{tocdepth}{1}
\tableofcontents

\section{Introduction}

\subsection{Strata of $k$-differentials and their spin refinement}
Let $g$ and $n$ be non-negative integers such that $2g-2+n>0$.  We denote by $\M_{g,n}$ and $\oM_{g,n}$ the moduli spaces of smooth and stable complex curves of genus $g$ with $n$ markings.  Let $k\in \ZZ_{>0}$, and let  $a=(a_1,\ldots,a_n)\in \ZZ^n$ be a vector of size
$$|a|\left(\overset{\rm def}{=}\sum_{i=1}^n a_i\right)=k(2g-2+n).$$  The {\em stratum of $k$-differentials  of type $a$} is the algebraic sub-stack of  $\M_g(a,k)\subset \M_{g,n}$ of marked curves $(C,x_1,\ldots,x_n)$ for which the relation 
\begin{equation}\label{eq:omegapic}
\omega_{\rm log}^{\otimes k} \simeq \mathcal{O}_C\big(a_1 x_1+\ldots + a_nx_n)
\end{equation}
holds in the Picard group of $C$ (here $\omega_{\rm log}\coloneqq\omega_C\big(x_1+\ldots+x_n)$  is the log-cotangent bundle).  For $k=1$ we simply denote it by $\M_g(a)$, and if $a$ is a vector of positive integers, then we speak of {\em strata of holomorphic differentials}. These  strata will play a special role throughout the paper. For instance, most strata of $k$-differentials are of co-dimension $g$ while strata of holomorphic differentials are of co-dimension $(g-1)$.
\smallskip

We denote by $\oM_g(a,k)$ the closure of $\M_g(a,k)$ in $\oM_{g,n}$, and by $[ \oM_g(a,k) ]\in A^*(\oM_{g,n},\QQ)$ the class of this cycle with the reduced sub-scheme structure. The natural problem is then: {\em are the classes of strata tautological? If yes, then can we compute them in terms of the standard generators of the tautological rings?} These questions go back to the early developments of tautological calculus for moduli spaces of curves. For instance, Faber computed in 1999 the restriction of the classes of strata of holomorphic differentials to $\M_{g,n}$ using the Thom--Porteous formula~\cite{Fab}. As of today, we can identify the following families of methods in the literature to compute classes of strata of differentials: 
 \begin{enumerate}
 \item Computing classes of {\em degeneracy loci in the projectivized Hodge bundle}. This method led to the first algorithm to compute all classes of strata of $1$-differentials ~\cite{Sau} (see~\ref{sec:introspinsection} for details). This method was generalized to higher values of $k$ but only for the restriction to the moduli space of curves with rational tails $\M_{g,n}^{\rm rt}$~\cite{GheTar}.
 \item Expressing $[\oM_g(a,k) ]$ in terms of {\em double ramification cycles}~\cite{FarPan,Sch,HolSch,BHPSS} allowed for their effective computation for all values of $k$ and $a$ (see~\ref{sec:introDR} for details). Early developments of these ideas could be found in the work of Hain~\cite{Hain}.
 \item A variation on the previous approach led to a conjectural expression of classes of strata of holomorphic differentials in terms of {\em Witten's $r$-spin classes}~\cite{PPZ19}. 
 \item Special curves (often called {\em test curves}) in $\oM_{g,n}$ can be used to evaluate the coefficients of classes in the Picard group of $\oM_{g,n}$. This method allows one to obtain a closed formula for the classes of divisors obtained from classes of strata~\cite{CT,Mul}. Faber used this approach to enhance his computation in $\M_{g,n}$ to obtain expressions in $\M_{g,n}^{\rm rt}$~\cite{Fab}. 
 \item Generalizing the previous idea, Wong has proposed an algorithm that allows for the computation of these classes, {\em assuming that all  classes of strata are tautological}. The method is based on the inversion of the linear system of restriction of classes (in any degree) to the union of the boundary divisors of $\oM_{g,n}$~\cite{Wong}. The main drawback of this method is that the value of the classes is hard to track after this inversion of linear system (in particular it is quite difficult to compute tautological integrals on strata with this approach). 
 \end{enumerate}

The main purpose of the paper is to compute the refinement of classes of strata of differentials according to spin parity.  A {\em spin structure} on a curve $C$ is a line bundle $L\to C$ satisfying $L^{\otimes 2}\simeq \omega_C$.  The {\em parity} of a spin structure is defined as the parity of $h^0(C,L)$.  It is invariant along deformations of a spin structure~\cite{Ati,Mum1}.  If $k$ and $a$ are odd (i.e.  all entries of $a$ are odd),  then a point of $\M_g(a,k)$ carries a canonical spin structure defined as
$$
\omega_C^{\otimes \frac{1-k}{2}} \left(\frac{a_1-k}{2} x_1 +\ldots + \frac{a_n-k}{2} x_n  \right).
$$
The local invariance of spin parity implies that $\M_g(a,k)$ splits into two disjoint sub-spaces $\M_g(a,k)^{+}$ and $\M_g(a,k)^-$ of $k$-differentials with even and odd spin parity respectively. We denote by $\oM_g(a,k)^{+}$ and $\oM_g(a,k)^{-}$ their closures in $\oM_{g}(a,k)$. Then again, we are led to ask: {\em are the classes of $\oM_g(a,k)^{+}$ and $\oM_g(a,k)^{-}$ tautological? Can we compute them?}  As the class of $\oM_g(a,k)$ was already computed, it remains to compute the {\em spin class of the stratum} defined as
$$
[\oM_g(a,k)]^{\pm} \coloneqq [\oM_g(a,k)^+] - [\oM_g(a,k)^-] \in A^*(\oM_{g,n},\QQ).
$$

\begin{theorem}\label{thm:main}
The class $[\oM_g(a,k)]^{\pm}$ is tautological and  explicitly computable.
\end{theorem}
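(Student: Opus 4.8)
The strategy is to deduce the statement from a \emph{refined double ramification cycle formula} exhibiting $[\oM_g(a,k)]^{\pm}$ directly as a tautological class with computable coefficients; once such a formula is available both assertions follow at once. To set it up, I would work on a compactification $\oH$ of $\M_g(a,k)$ (for instance the normalisation of $\oM_g(a,k)$ pulled back along Cornalba's moduli stack of spin curves, or its multi-scale/logarithmic incarnation) carrying, besides the $k$-differential data, a limit of the canonical theta characteristic $L=\omega_C^{(1-k)/2}\big(\sum_i\tfrac{a_i-k}{2}x_i\big)$. The forgetful morphism $\epsilon\colon\oH\to\oM_{g,n}$ is proper, and over the interior the locus $\M_g(a,k)^{+}$ (resp.\ $\M_g(a,k)^{-}$) is the union of connected components on which $h^0(C,L)$ is even (resp.\ odd), by the Atiyah--Mumford deformation invariance of the parity \cite{Ati,Mum1}. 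Thus $[\oM_g(a,k)]^{\pm}=\epsilon_{*}(\sigma)$, where $\sigma\in A^{*}(\oH,\QQ)$ restricts over the interior to $(-1)^{h^0(C,L)}\,[\M_g(a,k)]$, and the whole problem is to compute $\epsilon_{*}\sigma$.

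To evaluate this pushforward I would use that the parity of a theta characteristic is governed by the determinant of cohomology: over a family $(-1)^{h^0(C,L)}$ changes exactly along the degeneracy locus of the natural complex computing $R\pi_{*}L$, and passing to the universal square root --- an $r=2$ root construction --- places one precisely in the setting of Chiodo's Grothendieck--Riemann--Roch formula for roots of powers of $\omega_{\log}$. That formula expresses $\operatorname{ch}(R\pi_{*}L)$, hence the parity contribution to $\sigma$, as an explicit tautological class of Chiodo type (the $2$-spin specialisation underlying Witten's class). Combined with the already-known expression for the unsigned class $[\oM_g(a,k)]$ in terms of double ramification cycles \cite{FarPan,Sch,HolSch,BHPSS}, this presents $\epsilon_{*}\sigma$ as, schematically, the same DR-cycle generating series with the order of the roots doubled and twisted by the Chiodo Euler class of $-R\pi_{*}L$ --- a manifestly tautological and effectively computable expression.

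The main obstacle, and the technical core, is the behaviour over the boundary: one must show that $\sigma$ extends over all of $\oH$ in a manner compatible with $\epsilon$ and whose pushforward genuinely computes the class of the \emph{closure} $\oM_g(a,k)^{\pm}$, not merely of the interior. This calls for a careful analysis of limit spin structures, in Cornalba's sense, on the nodal curves occurring on the boundary of $\oM_g(a,k)$: one checks that the multidegrees imposed by the twisted $k$-differential condition are compatible with square roots of the relevant twists of $\omega_{\log}$, and tracks how spin parity --- equivalently, how the Arf invariant of the associated quadratic form --- distributes among the components of the boundary strata. The expected output is a recursion: $[\oM_g(a,k)]^{\pm}$ equals the naive parity-weighted pushforward minus a sum of boundary terms, each of which is a pushforward along clutching maps of products of spin classes of strata of differentials living on moduli spaces of strictly smaller dimension. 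An induction on $\dim\oM_{g,n}$, using the theorem itself as inductive hypothesis and the computations above as its base mechanism, then proves simultaneously that $[\oM_g(a,k)]^{\pm}$ is tautological and that it is explicitly computable, and yields along the way the refined DR formula announced in the abstract. Pinning down the precise boundary corrections --- in particular the combinatorics of how the Arf invariant behaves under nodal degenerations of $k$-differentials --- is the delicate point.
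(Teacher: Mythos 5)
Your overall architecture (reduce to a parity-weighted pushforward from a spin-enhanced compactification, feed in DR-cycle technology plus a Chiodo-type computation, and induct with boundary corrections) is in the right spirit, but the plan has two concrete gaps that the paper's actual argument is built precisely to fill, and one of your key mechanisms is false as stated. First, the claim that ``$(-1)^{h^0(C,L)}$ changes exactly along the degeneracy locus of the complex computing $R\pi_*L$'' contradicts the Atiyah--Mumford invariance you invoke in the same sentence: the parity is locally constant on $\oM_{g,n}^{1/2}$, so the parity cycle $[\pm]$ is a difference of fundamental classes of connected components and is \emph{not} recoverable from $\operatorname{ch}(R\pi_*\mathcal{L})$ by any Grothendieck--Riemann--Roch manipulation. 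What can be done --- and what the paper does --- is to evaluate pushforwards $\epsilon_*([\pm]\cdot\beta)$ for special classes $\beta$: one needs the vanishing statement that $[\pm]$ kills all boundary strata of $\oM_{g,n}^{1/2}$ whose $2$-weighting takes an even value (\ref{prop:cancellation}), the degree computation on odd-weight strata (\ref{degree:even-odd}), and the known evaluation $\epsilon_*([\pm]\cdot s_*(R^*\pi_*\mathcal{L}))=2^{g-1}\Lambda(-1)\Lambda(1/2)$. Your proposal has no substitute for the cancellation mechanism, and without it neither the spin Pixton formula (\ref{spDR = spPix}) nor the spin star-graph formula (\ref{thm:DR}) follows from the unsigned results of \cite{FarPan,Sch,HolSch,BHPSS}: the refined formula requires showing that boundary contributions from simple star graphs whose twist has an even value cancel, which in the paper is done by an explicit $\mu_2$-action on $\DRL^{1/2}_{\Gamma,I}$ exchanging the even and odd components (\ref{lem:exchpar}, \ref{DRcancel}), together with length computations identifying the multiplicities. ``Tracking how the Arf invariant distributes among boundary components'' names the problem; it does not solve it.

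Second, your induction has no base case. The DR/star-graph recursion only reduces $[\oM_g(a,k)]^{\pm}$ to signed classes of strata of \emph{holomorphic} differentials in lower $(g,n)$ (and even this needs a separate argument when $a\in(k\NN)^n$, cf.\ \ref{lem:computation1}, since $[\oM_g((a_1,\dots,a_n,0))]^{\pm}$ is undefined and the Farkas--Pandharipande trick fails). Those holomorphic signed classes are then computed in the paper by a second, independent induction through the incidence-variety compactification and the cone of squares $\mathcal{SQ}$ (\ref{prop:induction1}, \ref{lem:computation2}, \ref{lem:computation3}), which bottoms out at the signed Segre classes $s_g^{\pm,c}=p_*\bigl(\xi^c[\PP\oSQ_g(1,\dots,1)]^{\pm}\bigr)$. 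Computing these is the genuinely new input: it requires identifying the irreducible components of the cone of meromorphic spin sections $\wH^{1/2}_g[p,0^{n-1}]$ (a sum over trees), applying Chiodo's formula \emph{capped with} $[\pm]$, and performing a M\"obius inversion to isolate $s_g^{\pm}(t)$ from the quadratic Hodge symbol $2^{g-1}\Lambda(2t)\Lambda(-t)-2^{2g-1}$ (\ref{thm:segre}). Your schematic ``DR series with roots doubled, twisted by the Chiodo Euler class of $-R\pi_*L$'' does not supply any of this, and since --- as noted above --- the parity weight cannot be absorbed into a Chern-class twist, the proposed formula cannot be correct as stated. In short: the outline identifies the right ingredients at the level of keywords, but the two theorems that actually carry the proof (the spin DR formula with its even-twist cancellation, and the Segre-class identity for the cone of spin sections) are missing, and the mechanism you propose in their place does not work.
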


\subsubsection*{Previous results} In genus 1 and 2,  strata of holomorphic differentials are connected, so the first interesting computations occur in genus 3. The even loci $\M_{3}((5))^+$ and $\M_3((3,3))^+$ are the loci of hyper-elliptic curves with a marked Weierstrass point, and a marked pair of conjugated points under the hyper-elliptic involution, respectively. The class of these loci (after forgetting the markings) was first computed by Harris--Mumford, by applying a combination of Thom--Porteous formula and test curves computations~\cite{HarMum}.  Since this early result, little progress was made until the work of Wong who produced an algorithm that returns $[\oM_{g}(a)]^\pm$ if all spin classes of strata of $1$-differentials are tautological up to genus $g$, \emph{or} raises an error if this assumption is not valid~\cite[Theorem 1.4]{Wong}. This algorithm provided the first explicit values beyond genus 3. We should emphasize that~\ref{thm:main} 
is established by providing a more direct algorithm, in particular we do not need to compute the full tautological ring but only provide the expression of the desired class in terms of the standard generators. However,  as a corollary, the algorithm of~\cite{Wong} computes spin classes of strata of 1-differentials in all genera.

\subsubsection*{Motivation and future works} Strata of holomorphic differentials are equipped with a measure and an ergodic action of ${\rm SL}(2, \mathbb{R})$, which can be understood by interpreting holomorphic differentials as {\em translation surfaces}. We refer the reader to the surveys~\cite{Zor,Fil} for an overview of this perspective and the associated literature over the past two decades. Recently, the point of view of algebraic geometry and intersection theory has yielded new insights into the numerical invariants associated with the ergodic structure on these strata, such as the {\em Masur--Veech volumes}, {\em Siegel--Veech constants}, and {\em Lyapunov exponents}~\cite{Sau2,CMSZ,IttSau}. From this perspective,~\ref{thm:main} was both natural  and desirable. For instance, the Masur--Veech volumes of the {\em minimal} strata of holomorphic differentials (strata with $n=1$) were computed by intersection theory while the volume of each component defined by spin parity still requires the detour via representation theory and the Bloch--Okounkov formalism~\cite{EskOko,EOP}. The present result could lead to a direct geometric approach.

\smallskip

The second (and maybe more important) motivation comes from FJRW theory and the expression of classes of strata of holomorphic differentials in terms of Witten's classes. In the 90's, Witten proposed to construct a class $W_g(a,r)\in A^*(\oM_{g,n},\QQ)$ using moduli spaces of $r$-spin structures ($r$th roots of the canonical bundle)~\cite{Wit}. The proper mathematical construction of these classes was only achieved years later by Polishchuk and Vaintrob~\cite{PolishchukVaintrob}. Alternative constructions were proposed over time, leading to the more general set-up of FJRW theory or {\em gauged linear sigma models} (GLSM)~\cite{Chi1,CLL,CJRS}. Motivated by the development of the theory of double ramification cycles, Janda--Pandharipande--Pixton--Zvonkine observed that for a fixed value of $g$ and $a$, the class $W_g(a,r)$ is polynomial in $r$ and conjectured the identity \begin{equation}\label{for:witstrata}
    [\oM_g(a)]=\text{constant coefficient in $r$ of $W_g(a,r)$}
\end{equation} 
(see~\cite{PPZ19}). This conjecture provides an explicit method to compute classes of holomorphic strata. Indeed, for a fixed $r$, Witten's classes form a cohomological field theory (CohFT) which is computable through the following observations: this CohFT can be deformed into a semi-simple one (using $A$-Frobenius manifolds), and semi-simple CohFTs are determined by their value in genus 0 by Teleman's reconstruction theorem (see~\cite{Teleman}). If $r$ is even and $a$ is odd, then Witten's class also admits a refinement $W_g^\pm(a,r)$ according to the spin parity, and we have the following natural conjecture.  
\begin{conjecture}
    For a fixed value of $a$, $W_g^\pm(a,r)$ behaves polynomialy and the class  $[\oM_g(a)]^\pm$ is the constant coefficient of this polynomial. 
\end{conjecture}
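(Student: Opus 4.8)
The plan is to prove this conjecture by matching two completely explicit tautological formulas in $A^*(\oM_{g,n},\QQ)$: on one side the refined double ramification cycle formula for $[\oM_g(a)]^\pm$ established in this paper, and on the other a spin refinement of Chiodo's (and Pixton's) formula for $W_g^\pm(a,r)$. The bridge between them is already available in the non-spin theory: the passage ``constant coefficient in $r$ of Witten's class'' is realised by a specific $r\to\infty$ limit of Pixton's formula, and in that limit Pixton's class becomes the double ramification cycle. So the task splits into (i) upgrading this whole chain $W_g(a,r)\rightsquigarrow$ Pixton $\rightsquigarrow$ DR by a $\ZZ/2$-refinement, and (ii) invoking the refined DR formula of this paper for the final step.

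First I would fix the algebraic meaning of the spin refinement on the Witten side. For $r$ even and $a$ odd, on the moduli of $r$-th roots the universal root $L$ (with $L^{\otimes r}\simeq\omega_{\log}\otimes\mathcal{O}(-\sum_i a_i x_i)$) produces the family of line bundles $\theta\coloneqq L^{\otimes r/2}\otimes\mathcal{O}\big(\sum_i\tfrac{a_i-1}{2}x_i\big)$, which satisfies $\theta^{\otimes 2}\simeq\omega_C$ and has degree $g-1$, i.e.\ a family of theta characteristics. By Atiyah--Mumford, $h^0(\theta)\bmod 2$ is a deformation invariant; it splits the $r$-spin moduli, hence $W_g(a,r)=W_g^+(a,r)+W_g^-(a,r)$, and on the sublocus lying over $\M_g(a)$ the bundle $\theta$ restricts to the canonical spin structure of the introduction, so the two $\pm$-decompositions are compatible. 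Concretely, the parity-refined class is the ``sign-twisted'' pushforward obtained by weighting the $r$-spin virtual class by $(-1)^{h^0(\theta)}$, i.e.\ by a suitably normalised Pfaffian of the symmetric perfect complex $R\pi_*\theta$.

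Next come polynomiality and the matching of constant coefficients. Polynomiality of $W_g(a,r)$ in $r$ (for $r$ large in a fixed congruence class) follows from Chiodo's Grothendieck--Riemann--Roch formula because the answer is built from Bernoulli polynomials evaluated at the fractional multiplicities; I would run the same computation for the sign-twisted class, absorbing the sign $(-1)^{h^0(\theta)}$ into an equivariant character in an Atiyah--Bott localisation on the $r$-spin space, so that the fixed-point contributions are again Bernoulli-polynomial in $r$ and $W_g^\pm(a,r)$ is polynomial. Granting this and granting the non-spin identity \eqref{for:witstrata} for the sum, it remains to match the constant coefficient of $W_g^+(a,r)-W_g^-(a,r)$ with $[\oM_g(a)]^\pm$: one expands the sign-twisted Chiodo class as a sum over stable graphs, takes $r\to\infty$ to land on a spin refinement of Pixton's formula --- each vertex carrying an Arf-type weight coming from the theta characteristic --- and then compares the result term by term with the refined DR formula proved in \ref{thm:main}.

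I expect the genuine obstacle to be the ``spin Pixton formula'': showing that the sign-twist $(-1)^{h^0(\theta)}$ is compatible with the graph-sum structure of Pixton's class and with the $r\to\infty$ stabilisation. The parity is an Arf invariant, additive over the vertices of a stable graph, but it jumps in a nontrivial way when a node is smoothed, and tracking this through the polynomiality argument is the delicate point; getting the normalisations (powers of $r$, signs, factors of $2$) exactly right in the limit is where most of the work will lie. As a fallback route for the matching step one could instead try to identify $W^\pm(\cdot,r)$ as a parity-graded (super) cohomological field theory and apply a Teleman-type reconstruction, but then one first has to prove that the relevant Frobenius algebra is semisimple (or admits a semisimple deformation), which is itself not obvious.
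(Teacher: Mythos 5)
The statement you are trying to prove is stated in the paper as a \emph{conjecture}: the authors do not prove it, and they explicitly point out that even if it held it would not by itself determine $[\oM_g(a)]^\pm$, because the classes $W_g^\pm(a,r)$ fail the CohFT axioms, so a Teleman-type reconstruction (your fallback route) is not available. Your proposal is therefore not being measured against an existing argument, and as written it is a research plan rather than a proof: its load-bearing steps are exactly the open problems. First, your ``bridge already available in the non-spin theory'' is not available: what is proven is the chain Pixton's class $=$ DR cycle (BHPSS) and DR $=$ star-graph expression (\ref{thm:DR} and its non-spin antecedent), whereas the passage from the constant coefficient of Witten's class $W_g(a,r)$ to $[\oM_g(a)]$ is precisely the PPZ conjecture \ref{for:witstrata}, which remains open and which you yourself end up ``granting''. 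A proof conditional on \ref{for:witstrata} is not a proof of the conjecture. Second, the entire spin side --- polynomiality of the sign-twisted class $W_g^\pm(a,r)$, a graph-sum (Chiodo/Pixton-type) expansion compatible with the weight $(-1)^{h^0(\theta)}$, and control of the parity under node smoothing and under the $r\to\infty$ stabilisation --- is flagged by you as ``the genuine obstacle'' but never carried out. This is not a routine bookkeeping issue: as \ref{prop:cancellation} and the footnote to \ref{spDR = spPix} in the paper illustrate, the parity interacts with quasi-stable models and gluing data in a way that kills whole families of graph contributions and requires comparing moduli of $r$-th roots of the twisted bundle with $2r$-th roots of $\omega_{\log}^k(-\sum a_i x_i)$; nothing in your sketch (the proposed Pfaffian twist, the ``equivariant character in an Atiyah--Bott localisation'') engages with this.

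Concretely, then, the gap is twofold: (i) the reduction to the non-spin case rests on an unproven conjecture, and (ii) the spin Pixton/Chiodo formula for the parity-twisted Witten class, which is the actual mathematical content needed, is asserted as a goal rather than established. The parts of your plan that are sound --- the identification of $\theta=L^{\otimes r/2}\otimes\mathcal{O}\big(\sum_i\tfrac{a_i-1}{2}x_i\big)$ as a theta characteristic for $r$ even and $a$ odd, its compatibility over $\M_g(a)$ with the canonical spin structure, and the expectation that Bernoulli-polynomial expressions should govern polynomiality --- are the easy, formal parts; they do not substitute for the missing comparison of the two sides degree by degree, nor for a replacement of Teleman reconstruction, which the paper explains cannot be used here.
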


We emphasize that this conjecture would NOT determine the class $[\oM_{g}(a)]^\pm$, because the classes $W_g^\pm(a,r)$ fail to satisfy the CohFT axioms (so Telemann's reconstruction theorem cannot be applied). However, motivated by the conjectural expression~\ref{for:witstrata}, a new approach to GLSM has been developed in~\cite{CJRS,CJR} that hints that {\em Witten's classes and classes of strata of holomorphic differentials determine each other} (and that the spin analog is also true). If this expectation is valid, then~\ref{thm:main} would allow for a first explicit computation of the classes $W_g^\pm(a,r)$. This result might be of importance as the $W_g^\pm(a,r)$ should determine the potential of the  singular fiber of the $B$-Frobenius manifolds while no other singular manifolds have been described geometrically in the literature on FJRW theory so far.

\subsubsection*{The approach of the present paper} Above, we identified five families of methods for computing strata classes. However, to the best of our knowledge, none of these approaches can be directly adapted to compute the $[\oM_{g}(a,k)]^\pm$ (as exemplified by the discussion on Witten's class). Our proof of~\ref{thm:main} relies on a combination of the first and second approaches, along with a direct description of cones of meromorphic sections of spin structures and their Segre classes. The remainder of the introduction is devoted to presenting the algorithm, which includes the two new identities established in this paper: the spin DR cycle conjecture (\ref{thm:DR}), and the expression of Segre classes of cones of spin sections in terms of quadratic Hodge symbols (\ref{thm:segre}).

\subsection{Spin double ramification cycles}\label{sec:introDR}

 Let $\mathcal{J}_{g,n}\to \M_{g,n}$ be the universal Jacobian  of degree 0 line bundles.  The line bundles $\omega_{\rm log}^{\otimes k}(-a_1x_1-\ldots)$ and $\mathcal{O}_C$ define sections $\sigma_{a,k}$ and $\sigma_0$ of the universal Jacobian. With this notation, the space $\M(a,k)$ defined by the relation~\ref{eq:omegapic} is the fiber product $\M_{g,n}\times_{\mathcal{J}_{g,n}} \M_{g,n}$, where the morphisms used for each factor are $\sigma_{a,k}$ and $\sigma_0$ respectively. The universal Jacobian admits a natural extension to $\oM_{g,n}$ as the group of multi-degree zero line bundles, but the section $\sigma_{a,k}$ does not extend to $\oM_{g,n}$. Instead, there exists a birational model $\rho\colon \oM_g^{a}\to \oM_{g,n}$  allowing for the existence of $\sigma_{a,k}$, and which is universal for this property~\cite{Hol,Marcus2017LogarithmicCO}.  Moreover, the schematic intersection of the extended sections $\sigma_0$ and $\sigma_{a,k}$ is proper over $\oM_{g,n}$, thus enabling the definition of the {\em double ramification (DR) cycle} as
\begin{equation}
    \DR_g(a,k)\coloneqq \rho_* \left(\sigma_{0}^! [\sigma_{a,k}] \right) \in A^{g}(\oM_{g,n},\QQ),
\end{equation}
where $\sigma_{0}^!$ stands for Gysin pull-back. In~\cite{FarPan,Sch}, two conjectural expressions of the DR cycle were proposed 
\begin{equation}
    \rP_g(a,k)=\DR_{g}(a,k)=  \rH_g(a,k).
\end{equation}
Here the {\em Pixton's class} $P_g(a,k)$ is an explicit tautological class, while the {\em star graph expression} $\rH_g(a,k)$ is the class of an algebraic cycle constructed from strata of differentials. These equalities were established in~\cite{BHPSS}, and~\cite{HolSch} respectively. Moreover, Farkas--Pandharipande proved that the classes of strata  $[\oM_{g}(a,k)]$ can be recovered explicitly from the classes ${\rH}_g(a,k)$~\cite{FarPan}. Put together, these results provide an explicit expression for $[\oM_{g}(a,k)]$, in terms of tautological classes for all $k$ and $a$. 
\smallskip

\subsubsection{Construction of the spin DR cycle} 

We work over the {\em moduli space of  spin structures} $\M_{g,n}^{1/2}$, i.e. the moduli space of tuples $(C,x_1,\ldots,x_n,L, \phi\colon L^{\otimes 2}\xrightarrow{\sim} \omega_C)$. This space admits a compactification $\oM_{g,n}^{1/2}$ constructed by allowing certain semi-stable degenerations of the base curve (see~\cite{Cornalba1989ModuliOC} and~\cite{Caporaso2004ModuliOR}). The space $\oM_{g,n}^\half$ is a DM stack, and there exists a finite morphism  $\epsilon \colon \oM_{g,n}^{1/2}\to \oM_{g,n}$ defined by forgetting the spin structure. Moreover, Cornalba showed that the parity of $h^0(C,L)$ is still locally constant in $\oM_{g,n}^{1/2}$ so the space has two connected components $\oM_{g,n}^{1/2,+}$ and $\oM_{g,n}^{1/2,-}.$ The {\em parity cycle} is then defined as the difference
\begin{equation*}
    [\pm]:=[\overline{\cM}^{1/2,+}_{g,n}]-[\overline{\cM}^{1/2,-}_{g,n}]\in A^{0}(\overline{\cM}^{1/2}_{g,n}).
\end{equation*}

Let $\mathcal{L}\to \oC_{g,n}^{1/2}\overset{\pi}{\to} \oM_{g,n}^{1/2}$ denote the universal spin structure over the universal curve. If $k$ and $a$ are odd, then we may consider the section $\sigma^{\half}_{a,k}$ of the universal Jacobian over $\M_{g,n}^{1/2}$ defined by the line bundle 
\begin{equation}\label{eqn:section on smooth part}
    \mathcal{L}\otimes\omega_{C}^{\otimes \frac{k-1}{2}}\left(-\frac{a_1-k}{2}x_1-\ldots-\frac{a_n-k}{2}x_n\right),
\end{equation}
where $x_i$ stands for the section of the universal curve associated with the $i$-th marking. Again, the section $\sigma^{\half}_{a,k}$ extends to $\oM_{g,n}^\half$ if we pass to a (canonical) birational model $\rho^\half\colon \oM_g^{a,\half} \to \oM_{g,n}^{\half}$, then the {\em spin DR cycle} is defined as \begin{align*}
\DR_g^{\pm}(a,k)=& 2 \epsilon_*\left([\pm] \cdot \DR_g^\half(a,k)\right)\in A^g(\oM_{g,n},\QQ), \text{ where }  \\ \nonumber
     \DR_g^\half(a,k)= & \rho^\half_*\left(\sigma_0^![\sigma_{a,k}^{1/2}]\right) \in A^g(\oM_{g,n}^\half,\QQ).
\end{align*}

Conjectural spin analogs of Pixton's class and star graph expressions were proposed in~\cite{CosSauSch}.

\subsubsection{Pixton's spin class} Let   $\Gamma=\left(V,H, g\colon V\to \NN, i\colon H\to H,  H\to V, H^i\cong [\![1,n]\!]\right)$ be a pre-stable graph of genus $g$ with $n$ marked legs (we use the standard notation of~\cite{GraPan}).  

\begin{definition} \label{def:weight}  An $r/k$-{\em weighting} on $\Gamma$ is a function $w\colon H \to \{0,\dots,r-1\}$    
satisfying:
\begin{enumerate}
\item [(i)] For all edges $e=(h,h')$, we have $
w(h)+w(h')\equiv 0\mod r.$
\item [(ii)] For all vertices $v$, we have 
\begin{equation*}
    \sum_{h\mapsto v} w(h) \equiv k(2g(v)-2+n(v)) \mod r.
\end{equation*}
\end{enumerate}
We say that this weighting is {\em compatible with the vector $a$} if $w(i)\equiv a_i\mod r$.  Finally, we say that $w$ is an $r$-weighting if $k=1$.
\end{definition}

We denote by ${\rm Stab}_{g,n}$ the set of stable graphs of genus $g$ with $n$ legs. If $\Gamma$ is a stable graph, then we denote by $W_\Gamma^{2r/k}(a)^{\rm odd}$ the set of odd $2r/k$-weightings (i.e. taking only odd values) compatible with $a$.   For all positive integers $c$, we denote by $\rP_{g}^{ \pm,c,r}(a,k)$ the degree $c$ part of the class
\begin{align*}
   \sum_{\Gamma\in\mathcal{G}_{g,n}}
    \sum_{w\in W^{2r/k}_{\Gamma}(a)^{\rm odd}}&\frac{r^{-h^1(\Gamma)}}{|\Aut(\Gamma)|}
    \zeta_{\Gamma*}\left[\prod_{v\in V(\Gamma)}\exp(-\frac{k^2}{4}\kappa_1(v))\prod_{i=1}^n\exp(\frac{a_i^2}{4}\psi_i)\right.\  \cdot \\
    &\left.\prod_{e=(h,h')}\frac{1-\exp(-\frac{w(h)w(h')}{4})(\psi_h+\psi_{h'}))}{\psi_h+\psi_{h'}} \right]. \\
\end{align*}

In this expression we used standard notation: $\psi_i\in A^1(\oM_{g,n},\QQ)$ is the Chern class of the co-tangent line at the $i$-th marking for all $i\in \{1,\ldots,n\}$,  $\kappa_m=\pi_*(\psi_{n+1}^{m+1})\in A^m(\oM_{g,n},\QQ)$ for all $m\geq 1$ (here $\pi$ is the morphism forgetting the $(n+1)$-st marking), and 
$$
\zeta_\Gamma\colon \oM_{\Gamma}=\prod_{v\in V} \oM_{g(v),n(v)}\to \oM_{g,n}
$$
is the gluing morphism defining the boundary stratum associated to $\Gamma$. The class $\rP_{g}^{\pm,g,r}(a,k)$ is a polynomial in $r$ of degree at most $2c$~\cite{JanPanPixZvo}. The {\em Pixton's spin class} $\rP^\pm_g(a,k)$ is the constant term of this polynomial.
\begin{proposition}\label{spDR = spPix} We have the identity $\DR_{g}^\pm(a,k)=\rP_g^\pm(a,k)$. 
\end{proposition}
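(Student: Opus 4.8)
The plan is to deduce the statement from the non-spin equality $\DR_g(a,k)=\rP_g(a,k)$ of \cite{BHPSS}, applied to the moduli space of spin curves $\oM^\half_{g,n}$ rather than to $\oM_{g,n}$, and then to analyse the effect of the twisted pushforward $\alpha\mapsto 2\,\epsilon_*([\pm]\cdot\alpha)$. The proof splits into two largely independent parts. First, by \cite{BHPSS} the cycle $\DR^\half_g(a,k)\in A^g(\oM^\half_{g,n})$ is the constant coefficient in $r$ of a Pixton-type graph sum on $\oM^\half_{g,n}$ --- a sum over prestable graphs decorated with a spin structure and over weightings of every parity. Second, one checks that applying $\alpha\mapsto 2\,\epsilon_*([\pm]\cdot\alpha)$ to this graph sum produces the graph sum defining $\rP_g^{\pm,g,r}(a,k)$: the non-odd weightings are annihilated, and the missing prefactor $r^{-h^1(\Gamma)}/|\Aut(\Gamma)|$ and the overall constant $2$ are supplied. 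Since $\epsilon$ is finite, forming constant coefficients in $r$ commutes with this operation, so $\DR^\pm_g(a,k)=\rP^\pm_g(a,k)$.

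For the first part, observe that $\sigma^\half_{a,k}$ is the Abel--Jacobi section of the universal Jacobian over $\oM^\half_{g,n}$ attached to the line bundle $N:=\mathcal{L}\otimes\omega_C^{(k-1)/2}\!\left(-\sum_i\tfrac{a_i-k}{2}x_i\right)$, and a direct computation using $\mathcal{L}^{\otimes 2}\cong\omega_C$ shows that $N^{\otimes 2}$ is the pullback to $\oM^\half_{g,n}$ of the line bundle $\omega_{\log}^{\otimes k}(-\sum_i a_ix_i)$ defining $\sigma_{a,k}$. In the root-stack language of \cite{BHPSS} this means that an $r$-th root of $N$ is the same datum as a $2r$-th root $T$ of $\omega_{\log}^{\otimes k}(-\sum_i a_ix_i)$ together with an isomorphism $T^{\otimes r}\cong N$; this accounts for the weightings modulo $2r$ in \ref{def:weight}. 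Applying the main result of \cite{BHPSS} to the section $N$ --- working, exactly as there and as in \cite{HolSch}, on the stable model after the blow-up $\rho^\half$ and comparing $\sigma_0^![\sigma^\half_{a,k}]$ with the analogous construction on $\oM_{g,n}$, so as to accommodate Cornalba's semistable compactification and its exceptional components \cite{Cornalba1989ModuliOC} --- gives the Pixton-type formula for $\DR^\half_g(a,k)$ referred to above. Its tautological factors are already those of $\rP^\pm_g(a,k)$: the quarter-coefficients $\tfrac{k^2}{4}$, $\tfrac{a_i^2}{4}$ and $\tfrac{w(h)w(h')}{4}$ are the square-root analogues of the half-coefficients in Pixton's formula for $\DR_g(a,k)$, precisely because $N$ squares to the pullback of $\sigma_{a,k}$. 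What is missing is only the restriction to odd weightings and the prefactor $r^{-h^1(\Gamma)}/|\Aut(\Gamma)|$. (One may instead follow the $r$-spin virtual-class proof of \cite{BHPSS} more literally, working on the moduli of $2r$-th roots $T$, pushing forward the associated Chiodo class and invoking polynomiality in $r$; this leads to the same formula.)

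For the second part, I would compare the two graph sums boundary stratum by boundary stratum. Fix a stable graph $\Gamma$ of $\oM_{g,n}$. The restriction of $\epsilon$ over the open part of the stratum of $\Gamma$ is governed by the classical description of theta characteristics on a nodal curve \cite{Cornalba1989ModuliOC}: such a structure consists of a spin structure at each vertex together with discrete gluing and exceptional-component data along the edges, and the parity $h^0(C,L)$ decomposes, modulo $2$, as the sum of the vertex parities plus an Arf-type term supported on $H_1(\Gamma;\ZZ/2)$ --- the nodal form of the identification of $h^0\bmod 2$ with an Arf invariant \cite{Mum1}. Hence, summing the boundary contributions over $\Gamma$ of the first-part formula, each weighted by $(-1)^{h^0(C,L)}$, produces a signed Gauss sum over the discrete spin data along the edges. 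A standard property of such sums forces the result to vanish unless every residue $w\bmod 2$ of the weighting is odd --- this is the origin of the restriction to $W_\Gamma^{2r/k}(a)^{\rm odd}$ --- and in the non-vanishing case it combines with the spin-automorphism counts, the number of spin structures on $\Gamma$, and the passage from mod-$r$ to mod-$2r$ weightings to reproduce exactly the prefactor $r^{-h^1(\Gamma)}/|\Aut(\Gamma)|$ and the overall constant $2$ of the definition of $\DR^\pm_g(a,k)$. The polynomiality in $r$ needed to pass from $\rP_g^{\pm,g,r}(a,k)$ to its constant term $\rP^\pm_g(a,k)$ holds by \cite{JanPanPixZvo}, and combining all of this gives $\DR^\pm_g(a,k)=\rP^\pm_g(a,k)$.

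The principal obstacle is the combinatorial heart of the second part: proving that the signed sum over spin structures along the nodes vanishes off the odd weightings, and evaluating it exactly --- with the correct power of $r$, the correct automorphism factors and the factor $2$ --- uniformly over all stable graphs, while tracking the exceptional components that enter the formula of the first part. This is the spin counterpart of the node-by-node bookkeeping of \cite{BHPSS,HolSch}, and it is here that the precise conjectural shape of $\rP^\pm_g(a,k)$ proposed in \cite{CosSauSch} must be recovered from the Abel--Jacobi geometry. A secondary, more routine technicality is the comparison of the compactifications $\oM^\half_{g,n}$ and $\oM_{g,n}$ needed to make the pullback in the first part rigorous.
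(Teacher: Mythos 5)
Your plan follows essentially the same route as the paper's proof: identify $\DR^{1/2}_g(a,k)$ with the pullback of the operational Pixton formula of~\cite{BHPSS} along the classifying map of $\cF_k=\cL\otimes\omega_{\rm log}^{\otimes(k-1)/2}$, then analyse $2\epsilon_*([\pm]\cdot\,-\,)$ stratum by stratum, where strata whose $2$-weighting takes an even value are killed (the paper uses Cornalba's node-flip parity argument rather than your Gauss-sum/Arf formulation), odd strata contribute a degree factor $2^{g-1}$, and the substitution $u\mapsto 2u+1$ turns the surviving weightings mod $r$ into the odd weightings mod $2r$. Two caveats on emphasis: what you call a routine comparison of compactifications is where the paper does genuine work (an excess-intersection computation on Cornalba's quasi-stable boundary strata and a translation by the piecewise-linear function $\beta$, which produce the halved $\psi$-factors and the $-w(e)/8$ shift), and the $r$-th-root versus $2r$-th-root identification you invoke as an aside is precisely the step the paper's footnote flags as non-rigorous without a further comparison of root moduli, which is why the paper works directly with the Abel--Jacobi section on the Picard stack instead.
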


Cornalba described degenerations of spin structures in terms of semi-stable curves  (instead of orbifold curves for instance), so this proposition is obtained by direct application of~\cite{BHPSS} to the family of line bundles~\ref{eqn:section on smooth part}. We refer to \ref{ssec:spinpixton} for this computation.\footnote{An alternative proof was outlined  in~\cite[Section 2.3]{CosSauSch} by combining~\cite[Theorem 7]{BHPSS} (or rather the alternative formulation~\cite[Theorem~a]{HolCh} in terms of Chiodo classes), and the computation of ``spin Chido classes'' of~\cite{GKL}. To make this proof rigorous one needs to show that the moduli space of $r$-th root of~\ref{eqn:section on smooth part} is isomorphic to $2r$-root of $\omega_{\rm log}^k(-a_1\sigma_1\ldots)$. Although  natural, the proof of this statement would require one to work with different approaches to roots of line bundles on singular curves.}

\begin{remark} The constant term of the spin Pixton's class $\rP^{\pm}_{g}(a,k)$
is equal to the codimension $g$ part of the constant term of the polynomial
defined in~\cite[Section 2.3]{CosSauSch}. This can be seen by applying
the exact same arguments as in~\cite[Proposition 5]{JanPanPixZvo}
to reduce the contribution of the constant term defined in~\cite{CosSauSch} to simpler terms. 
\end{remark}

\subsubsection{Star graph expression} Let $\Gamma$ be a semi-stable graph. \begin{definition} \label{def:twist} A \emph{twist}  $I$
on a stable graph $\Gamma$ is a function $I\colon H\to \ZZ$ satisfying
\begin{enumerate}
    \item[(i')] For all edges $(h,h')$ we have $I(h)+I(h')=0$
    \item[(ii')] For all  $v,v'$ in $V(\Gamma)$, if two edges $(h_1,h'_1)$ and $(h_2,h'_2)$ connect $v$ to $v'$ then
    $$I(h_1)\geq0\Leftrightarrow I(h_2)\geq0.$$
In which case we denote $v\geq v'$.
\item[(iii')] The relation $\geq$ on the set of vertices of $\Gamma$ is transitive (i.e. it is a partial ordering). 
\end{enumerate}
We say that it is a {\em $k$-twist} if moreover
\begin{enumerate}
    \item[(iv')]  For all vertices $v$, we have 
\begin{equation*}
    \sum_{h\in H(v)} I(h) = k(2g(v)-2+n(v)).
\end{equation*}
\end{enumerate}
We say that it is compatible with a vector $a$ if $I(i)=a_i$ for all $i$.
\end{definition}

We see that the conditions (i) and (ii) in the definition of $r/k$-weights are the reduction modulo $r$ of the definition of the conditions (i') and (iv') in the definition of $k$-twists. 

\smallskip

The pair $(\Gamma,I)$ is called
a \emph{$k$-twisted graph}. If $v$ is a vertex of $\Gamma$, then we denote by $I(v)$ the vector of values of the $I(h)$ for $h$ incident to $v$. If $e=(h,h')$ is an edge, then we denote by $I(e)=|I(h)|$.

\begin{definition}\label{def:simple-star} A \emph{$k$-simple star graph} is a $k$-twisted
graph $(\Gamma,I)$ 
such that: 
\begin{enumerate}
    \item The graph $\Gamma$ is stable, and has a distinguished vertex $v_0$ (the \emph{central} vertex) such that all edges connect $v_0$ to a vertex in $V_{\rm out}:=V\setminus{\{v_0\}}$ (called an \emph{outlying} vertex).
    \item  If  $h$ is an half-edge incident to an outlying vertex then $I(h)\in k\NN$. 
\end{enumerate} 
\end{definition}
We denote by $\text{SStar}_{g}(a,k)$ (respectively $\text{SStar}_{g}^{\rm odd}(a,k)$) the set of
$k$-simple star graphs compatible with $a$ (respectively with odd twist). Then,  the {\em spin star-graph expression} is the class
\begin{align*}
    {\rH}_g^\pm(a,k)=\,\, & \!\!\!\sum_{(\Gamma,I)\in \text{SStar}_{g}^{\rm odd}(a)}\frac{\prod_{e\in E(\Gamma)}I(e)}{k^{\#V_{\rm out}}|\Aut(\Gamma,I)|}\zeta_{\Gamma*}\left([\oM_{\Gamma,I}]^\pm\right), \text{ where} \\  \nonumber  \oM_{\Gamma,I} =\,\, & \oM_{g(v_0)}(I(v_0),k) \times \prod_{v\in V_{\rm out}} \oM_{g(v)}(I(v)/k), \text{ and} \\ \nonumber
     [\oM_{\Gamma,I}]^\pm =\,\, & [\oM_{g(v_0)}(I(v_0),k)]^\pm  \bigotimes_{v\in V_{\rm out}}[\oM_{g(v)}(I(v)/k)]^\pm.
\end{align*}

\begin{theorem}[\cite{CosSauSch}, Conjecture~2.5]\label{thm:DR} If $a$ is not in $(k\NN)^n$, then we have ${\rH}_g^\pm(a,k)=\DR_g^\pm(a,k)$.
\end{theorem}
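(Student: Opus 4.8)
The plan is to mimic the strategy by which Holmes--Schwarz establish the non-spin identity $\rH_g(a,k) = \DR_g(a,k)$ in~\cite{HolSch}, transposing it to the universal spin curve over $\oM_{g,n}^{\half}$ and then pushing forward along $\epsilon$. The core geometric input is the explicit description of the intersection $\sigma_0 \cap \sigma_{a,k}^{\half}$ inside the stretched model $\oM_g^{a,\half}$: its irreducible components over the boundary of $\oM_{g,n}^{\half}$ are indexed exactly by $k$-twisted graphs, and for a component dominating a boundary stratum $\zeta_\Gamma$ the contribution is governed by the twist $I$ and the line bundle~\ref{eqn:section on smooth part} restricted to each vertex. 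What I would do first is recall (or reprove, adapting~\cite{HolSch}) that, after stabilization, the only twisted graphs contributing to the pushforward $\rho^{\half}_*(\sigma_0^![\sigma_{a,k}^{\half}])$ are the \emph{simple star graphs}: for a general twist one can always choose a one-parameter degeneration realizing the partial order $\geq$, and the combinatorics force the central-vertex/outlying-vertex shape, with the multiplicity on edge $e$ being $I(e)$ and the factor $k^{-\#V_{\rm out}}$ coming from the $k$-th root structure on the outlying vertices (whose sections are $k$-differentials pulled back from $\oM_{g(v)}(I(v)/k)$).

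Second, I would carry out the identification of the cycle supported on each simple star stratum. On the central vertex $v_0$ the relevant line bundle is $\mathcal L \otimes \omega_C^{(k-1)/2}(-\sum \frac{I(h)-k}{2}x_h)$ — i.e.\ the spin stratum $\oM_{g(v_0)}(I(v_0),k)$ with its canonical spin structure — while on each outlying vertex $v$ the half-edge twists lie in $k\NN$, so the spin structure there is pulled back from the base and the relevant cycle is the ordinary stratum $\oM_{g(v)}(I(v)/k)$, again with its tautological spin refinement. The point is that the parity cycle $[\pm]$ on $\oM_{g,n}^{\half}$ restricted to the boundary stratum splits multiplicatively over the vertices (parity of $h^0$ is additive in connected components of a nodal curve, hence the sign is a product), which is precisely what produces the tensor-product formula $[\oM_{\Gamma,I}]^{\pm} = [\oM_{g(v_0)}(I(v_0),k)]^{\pm}\bigotimes_v[\oM_{g(v)}(I(v)/k)]^{\pm}$ after applying $2\epsilon_*([\pm]\cdot -)$. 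Combining the multiplicity bookkeeping with this parity splitting yields $\rH_g^\pm(a,k)$ on the nose.

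Third, the hypothesis that $a \notin (k\NN)^n$ needs to be used exactly where~\cite{HolSch} uses it: it guarantees that the "trivial" twist (all vertices in one piece, no genuine degeneration) either does not occur or does not contribute, so that the star-graph sum is the entire DR cycle and there is no leftover boundary-free term — equivalently, the central vertex is forced to carry a genuine $k$-differential rather than the whole expression collapsing. I would isolate this as a lemma about which twisted graphs can contribute nonzero terms.

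The main obstacle I anticipate is \textbf{not} the combinatorics of twisted graphs — that is essentially bookkeeping inherited from~\cite{HolSch} — but rather the compatibility of the two distinct compactifications in play: the Holmes--Schwarz machinery (b-Chow, stretched Jacobians, the universality of $\rho$) is developed over $\oM_{g,n}$, and one must check it runs identically over Cornalba's compactification $\oM_{g,n}^{\half}$, which involves semistable (chains of $\PP^1$'s) degenerations of the curve. Concretely, one must verify that $\sigma_{a,k}^{\half}$ extends over a model that is still universal in the appropriate sense, that the Gysin pullback $\sigma_0^!$ is well-defined (properness of the intersection over $\oM_{g,n}^{\half}$), and that these constructions are compatible with the finite pushforward $\epsilon_*$. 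A clean way around this is to observe that $\epsilon$ is finite (so pushforward is exact on Chow) and that the relevant line bundle~\ref{eqn:section on smooth part} is, up to the spin twist $\mathcal L$, pulled back from $\oM_{g,n}$; thus one can run~\cite{BHPSS}/\cite{HolSch} for the family $\mathcal L^{\otimes 2}\otimes \omega_C^{k-1}(-\dots) = \omega_{\log}^{\otimes k}(-\dots)$ upstairs and descend. I would devote the bulk of the write-up to making this transfer rigorous, citing \ref{spDR = spPix} for the parallel Pixton-side transfer as a template, and then state the star-graph manipulation as a corollary of the already-published non-spin argument applied vertex by vertex.
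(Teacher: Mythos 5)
There is a genuine gap, and it sits exactly at the spin-specific heart of the theorem. Your second paragraph asserts that the parity cycle restricted to a boundary stratum ``splits multiplicatively over the vertices'' because $h^0$ is additive on components of a nodal curve. This is only true when the underlying $2$-weighting is odd on every edge (equivalently, when every edge twist $I(e)$ is odd), so that the spin structure has an exceptional $\PP^1$ at every node and sections never need to glue across it. The double ramification locus $\DRL^{1/2}$, however, has irreducible components indexed by \emph{all} $k$-simple star graphs compatible with $a$, including those where some $I(e)$ is even; for such an edge the weighting is $w(e)=0$, the section must glue at the node, $h^0$ is not additive, and both parities occur within a single component. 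Since ${\rH}_g^\pm(a,k)$ is by definition a sum only over odd twists, your argument as written would produce extra terms from even-twist star graphs with no mechanism to remove them. The paper's proof devotes all of Section 4 to precisely this point: using the explicit torsor/log description of $\oM^{1/2,a}_{\Gamma,I}$ (\ref{lem:fibre_rooted}) it constructs a $\mu_{I(e)}$-action whose odd elements act by flipping the sign of the gluing at the even-twist node, hence exchange $\DRL^{1/2,+}_{\Gamma,I}$ and $\DRL^{1/2,-}_{\Gamma,I}$ (\ref{lem:exchpar}), so that these components cancel after capping with $[\pm]$ and pushing forward (\ref{DRcancel}). Without this cancellation, or some substitute for it, the proposal does not close.

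A secondary concern: you treat the transfer of the Holmes--Schmitt machinery to Cornalba's compactification as the main obstacle and propose to ``descend'' by running the non-spin argument for $\cL^{\otimes 2}\otimes\omega^{k-1}(-\cdots)=\omega_{\log}^{\otimes k}(-\cdots)$ upstairs; but squaring discards exactly the parity data you need, so this cannot by itself yield the signed statement. The paper instead builds $\oM_g^{1/2,a}$ directly from $\mathbf{Div}$ on quasi-stable curves, shows $\epsilon_a\colon\DRL^{1/2}\to\DRL$ is a $\mu_2$-gerbe (whence the factor $2$ in $\DR^\pm$ and the identification of multiplicities), proves $\mathrm{DRC}^{1/2}=[\DRL^{1/2}]$ via the l.c.i./Cohen--Macaulay charts, and then imports the length computation $\prod_e I(e)/k^{|V_{\rm out}|}$ from \cite{HolSch} through a comparison of Artin local rings on the fine versus saturated models (\ref{cor:equal-lengths}, \ref{rem:DRL-length}). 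Your multiplicity bookkeeping and the description of the odd-twist strata (central vertex carrying the spin $k$-differential, outlying vertices carrying $1$-differential strata with their canonical spin structures) do match the paper, and your reading of where $a\notin(k\NN)^n$ enters is roughly right, but the even-twist vanishing is the missing idea that the rest of the argument cannot replace.
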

As explained in~\cite{CosSauSch}, this theorem allows for the computation of integrals of products of $\psi$-classes on $\DR_g^\pm(a,k)$ and $[\oM(a,k)]^\pm$.

\subsubsection{How to use spin DR-cycles} By applying~\ref{thm:DR} and~\ref{spDR = spPix} we obtain the following expression
$$
[\oM_{g}(a,k)]^\pm = \rP_g^\pm(a,k) - \Delta,
$$
where $\Delta$ is a class expressed in terms  of spin classes  of strata for smaller values of $(g,n)$. However,~\ref{thm:DR} only holds for vectors $a$ which are not in $(k\NN)^n$. For non-signed classes, Farkas--Pandharipande explained that we can go around this problem by observing that the residue theorem implies that $\oM_g((a_1,\ldots,a_n,0))$ is empty when all $a_i$ are positive so $[\oM_g((a_1,\ldots,a_n,0))]=0$. In particular the first non-trivial term in ${\rH}_g((2g,0))$ involves $[\oM_g((2g-1))]$. This observation does not apply in the spin setting as $[\oM_g((a_1,\ldots,a_n,0))]^\pm$ is not defined (because 0 is even), but~\ref{thm:DR} has the following corollary.
\begin{lemma}\label{lem:computation1}
    Let $g_0\in \NN$. If all classes $[\oM_{g}(a)]^\pm$ are tautological and computable for $g\leq g_0$ and $a$ positive, then the classes $[\oM_{g}(a,k)]^\pm$ are tautological and computable for $g\leq g_0$.
\end{lemma}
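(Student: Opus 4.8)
The plan is to argue by induction on $g$, holding $g_0$ --- and with it the hypothesis of the lemma --- fixed. The base case $g=0$ carries no content: on a genus-$0$ curve the relation $\omega_{\rm log}^{\otimes k}\simeq\O(a_1x_1+\dots+a_nx_n)$ is automatic, there is a unique spin structure, and one checks that it has even parity, so $[\oM_0(a,k)]^\pm$ is just the fundamental class of $\oM_{0,n}$ for every admissible $a,k$. So assume $g\geq 1$, $g\leq g_0$, and that the conclusion is already known in all genera $<g$, for all $n$, all $k$, and all admissible odd $a$.

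The engine is the chain of identities $\rP_g^\pm(a,k)=\DR_g^\pm(a,k)=\rH_g^\pm(a,k)$ supplied by~\ref{spDR = spPix} and~\ref{thm:DR}. The class $\rP_g^\pm(a,k)$ is by construction the constant term of an explicit polynomial in $r$ with tautological coefficients, hence tautological and computable. In the star-graph expression on the right I would isolate the contribution of the trivial $k$-simple star graph --- the single genus-$g$ vertex carrying the $n$ legs with twist $a$ --- which occurs with coefficient $1$ and equals $[\oM_g(a,k)]^\pm$ exactly. Whenever $a\notin(k\NN)^n$, so that~\ref{thm:DR} applies, this rearranges to
\[
[\oM_g(a,k)]^\pm=\rP_g^\pm(a,k)-\!\!\!\sum_{(\Gamma,I)\ \mathrm{nontrivial}}\!\!\!\frac{\prod_{e}I(e)}{k^{\#V_{\rm out}}\,|\Aut(\Gamma,I)|}\;\zeta_{\Gamma*}\bigl([\oM_{\Gamma,I}]^\pm\bigr),
\]
and it suffices to see that each term on the right is already known.

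An outlying vertex of $(\Gamma,I)$ contributes a factor $[\oM_{g(v)}(I(v)/k)]^\pm$; condition (2) of~\ref{def:simple-star} forces every half-edge there to have twist in $k\NN$, and in a non-vanishing term this twist is in fact $\geq k$, since an edge with $I(e)=0$ kills the term and a leg carries an odd, hence non-zero, twist. Thus $I(v)/k$ is a vector of positive integers and the factor is the spin class of a stratum of \emph{holomorphic} differentials of positive type in genus $g(v)\leq g\leq g_0$, which is supplied by the hypothesis of the lemma. The central vertex contributes $[\oM_{g(v_0)}(I(v_0),k)]^\pm$, and I would check that $g(v_0)<g$ in every nontrivial non-vanishing term: if $g(v_0)=g$ then all other vertices have genus $0$ and $\Gamma$ is a tree, while any such outlying vertex $v$ is stable with $n(v)\geq 3$ half-edges each of twist $\geq k$, contradicting condition (iv') of~\ref{def:twist}, which forces these twists to sum to $k(n(v)-2)<k\,n(v)$. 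Hence the central factor lives in genus $<g$ and is known by the inductive hypothesis. This settles all types $a\notin(k\NN)^n$.

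It remains to treat $a\in(k\NN)^n$, that is $k\mid a_i$ for all $i$; since $a$ is odd this forces $k$ odd and $b\coloneqq a/k$ a vector of positive odd integers with $|b|=2g-2+n$. For $k=1$ this is precisely the hypothesis of the lemma, so there is nothing to prove. For $k>1$, however,~\ref{thm:DR} genuinely fails: besides the component $\oM_g(b)$, the stratum $\M_g(kb,k)$ acquires components on which $\omega_{\rm log}\otimes\O(-b_1x_1-\dots-b_nx_n)$ is a non-trivial $k$-torsion bundle $N$, and on them the canonical spin structure is the torsion twist $\O\bigl(\tfrac12\sum_i(b_i-1)x_i\bigr)\otimes N^{(1-k)/2}$, which the simple-star combinatorics does not record. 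This is the main obstacle, and my plan for it is to reduce back to the case just handled by an auxiliary type: on $\oM_{g,n+2}$ put $a'=(a_1,\dots,a_n,1,2k-1)$, which lies outside $(k\NN)^{n+2}$ precisely because of the entry $1$, so that~\ref{thm:DR} and the previous paragraphs make $[\oM_g(a',k)]^\pm$ tautological and computable. One then restricts $[\oM_g(a',k)]^\pm$ to the boundary divisor where the two new markings collide: a short node-twist computation shows that the genus-$g$ piece in such a limit carries a type divisible by $k$ --- indeed a relabelling of $a$, possibly with an extra entry $k$ --- so that its contribution is a forgetful pull-back of $[\oM_g(a,k)]^\pm$ times a nonzero combinatorial constant, while every other contribution sits over a strictly deeper boundary stratum and is known by induction. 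Since the forgetful pull-back is injective on Chow groups, solving this linear relation yields $[\oM_g(a,k)]^\pm$, again tautological. I expect the genuinely delicate point to be carrying out this boundary restriction cleanly --- tracking the multiplicities and the way the spin structure degenerates along the collision; everything else is formal, and in any case the part of the statement actually used in the paper's computations, namely $a\notin(k\NN)^n$, is already settled by the third paragraph.
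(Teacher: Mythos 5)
Your handling of the case $a\notin(k\NN)^n$ is correct and is essentially the paper's own argument: isolate the trivial graph in $\rH_g^\pm(a,k)=\DR_g^\pm(a,k)=\rP_g^\pm(a,k)$, observe that every half-edge (including legs) at an outlying vertex carries an odd twist in $k\NN$, hence $\geq k$, so the outlying factors are spin classes of holomorphic strata of positive type in genus $\leq g_0$ supplied by the hypothesis, and check that a nontrivial $k$-simple star graph in ${\rm SStar}^{\rm odd}_g(a,k)$ cannot have $g(v_0)=g$ (your genus-$0$ outlying-vertex count is right), so the central factor is covered by induction on the genus. This is a slightly more explicit bookkeeping than the paper's ``smaller $(g,n)$'' remark, but the same route.

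The gap is the case $a\in(k\NN)^n$ with $k>1$, which you only sketch. Your plan --- compute $[\oM_g(a',k)]^\pm$ for $a'=(a_1,\dots,a_n,1,2k-1)$ and restrict to the boundary divisor where the two auxiliary markings collide --- requires a precise description of the Gysin pullback of the closed spin stratum class to that divisor: which boundary components of $\oM_g(a',k)$ lie in it, with which multiplicities, how the spin parity behaves in the limit, and how components of excess dimension are handled. None of this is carried out (``a short node-twist computation shows\dots''), and it is exactly where the difficulty sits. Moreover, the claim that ``every other contribution sits over a strictly deeper boundary stratum and is known by induction'' is not justified as stated: among the deeper components there are ones whose genus-$g$ vertex again carries a type lying in $(k\NN)^m$ (for instance a relabelling of $a$ itself, when one of the original markings slides onto a bubble together with the two new ones), so there is no declared induction parameter that decreases for these terms, and one would have to check that they only contribute via their $k$-th-power (hence known) components or not at all --- in effect redoing a piece of the degeneration theory of spin strata. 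The paper avoids all of this by a different mechanism: following the argument of \cite{Sauflat}, it combines \ref{thm:DR} with the polynomiality of the spin Pixton class \cite{SpePolynomiality,PixPolynomiality} to obtain an identity expressing $\rP_g^\pm(a,k)$ as $-a_1\psi_1[\oM(a/k)]^\pm$ plus $[\overline{\M(a,k)\setminus\M(a/k)}]^\pm$ plus star-graph terms with the first leg on the central vertex; since $[\oM(a/k)]^\pm$ is given by the hypothesis, this determines $[\overline{\M(a,k)\setminus\M(a/k)}]^\pm$ and hence $[\oM_g(a,k)]^\pm$ with no boundary-restriction analysis. As written, your proposal does not prove the lemma in this remaining case.
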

\begin{proof}
    If $a$ is not in $(k\NN)^n$ then we use the star graph expression as explained above. If $a$ is in $(k\NN)$, then the star graph expression cannot be used as it is stated, but we can use the same argument as in~\cite{Sauflat}:  \ref{thm:DR} and the polynomiality of spin Pixton's class (proved in~\cite{SpePolynomiality,PixPolynomiality}) to prove that the following identity holds $$\rP_{g}^\pm(a,k)= -a_1\psi_1 [\oM(a/k)]^\pm + [\overline{\M(a,k)\setminus \M(a/k)}]^\pm   \, +\!\!\!\!\!\!\! \sum_{(\Gamma, I)\in {\rm SStar}_{g,1}^{\rm odd}(a,k)}  \frac{\prod_{e\in E(\Gamma)}I(e)}{k^{\#V_{\rm out}}|\Aut(\Gamma,I)|}\zeta_{\Gamma*}[\oM_{\Gamma,I}]^\pm,
    $$
    where ${\rm SStar}_{g,1}^{\rm odd}(a,k)\subset {\rm SStar}_{g}^{\rm odd}(a,k)$ is the subset where the first leg is on the central vertex. The second term in the right-hand side of this expression is the class of the closure of $k$-canonical divisor which are not obtained from a $1$-canonical divisor multiplied by $k$. Using this expression, we see again that the class $[\overline{\M(a,k)\setminus \M(a/k)}]^\pm$ can be expressed in terms of spin classes of strata of holomorphic differentials. 
\end{proof}

\subsection{Cone of spin sections}\label{sec:introspinsection} The DR cycle approach has reduced the problem to computing spin classes of strata of holomorphic differentials. To do so we will follow the approach of~\cite{Sau}. The {\em Hodge bundle} $\oH_{g,n}\to \oM_{g,n}$ is the vector bundle of holomorphic differentials and we denote by $p\colon \PP\oH_{g,n}\to \oM_{g,n}$ its projectivization. If $a\in \ZZ_{>0}^n$, then we can consider the {\em degeneracy locus} $\PP\H_g(a)\subset \PP\oH_{g,n}$ of differentials on smooth curves and with order $a_i-1$ at the marking $x_i$ for all $i\in \{1,\ldots,n\}$. Note that in this definition,  we do not assume that $|a|=2g-2+n$. The main formula used to compute the classes of closures of degeneracy loci has the shape
$$
[\PP\oH_g(a)]=(\xi+(a_i-1)\psi_i) [\PP\oH_g(a_1,\ldots,a_i-1,\ldots)] - \text{boundary terms},
$$
where $\xi=c_1(\mathcal{O}(1))$. The boundary terms in this formula are expressed in terms of strata of meromorphic differentials with residue conditions in either lower genus or lower number of markings. Therefore, the computation of classes of the form $p_*\left(\xi^\ell[\PP\oH_{g}(a)]\right)$ can be performed by induction on the size of $a$ (and the genus, and number of markings). The base of this inductive computation are the $\lambda$-classes defined as
$$
\lambda_c \coloneqq (-1)^{c} s_{c}(\oH_{g,n}) = (-1)^{c}  p_*\left(\xi^{c+g-1}[\PP\oH_{g}(1,\ldots,1)]\right).
$$
These classes are tautological and computable as was shown by Mumford in his seminal introduction to tautological calculus~\cite{Mum} (or alternatively by~\cite{PolSau}). 

\smallskip

In order to adapt this approach to spin classes,  we consider the sub-cone $\mathcal{SQ}_g(a)\subset \H_g(a)$ 
of {\em squares}, i.e. differentials with even zeros. This cone is partitioned into two components according to parity, and we denote by $[\PP\overline{\mathcal{SQ}}_g(a)]^\pm$ the class of the difference between the closures of the projectivizations of these components.  
With this notation we have
$$
2[\PP\overline{\mathcal{SQ}}_g(a)]^\pm =(\xi+(a_i-2)\psi_i) \, [\PP\overline{\mathcal{SQ}}_g(a_1,\ldots,a_i-2,\ldots)]^\pm + \text{boundary terms}
$$
where the boundary terms are described in terms of spin classes of strata in lower genera or number of markings (see~\ref{ssec:intersectionformulas}). Therefore, we can reduce the computation of classes of holomorphic strata to the computation the classes 
$$
s_{g}^{\pm,c} = \nonumber p_*\left(\xi^c [\PP\overline{\mathcal{SQ}}_g(1,\ldots,1)]^\pm\right) \in A^c(\oM_{g,n},\QQ)\footnote{As for Hodge classes, we did not include the number of markings in this notation as $\pi^*s_{g}^{\pm,c }=s^{\pm,c}_{g}$ if $\pi$ is the forgetful morphism of a marking.}.$$
 To express these classes, we consider the set ${\rm Tree}_{g,n}\subset {\rm Stab}_{g,n}$ of stable graphs with no loops, and its subset ${\rm Tree}_{g,n}^\star\subset {\rm Tree}_{g,n}$ of graphs with no genus 0 vertices. For all $t\in \CC$, we denote 
\begin{align}
\label{for:notations} s_{g}^{\pm}(t)=\,\, & \sum_{c\geq 0} t^c s_{g}^{\pm,c}, \text{ and}   \\
L_{g}(t)=\,\, &2^{g-1}\Lambda(2t)\Lambda(-t)-2^{2g-1}, 
\end{align}
where  $\Lambda(t)=1+t\lambda_1+t^2\lambda_2+\ldots$. The following theorem expresses the classes $s_g^\pm$ in term of the $L$-classes,  which are tautological and computable because  $\lambda$-classes are tautological and computable. 

\begin{theorem}\label{thm:segre}
    For all $g$ and $n$ we have 
    \begin{align}
        \label{for:mainseg} L_g(t)=&\sum_{\Gamma \in {\rm Tree}^\star_{g,n}} \frac{t^{|E(\Gamma)|}}{|{\rm Aut}(\Gamma)|}\zeta_{\Gamma *}\left(\bigotimes_{v\in V} s_{g(v)}(t) \right), \text{ and}\\
        \label{for:mainseg2} s_g(t)=&\sum_{\Gamma \in {\rm Tree}^\star_{g,n}} \frac{(-t)^{|E(\Gamma)|}}{|{\rm Aut}(\Gamma)|}\zeta_{\Gamma *}\left(\bigotimes_{v\in V} L_{g(v)}(t) \right).
    \end{align}
\end{theorem}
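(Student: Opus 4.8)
The plan is to deduce both identities from \ref{for:mainseg} alone, the passage to \ref{for:mainseg2} being a formal inversion. For a collection of (pullback‑compatible) tautological classes $(f_g)_{g\ge 0}$ set
\[
\Phi_t(f)_g=\sum_{\Gamma\in{\rm Tree}_{g,n}}\frac{t^{|E(\Gamma)|}}{|\Aut(\Gamma)|}\,\zeta_{\Gamma*}\Bigl(\bigotimes_{v\in V(\Gamma)} f_{g(v)}\Bigr),
\]
the sum over all trees (so $\Phi_t=\mathrm{id}+O(t)$). A standard Möbius inversion in the category of stable graphs — grouping "trees of trees'' by their underlying tree and observing that the alternating sum over ways to split the edge set into a cut part and an uncut part vanishes unless there are no edges — gives $\Phi_{-t}\circ\Phi_t=\mathrm{id}$. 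Since the Hodge bundle on $\oM_{0,n}$ has rank $0$ we have $\PP\oH_{0,n}=\varnothing$, hence $s_0^\pm(t)=0$; and $L_0(t)=\tfrac12\Lambda(2t)\Lambda(-t)-\tfrac12=\tfrac12-\tfrac12=0$. Thus every tree with a genus‑$0$ vertex contributes $0$ on both sides, so the sums over ${\rm Tree}^\star_{g,n}$ in \ref{for:mainseg} and \ref{for:mainseg2} equal $\Phi_t(s^\pm)_g$ and $\Phi_{-t}(L)_g$ respectively; and then \ref{for:mainseg}, which reads $L_g(t)=\Phi_t(s^\pm)_g$, yields \ref{for:mainseg2} after applying $\Phi_{-t}$.

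To prove \ref{for:mainseg} I would realize its right‑hand side as a single global push‑forward over the moduli of spin structures. Over $\oM_{g,n}^{1/2}$ carry the universal spin bundle $\mathcal{L}\to\oC_{g,n}^{1/2}\overset{\pi}{\to}\oM_{g,n}^{1/2}$ with $\mathcal{L}^{\otimes2}\cong\omega_\pi$. Squaring sections defines a morphism from the cone of spin sections attached to $\pi_*\mathcal{L}$ into $\epsilon^*\oH_{g,n}$; let $\PP\widetilde{\mathcal{SQ}}_g\subset\PP\oH_{g,n}$ be the closure over $\oM_{g,n}$ of the image of its projectivisation, with its parity splitting, and put $\widetilde L_g(t):=p_*\bigl(\tfrac1{1-t\xi}[\PP\widetilde{\mathcal{SQ}}_g]^\pm\bigr)$ (with $\xi$ the hyperplane class on $\PP\oH_{g,n}$ and $p$ its projection). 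The first part of the argument is a degeneration statement: stratifying $\oM_{g,n}^{1/2}$ by Cornalba's description of stable limits of spin structures \cite{Cornalba1989ModuliOC}, over the stratum with dual graph $\Gamma$ a limiting spin structure restricts to a spin structure on each component $C_v$ glued at the nodes and a limiting section of $\mathcal{L}$ is a compatible tuple of sections, one on each $C_v$; this exhibits the part of $\widetilde{\mathcal{SQ}}_g$ over that stratum as glued from the cones $\oSQ_{g(v)}(1,\dots,1)$, and bookkeeping of $\xi$ and of the parity identifies the total with the tree sum of \ref{for:mainseg} (genus‑$0$ vertices drop out as above, and the non‑separating/loop strata contribute nothing, because a non‑separating node replaces the spin structure by a square root of the dualizing sheaf of the normalisation twisted by the two points over the node — a meromorphic datum carrying no holomorphic square differential in the relevant range). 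The second part is $\widetilde L_g(t)=2^{g-1}\Lambda(2t)\Lambda(-t)-2^{2g-1}$.

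For this second part, note first that squaring has degree $2$ in the fibre direction of $\oH_{g,n}$, so the pullback of $\xi$ to the projectivised cone of spin sections is $2\xi$; hence $\widetilde L_g(t)=2\,\epsilon_*\bigl([\pm]\cdot\mathfrak{s}(2t)\bigr)$, where $\mathfrak{s}(u)=\sum_c u^c\mathfrak{s}_c$ is the Fulton Segre series of the cone of spin sections — this is where the argument $2t$, hence the factor $\Lambda(2t)$, enters. Now $\chi(\mathcal{L})=0$, and relative Serre duality together with $\mathcal{L}^{\otimes2}\cong\omega_\pi$ give $(R\pi_*\mathcal{L})^\vee\cong R\pi_*\mathcal{L}[1]$, forcing $\mathrm{ch}_{2l}(R\pi_*\mathcal{L})=0$ for all $l\ge0$. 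Combined with Grothendieck--Riemann--Roch (expressing $\mathrm{ch}(R\pi_*\mathcal{L})$ as a universal $\pi_*$‑push of a power series in the relative cotangent‑line class, with boundary corrections) and with Mumford's relation $\Lambda(t)\Lambda(-t)=1$ for the Hodge bundle \cite{Mum}, this pins down the Chern classes of a two‑term bundle resolution $[\mathcal{E}\xrightarrow{\phi}\mathcal{E}^\vee]$ of $R\pi_*\mathcal{L}$ with $\phi$ symmetric, $\pi_*\mathcal{L}=\ker\phi$, $R^1\pi_*\mathcal{L}=\operatorname{coker}\phi\cong(\ker\phi)^\vee$. The cone of spin sections is the sub‑cone $\ker\phi\subset\mathcal{E}$, a line bundle away from the jumping loci $\{h^0(\mathcal{L})\ge m\}$; and by the classical description of the parity of a theta characteristic through the rank of the symmetric form $\phi$, the class $[\pm]$ is a Pfaffian/Thom--Porteous‑type class for $\phi$, whose product with the Segre class of $\ker\phi$ collapses the contributions of all higher jumping loci and returns $2^{g-1}\Lambda(2t)\Lambda(-t)-2^{2g-1}$ (at $t=0$ this is $-2^{g-1}(2^g-1)$, minus the number of odd theta characteristics, matching the direct count $s_g^{\pm,0}=p_*[\PP\oSQ_g^+]-p_*[\PP\oSQ_g^-]$).

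The main obstacle is making this collapse rigorous: the cone of spin sections is badly non‑transverse inside $\oH_{g,n}$ — its support far exceeds the expected codimension — and one must prove that after weighting by the parity cycle all the contributions of the loci $\{h^0(\mathcal{L})\ge m\}$ with $m\ge2$ cancel, leaving precisely the polynomial above. Intertwined with this is the combinatorial compatibility needed in the first part, namely that the non‑separating (loop) degenerations of spin structures — described by twisted square roots on the normalisation — contribute nothing to $p_*\bigl(\tfrac1{1-t\xi}[\PP\widetilde{\mathcal{SQ}}_g]^\pm\bigr)$, so that exactly ${\rm Tree}^\star_{g,n}$ appears; I expect this to follow from the residue/twist constraints on such limits in Cornalba's compactification, but it is the delicate point.
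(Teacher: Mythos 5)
Your reduction of \ref{for:mainseg2} to \ref{for:mainseg} is fine: the identity $\Phi_{-t}\circ\Phi_t=\mathrm{id}$, together with $s_0^\pm=L_0=0$, is the same Möbius inversion the paper carries out (there by induction on $(g,n)$ via a rooted-tree regrouping), and your formal version of it is acceptable.

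The problem is with \ref{for:mainseg} itself: the two steps you defer at the end are not loose ends but the entire content of the theorem, and the route you sketch for the second one is not a proof. Working with the untwisted spin bundle $\L$, the complex $R\pi_*\L$ is not concentrated in a single degree and the generic value of $h^0$ differs on the two parity components, so the cone of spin sections is not an abelian cone on a single coherent sheaf whose Segre class is accessible; the assertion that $[\pm]$ acts as a Pfaffian/Thom--Porteous-type class whose product with the Segre class "collapses" all loci $\{h^0(\L)\ge m\}$, $m\ge 2$, and returns $2^{g-1}\Lambda(2t)\Lambda(-t)-2^{2g-1}$ is exactly the statement to be proved, and no mechanism is given. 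The paper avoids this entirely by twisting: it studies the cone $\wH_g^{\half}[1,0^{n-1}]$ of sections of $\L(x_1)$, for which $\pi_*\L(-x_1)=0$, so the cone is $\mathrm{Spec}\,\mathrm{Sym}^\bullet R^1\pi_*\L(-x_1)$ and its signed Segre class follows directly from Chiodo's formula together with \ref{prop:cancellation} (which kills the even-weight boundary terms after capping with $[\pm]$), yielding \ref{for:pm10}. Your parallel claim that loop strata "carry no holomorphic square differential in the relevant range" is also not the right mechanism: square roots of $\omega_{\widetilde\cC}(p_h+p_{h'})$ do have sections in general, and what actually removes non-separating degenerations in the paper is the sign cancellation of \ref{prop:cancellation} plus the dimension count in the classification of components.

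Moreover, even granting your degeneration statement, your outline is missing an ingredient that does not disappear in the untwisted setting. In the paper, the price of the twist is that the components of $\wH_g^{\half}[1,0^{n-1}]$ (classified, for $p\ge 1$, as pure of expected dimension and indexed by trees) carry either a pole-free or a pole-at-$x_1$ factor at the central vertex, so the Chiodo computation only gives $\widetilde s^{0,\pm}_g+\widetilde s^{1,\pm}_g$; isolating $\widetilde s^{0,\pm}_g=L_g$ requires the second, independent relation \ref{pr:pm1main} between $s^{1,\pm}_g$ and $s^\pm_g$, which rests on the inductive boundary formulas of Section 5 and the parity counts $d(g,m)=(-1)^{m+1}2^{2g-1}+2^{g-1}$, culminating in \ref{for:relationpm1}. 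Your proposal has no counterpart of this step; in your version the same difficulty resurfaces as the unproven collapse of the higher jumping loci, and the component analysis for $p=0$ is itself harder (the untwisted cone is not of pure expected dimension, which is why the paper proves the tree classification only for $p\ge 1$). So the proposal is a reasonable outline of the strategy, but its two pivotal steps are missing and the specific untwisted route chosen makes them harder, not easier, than in the paper.
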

This theorem completes the proof of~\ref{thm:main} outlined in this introduction. The proof of formula~\ref{for:mainseg} will be performed by expressing both sides as the Segre classes of the cone over $\oM_{g,n}^\half$ of  sections  of the spin structure. The presence of the quadratic Hodge symbol in the left-hand side of this formula is due to the identity
$$\epsilon_*\left([\pm]\cdot s_*\left(R^*\pi_*\mathcal{L}\right)\right)= 2^{g-1}\Lambda(-1) \Lambda(1/2)$$
that was already used in the expression of the spin Hurwitz numbers and spin Gromov--Witten invariants of $\PP^1$~\cite{GKL,GKLSau}. Finally, formula~\ref{for:mainseg2} will be derived from formula~\ref{for:mainseg} by performing a Möbius inversion.

\subsection*{Acknowledgements} The authors would like to thank Francesca Carocci, Alessandro Chiodo, Matteo Costantini, Alessandro Giacchetto, Martin Möller, Rahul Pandharipande, Johannes Schmitt, Pim Spelier, Thibault Poiret, and Anton Zorich for several discussions on strata of differentials and spin parity over the years. We are particularly grateful to Francesca Carocci for pointing out a mistake in section 3 of an earlier draft. 

A.S. was supported by the 
ERC Starting Grant  {\em SpiCE} (no. 101164820). D.H. and G.P. were supported by NWO grant VI.Vidi.193.006. The project was also supported by the Franco-Dutch program Hubert Curien - Van Gogh.

\section{Moduli of spin structures and spin classes}

In this section, we summarize the definition of \emph{spin curves}
and their moduli constructed by Cornalba in~\cite{Cornalba1989ModuliOC}. We recall the description of the stratification of the boundary indexed by {\em 2-weighted} graphs and prove a vanishing statement for the classes of strata with at least one 0 weight (see~\ref{prop:cancellation}). We could not find a proper proof of this ``folklore'' proposition in the literature, although the argument is essentially borrowed from Cornalba's original paper and used in the computations of~\cite{GKL}.  Finally, we compute Pixton's classes
constructed from the universal spin structure. 

\subsection{Moduli space of spin structures}
\begin{definition} A pre-stable curve  of genus $g$ with $n$ 
marked points over an algebraically closed field $(\mathcal{C},p_1,\dots,p_n)$
is \emph{quasi-stable} if all unstable components $E_1,\dots, E_{t}$
\begin{enumerate}
    \item [(i)] are smooth, rational, and disjoint, that is, if $E_i$ and $E_j$ are 
    unstable, then $E_i\cap E_j=\emptyset$, and
    \item [(ii)]  meet the rest of the curve in exactly \emph{two} 
    points. 
\end{enumerate}
The unstable components of $\mathcal{C}$ 
will be referred to as \emph{exceptional}. 
Note that, by definition, the marked points $p_1,\dots,p_n$ lie on non-exceptional components. 
The sub-curve 
\begin{equation*}
    \widetilde{\cC}:= \overline{\mathcal{C}\setminus{\bigcup}_{i=1}^tE_i}
\end{equation*}
will be called the \emph{non-exceptional sub-curve}. In the definition
of $\widetilde{\cC}$ above the upper bar denotes the topological closure.
Finally, a quasi-stable curve of genus $g$
with $n$ marked points $(\mathcal{C},p_1,\dots p_n)$ 
is called a \emph{quasi-stable model} of $(C,p_1,\dots,p_n)$ 
if $C$ is the stabilization of $\mathcal{C}$.
\end{definition}

\begin{definition}
    A line bundle $\cL$ on a quasi-stable curve $\mathcal{C}$ is 
    \emph{admissible} if the restriction to any exceptional
    component has degree $1$.
\end{definition}

\begin{definition}\label{def:spstr} A \emph{spin structure} on a stable curve $C$ is a
triple $(\mathcal{C}, \cL, \phi)$ such that:
\begin{enumerate}
    \item [(i)] $\mathcal{C}$ is a quasi-stable model of $C$,

    \item [(ii)] $\cL$ is an admissible line bundle on $\mathcal{C}$
    of total degree $g-1$,

    \item [(iii)] $\phi\colon \cL^{\otimes2}\to \omega_{\mathcal{C}}$ is a morphism 
    of sheaves which is generically non-zero on non-exceptional 
    components.
\end{enumerate}

\end{definition}

\begin{remark}\label{honroot} 
The fact that $\cL$ is admissible implies, by degree considerations,
that $\phi$ vanishes on exceptional components.
The morphism $\phi$ induces a morphism 
\begin{equation*}
    \widetilde{\phi}\colon \cL^{\otimes2}|_{\widetilde{\cC}}
    \to \omega_{\widetilde{\cC}},
\end{equation*}
which, by conditions $(ii)$ and $(iii)$, is an isomorphism (see~\cite[beginning of Section 2]{Cornalba1989ModuliOC}). In particular, if 
$\mathcal{C}$ is smooth, a spin structure coincides with a \emph{theta characteristic} in the 
sense of~\cite{Mum1}. Furthermore, let $n\colon \widetilde{\cC}'\to \widetilde{\mathcal{C}}$
be the normalization morphism of the non-exceptional sub-curve. Since $\widetilde{\phi}$ is an isomorphism, 
pulling back $\cL|_{\widetilde{\cC}}$ via the normalization morphism and restricting to an irreducible component $Y\subset\widetilde{\cC}$ yields a square root of 
$n^*\omega_{\widetilde{\mathcal{C}}}|_{Y}$.
 In particular,
a necessary condition for $\mathcal{C}$ to be the support 
of a spin structure on a stable curve is that 
the degree of the restriction of $\omega_{\widetilde{\cC}}$ 
to every irreducible component must be even (see~\cite[Section 3]{Cornalba1989ModuliOC}).
\end{remark}

\begin{remark}\label{rem:exc-parity} Given a global section $\widetilde{s}$ of the 
restriction $\cL|_{\widetilde{\cC}}$, one can uniquely extend it over
the exceptional components to obtain a global section $s$ of
$\cL$. Indeed, let $E$ be any exceptional component. Then, since $E\cong \PP^{1}$ and $\cL$ is admissible, we obtain that 
$\cL|_{E}\cong \cO_{\PP^{1}}(1)$. Furthermore, as there is a unique section of 
$\cO_{\mathbb{P}^{1}}(1)$ 
taking any given two values at 
$0$ and $\infty$, we can extend $\widetilde{s}$ over each 
exceptional component by the unique section of $\cO_{\mathbb{P}^1}(1)$ 
which takes the values of the section 
$\widetilde{s}$ at the corresponding pre-images of exceptional nodes. 
Conversely, given a global section of $\cL$, we can always restrict
it to the non-exceptional sub-curve $\widetilde{\cC}$. In this way,
we obtain an isomorphism
\begin{equation*}
    H^0(\mathcal{C},\cL)\cong H^0(\widetilde{\cC},\cL|_{\widetilde{\cC}}).
\end{equation*}
In particular, the dimension of sections of a spin structure  
$(\mathcal{C},\cL,\phi)$ on a stable curve $C$
is equal to that of the space of sections of $\cL|_{\widetilde{\cC}}$.
\end{remark}

\begin{definition}Let 
$(\pi\colon C\to S,p_1.\dots,p_n\colon S\to C)$
be a family of $n$-marked stable curves of genus $g$ over a base scheme $S$,
and let $a\in\ZZ^n$ a vector of integers such that 
\begin{equation*}
    \left(2g-2+n-\sum_{i=1}^{n} a_i\right)\equiv0\mod2.
\end{equation*} 
A family of \emph{square roots of}
$\omega_{{\rm log},\pi}(-\sum_{i=1}^na_ip_i(S))$ is a triple 
$(\xymatrix{
\mathcal{C} \ar@{->}[r]^{\overline{\pi}} & S \ar@/^/@{->}[l]^{{p_1,\dots,p_n}}
}, \cL, \phi)$,
where 
\begin{enumerate}
    \item [(i)] $\xymatrix{
\mathcal{C} \ar@{->}[r]^{\overline{\pi}} & S \ar@/^/@{->}[l]^{{p_1,\dots,p_n}}
}$ 
is a family of quasi-stable curves such that, for all geometric points $s\in S$,
the fiber $(\mathcal{C}_{s},p_1(s),\dots,p_n(s))$ is a quasi-stable model of $(C_{s},p_1(s),\dots,p_{n}(s))$,
    \item [(ii)] $\cL$ is a line bundle on $\mathcal{C}$ such that, for all geometric points $s\in S$, the fiber $\cL_{s}$ is an admissible line  bundle on 
    $\mathcal{C}_{s}$ of total degree 
    $$\frac{2g-2+n-\sum_{i=1}^{n} a_i}{2}$$,
    \item [(iii)] 
    $\phi\colon \cL^{\otimes2}\to \omega_{{\rm log},\overline{\pi}}
(-\sum_{i=1}^na_ip_i(S))$ 
is a morphism of sheaves on $\mathcal{C}$, such that for
all geometric points $s\in S$, the induced morphism 
\begin{equation*}
    \phi_{s}\colon \cL_{s}^{\otimes 2}\to \omega_{{\rm log},\mathcal{C}_{s}}
(-\sum_{i=1}^na_ip_i(S))
\end{equation*}
is generically non-zero on non-exceptional components of $\mathcal{C}_{s}$.
\end{enumerate}
\end{definition}
\begin{remark}
Let $C\to S$ be a stable curve over an algebraically closed field.
The definition above generalizes~\ref{def:spstr}.
In particular, if $a=(1,\dots,1)$, 
the triple $(\mathcal{C},\cL,\phi)$ in the definition above
recovers the definition of a spin structure on $C\to S$. 
However, since many computations to follow will involve
spin structures, we include both definitions for clarity.
\end{remark}

\begin{theorem}~\cite{Cornalba1989ModuliOC} 
The moduli spaces $\oM_{g,n}^{1/2}$ of spin structures, 
 and more generally $\overline{\cM}^{1/2,a}_{g,n}$ (of square roots of $\omega_{\rm log}(-\sum_{i=1}^na_ip_i)$), on 
stable curves
are smooth Deligne-Mumford stacks over ${\rm Spec}(\CC)$ such that the boundary is a normal-crossings divisor. Furthermore, 
forgetting the root and stabilizing yields a finite flat morphism 
$\epsilon\colon\overline{\cM}^{1/2,a}_{g,n}\to\overline{\cM}_{g,n}$ 
of degree $2^{2g-1}$. 
\end{theorem}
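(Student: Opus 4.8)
The plan is to realize $\oM^{1/2,a}_{g,n}$ as a moduli stack, prove it is Deligne--Mumford directly, show that $\epsilon$ is proper and quasi-finite (hence finite), and finally read off smoothness, the normal-crossings property, flatness and the degree from a computation over the interior together with explicit étale-local models at the boundary. As a category fibred in groupoids, $\oM^{1/2,a}_{g,n}$ has $S$-points the data $(\overline\pi\colon\mathcal{C}\to S,p_1,\dots,p_n,\cL,\phi)$ of the preceding definition. Algebraicity is formal: the stack of quasi-stable curves is algebraic over $\oM_{g,n}$ (it is cut out of the stack of prestable curves by the locally closed conditions on exceptional components), the relative Picard stack of line bundles of the prescribed total degree is algebraic over it, admissibility is open, and the choice of $\phi$ subject to the generic non-vanishing requirement lies in an open subscheme of a relative $\mathrm{Hom}$-scheme of coherent sheaves. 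To see the diagonal is unramified I would bound automorphisms: an automorphism of $(\mathcal{C},p_\bullet,\cL,\phi)$ over an algebraically closed field is an automorphism of the pointed curve together with a compatible isomorphism of $\cL$, and by \ref{honroot} and \ref{rem:exc-parity} it is determined by its restriction to $\widetilde{\cC}$ up to the global sign on $\cL$; the sign contributes only the central $\mu_2$ and the rest is a subgroup of $\mathrm{Aut}(\widetilde{\cC},\dots)$, so $\mathrm{Aut}$ is finite and reduced. One must also check that an exceptional $\PP^1$ creates no residual $\mathbb{G}_m$ of automorphisms, which holds because the values of a section of $\cL$ at the two exceptional nodes are prescribed by the rest of the curve (\ref{rem:exc-parity}); hence $\oM^{1/2,a}_{g,n}$ is Deligne--Mumford.

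Next I would verify properness of $\epsilon$ by the valuative criterion over a DVR $R$ with fraction field $K$. For uniqueness of limits, given two quasi-stable models with square roots agreeing over $K$, \ref{honroot} forces $\phi,\phi'$ to vanish on all exceptional components and $\widetilde\phi,\widetilde\phi'$ to be isomorphisms, and the set of nodes of the special fibre at which an exceptional component is inserted is pinned down by the parity obstruction of \ref{honroot}; with the two quasi-stable models thereby matched, \ref{rem:exc-parity} rigidifies the extension over the inserted $\PP^1$'s, so the two square roots become isomorphic. For existence of limits I would follow Cornalba's construction \cite{Cornalba1989ModuliOC}: starting from a stable curve $C/\spec R$ and a square root $\cL_K$ on $C_K$, the only obstruction to extending is that $\omega_{\rm log}(-\sum a_i p_i)$ may restrict with odd degree to some components of the special fibre $C_0$; one blows up the essentially unique set of nodes of $C_0$ dictated by this parity obstruction to obtain a quasi-stable model $\mathcal{C}_0$ whose every component carries an even-degree line bundle, chooses a square root on the normalization of each component, glues these along the exceptional $\PP^1$'s --- the gluing obstruction dies because $H^1(\PP^1,\mathcal{O}(-1))=0$ and the identification at each exceptional node is unobstructed --- and extends $\phi$ using that $\omega$ is torsion free and that $\widetilde\phi$ is forced to be an isomorphism. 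Since over a geometric point of $\oM_{g,n}$ there are only finitely many such data, $\epsilon$ is quasi-finite, hence finite.

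It remains to pin down the degree, flatness, smoothness and the boundary. Over the interior $\M_{g,n}$ the squaring map $L\mapsto L^{\otimes 2}$ on the relative Picard scheme is étale in characteristic $0$ (it is a translate of multiplication by $2$ on the relative Jacobian $\mathcal{J}_{g,n}$), so $\oM^{1/2,a}_{g,n}$ restricted over $\M_{g,n}$ is the fibre product of $\M_{g,n}$, via the section determined by $\omega_{\rm log}(-\sum a_i p_i)$, with this squaring map, hence étale over $\M_{g,n}$; its fibre over a geometric point is the groupoid of square roots of a fixed line bundle, a $J[2]\cong(\ZZ/2)^{2g}$-torsor with every object having automorphism group $\mu_2$ (the $\pm1$ on $\cL$), so of mass $2^{2g}/2=2^{2g-1}$. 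For smoothness and the normal-crossings property I would produce étale-local charts at a boundary point: near a point of $\oM_{g,n}$ smoothing a single node, $\epsilon$ is controlled by the $2$-torsion of the Jacobian of a one-parameter nodal degeneration, with two cases --- in the non-exceptional case the node-smoothing coordinate $\tau$ pulls back to (an étale-local multiple of) a coordinate on $\oM^{1/2,a}_{g,n}$, and in the exceptional case it acquires a coordinate $\sigma$ with $\sigma^2=\tau$; in both cases the local ring is regular and the boundary is the smooth divisor $\{\sigma=0\}$, resp. $\{\tau=0\}$, and étaleness of $[2]$ on Jacobians excludes further degeneration. Regularity of the source together with regularity of $\oM_{g,n}$ and finiteness of $\epsilon$ then give flatness by miracle flatness, and since $\oM_{g,n}$ is connected the total degree is the constant $2^{2g-1}$ computed on the interior.

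The genuine obstacle is the existence-of-limits step: identifying the right set of nodes to blow up is a $\ZZ/2$-cocycle problem on the dual graph of $C_0$, one must show such a choice exists and is essentially unique, and then the square root has to be built explicitly on the resulting quasi-stable family with $\phi$ extending with its prescribed vanishing --- this is the technical heart of Cornalba's argument, and the separatedness proof rests on the same combinatorics. A secondary point requiring care is the determination of the two étale-local boundary models and the verification that an exceptional component produces no $\mathbb{G}_m$ of automorphisms, which is exactly what keeps $\oM^{1/2,a}_{g,n}$ Deligne--Mumford rather than merely Artin.
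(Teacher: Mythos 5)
This theorem is not proved in the paper at all: it is quoted from \cite{Cornalba1989ModuliOC} (and, for general $a$, from \cite{Caporaso2004ModuliOR}), so there is no internal argument to compare with. Your outline follows essentially Cornalba's original route, and much of it is sound: algebraicity, the interior computation (torsor under $J[2]$ with a stacky $\mu_2$ on each object, giving degree $2^{2g}/2=2^{2g-1}$, which matches the paper's stack-theoretic convention), the two \'etale-local boundary models $\tau$ and $\sigma^2=\tau$, and flatness by miracle flatness.

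There is, however, a genuine gap in the valuative-criterion step, and it affects both separatedness and existence. You assert that the set of nodes of the special fibre at which exceptional components are inserted is ``pinned down by the parity obstruction'' of \ref{honroot}. Parity of $\deg\omega_{\rm log}(-\sum a_ip_i)$ on components is only a \emph{necessary} condition on the support of a spin structure; it does not single out a configuration. For an irreducible stable curve with one non-separating node, both the configuration with an exceptional component at the node and the one without are parity-admissible, and both occur as limits of spin structures on smooth curves; which one arises in a given one-parameter family depends on the limit of $\cL_K$ itself, not on the dual graph. (This is implicit in \ref{subsec:strat-of-roots}: the boundary strata over a fixed stable graph are indexed by \emph{all} $2$-weightings $w$, which would be pointless if parity determined the weighting.) So the uniqueness argument as written does not prove separatedness, and the existence argument cannot begin from ``the essentially unique set of nodes dictated by parity.'' The correct mechanism, which is the technical heart of \cite{Cornalba1989ModuliOC}, is to extend $\cL_K$ to a line bundle on (a modification of) the total space of the family after a ramified base change, using regularity of the total space; then compare $\cL^{\otimes 2}$ with $\omega_{\rm log}(-\sum a_ip_i)$ up to twists by components of the special fibre, normalize the vanishing divisor of $\phi$, and read off from that divisor where exceptional components must be inserted; uniqueness is then obtained by comparing two such extensions via separatedness of the relative Picard functor, not via parity. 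A secondary point: your exclusion of a residual $\mathbb{G}_m$ of automorphisms on exceptional components should be argued through compatibility of the isomorphism of $\cL$ with $\phi$ and with the gluings at the two exceptional nodes (automorphisms do not act on a chosen section); the conclusion, finiteness of the inessential automorphism group, is of course correct.
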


\subsection{Stratification of $\oM_{g,n}^{1/2}$}\label{subsec:strat-of-roots}
Let $(\Gamma,w)$ be a stable graph together
with a $2$-weighting (see~\ref{def:weight}). For each edge $e=(h,h')\in E(\Gamma)$, we define:
\begin{equation*}
    w(e):=w(h)=w(h').
\end{equation*}
We then define $\Gamma_w$ to be the unique quasi-stable graph
obtained by inserting a vertex of genus $0$ on every edge $e=(h,h')$ 
such that $w(h)=1$. The stabilization 
$\Gamma$ of $\Gamma_{w}$, i.e., contracting all unstable vertices of
$\Gamma_{w}$, induces a morphism $\Gamma_{w}\to\Gamma$.
In this way, the sets $V(\Gamma)$ and $H(\Gamma)$ can be seen as subsets of 
$V(\Gamma_w)$ and $H(\Gamma_w)$. 

The collection of $2$-weightings encodes all 
possible ways of inserting exceptional components at nodes of
a stable curve $C$ such that the resulting quasi-stable curve 
can be the support of 
a spin structure on $C$. To see this, first recall that,
by~\ref{honroot},  
a quasi-stable curve $\cC$ is the support of a spin structure on 
$C$ only if the multi-degree of $\omega_{\widetilde{\cC}}$ is even on all
stable components.
Now suppose that we are 
given a stable curve with associated dual graph $\Gamma$, a $2$-weighting $w$ on $\Gamma$, and a quasi-stable curve 
$\mathcal{C}$ whose dual graph is $\Gamma_{w}$. 
Then the multi-degree of $\omega_{\widetilde{\mathcal{C}}}$ on a vertex 
$v\in V(\Gamma)\subseteq V(\Gamma_{w})$ is given by
\begin{equation*}
    2g(v)-2+n(v)-\sum_{h\to v}w(h),
\end{equation*}
which by definition of the $2$-weighting is an even number, and which,
by the above, implies that $\cC$ can be the support of a spin structure on $C$.

We recall that the stable graph $\Gamma$ determines a stratum 
$\zeta_\Gamma\colon \oM_{\Gamma}\to \oM_{g,n}$. By the discussion above,
we see that pulling back $\zeta_{\Gamma}$ 
along the morphism $\epsilon\colon \oM_{g,n}^{1/2}\to \oM_{g,n}$
yields a disjoint union of spaces 
\begin{equation*}
    \xymatrix{ \coprod_{w}\overline{\cP}_{\Gamma,w}\ar[d]
    \ar[r]  & \overline{\cM}^{1/2}_{g,n}\ar[d]^{\epsilon} 
    \\ \overline{\cM}_{\Gamma}\ar[r]^{\zeta_{\Gamma}}  & 
  \overline{\cM}_{g,n} }
\end{equation*}
\noindent indexed by $2$-weightings on the stable graph $\Gamma$. Restricting to a fixed $2$-weighting $w$, we obtain a gluing morphism 
\begin{equation*}
    \zeta_{\Gamma,w}\colon \overline{\cP}_{\Gamma,w}\to \oM_{g,n}^{1/2}.
\end{equation*}
By definition of the pullback diagram of the spaces above, we see that
the space $\overline{\cP}_{\Gamma,w}$ parametrizes points in $\oM_{g,n}^{1/2}$
whose stabilization is given by the gluing
of a point in $\oM_{\Gamma}$, along with an identification of
the half edges corresponding to $\Gamma$. We refer to the image of $\zeta_{\Gamma,w}$
as the \emph{boundary stratum} associated to $(\Gamma,w)$. We define the space
\begin{equation*}
    \overline{\cM}_{\Gamma,w}:=  \prod_{v\in V}
    \overline{\cM}^{1/2,w(v)}_{g(v),n(v)},
\end{equation*}
where 
$w(v):=(w(h))_{h\in H(v)}$. Unlike the case of $\oM_{g,n}$, the space 
$\overline{\cP}_{\Gamma,w}$ is not isomorphic to $\oM_{\Gamma,w}$.
However, there exists a morphism  
\begin{equation*}
    \nu \colon \overline{\cP}_{\Gamma,w}\to \oM_{\Gamma,w}
\end{equation*}
given by normalizing all nodes corresponding to edges of $\Gamma$
of a point in $\overline{\cP}_{\Gamma,w}$ and pulling back the spin structure 
to the partial normalization. In general, there are two possible ways of gluing
line bundles $(\cL_{v})_{v\in V(\Gamma)}$ of points of $\oM_{\Gamma,w}$
on exceptional and non-exceptional nodes contained in a cycle of $\Gamma$,
such that the resulting object lies in
$\overline{\cP}_{\Gamma,w}$ (see~\cite[Section 1.3]{CapCas}). 
Thus, it would be tempting to 
argue that $\nu\colon \overline{\cP}_{\Gamma,w}\to \oM_{\Gamma,w}$
is a torsor under the group $H^1(\Gamma,\ZZ/2\ZZ)$. However, $\nu$
fails to endow $\overline{\cP}_{\Gamma,w}$ with a torsor structure over $\oM_{\Gamma,w}$ 
due to ramification. This ramification occurs precisely when there exists an edge 
$e\in E(\Gamma)$ contained in a cycle of $\Gamma$ such that $w(e)=1$. In such cases, the
two different choices of gluing along these nodes yield isomorphic spin structures. 

\begin{proposition}\label{degree:nu} The morphism $\nu \colon \overline{\cP}_{\Gamma,w}\to \oM_{\Gamma,w}$ is finite and flat of degree
$2^{h^{1}(\Gamma)+|V(\Gamma)|-1}$.
\end{proposition}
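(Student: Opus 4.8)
The plan is to analyse the map $\nu$ fibrewise over $\oM_{\Gamma,w}$, decomposing the gluing data into its behaviour at the two types of nodes — the exceptional nodes (coming from edges $e$ with $w(e)=1$) and the non-exceptional nodes (coming from edges $e$ with $w(e)=0$). First I would fix a geometric point of $\oM_{\Gamma,w}$, i.e.\ a tuple $(C_v,\cL_v,\phi_v)_{v\in V(\Gamma)}$ of square roots, together with the half-edge identifications prescribed by $\Gamma_w$; producing a point of $\overline{\cP}_{\Gamma,w}$ over it amounts to choosing, for each node of $\Gamma$, an isomorphism of the fibres of the relevant line bundles that is compatible with $\phi$ (i.e.\ squares to the fixed identification of the $\omega$'s). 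For a node with $w(e)=0$ the two fibres of the $\cL_v$ must be glued directly, and there are exactly two such isomorphisms (differing by $-1$), both giving non-isomorphic results in general; for a node with $w(e)=1$ one inserts a $\PP^1$ with $\cO(1)$, and by \ref{rem:exc-parity} the section-extension picture shows that the two a priori choices of gluing the two ``ends'' through the exceptional component become identified, so such an edge contributes a ramification of order $2$ rather than a free $\ZZ/2$-choice.

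Next I would organise this count. Without any identifications, the naive space of gluings is a torsor under $(\ZZ/2)^{E(\Gamma)}$, of cardinality $2^{|E(\Gamma)|}$. The subgroup generated by the ``sign flips'' attached to a single vertex $v$ (simultaneously negating the gluing isomorphism on every half-edge at $v$) acts trivially on the isomorphism class of the resulting spin curve; these relations span a subgroup of corank $1$ in $(\ZZ/2)^{V(\Gamma)}$ (the all-ones vector acts trivially), so the distinct glued objects are a torsor under a group of order $2^{|E(\Gamma)|-|V(\Gamma)|+1} = 2^{h^1(\Gamma)}$ when $\Gamma$ is connected, recovering the expected $H^1(\Gamma,\ZZ/2)$-torsor picture away from ramification. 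The additional factor $2^{|V(\Gamma)|-1}$ in the stated degree comes not from the isomorphism classes of objects but from automorphisms: a point of $\oM_{\Gamma,w}$ carries the automorphism $(-1)$ on each $\cL_v$ independently, so $(\ZZ/2)^{V(\Gamma)}$ acts on $\oM_{\Gamma,w}$-with-its-spin-data, while on $\overline{\cP}_{\Gamma,w}$ only the global $-1$ survives as a stack automorphism; the resulting generic $2^{|V(\Gamma)|-1}$-to-one behaviour of $\nu$ on automorphism groups multiplies the torsor degree. Putting the torsor count and the automorphism count together gives degree $2^{h^1(\Gamma)+|V(\Gamma)|-1}$, matching the claim.

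Finiteness and flatness I would get formally: $\nu$ is a morphism between smooth Deligne--Mumford stacks of the same dimension (both $\oM_{\Gamma,w}$ and $\overline{\cP}_{\Gamma,w}$ have dimension $3g-3+n-|E(\Gamma)|$, the latter because $\overline{\cP}_{\Gamma,w}$ is a closed substack of $\oM_{g,n}^{1/2}$ mapping finitely to the codimension-$|E(\Gamma)|$ stratum $\oM_\Gamma$ via $\epsilon$), it is quasi-finite by the fibrewise analysis above and proper since both sides are proper over $\oM_\Gamma$, hence finite; a finite morphism between regular stacks of the same dimension with reduced (indeed smooth) target and source is automatically flat by miracle flatness, and the degree is then the generic fibre cardinality computed above. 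The main obstacle I expect is the careful bookkeeping at the ramification edges: making precise, in families rather than just on geometric fibres, why an edge with $w(e)=1$ lying on a cycle collapses the two gluing choices — this requires unwinding the isomorphism $H^0(\cC,\cL)\cong H^0(\widetilde\cC,\cL|_{\widetilde\cC})$ of \ref{rem:exc-parity} at the level of the gluing functors and checking the order-$2$ ramification is exactly as the local model (a node of $\overline{\cM}^{1/2}_{g,n}$ smoothing an exceptional component) predicts, so that the degree formula holds as an equality of flat-morphism degrees and not merely a count of closed points.
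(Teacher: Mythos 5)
Your plan is correct and has the same overall architecture as the paper's proof: compute the degree on coarse spaces (which is $2^{h^1(\Gamma)}$, counted as $2^{h^1(\widetilde\Gamma)}$ distinct fiber points each with multiplicity $2^{h^1(\Gamma)-h^1(\widetilde\Gamma)}$, where $\widetilde\Gamma$ is $\Gamma$ with the $w(e)=1$ edges removed), and then correct by the ratio of generic automorphism groups, $2^{|V(\Gamma)|}$ downstairs (independent $\mu_2$ on each $\cL_v$) versus $2$ upstairs (the global $-1$), giving $2^{h^1(\Gamma)+|V(\Gamma)|-1}$. The difference is one of sourcing: the paper simply cites Caporaso--Casagrande \cite[Proposition 5]{CapCas} for the coarse-space count, including exactly the family-level multiplicity bookkeeping at the $w(e)=1$ edges that you identify as your ``main obstacle,'' whereas you propose to re-derive it via the $H^1(\Gamma,\ZZ/2)$-torsor picture with order-$2$ ramification along cycles through exceptional edges; you also supply an explicit finiteness/flatness argument (quasi-finite plus proper, then miracle flatness between smooth stacks of equal dimension) which the paper leaves implicit in the citation. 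Your route is more self-contained but correspondingly has to redo the hardest step; the citation buys the paper exactly that step.

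One small correction of mechanism: the identification of the two gluings at a node with $w(e)=1$ does not really come from the section-extension statement of~\ref{rem:exc-parity}. It comes from the $\mathbb{G}_m$ of automorphisms of the exceptional component $E\cong\PP^1$ (fixing its two nodes) acting on $\cL|_E\cong\cO_{\PP^1}(1)$, which rescales the fibers at $0$ and $\infty$ by different factors and hence absorbs the sign of the gluing at one of the two exceptional nodes; this is also precisely what turns such edges (when they lie on cycles) into ramification directions contributing the multiplicity $2^{h^1(\Gamma)-h^1(\widetilde\Gamma)}$, and note that the collapse of choices only reduces the point count when the edge lies on a cycle, since for separating edges the two gluings are already identified by the vertex sign flips you quotient by.
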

\begin{proof} The authors in~\cite[Proposition 5]{CapCas} compute
the degree of $\nu$ at the level of coarse spaces. 
In particular, they show that in the fiber of $\nu$ there are $2^{h^{1}(\widetilde{\Gamma})}$ points, where $\widetilde{\Gamma}$ is the 
graph obtained from $\Gamma$ by removing all edges $e$ such that $w(e)=1$.
Additionally, they show that each point in the fiber of $\nu$ has
multiplicity $2^{h^{1}(\Gamma)-h^{1}(\widetilde{\Gamma})}$, i.e., the degree
of $\nu$ is $2^{h^{1}(\Gamma)}$ on the level of coarse spaces.
However, a general 
point in the domain and codomain of $\nu$ has $2$ and $2^{|V(\Gamma)|}$ 
automorphisms, respectively, given
by multiplication by $\mu_2$ in the fibers of the spin structures. 
\end{proof}

\begin{proposition}\label{prop:cancellation} Let $(\Gamma,w)$ be a pair consisting of a stable graph $\Gamma$
and a $2$-weighting $w$
admitting an even value on some half edge $h\in H(\Gamma)$. Then 
\begin{equation*}
    \nu_*(\zeta_{\Gamma,w}^*[\pm])=0
\end{equation*}
in $A^*(\oM_{\Gamma,w})$. 
In particular, we have
\begin{equation*}
    \epsilon_*(\zeta_{\Gamma,w*}(\phi\circ \nu)^*(\beta)\cdot[\pm])=0
\end{equation*}
in $A^*(\oM_{g,n})$, where $\phi\colon\oM_{\Gamma,w}\to \oM_{\Gamma}$
and $\beta\in A^*(\oM_{\Gamma})$.
\end{proposition}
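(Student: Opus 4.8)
The plan is to exploit the torsor/ramification structure of $\nu\colon \overline{\cP}_{\Gamma,w}\to\oM_{\Gamma,w}$ established in \ref{degree:nu}, together with Cornalba's sign rule for how spin parity changes under the two possible gluings at a node lying on a cycle of $\Gamma$. The whole point is that the parity cycle $[\pm]$ is an alternating sum over connected components, and that when some half-edge carries an even weight the fibres of $\nu$ come in pairs of opposite parity whose contributions cancel.

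\emph{Step 1 (reduce to a single edge).} First I would reduce to the case where the even-weighted half-edge $h$ belongs to an edge $e=(h,h')$ that is \emph{not} a bridge, i.e.\ lies on a cycle of $\Gamma$. If $e$ is a bridge (so $h^1$ of the graph is unaffected by cutting $e$), then $w(e)\in\{0,\dots\}$ even forces, via condition (ii) of \ref{def:weight} applied on one side of the bridge, that $w(e)=0$: indeed a bridge with even positive weight would still be even but the only even weight below $r=2$ is $0$; and $w(e)=0$ means no exceptional component is inserted at that node, so the spin structure is honestly a theta characteristic on a curve with a \emph{separating} node, whose parity splits additively — but this does not by itself kill $[\pm]$. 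So in fact the cancellation only needs the non-bridge case, and for a bridge with $w(e)=0$ one argues instead that $\overline{\cP}_{\Gamma,w}\to\oM_{\Gamma,w}$ is an honest $H^1(\Gamma,\ZZ/2)$-torsor with $H^1(\Gamma)$ unchanged, and the parity along the two factors multiplies, so $[\pm]$ pulls back to a \emph{tensor product} $[\pm]\otimes[\pm]$ and its pushforward under $\nu$ is a multiple of $[\pm]^{\otimes 2}$, which is $\pm1$-valued but nonzero — hence I must be more careful here. The cleaner route: normalize at \emph{every} edge at once via $\nu$, and observe that $[\pm]$ on $\overline{\cP}_{\Gamma,w}$ is the pullback along $\nu$ of $\bigotimes_v[\pm]_v$ \emph{twisted by a sign coming from the gluing data on the cycles}; this sign is exactly the obstruction to $\nu$ being a torsor.

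\emph{Step 2 (Cornalba's parity sign).} The heart of the argument is the following local fact (borrowed from \cite{Cornalba1989ModuliOC} and used in \cite{GKL}): if $(\mathcal{C},\cL,\phi)\in\overline{\cP}_{\Gamma,w}$ and $e$ is an edge with $w(e)=0$ lying on a cycle, then replacing the gluing isomorphism at the node corresponding to $e$ by its negative changes $h^0(\mathcal{C},\cL)$ by $1$, hence flips the parity; whereas if $w(e)=1$ the two gluings give isomorphic spin structures (the ramification of \ref{degree:nu}). Therefore, fixing all the factor-wise parities $\varepsilon_v\in\{\pm\}$ and the gluing signs $\delta_e\in\{\pm1\}$ for $e$ in a spanning set of cycles, the global parity of the glued spin curve is $\bigl(\prod_v\varepsilon_v\bigr)\cdot\bigl(\prod_{e:\,w(e)=0}\delta_e\bigr)$ up to a fixed sign. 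Summing $[\pm]=[\overline{\cM}^{1/2,+}]-[\overline{\cM}^{1/2,-}]$ over the fibre of $\nu$ amounts to summing this signed quantity over all choices of $(\delta_e)$; as soon as there is at least one cycle-edge with $w(e)=0$ the sum $\sum_{\delta_e=\pm1}\delta_e=0$, giving $\nu_*(\zeta_{\Gamma,w}^*[\pm])=0$ in $A^*(\oM_{\Gamma,w})$.

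\emph{Step 3 (handle the even-but-nonzero-weight bridge case and conclude).} It remains to see why an even value on \emph{some} half-edge forces us into the situation of Step 2. If every even-weighted edge were a non-cycle edge with weight $0$, the above $\delta_e$-cancellation on that bridge still applies provided cutting it keeps a cycle — but a bridge is never on a cycle. The resolution is that for a bridge $e$ with $w(e)=0$, no exceptional $\mathbb P^1$ is inserted and the node is an honest separating node of a theta characteristic; by \ref{rem:exc-parity} (extension of sections) the parity is the \emph{product} of the parities on the two sides, so $\zeta_{\Gamma,w}^*[\pm]$ already factors through $[\pm]\otimes[\pm]$ on the product $\oM_{\Gamma_1,w_1}\times\oM_{\Gamma_2,w_2}$; pushing to $\oM_{\Gamma,w}$ and then applying $\epsilon_*$, each side contributes $\epsilon_{i*}([\pm]_i\cdot(\dots))$, and one of the two sides is a \emph{disconnected}-from-markings piece on which the corresponding statement has already been invoked inductively — alternatively, and more simply, one reduces to Step 2 by noting that any $2$-weighting with an even half-edge value either has a cycle-edge of weight $0$ (done by Step 2) or can be connected-sum decomposed until it does. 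Finally, the ``in particular'' clause follows formally: $(\phi\circ\nu)^*\beta$ is pulled back from $\oM_{\Gamma,w}$, so by the projection formula $\zeta_{\Gamma,w*}\bigl((\phi\circ\nu)^*\beta\cdot[\pm]\bigr)=\zeta_{\Gamma,w*}\bigl(\nu^*(\text{pullback of }\beta)\cdot[\pm]\bigr)$, and pushing first by $\nu$ gives $\nu_*([\pm])\cdot(\text{pullback of }\beta)=0$ by the first part, whence the whole expression and its $\epsilon_*$-image vanish.

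\emph{Main obstacle.} The delicate point is Step 2 — pinning down the precise sign rule for how spin parity transforms under a change of gluing at a cycle-node, uniformly in families, and checking that the $w(e)=1$ ramification really does collapse the two gluings to an \emph{isomorphic} (not merely abstractly parity-equal) spin curve, so that the fibre of $\nu$ is counted with the correct signed multiplicity. This is exactly the content that \ref{degree:nu} (via \cite{CapCas}) and Cornalba's original analysis supply, but assembling it into a clean cancellation at the level of Chow classes — rather than just on geometric points — requires care with the stack structure and the factor-wise $\mu_2$-automorphisms recorded in the proof of \ref{degree:nu}.
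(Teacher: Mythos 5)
Your Step 2 contains the right core idea, and it is the same one the paper uses: in each fibre of $\nu$ the spin structures pair up, two choices of gluing at a node of weight $0$ giving opposite parities, so that $\nu_*\zeta_{\Gamma,w}^*[\pm]=0$; your derivation of the ``in particular'' clause from the first assertion via the projection formula is also fine. But there is a genuine gap in Steps 1 and 3: the separating-edge case is not something to be ``handled'' by induction or by a ``connected-sum decomposition'' (neither of which is an argument as written) — it has to be shown \emph{vacuous}, and your proposal never does this. The point, which the paper dispatches in one line as ``degree considerations'', is that the $2$-weightings indexing boundary strata of $\oM_{g,n}^{1/2}$ take odd values on all legs, so summing the vertex condition of \ref{def:weight} modulo $2$ over the vertices on one side of a separating edge $e_0$ forces $w(e_0)\equiv 1$; hence any edge with an even (i.e.\ zero) weight is automatically non-separating. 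This is not a dispensable convenience: at a separating node with $w=0$ the two gluings give \emph{isomorphic} spin structures (rescale the bundle on one side by $-1$), so if the only even-weighted edge were a bridge the fibres of $\nu$ would not pair up and $\nu_*\zeta_{\Gamma,w}^*[\pm]$ would be a nonzero multiple of a product of parity cycles — exactly the difficulty you ran into and then tried to argue around. Without the non-separating observation your plan does not close.

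A second, smaller issue is that your global sign formula $\bigl(\prod_v\varepsilon_v\bigr)\cdot\bigl(\prod_{e:\,w(e)=0}\delta_e\bigr)$ is asserted rather than proved; what is actually needed (and what the paper proves) is the relative statement that flipping the gluing at one weight-$0$ node flips the parity. This is where the non-separating property is used a second time: cutting $e_0$ leaves the relevant piece $\widetilde{\cC}_{e_0}$ connected and carrying both preimages $p_h,p_{h'}$, one gets a bundle $\cL_{e_0}$ with $\cL_{e_0}^{\otimes 2}\cong\omega_{\widetilde{\cC}_{e_0}}(p_h+p_{h'})$, and a Riemann--Roch argument produces a section not vanishing simultaneously at $p_h$ and $p_{h'}$; the gluings $s(p_h)\mapsto \pm s(p_{h'})$ then differ by whether $s$ descends, so $h^0$ changes by one. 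Together with \ref{rem:exc-parity} (gluings at exceptional nodes change neither the isomorphism class nor the parity) and the fibre description underlying \ref{degree:nu}, this gives the fibrewise cancellation on geometric points, which suffices since parity is locally constant. You correctly flagged this as the main obstacle and pointed to the right sources, but as it stands the proposal is missing both the key combinatorial fact and the Riemann--Roch step that make the cancellation go through.
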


\begin{proof} We will make use of a well-known argument found 
in~\cite[Example 6.2]{Cornalba1989ModuliOC}. In order to prove the first assertion, 
we see from the proposition above 
that it suffices to show that in each fiber of $\nu$ there 
is an equal number of even and odd spin structures. 
Since the parity is constant in families, it is enough to argue on 
geometric points.   

First, we fix some notation. Let $e_0=(h,h')$
be the edge of $\Gamma$ in the assumption: i.e., $w(h)=0$.
We note that since $w(h)=0$, then by degree considerations 
$e_0$ necessarily corresponds 
to a non-separating edge. Let
$(\cC_v,\cL_v)_{v\in V(\Gamma)}$ be a geometric point of $\oM_{\Gamma,w}$
and $\cC_{e_0}$ denote the curve obtained by gluing all non-exceptional 
nodes except for the one corresponding to $e_0$. Note that
a priori, $\cC_{e_0}$
may be disconnected. However, since $e_0$ corresponds to a non-separating node
we may restrict to the connected component of $\cC_{e_0}$, 
say $\widetilde{\cC}_{e_0}$, carrying the markings corresponding to the half edges $(h,h')$ of $e_0$. 

Suppose that we fix a choice of gluings
for all non-exceptional nodes except for $e_0$ also for 
the line bundles $\cL_v$. 
Then we obtain a line bundle $\cL_{e_0}$ on $\widetilde{\cC}_{e_0}$ such that 
    \begin{equation*}
        \cL_{e_0}^{\otimes2}\cong \omega_{\widetilde{\cC}_{e_0}}(p_{h}+p_{h'}).
    \end{equation*}
An application of Riemann-Roch shows that 
$\cL_{e_0}$ admits a section $s$ which does not vanish simultaneously 
at $p_h$ and $p_{h'}$. 
This yields two possible identifications
of the node for $\cL_{e_0}$ with opposite parities. More 
precisely, the two identifications are given by 
\begin{equation*}
    s(p_h)\mapsto s(p_{h'})\ \ \text{and}\ \ s(p_{h})\mapsto -s(p_{h'}).
\end{equation*}
In the former the section $s$ descends to a section of the spin structure whereas in the latter it does not, thus the resulting line bundles
will have opposite parities. Note that, up to this point, we have obtained
a line bundle only on the non-exceptional sub-curve of a point in the fiber 
of $\nu$. However, different 
choices of gluings on the remaining exceptional nodes will
neither change the isomorphism class of the resulting spin structure $(\mathcal{C},\cL,\phi)$
nor change the parity (see~\ref{rem:exc-parity}).
\end{proof}

In other words, the preceding lemma implies that if we fix a stable graph 
$\Gamma$ and a $2$-weighting $w$ admitting an even value, then
the \enquote{degree} of the class $[\pm]$ along the stratum corresponding to 
$(\Gamma,w)$ is $0$.
However, this is not true in the complementary case: when $w$ admits
only odd values. In that situation, the computation of the degree along the stratum 
$(\Gamma,w)$ appears in the proof of~\cite[Proposition 9.21]{GKL} 
in a more general context. For completeness, we recall this 
computation here, specialized to the context relevant to our setting.

\begin{proposition}\label{degree:even-odd} Let $(\Gamma,w)$ be a pair of stable 
graph and a $2$-weighting admitting only odd values. 
Then, using the notation of~\ref{prop:cancellation} 
\begin{equation*}
    \epsilon_*(\zeta_{\Gamma,w*}(\phi\circ \nu)^*(\beta)\cdot[\pm])=2^{g-1}
    \zeta_{\Gamma*}(\beta)
\end{equation*}
in $A^*(\oM_{g,n})$, where $\beta\in A^*(\oM_{\Gamma})$.
\end{proposition}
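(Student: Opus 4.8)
The plan is to compute the pushforward $\epsilon_*\bigl(\zeta_{\Gamma,w*}(\phi\circ\nu)^*(\beta)\cdot[\pm]\bigr)$ by a local analysis along each fiber of the morphism $\epsilon$ restricted to the stratum, exactly as in \ref{prop:cancellation}, but now tracking the signed count rather than showing it vanishes. Since $[\pm]$ is a degree-$0$ class (the difference of the fundamental classes of the even and odd components), the quantity $\epsilon_*\bigl(\zeta_{\Gamma,w*}(\phi\circ\nu)^*(\beta)\cdot[\pm]\bigr)$ equals $\zeta_{\Gamma*}(\beta)$ multiplied by the ``signed degree'' of $[\pm]$ along the stratum, i.e. $\sum (\text{number of even spin structures} - \text{number of odd spin structures})$ over a generic fiber, suitably weighted by automorphisms. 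Concretely, I would use the factorization $\epsilon\circ\zeta_{\Gamma,w} = \zeta_\Gamma\circ\phi\circ\nu$ together with \ref{degree:nu} to reduce to counting, in a fiber of $\nu$ over a generic geometric point $(\cC_v,\cL_v)_{v\in V(\Gamma)}$ of $\oM_{\Gamma,w}$, the parities of the $2^{h^1(\Gamma)}$ (on coarse spaces) gluings of the line bundles along the edges of $\Gamma$.

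First I would reduce to the non-exceptional part: by \ref{rem:exc-parity}, gluings along exceptional nodes do not affect the isomorphism class or the parity, so only the gluings along the non-exceptional nodes corresponding to the edges of $\widetilde\Gamma$ (the graph $\Gamma$ with all weight-$1$ edges removed) matter; there are $h^1(\widetilde\Gamma)$ free binary choices, and by \ref{degree:nu} each contributes with multiplicity $2^{h^1(\Gamma)-h^1(\widetilde\Gamma)}$ to the coarse count. Next I would compute the parity as a function of the gluing data. Fix a spanning tree of $\widetilde\Gamma$; the non-tree edges $e_1,\dots,e_{h^1(\widetilde\Gamma)}$ carry independent sign choices $\varepsilon_1,\dots,\varepsilon_{h^1}\in\{\pm1\}$, and one glues a line bundle $\cL_\varepsilon$ on the total (connected, since all weights are odd so no separating issues obstruct connectivity after gluing) non-exceptional curve $\widetilde\cC$ with $\cL_\varepsilon^{\otimes 2}\cong\omega_{\widetilde\cC}$. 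The key computation is that the function $\varepsilon\mapsto h^0(\widetilde\cC,\cL_\varepsilon)\bmod 2$ is an affine-quadratic (Arf-type) function of $\varepsilon\in(\ZZ/2)^{h^1}$; this is precisely the content extracted from \cite[Example 6.2]{Cornalba1989ModuliOC} and \cite[Proposition 9.21]{GKL}, and it is the analog, node by node, of the sign flip used in \ref{prop:cancellation}: changing one $\varepsilon_j$ toggles whether a Riemann--Roch section descends across that node. A non-degenerate quadratic form in $m = h^1(\widetilde\Gamma)$ variables over $\ZZ/2$ has excess $\pm 2^{m-1}$ (with the sign depending only on its Arf invariant); combined with a genus bookkeeping showing the relevant Arf invariant vanishes here (because the generic spin structure on each $\cC_v$ can be taken even, or more robustly because the signed sum is a deformation invariant and can be evaluated on a convenient degeneration), one gets signed coarse count $2^{h^1(\widetilde\Gamma)-1}\cdot 2^{h^1(\Gamma)-h^1(\widetilde\Gamma)} = 2^{h^1(\Gamma)-1}$.

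Finally I would assemble the automorphism factors. On coarse spaces the signed fiber count is $2^{h^1(\Gamma)-1}$; by \ref{degree:nu} the stacky normalization morphism $\nu$ has degree $2^{h^1(\Gamma)+|V(\Gamma)|-1}$, with the generic source point of $\oM_{\Gamma,w}$ carrying $2$ automorphisms and the generic target point $2^{|V(\Gamma)|}$, so passing between stack and coarse counts multiplies by $2^{|V(\Gamma)|}/2 = 2^{|V(\Gamma)|-1}$. Therefore $\nu_*\zeta_{\Gamma,w}^*[\pm]$ is $2^{h^1(\Gamma)-1}\cdot 2^{|V(\Gamma)|-1}$ times the fundamental class of $\oM_{\Gamma,w}$ divided by $\deg\nu = 2^{h^1(\Gamma)+|V(\Gamma)|-1}$, which is $2^{-1}$; then $\epsilon\circ\zeta_{\Gamma,w} = \zeta_\Gamma\circ\phi\circ\nu$ together with $\deg\epsilon = 2^{2g-1}$ and $\deg(\phi\circ\nu)$ on the relevant strata supplies the remaining power of $2$, yielding the clean answer $2^{g-1}\zeta_{\Gamma*}(\beta)$ after the projection formula moves $\beta$ through. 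The main obstacle is the middle step: proving that the parity is genuinely a non-degenerate quadratic function of the gluing parameters with the correct (trivial) Arf invariant, so that the excess is exactly $+2^{h^1(\widetilde\Gamma)-1}$ and not $-2^{h^1(\widetilde\Gamma)-1}$ or something degenerate; I expect to handle this by deformation-invariance of the signed count together with an explicit model (e.g. iterated self-gluing of a single smooth curve, reducing to the classical fact that the spin structures on an irreducible nodal curve of compact type split evenly except for the controlled discrepancy counted by genus), mirroring how \ref{prop:cancellation} isolates a single node.
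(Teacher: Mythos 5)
Your middle step rests on a misreading of the geometry in exactly the case this proposition covers. When the $2$-weighting $w$ takes only odd values, \emph{every} edge of $\Gamma$ has $w(e)=1$, so in the quasi-stable model every node of the stable curve is replaced by an exceptional $\PP^1$; the non-exceptional subcurve is therefore the \emph{disjoint union} of the vertex components (your parenthetical claim that it is connected is backwards), and the graph $\widetilde\Gamma$ obtained by deleting the weight-$1$ edges is edgeless, so $h^1(\widetilde\Gamma)=0$. By \ref{rem:exc-parity}, $H^0(\cC,\cL)\cong\bigoplus_v H^0(\cC_v,\cL_v)$ and the parity is simply the product of the vertex parities, completely independent of all gluing data. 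There is no Arf-type quadratic function of gluing signs to analyze here: your claimed signed fiber count $2^{h^1(\widetilde\Gamma)-1}\cdot 2^{h^1(\Gamma)-h^1(\widetilde\Gamma)}$ is being evaluated at $h^1(\widetilde\Gamma)=0$, where the quadratic-form excess formula is vacuous, and the true signed count over a fiber of $\nu$ is $\pm\deg\nu$ with the sign depending on which component of $\oM_{\Gamma,w}$ the base point lies in (it is $+$ when an even number of the $\cL_v$ are odd, $-$ otherwise). Consequently $\nu_*\zeta_{\Gamma,w}^*[\pm]$ is \emph{not} a constant multiple of $[\oM_{\Gamma,w}]$ as your assembly assumes; it is $\deg(\nu)$ times the product parity class $[\pm]_V$ on $\oM_{\Gamma,w}$, which is non-constant across components.

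The factor $2^{g-1}$ only appears after you push further along $\phi\colon\oM_{\Gamma,w}\to\oM_\Gamma$, using that the signed (even minus odd) count of spin structures on a genus $g(v)$ curve is $2^{g(v)}$, with one stacky factor of $\tfrac12$ per vertex, so $\phi_*[\pm]_V=2^{\sum_v(g(v)-1)}=2^{g-h^1(\Gamma)-|V(\Gamma)|}$; combining this with $\deg\nu=2^{h^1(\Gamma)+|V(\Gamma)|-1}$ from \ref{degree:nu} and the projection formula along $\zeta_{\Gamma,w}$ and $\zeta_\Gamma$ gives exactly $2^{g-1}\zeta_{\Gamma*}(\beta)$. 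This is the route the paper takes, and it requires no deformation-invariance or degeneration argument to fix a sign. Note also that $\deg\epsilon=2^{2g-1}$, which you invoke to ``supply the remaining power of $2$,'' plays no role: pushing forward a class supported on the stratum along $\epsilon$ does not multiply by the degree of $\epsilon$, and relying on it signals that your bookkeeping would not close. Your opening reduction (the factorization $\epsilon\circ\zeta_{\Gamma,w}=\zeta_\Gamma\circ\phi\circ\nu$ and the use of \ref{degree:nu}) is fine; it is the identification of where the parity lives — on the vertices, via $[\pm]_V$, rather than in gluing parameters — that needs to replace the Arf-invariant computation.
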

\begin{proof} First, by~\ref{rem:exc-parity}, a 
geometric point $(\mathcal{C},\cL,\phi)$ lying in the stratum $(\Gamma,w)$, 
where $w$ takes only odd values, satisfies
\begin{equation*}
    H^0(\mathcal{C},\cL)\cong\bigoplus_{v\in V(\Gamma)} H^0(\cC_v,\cL_v).
\end{equation*}
In particular, the parity of $\cL$ depends only on the parity of each stable 
component of $\mathcal{C}$ corresponding to vertices $v\in V(\Gamma)$.
This implies that the cycle $\zeta_{\Gamma,w}^*([\pm])$ is the pullback of the product of
the parity cycles on each component of $\oM_{\Gamma,w}$ via $\nu$. We denote this cycle
by $[\pm]_{V}\in A^0(\oM_{\Gamma,w})$.
Furthermore, for each vertex $v\in V(\Gamma)$ the difference in the number  of even and 
odd spin structures on a stable curve is $2^{g(v)}$. Taking into 
account the fact that a general point in
$\oM_{\Gamma,w}$ has $2^{|V(\Gamma)|}$ automorphisms, given by multiplication by $\mu_2$
on the fibers of $\cL_{v}$ for each $v\in V(\Gamma)$, we obtain that
\begin{equation}\label{eq:proof-even-odd}
    \phi_{*}([\pm]_{V})=2^{\sum_{v}(g(v)-1)}=2^{g-h^{1}(\Gamma)-|V(\Gamma)|}.
\end{equation}
Consider
the following diagram
\begin{equation}\label{diagram:degree-proof}
    \xymatrix{\mathcal{P}_{\Gamma,w}\ar[r]^{\zeta_{\Gamma,w}}\ar[d]_{\nu} & \oM_{g,n}^{1/2}\ar[dd]^{\epsilon} \\
    \oM_{\Gamma,w}\ar[d]_{\phi} & \\
    \oM_{\Gamma} \ar[r]_{\zeta_{\Gamma}} & \oM_{g,n.}}
\end{equation}
Using the above, together with~\ref{degree:nu}, we compute:
\begin{align*}
    \epsilon_*\left(\zeta_{\Gamma,w*}(\phi\circ\nu)^*(\beta)\cdot[\pm]\right)&=
    \epsilon_*\left(\zeta_{\Gamma,w*}\left((\phi\circ\nu)^*\beta\cdot
    \zeta_{\Gamma,w}^*[\pm]\right)\right) &&(\text{projection formula on $\zeta_{\Gamma,w}$}) \\
    &=\zeta_{\Gamma*}\left(\phi\circ\nu_{*}\left(\nu^{*}(\phi^{*}\beta\cdot[\pm]_{V})\right)\right)
    &&(\text{commutativity of~\ref{diagram:degree-proof}})\\
    &=2^{h^1(\Gamma)+|V(\Gamma)|-1}\zeta_{\Gamma*}\left(\phi_{*}(\phi^{*}\beta\cdot[\pm]_{V})\right) &&(\text{\ref{degree:nu}})\\
    &=2^{h^1(\Gamma)+|V(\Gamma)|-1}2^{g-h^1(\Gamma)-|V(\Gamma)|}\zeta_{\Gamma *}(\beta) 
    &&(\text{Equation~\ref{eq:proof-even-odd}})\\
    &=2^{g-1}\zeta_{\Gamma*}(\beta).
\end{align*}
\end{proof}

\subsection{Pixton's spin class}\label{ssec:spinpixton} 
Let $d\in\ZZ$, and let $a\in \ZZ^{n}$ be a vector of integers such that $|a|=d$. Following~\cite{BHPSS}, we denote by $\mathfrak{Pic}_{g,n}$ the Picard stack
over the moduli space of pre-stable curves
$\mathfrak{M}_{g,n}$, and by $\mathfrak{Pic}_{g,n,d}$ its connected component, parametrizing degree $d$ line bundles on pre-stable curves. We consider the Abel-Jacobi section
\begin{align*}
    \sigma\colon \mathfrak{M}_{g,n}&\to \mathfrak{Pic}_{g,n,d} \\
                    (C,p_1,\dots,p_{n})&\mapsto \cO_{C}\left(\sum_{i=1}^{n}a_{i}p_{i}\right).
\end{align*}
In~\cite[Section 3]{BHPSS}, the 
authors construct an operational Chow class
called the \emph{universal double ramification cycle}
denoted by $\DR_{g,a}^{\rm op}$ corresponding to the class
of the closure
of the image of $\sigma$. One of the main results
in \emph{loc.cit.} is an expression for $\DR_{g,a}^{\rm op}$ in $A^*_{\rm op}(\mathfrak{Pic}_{g,n})$. 
In order to state this result we introduce several operational
classes on $\mathfrak{Pic}_{g,n}$. First, 
we denote  by
\begin{equation*}
    p\colon\mathfrak{C}_{g,n}\to \mathfrak{Pic}_{g,n}, \ \ \mathfrak{L}\to \mathfrak{C}_{g,n}
\end{equation*}
the universal curve over $\mathfrak{Pic}_{g,n}$ and the universal line bundle on $\mathfrak{C}_{g,n}$, respectively.
We define 
\begin{equation*}
    \eta:=p_*\left(c_1(\mathfrak{L})^2\right).
\end{equation*}
To define the remaining classes of interest, we recall the notion of \emph{pre-stable graphs of degree $d$} 
(see~\cite[Section 0.3.2]{BHPSS}), denoted by $(\Gamma,\delta)$.
Here $\Gamma$ is a pre-stable graph of type $(g,n)$,
and $\delta\colon V(\Gamma)\to \ZZ$ is
a multi-degree function of degree $d$, i.e., $\delta$ satisfies
\begin{equation*}
    \sum_{v\in V(\Gamma)}\delta(v)=d.
\end{equation*}
We denote by $\mathcal{G}^{pst}_{g,n,d}$ the set of such pairs. 
There exist proper representable gluing maps for pairs $(\Gamma,\delta)$
\begin{equation*}
    j_{(\Gamma,\delta)}\colon\mathfrak{Pic}_{(\Gamma,\delta)}\to\mathfrak{Pic}_{g,n},
\end{equation*}
where $\mathfrak{Pic}_{(\Gamma,\delta)}$ parameterizes
curves with degenerations forced by $\Gamma$
and with line bundles which have degree $\delta(v)$ when restricted to the components corresponding to the vertex 
$v\in V(\Gamma)$. 
Although $\mathfrak{Pic}_{(\Gamma,\delta)}$ is not isomorphic to $\prod_{v\in V(\Gamma)}\mathfrak{Pic}_{g(v),n(v),\delta(v)}$, it admits a natural map 
\begin{equation*}
    \mathfrak{Pic}_{(\Gamma,\delta)}\to \prod_{v\in V(\Gamma)}\mathfrak{Pic}_{g(v),n(v),\delta(v)},
\end{equation*}
defined by partially normalizing the curve along the nodes of $\Gamma$
and pulling back the line bundle to the normalization. This morphism
is a $\mathbb{G}^{h^{1}(\Gamma)}_m$-torsor. 
This allows us to pullback
classes from the codomain of the map above and push them forward to $\mathfrak{Pic}_{g,n}$ via the proper morphism $j_{(\Gamma,\delta)}$.

\begin{definition}\label{def:weights-for-delta} Let $g,n,d\in\ZZ_{\geq0}$, and let $a\in\ZZ^{n}$ 
be a vector of integers such that $|a|=d$. 
Let $(\Gamma,\delta)$ be a pre-stable graph of type $(g,n)$
and of degree $d$. A weighting
\emph{modulo $r$ for $\delta$} compatible with $a$ is a function $u\colon H(\Gamma)\to \{0,\dots,r-1\}$ such that
\begin{enumerate}
    \item [(i)] For all edges $e=(h,h')\in E(\Gamma)$ we have
    \begin{equation*}
        u(h)+u(h')\equiv0\mod r.
    \end{equation*}
    \item [(ii)] For all $\ell_i\in L(\Gamma)$ we have 
    \begin{equation*}
        u(\ell_i)\equiv a_i\mod r.
    \end{equation*} 
    \item [(iii)] For all vertices we have 
    \begin{equation*}
        \sum_{h\in H(v)}u(h)\equiv\delta(v)\mod r.
    \end{equation*}
\end{enumerate}
We write $U_{\Gamma,\delta}^r(a)$ for the set of weightings modulo $r$
for $\delta$ compatible with $a$.
\end{definition}

\begin{definition}\label{op:Pix}Let $a\in\ZZ^n$ such that $|a|=d$, and let $\mathfrak{L}_{d}$ 
denote the universal line bundle of $\mathfrak{Pic}_{g,n,d}$. 
For all positive integers $r,c$, we 
denote by $\mathsf{P}^{c,r}_{g,a}$ the codimension 
$c$ part of the following class
\begin{equation*}
    \exp\left(-\frac{1}{2}\eta_{a}\right)\sum_{\substack{(\Gamma,\delta)\in
    \mathcal{G}^{pst}_{g,n,d}\\ u\in U^r_{\Gamma,\delta}(a)}}\frac{r^{-h^{1}
    (\Gamma)}}{|\Aut(\Gamma,\delta)|}j_{(\Gamma,\delta)*}
    \left[\prod_{e=(h,h')}\frac{1-\exp\left(\frac{-u(h)u(h')}{2}
    (\psi_h+\psi_{h'})\right)}{\psi_h+\psi_{h'}}\right]
\end{equation*}
in $A^{c}(\mathfrak{Pic}_{g,n,d})$, 
where 
\begin{equation*}
    \eta_{a}:=p_{*}\left(c_1\left(\mathfrak{L}_{d}(-\sum_{i=1}^na_ip_i)\right)^2\right).
\end{equation*}
The class $\mathsf{P}^{c,r}_{g,a}$ is polynomial in $r$ for $r$
sufficiently large. We denote by $\mathsf{P}^{c}_{g,a}$
the constant term of this polynomial.
\end{definition}
\begin{theorem}~\cite[Theorem 7]{BHPSS}\label{pix-Pic-stack} Let $a\in\ZZ^{n}$ be a vector
of integers such that $|a|=d$. Then we have 
\begin{equation*}
    \DR^{\rm op}_{g,a}=\mathsf{P}^{g}_{g,a}.
\end{equation*}
in $A^*_{\rm op}(\mathfrak{Pic}_{g,n,d})$.
\end{theorem}
\begin{remark} The general Pixton's formula in 
$\mathfrak{Pic}_{g,n,d}$ presented in~\cite[Section 0.3]{BHPSS}
appears with a different shape. In particular, our formula
is the pullback of the degree $0$ Pixton's formula 
(see~\cite[Section 0.7]{BHPSS}) under the translation morphism
\begin{equation*}
    \tau_{a}\colon\mathfrak{Pic}_{g,n,d}\to \mathfrak{Pic}_{g,\emptyset,0}
\end{equation*}
described in the same section of \emph{loc.cit.}.
We choose this form in order
to simplify the notation in our computations later.
\end{remark}

Throughout the rest of this section, we fix an odd integer 
$k\in\ZZ_{\geq1}$, and a vector of odd integers $a\in\ZZ^n$ such that 
$|a|=k(2g-2+n)$. Moreover, we define $\widetilde{a}\in\ZZ^n$ 
to be the vector of integers whose $i$-th entry is given by 
$\widetilde{a}_i:=(a_i-1)/2$.
We denote by
\begin{equation*}
    \cL\to \oC^{1/2}_{g,n}\xrightarrow{\pi} \oM^{1/2}_{g,n}
\end{equation*} 
the universal quasi-stable curve of $\oM_{g,n}^{1/2}$ and the universal spin structure. We define two line bundles
\begin{equation*}
    \cF_{k}:=\cL\otimes\omega^{\otimes \frac{k-1}{2}}_{\rm log}\ \ {\rm and}\ \ \cL_{a,k}:=\cF_{k}\left(-\sum_{i=1}^{n}\widetilde{a}_ip_i\right)  
\end{equation*} 
on $\oC_{g,n}^{1/2}$, of degree $|\widetilde{a}|$ and 
of degree $0$, respectively. We denote by 
$\phi_{\cF_{k}}\colon \oM_{g,n}^{1/2}\to \mathfrak{Pic}_{g,n}$
the corresponding classifying morphism. 
Arguing as in~\cite[Section 3.7]{BHPSS}, we obtain
\begin{equation*}
    \DR^{\rm op}_{g,\widetilde{a}}(\phi_{\cF_{k}})[\oM_{g,n}^{1/2}]=\DR_{g}^{1/2}(a,k),
\end{equation*}
where $\DR^{1/2}_{g}(a,k)$ is the class defined in the introduction. Therefore, by~\ref{pix-Pic-stack},
computing the action of $\mathsf{P}_{g,\widetilde{a}}^{g}$ 
on the same data will produce a formula for $\DR^{1/2}_{g}(a,k)$.

\begin{remark}\label{rem:weights-for-prestable/stable} 
Let $(\Gamma,w)$ be a stable graph with a $2$-weighting. 
We note that a spin structure in the stratum corresponding to 
$(\Gamma,w)$ has generically constant multi-degree given by 
\begin{equation*}
    \delta_{w}(v):=\frac{k(2g(v)-2+n(v))-\sum_{h\in H(v)}w(h)}{2}
\end{equation*}
on $v\in V(\Gamma)$, and by $\delta_{w}(v):=w(e)$ on $v\in V(\Gamma_{w})\setminus V(\Gamma)$. Thus, 
from the pair $(\Gamma,w)$ we can define $(\Gamma_w,\delta_{w})$,
where $\delta_{w}$ denotes the multi-degree function of 
spin structures in the stratum $(\Gamma,w)$. 
Now, for every partial stabilization $(\Gamma',\delta')$ of $(\Gamma_{w},\delta_{w})$ we have a stabilization morphism
$\Gamma_{w}\to \Gamma'$, which induces an 
injection $H(\Gamma')\subseteq H(\Gamma_{w})$. 
Restricting a weighting modulo $r$ for $\delta_{w}$
compatible with $\widetilde{a}$ to the subset
$H(\Gamma')$ yields a bijection
\begin{equation*}
    U_{\Gamma_{w},\delta_{w}}^{r}(\widetilde{a})\to 
   U^{r}_{\Gamma',\delta'}(\widetilde{a}).
\end{equation*}
When there 
is no risk of confusion, we will use the same notation 
$u$ for all aforementioned weightings on different graphs.
Finally, the first Betti numbers and the 
cardinality of the automorphism groups of all 
$\Gamma,\Gamma_w$ and $\Gamma'$ are equal.
\end{remark}

\begin{definition} For all positive integers $c,r$ we denote by 
$\mathsf{P}^{c,r,\frac{1}{2}}_{g}(a,k)\in A^*(\oM^{1/2}_{g,n})$ the
codimension $c$ component of the class
\begin{align*}
    \sum_{(\Gamma,w)}
    \sum_{u\in U^{r}_{\Gamma_w,\delta_{w}}(\widetilde{a})}&\frac{r^{-h^1(\Gamma)}}{|\Aut(\Gamma,w)|}
    \zeta_{\Gamma,w*}\left[\prod_{v\in V(\Gamma)}\exp\left(-\frac{k^2}{8}\kappa_1(v)\right)\prod_{i=1}^n\exp\left(\frac{a_i^2}{8}\psi_i\right)\right.\  \times \\
    &\left.\prod_{e=(h,h')\in E(\Gamma)}\frac{1-\exp((\frac{u(h)^2+u(h')^2}{4}-\frac{w(e)}{8})(\psi_h+\psi_{h'}))}{\psi_h+\psi_{h'}} \right]. \\
\end{align*}
The class $\mathsf{P}^{c,r,\frac{1}{2}}_{g}(a,k)$ is a polynomial for $r>>0$. We denote by
$\mathsf{P}^{c,\frac{1}{2}}_{g}(a,k)$ the constant term of this polynomial.
\end{definition}

\begin{remark}\label{rem:r-weight-restriction} In the expression above we are summing over
weightings on the graph $\Gamma_{w}$ which we evaluate
at the half-edges of $\Gamma$. In order to make sense 
of this,
we identify $H(\Gamma)$ as a subset of $H(\Gamma_{w})$
via the stabilization morphism 
$\Gamma_{w}\to \Gamma$ (see the start of~\ref{subsec:strat-of-roots}).
\end{remark}

\begin{proposition}\label{1/2Pix} Let $k\in\ZZ_{\geq1}$ be an odd integer, and let
$a\in \ZZ^n$ be a vector of odd integers such that 
$|a|=k(2g-2+n)$. Then we have 
\begin{equation*}
    \mathsf{P}^{c,r,\frac{1}{2}}_{g}(a,k)=\mathsf{P}^{c,r}_{g,\widetilde{a}}(\phi_{\cF_{k}})[\oM^{1/2}_{g,n}].
\end{equation*}
In particular,
we have $\mathsf{P}^{g,\frac{1}{2}}_{g}(a,k)=\DR^{1/2}_{g}(a,k)$
in $A^*(\oM_{g,n}^{1/2})$.
\end{proposition}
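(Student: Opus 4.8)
The plan is to prove Proposition~\ref{1/2Pix} by a direct comparison of the two operational/Chow classes term-by-term, reducing the statement to a bookkeeping identity between (a) the pullback of Pixton's formula $\mathsf{P}^{c,r}_{g,\widetilde a}$ along the classifying map $\phi_{\cF_k}$ of the line bundle $\cL_a,k$, and (b) the explicit spin Pixton class $\mathsf{P}^{c,r,\frac12}_g(a,k)$ that is summed over pairs $(\Gamma,w)$ of stable graphs with $2$-weightings. The first step is to pin down the combinatorial dictionary. By~\ref{pix-Pic-stack} applied to the universal line bundle $\cF_k$ of degree $|\widetilde a|$, together with the discussion preceding the proposition that identifies $\DR^{\rm op}_{g,\widetilde a}(\phi_{\cF_k})[\oM^{1/2}_{g,n}]$ with $\DR^{1/2}_g(a,k)$, the degree-$c$, level-$r$ truncations already match for $c=g$; so the content is really the identity of the full polynomial-in-$r$ classes $\mathsf{P}^{c,r}$, from which the $c=g$ constant-term statement follows. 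I would therefore first establish the identity of the classes $\mathsf{P}^{c,r,\frac12}_g(a,k)$ and $\mathsf{P}^{c,r}_{g,\widetilde a}(\phi_{\cF_k})[\oM^{1/2}_{g,n}]$ for all $c$ and all sufficiently large $r$, and only then invoke~\ref{pix-Pic-stack} and the polynomiality to conclude the second sentence.

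The heart of the argument is matching the index sets and the local contributions. On the $\mathfrak{Pic}$ side, the sum runs over pre-stable graphs of degree $|\widetilde a|$ with weightings $u$ modulo $r$ for the multi-degree, pulled back along $\phi_{\cF_k}$; since $\oM^{1/2}_{g,n}$ maps to $\mathfrak{Pic}_{g,n}$ via $\cF_k$, the relevant degenerations of $\oM^{1/2}_{g,n}$ are exactly those indexed by pairs $(\Gamma,w)$ via~\ref{subsec:strat-of-roots}, and \ref{rem:weights-for-prestable/stable} gives the bijection $U^r_{\Gamma_w,\delta_w}(\widetilde a)\xrightarrow{\sim} U^r_{\Gamma',\delta'}(\widetilde a)$ for partial stabilizations, so the weightings $u$ on $\Gamma_w$ evaluated at half-edges of $\Gamma$ are exactly the Pixton weightings appearing on the $\mathfrak{Pic}$ side. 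The automorphism factors and $r^{-h^1}$ factors agree by the last sentence of \ref{rem:weights-for-prestable/stable}. What remains is to compute the pullback of the $\psi$/$\eta$-data. Here one uses $\cL_{a,k}=\cF_k(-\sum\widetilde a_i p_i)$ with $\cL^{\otimes 2}=\omega_{\log}$ (so $c_1(\cL)=\tfrac12 c_1(\omega_{\log})$ rationally, i.e. $\cF_k$ has $c_1=(\tfrac k2-\tfrac12+\tfrac12)\cdot$... more precisely $c_1(\cF_k)=\tfrac{k-1}{2}c_1(\omega_{\log})+c_1(\cL)=\tfrac{k}{2}c_1(\omega_{\log})-\tfrac12(\sum p_i)$): substituting into $\eta_{\widetilde a}=p_*(c_1(\cL_{a,k})^2)$ and expanding, the self-intersection of $\omega_{\log}$ produces the $\kappa_1$ terms, the diagonal terms involving $p_i$ produce $\psi_i$ via $p_*(p_i^2)=-\psi_i$, and the cross terms vanish or contribute boundary corrections. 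One checks that $c_1(\cL_{a,k})^2$ pushed forward yields exactly $-\tfrac{k^2}{4}\kappa_1+\tfrac14\sum a_i^2\psi_i$ on the interior plus the edge contributions; the factor $\exp(-\tfrac12\eta_{\widetilde a})$ then becomes $\prod_v\exp(-\tfrac{k^2}{8}\kappa_1(v))\prod_i\exp(\tfrac{a_i^2}{8}\psi_i)$, matching the proposition. The only genuinely new subtlety is the edge factor: on the $\mathfrak{Pic}$ side the edge factor is $\tfrac{1-\exp(-\tfrac{u(h)u(h')}{2}(\psi_h+\psi_{h'}))}{\psi_h+\psi_{h'}}$ with $u(h')\equiv-u(h)$, so $-u(h)u(h')\equiv u(h)^2$; but the $2$-weighting $w$ records whether an exceptional component was inserted, which shifts the effective half-edge degree/weight, and unwinding $\delta_w$ on the exceptional vertex (where $\delta_w(v)=w(e)$) through the partial stabilization produces the correction $\tfrac{u(h)^2+u(h')^2}{4}-\tfrac{w(e)}{8}$ instead of $\tfrac{u(h)^2}{2}$.

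I expect the main obstacle to be precisely this last point: carefully tracking how the exceptional $\PP^1$-components inserted over nodes with $w(e)=1$ interact with the Pixton edge factors and $\psi$-classes under the $\mathbb{G}_m^{h^1}$-torsor $\mathfrak{Pic}_{(\Gamma,\delta)}\to\prod_v\mathfrak{Pic}_{g(v),n(v),\delta(v)}$. One must verify that the $\psi$-class at a half-edge of $\Gamma$ (the stabilized graph) equals the relevant $\psi$-class on $\oM_{\Gamma_w,\delta_w}$ after contracting the exceptional vertex — contracting a genus-$0$ two-pointed vertex does not change $\psi$-classes at the surviving half-edges, which is what makes the identification in \ref{rem:r-weight-restriction} work — and that the admissibility condition (degree $1$ on exceptional components) is exactly what forces $w(e)\in\{0,1\}$ and reproduces the half-integral shift by $w(e)/8$. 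The rest of the proof is then a routine expansion: substitute $c_1(\cL_{a,k})$ in terms of $\omega_{\log}$, $\cL$, and the $p_i$; apply the projection formula and the standard identities $\pi_*(\psi_{n+1}^2)=\kappa_1$, $p_*(p_i\cdot(\text{anything}))$, $p_*(p_i^2)=-\psi_i$; and collect terms. Once the two level-$r$ classes are shown equal for all $r\gg0$ and all $c$, taking $c=g$ and the constant term in $r$, and combining with~\ref{pix-Pic-stack} and the identification $\DR^{\rm op}_{g,\widetilde a}(\phi_{\cF_k})[\oM^{1/2}_{g,n}]=\DR^{1/2}_g(a,k)$ stated just before the proposition, yields $\mathsf{P}^{g,\frac12}_g(a,k)=\DR^{1/2}_g(a,k)$.
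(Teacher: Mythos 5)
Your overall strategy is the same as the paper's: prove the identity of the full level-$r$, degree-$c$ classes by evaluating the operational Pixton class of \ref{pix-Pic-stack} against $\phi_{\cF_k}$, match the graph sums via the bijection of weightings in \ref{rem:weights-for-prestable/stable}, and only at the end take $c=g$ and the constant term in $r$. However, there is a genuine gap at exactly the two places where the real work lies. First, your expansion of $\exp(-\tfrac12\eta_{\widetilde a})$ is incomplete: since $\cL$ is a square root of $\omega_{\cC}$ only up to the exceptional components (your parenthetical $\cL^{\otimes2}\cong\omega_{\rm log}$ is already off for this reason), one has $\cL_{a,k}^{\otimes2}\cong\omega_{\rm log}^{\otimes k}(-\sum_i a_ip_i)\otimes\cO(\beta)$ with $\beta$ the PL function of \ref{def:betaPL}, and hence $\tfrac12\eta_{a,k}=\tfrac18\left(k^2\kappa_1-\sum_i a_i^2\psi_i+\pi_*(c_1(\cO(\beta))^2)\right)$. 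The boundary-supported term $\exp\left(-\tfrac18\pi_*(c_1(\cO(\beta))^2)\right)$ is not a negligible ``boundary correction'': it is precisely what produces the shift $-\tfrac{w(e)}{8}$ in the edge factor, after being translated across the graph sum by the piecewise-polynomial formalism of~\cite{Holmes2022LogarithmicDR}. Attributing that shift instead to ``unwinding $\delta_w$ on the exceptional vertex through the partial stabilization'' is not correct, and with your accounting the exponent $\tfrac{u(h)^2+u(h')^2}{4}-\tfrac{w(e)}{8}$ cannot be derived.

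Second, the sum over partial stabilizations $(\Gamma',\delta')$ of $(\Gamma_w,\delta_w)$ is not a matter of checking that $\psi$-classes are unchanged under contracting two-pointed genus-$0$ vertices. The stratum $\overline{\cP}_{\Gamma,w}$ has codimension $|E(\Gamma)|$ in $\oM^{1/2}_{g,n}$, while $\mathfrak{Pic}_{(\Gamma',\delta')}$ has codimension $|E(\Gamma')|$, which is strictly larger whenever an edge with $w(e)=1$ is kept subdivided in $\Gamma'$; so the Gysin pullback $\phi^!_{\cF_k}$ of the decorated boundary classes is an excess intersection computation. This is the content of \ref{lem:excess-int}: the excess bundle is $\bigoplus_{e\in E_{f'}(\Gamma)}L_h\otimes L_{h'}$, giving the factors $\left(\tfrac{-\psi_h-\psi_{h'}}{2}\right)^{D_e}$ with $D_e=d_{e_1}+d_{e_2}+1$ for subdivided edges, together with the factor $2^{-|E(\Gamma)|}$ coming from the orbifold structure of $\mathfrak{Pic}_{g,n}$ at the nodes; only after summing these contributions over all $(\Gamma',\delta')$ does one recover the single edge factor with exponent $\tfrac{u(h)^2+u(h')^2}{4}$ per edge of $\Gamma$. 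Your proposal never sets up this excess bundle, so the step you flag as ``the main obstacle'' is indeed the missing ingredient rather than a verification you have reduced to routine bookkeeping. The remaining parts of your plan (index matching, automorphism and $r^{-h^1}$ factors, and the final passage to $c=g$ via \ref{pix-Pic-stack}) agree with the paper's argument.
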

\begin{proof} 
We will prove this formula by using 
the general form of Pixton's formula given in~\ref{op:Pix}. 
First, we define
$\eta_{a,k}:=\pi_{*}(c_1(\cL_{a,k})^2)\in A^*(\oM_{g,n}^{1/2})$, and we observe that
$\eta_{\widetilde{a}}(\phi_{\cF_{k}})[\oM_{g,n}^{1/2}]=\eta_{a,k}$.
Now, there exists a line bundle $\cO(\beta)$ such that 
there is an isomorphism
\begin{equation*}
    \cL^{\otimes2}_{a,k}\cong \omega^{\otimes k}_{\rm log}\left(-\sum_{i=1}^{n}a_ip_i\right)\otimes\cO(\beta)
\end{equation*}
on the universal quasi-stable curve $\oC_{g,n}^{1/2}\to \oM_{g,n}^{1/2}$.
The line bundle $\cO(\beta)$ is constructed from a PL
function that will be introduced in the next section (see~\ref{def:PLfun} and~\ref{def:betaPL}). However, its 
definition is not needed in the present proof.
A similar computation as in ~\cite[the beginning of Section 6.2]{Holmes2022LogarithmicMO} 
or~\cite[Proposition 38]{BHPSS} gives
\begin{equation*}
    \frac{1}{2}\eta_{a,k}=\frac{k^2\kappa_1-\sum_{i=1}^{n}a_{i}^{2}\psi_i+
    \pi_{*}(c_{1}(\cO(\beta))^2)}{8}.
\end{equation*}
Furthermore, we have to compute the contribution obtained for each stratum corresponding
to $(\Gamma,w)$. Recall that points in $\overline{\cP}_{\Gamma,w}$ have generically constant
multi-degree. We denote by $\delta_{w}$ the multi-degree 
function on $\Gamma_{w}$. 
Then, for any partial stabilization $(\Gamma',\delta')$ of $(\Gamma_w,\delta_{w})$,
we have the following pullback diagram
\begin{equation}\label{gysin:pull}
    \xymatrix{\overline{\mathcal{P}}_{\Gamma,w}\ar[r]^j
    \ar[d]_{\zeta_{\Gamma,w}} & 
    \mathfrak{Pic}_{(\Gamma',\delta')}\ar[d]^{j_{(\Gamma',\delta')}}\\
              \oM^{1/2}_{g,n}\ar[r]_{\phi_{\cF_{k}}}& 
              \mathfrak{Pic}_{g,n}}
\end{equation}
This implies that for any decorated pre-stable graph 
$(\Gamma',\delta',\gamma)$,
in the sense of ~\cite[Definition 3]{BHPSS}, we have
\begin{equation*}
    j_{(\Gamma',\delta')*}[\gamma](\phi_{\cL_{k,a}})[\oM^{1/2}_{g,n}]=\zeta_{\Gamma,w*}
    \phi_{\cL_{k,a}}^!(\gamma).
\end{equation*}
Note that if $(\Gamma',\delta')$ is \emph{not} the partial stabilization
of $(\Gamma_{w},\delta_{w})$ for some $2$-weighting $w$ on a
stable graph $\Gamma$, we have
$j_{(\Gamma',\delta')*}[\gamma](\phi_{\cL_{k,a}})[\oM^{1/2}_{g,n}]=0$.
Now, we fix a pair $(\Gamma,w)$ consisting of a stable graph $\Gamma$
and $2$-weighting $w$ compatible with $a$.
In our case, for any partial stabilization $(\Gamma',\delta')$ of $(\Gamma_{w},\delta_{w})$,
the decorated classes we are interested in are of the form 
\begin{equation*}
   j_{(\Gamma',\delta')*}\left[ \prod_{e=(h,h')}(-\psi_{h}-\psi_{h'})^{d_e}\right].
\end{equation*}
For a given pair $(\Gamma',\delta')$, every arrow 
of the diagram~\ref{gysin:pull} is
a regular closed immersion, which simplifies the computation of the excess bundle. We perform this computation in the following lemma.
\begin{lemma}\label{lem:excess-int}The excess 
intersection formula gives  
\begin{equation*}
    \phi^{!}_{\cF_{k}}\left[\prod_{e=(h,h')\in E(\Gamma')}
    (-\psi_{h}-\psi_{h'})^{d_e}\right]=\frac{1}{2^{|E(\Gamma)|}}\prod_{e=(h,h')\in E(\Gamma)}\left(\frac{-\psi_{h}-\psi_{h'}}{2}\right)
    ^{D_e},
\end{equation*}
where 
\begin{equation*}
     D_e=\left\{\begin{matrix}
 d_{e_{1}}+d_{e_{2}} +1 & \text{if $e$ is subdivided to $(e_1,e_2)$ in
 $\Gamma'$} \\
 d_e& \text{otherwise.}  \\
\end{matrix}\right.
\end{equation*}
\end{lemma} 
\begin{proof}[Proof of Lemma 2.20]Let $(\Gamma',\delta')$
be a partial stabilization of $(\Gamma_w,\delta_{w})$. 
We define a map
\begin{equation*}
    f\colon E(\Gamma_w)\to E(\Gamma)
\end{equation*}
as follows: an edge $e\in E(\Gamma)$ is subdivided to two edges
$e_1$ and $e_2$ in $\Gamma_w$ 
if $w(e)=1$ and remains unchanged if $w(e)=0$. 
We form $f$ by mapping $e_1$ and $e_2$ to $e$ if $w(e)=1$ and $e$ maps to itself if $w(e)=0$.
In particular, $f$ has two pre-images at $e$ if $w(e)=1$ and one otherwise.
Furthermore, the partial stabilization morphism $\Gamma_w\to \Gamma'$
induces an injective map 
\begin{equation*}
    i\colon E(\Gamma')\to E(\Gamma_w).
\end{equation*}
We define $f':=f\circ i$, and we denote by $E_{f'}(\Gamma)$
the set of edges of $\Gamma$ with two pre-images under $f'$, 
i.e., $E_{f'}(\Gamma):=\{e\in E(\Gamma)\ |\ |f'^{-1}(e)|=2\}$. With this setup at hand, the normal bundle
$\mathcal{N}_{j_{(\Gamma',\delta')}}$ on $\mathfrak{Pic}_{(\Gamma',\delta')}$
is given by 
\begin{equation*}
    \bigoplus_{\underline{e}=(\underline{h},\underline{h}')\in E(\Gamma')}L_{\underline{h}}\otimes L_{\underline{h}'},
\end{equation*}
where $L_{\underline{h}}$ 
is the cotangent line bundle on the marking corresponding to the 
half edge $\underline{h}$. 
Now, for each edge $\underline{e}=(\underline{h},\underline{h}')\in E(\Gamma')$ and
$e=(h,h')\in E(\Gamma)$
such that $f'(\underline{e})=e$, we have $j^*(L_{\underline{h}}\otimes L_{\underline{h}})=L_{h}\otimes L_{h'}$.
Since all arrows in~\ref{gysin:pull} are regular embeddings, we 
obtain that the excess bundle $T$ on $\overline{\mathcal{P}}_{\Gamma,w}$
is given by $T\cong j^*\mathcal{N}_{j_{(\Gamma',\delta')}}/\mathcal{N}_{\zeta_{(\Gamma,w)}}$. 
Then we have 
\begin{equation*}
    T\cong\bigoplus_{e\in E_{f'}(\Gamma)}L_{h}\otimes 
    L_{h'}
\end{equation*}
Finally, for a given pair $(\Gamma',\delta')$, the excess 
intersection formula gives  
\begin{equation*}
    \phi^{!}_{\cF_{k}}\left[\prod_{e=(h,h')\in E(\Gamma')}
    (-\psi_{h}-\psi_{h'})^{d_e}\right]=\frac{1}{2^{|E(\Gamma)|}}\prod_{e=(h,h')\in E(\Gamma)}\left(\frac{-\psi_{h}-\psi_{h'}}{2}\right)
    ^{D_e},
\end{equation*}
where $D_e$ is as described in the claim. Finally, the factor
$2^{-|E(\Gamma)|}$ occurs from the root 
structure of $\mathfrak{Pic}_{g,n}$ at the nodes.
\end{proof}

We now complete the proof of the proposition.
As explained in~\ref{rem:weights-for-prestable/stable},
weightings modulo $r$ for $\delta_{w}$
compatible with $\widetilde{a}$ and weightings modulo $r$
for $\delta'$ compatible with $\widetilde{a}$ for any partial
stabilization $(\Gamma',\delta')$ of $(\Gamma_{w},\delta_{w})$
are in bijection. Under this bijection,
we fix a weighting modulo $r$ for $\delta_{w}$ compatible with 
$\widetilde{a}$, say $u$,
and the corresponding ones for every partial 
stabilization $(\Gamma',\delta')$, 
which, by abuse of notation, we denote again by $u$. Then, using~\ref{lem:excess-int}, we obtain
\begin{align*}
    \zeta_{\Gamma,w*}&\left[\sum_{\Gamma',\delta'}\phi^{!}_{\cF_{k}}\left(\prod_{e\in E(\Gamma')}\frac{1-\exp(-\frac{u(h)u(h')}{2})(\psi_{h}+\psi_{h'})}{\psi_{h}+\psi_{h'}}\right)\right] \\
    =&\, \zeta_{\Gamma,w*}\,\left[ \prod_{e\in E(\Gamma)}\frac{1-\exp(\frac{u(h)^{2}+u(h')^{2}}{4})(\psi_{h}+\psi_{h'})}{\psi_{h}+\psi_{h'}}\right],
\end{align*}
where in the RHS of the equation above 
we evaluate $u\in U^{r}_{\Gamma_{w},\delta_{w}}(\widetilde{a})$ on the subset $H(\Gamma)$ (see~\ref{rem:r-weight-restriction}). Moreover, we have the following formula:
\begin{align*}
    \zeta_{\Gamma,w}^{*}\exp\left(-\frac{k^2\kappa_1-\sum_{i=1}^{n}a_{i}^{2}\psi_i}{8}\right)=\prod_{v\in V(\Gamma)}\exp\left(-\frac{k^2}{8}\kappa_1(v)\right)\prod_{i=1}^n\exp\left(\frac{a_i^2}{8}\psi_i\right).
\end{align*}
The proof now finishes by using the projection formula to 
include the $\psi$- and $\kappa$- terms of
$\exp(-\frac{1}{2}\eta_{a,k})$
in the sum, and the translation of piecewise polynomials, 
exactly as presented in~\cite[Section 6]{Holmes2022LogarithmicDR} on 
the pair $(\oM_{g,n}^{1/2},\partial\oM_{g,n}^{1/2})$,
to translate the class 
$\exp\left(-\frac{1}{8}\pi_{*}(c_1(\cO(\beta))^2)\right)$.
Using this translation we obtain

\begin{align*}
    \exp\left(-\frac{\pi_{*}(c_1(\cO(\beta))^2)}{8}\right)&\sum_{(\Gamma,w)}
    \sum_{u\in U^{r}_{\Gamma_w,\delta_{w}}(\widetilde{a})}\frac{r^{-h^1(\Gamma)}}{|\Aut(\Gamma,w)|}
    \zeta_{\Gamma,w*}\left[\prod_{v\in V(\Gamma)}\exp\left(-\frac{k^2}{8}\kappa_1(v)\right)\prod_{i=1}^n\exp\left(\frac{a_i^2}{8}\psi_i\right)\right.\  \times \\
    &\left.\prod_{e=(h,h')}\frac{1-\exp((\frac{u(h)^2+u(h')^2}{4})(\psi_h+\psi_{h'}))}{\psi_h+\psi_{h'}} \right] \\
    =& \sum_{(\Gamma,w)}
     \sum_{u\in  U^{r}_{\Gamma_w,\delta_{w}}(\widetilde{a})}\frac{r^{-h^1(\Gamma)}}{|\Aut(\Gamma,w)|}
     \zeta_{\Gamma,w*}\left[\prod_{v\in V(\Gamma)}\exp\left(-\frac{k^2}{8}\kappa_1(v)\right)\prod_{i=1}^n\exp\left(\frac{a_i^2}{8}\psi_i\right)\right.\  \times \\
     &\left.\prod_{e=(h,h')}\frac{1-\exp((\frac{u(h)^2+u(h')^2}{4}-\frac{w(e)}{8})(\psi_h+\psi_{h'}))}{\psi_h+\psi_{h'}} \right].
\end{align*}
\end{proof}
\begin{proposition}~\label{prop:spinPix=Pix1/2+-} The following equality holds in $A^*(\oM_{g,n}^{1/2})$
\begin{equation*}
    \epsilon_{*}\left(2\mathsf{P}^{c,r,\frac{1}{2}}_{g}(a,k)\cdot[\pm]\right)=\mathsf{P}^{c,r,\pm}_{g}(a,k).
\end{equation*}
\end{proposition}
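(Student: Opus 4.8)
The goal is to pull back the $\tfrac{1}{2}$-Pixton class along $\epsilon$ after intersecting with the parity cycle $[\pm]$, and match the result term-by-term with $\mathsf{P}^{c,r,\pm}_{g}(a,k)$. The strategy is to compare the two expressions summand by summand: the source expression $\mathsf{P}^{c,r,\frac{1}{2}}_{g}(a,k)$ is a sum over pairs $(\Gamma,w)$ of a stable graph and a $2$-weighting, while $\mathsf{P}^{c,r,\pm}_{g}(a,k)$ (as given in the introduction) is a sum over stable graphs $\Gamma$ with odd $2r/k$-weightings. The first key step is to apply the projection formula for $\zeta_{\Gamma,w}$ and the commutativity of the stratum diagrams to push the factor $[\pm]$ down through $\epsilon$. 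Concretely, since $[\pm]$ restricts to each boundary stratum $\overline{\cP}_{\Gamma,w}$, we write $\epsilon_*(\zeta_{\Gamma,w*}(\cdots)\cdot[\pm]) = \epsilon_*(\zeta_{\Gamma,w*}((\cdots)\cdot\zeta_{\Gamma,w}^*[\pm]))$, and the classes $(\cdots)$ appearing in $\mathsf{P}^{c,r,\frac{1}{2}}_{g}(a,k)$ are exactly of the form $(\phi\circ\nu)^*\beta$ for a tautological class $\beta\in A^*(\oM_\Gamma)$, since all $\psi$- and $\kappa$-decorations on the vertices of $\Gamma$ pull back from $\oM_\Gamma$.

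\textbf{Main computation.} Now I would split the sum over $(\Gamma,w)$ according to whether $w$ takes an even value somewhere or only odd values. By \ref{prop:cancellation}, every term with $w$ admitting an even value on some half-edge contributes zero to $\epsilon_*(\,\cdot\,[\pm])$, so only $2$-weightings with all odd values survive. For those, \ref{degree:even-odd} gives
\begin{equation*}
    \epsilon_*\bigl(\zeta_{\Gamma,w*}(\phi\circ\nu)^*(\beta)\cdot[\pm]\bigr)=2^{g-1}\zeta_{\Gamma*}(\beta).
\end{equation*}
Thus $\epsilon_*(2\mathsf{P}^{c,r,\frac{1}{2}}_{g}(a,k)\cdot[\pm])$ equals $2^{g}$ times the sum over $\Gamma$ and over odd $2$-weightings of $\zeta_{\Gamma*}$ applied to the vertex- and edge-decorations. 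Next I would match the decorations: the edge factor in $\mathsf{P}^{c,r,\frac12}_{g}(a,k)$ is $\frac{1-\exp((\frac{u(h)^2+u(h')^2}{4}-\frac{w(e)}{8})(\psi_h+\psi_{h'}))}{\psi_h+\psi_{h'}}$, and since $w(e)\in\{0,1\}$ with $w(e)=1$ for all edges of $\Gamma$ itself once we restrict to the all-odd case on $\Gamma$ (here care is needed: $w$ on $\Gamma$ has values in $\{0,1\}$, but the odd-valued relevant weighting on $\Gamma$ is the $2r/k$-weighting taking values in $\{0,\dots,2r-1\}$, odd), one identifies $u(h)$ modulo $r$ with the entries of the $2r/k$-weighting, and the quantity $\frac{u(h)^2+u(h')^2}{4}-\frac{w(e)}{8}$ with $-\frac{w'(h)w'(h')}{4}$ appearing in $\rP^{\pm,c,r}_g(a,k)$, using $w'(h)+w'(h')\equiv 0$ and the odd parity. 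The Betti number bookkeeping $r^{-h^1(\Gamma)}$ and $|\Aut(\Gamma,w)|=|\Aut(\Gamma)|$ carry over by \ref{rem:weights-for-prestable/stable}, and the vertex factors $\exp(-\frac{k^2}{8}\kappa_1(v))\exp(\frac{a_i^2}{8}\psi_i)$ are squared to $\exp(-\frac{k^2}{4}\kappa_1(v))\exp(\frac{a_i^2}{4}\psi_i)$ — but the factor of $2$ on the left of the proposition and the $2^{g-1}$ from \ref{degree:even-odd} interact with the exponent-doubling only via the overall normalization, so this is where the arithmetic must be done carefully.

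\textbf{Expected obstacle.} The delicate point is the precise dictionary between the $2$-weightings $w\in\{0,1\}$ used to index the strata of $\oM^{1/2}_{g,n}$, the mod-$r$ weightings $u$ for $\delta_w$ appearing inside Pixton's formula, and the odd $2r/k$-weightings indexing $\rP^{\pm,c,r}_g(a,k)$: one must check that restricting $u$ to $H(\Gamma)$ and pairing it with the data of $w$ produces exactly an odd $2r/k$-weighting compatible with $a$, with the right multiplicity, and that the ``$-w(e)/8$'' correction in the edge exponent is precisely what converts $\frac{u(h)^2+u(h')^2}{4}$ into the symmetric product $-\frac{w'(h)w'(h')}{4}$ after the substitution $w'(h)\equiv u(h)$, $w'(h')\equiv -u(h)\pmod{2r}$. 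I would also need to confirm that the translation of piecewise-polynomial classes (the $\cO(\beta)$-term) used in the proof of \ref{1/2Pix} is compatible with $\epsilon_*$ and $[\pm]$, but since that term has already been absorbed into $\mathsf{P}^{c,r,\frac12}_g(a,k)$ and does not involve the spin structure, this should be routine. The bulk of the proof is therefore a careful bookkeeping of normalization constants and weighting bijections, with \ref{prop:cancellation} and \ref{degree:even-odd} doing the geometric heavy lifting.
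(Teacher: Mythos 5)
Your proposal follows essentially the same route as the paper's proof: expand $\mathsf{P}^{c,r,\frac{1}{2}}_{g}(a,k)$ as a sum of classes $\zeta_{\Gamma,w*}(\phi\circ\nu)^*\beta$, discard every term whose $2$-weighting takes an even value by \ref{prop:cancellation}, apply \ref{degree:even-odd} to the all-odd terms to extract the factor $2^{g-1}$ (hence $2^{g}$ with the extra factor of $2$), and convert the mod-$r$ weightings $u$ into odd $2r/k$-weightings via $u\mapsto 2u+1$ using the congruence $2(u(h)^2+u(h')^2)-1\equiv -(2u(h)+1)(2u(h')+1)\bmod 2r$ together with $|\Aut(\Gamma,w)|=|\Aut(\Gamma)|$ for constant $w$, which is exactly the paper's argument. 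The one inaccuracy is your suggestion that the vertex factors get ``squared'' under $\epsilon_*$: pushing forward against $[\pm]$ only produces the scalar $2^{g-1}$ and leaves all $\psi$-, $\kappa$- and edge-decorations unchanged, so the matching of exponents is purely the weighting dictionary above, not any doubling coming from the pushforward.
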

\begin{proof}
We observe that $\mathsf{P}^{c,r,\frac{1}{2}}_{g}(a,k)$ is 
expressed as a sum of classes of the form 
\begin{equation*}
    \zeta_{\Gamma,w*}\nu^*\phi^*(\beta)
\end{equation*}
for pairs $(\Gamma,w)$, 
where $\nu\colon\overline{\cP}_{\Gamma,w}\to \oM_{\Gamma,w}$, and
$\phi\colon \oM_{\Gamma,w}\to \oM_{\Gamma}$ 
are the morphisms described in~\ref{subsec:strat-of-roots}, 
and $\beta\in A^*(\oM_{\Gamma})$. 
If $w$ admits an
even value on an edge,~\ref{prop:cancellation} 
implies that all classes of this form will vanish when capped with $[\pm]$ and
pushed along $\epsilon$. Thus, the only non-trivial
contribution of $\mathsf{P}^{c,r,\frac{1}{2}}_{g}(a,k)$ 
after capped with $[\pm]$ and pushed along $\epsilon$
will arise from 
\begin{align*}
    \sum_{\Gamma\in\mathcal{G}_{g,n}}
    \sum_{u\in U^{r,{\rm odd}}_{\Gamma_w,\delta_{w}}(\widetilde{a})}&\frac{r^{-h^1(\Gamma)}}{|\Aut(\Gamma,w)|}
    \zeta_{\Gamma,w*}\left[\prod_{v\in V(\Gamma)}\exp\left(-\frac{k^2}{8}\kappa_1(v)\right)\prod_{i=1}^n\exp\left(\frac{a_i^2}{8}\psi_i\right)\right.\  \times \\
    &\left.\prod_{e=(h,h')\in E(\Gamma)}\frac{1-\exp((\frac{u(h)^2+u(h')^2}{4}-\frac{1}{8})(\psi_h+\psi_{h'}))}{\psi_h+\psi_{h'}} \right],
\end{align*}
where $U_{\Gamma_{w},\delta_{w}}^{r,{\rm odd}}$ is
the set of weightings mod $r$ for $\delta_{w}$, where $w$
takes only odd values, that is, $w$ is the constant function $1$. In that case, we use~\ref{degree:even-odd}
and we obtain a factor of $2^{g-1}$ when we cap with $[\pm]$ and push along 
$\epsilon$. This, together with the extra factor of $2$ in the equation of
the proposition yields a global factor $2^{g}$. 
Furthermore, for all $u\in U^{r,{\rm odd}}_{\Gamma_{w},\delta_{w}}(\widetilde{a})$, by~\ref{def:weights-for-delta} and~\ref{rem:weights-for-prestable/stable}, we have the following identities
for the restriction of $u$ to $H(\Gamma)$: let $e=(h,h')\in E(\Gamma)$. Then 
\begin{align*}
    u(h)^{2}&\equiv-u(h)-u(h)u(h')\mod r, \\ 
    u(h')^{2}&\equiv -u(h')-u(h)u(h')\mod r.
\end{align*}
Therefore, we obtain
\begin{equation*}
    2(u(h)^2+u(h')^2)-1\equiv -(2u(h)+1)(2u(h')+1)\mod r.    
\end{equation*}
It is a straightforward computation to see that the assignment
\begin{align*}
    U_{\Gamma_{w},\delta_{w}}^{r,{\rm odd}}
    (\widetilde{a})&\to W^{2r/k}_{\Gamma}(a)^{\rm odd} \\
                        u&\mapsto 2u|_{H(\Gamma)}+1,
\end{align*}
where 
$u|_{H(\Gamma)}$ denotes the restriction of $u$
to the subset $H(\Gamma)$ of $H(\Gamma_{w})$, is a bijection.
Thus, we can replace the term $2u+1$ with the corresponding $2r/k$-weightings compatible with $a$ which takes only odd values. 
Finally, we also have $|\Aut(\Gamma,w)|=|\Aut(\Gamma)|$ 
when $w$ is a constant function which proves the desired result.
\end{proof}

\noindent We now have all the necessary tools to
prove~\ref{spDR = spPix} of the Introduction.

\begin{proof}[Proof of~\ref{spDR = spPix}.] 

 By the propositions we proved in this section and the
definition of $\DR^{\pm}_{g}(a,k)$, we obtain
\begin{align*}
    \mathsf{P}^{\pm}_{g}(a,k)&=\epsilon_{*}\left(2\mathsf{P}^{g,\frac{1}{2}}_{g}(a,k)\cdot[\pm]\right) &&(\text{\ref{prop:spinPix=Pix1/2+-}}) \\
    &=\epsilon_{*}\left(2\DR^{1/2}_{g}(a,k)\cdot[\pm]\right) &&(\text{\ref{1/2Pix}}) \\
    &=\DR^{\pm}_{g}(a,k) &&(\text{Definition})
\end{align*}
\end{proof}

\section{DR cycles: Abel-Jacobi and compatibility with roots}

We begin this section by recalling the construction of the double ramification (DR) cycles 
from~\cite{Marcus2017LogarithmicCO,Hol}, along with a slight variation, introduced 
in~\cite{HolSch}. To describe
these constructions,
we begin by outlining the space $\mathbf{Div}_{g,n}$, originally 
introduced in~\cite{Marcus2017LogarithmicCO}. 

\subsection{The $\mathbf{Div}$ stack}
Unless otherwise stated, all log structures considered will be assumed to 
be \emph{fine} and \emph{saturated}. 
Following F. Kato, a \emph{log curve} over a log scheme $S$
is a morphism of log schemes $\pi\colon C\to S$ that is proper, integral, 
saturated, log smooth, and whose geometric fibers are reduced and connected
of pure dimension $1$ (see~\cite[Definition 1.2]{Kato}). When the underlying scheme
of $S$ is the spectrum of an algebraically closed field, we refer to $C\to S$ simply as a log curve over an algebraically closed field.
While the definition of a log curve will not play a central role throughout the text,
we will make substantial use of the following structural result proven in~\cite{Kato}.

We recall the local description of the log structures of
log curves $\pi\colon C\to S$ at a geometric point $c$ of the fiber $C_s$ 
over a geometric point $s\in S$.
We distinguish the following cases:
\begin{enumerate}
    \item [(i)] $c$ is a smooth point of $C_s$ and $\overline{M}_{C_s,c}\cong \overline{M}_{S,s}$, or
    \item [(ii)] $c$ is a marked point of $C_s$ and $\overline{M}_{C_s,c}\cong \overline{M}_{S,s}\oplus\NN$, or
    \item [(iii)] $c$ is a nodal point of $C_s$ and there exists
    an element $\ell_c\in\overline{M}_{S,s}$ such that
    \begin{equation*}
        \overline{M}_{C_s,c}\cong \overline{M}_{S,s}\oplus_{\NN}\NN^2,
    \end{equation*}
    where the morphism $\NN\to \NN^2$ is the diagonal, and 
    $\NN\to\overline{M}_{S,s}$ is given by $1\mapsto \ell_c$.
\end{enumerate}
Over a geometric point $s$ of $S$, the dual graph of $C_s$ is well defined, and a 
nodal point $c$ corresponds to an edge $e$ of its dual graph. We will use the 
notation $\ell(e)$ for the element $\ell_{c}\in\overline{M}_{S,s}$
described above. 
The element $\ell(e)$ is widely known as the \emph{length} of $e$ or the 
\emph{smoothing parameter} of $c$.

\begin{definition} Let $C\to S$ be a log curve over an
algebraically closed field. The \emph{tropicalization} of $C$
is the data $(\Gamma,\ell)$, where $\Gamma$ is the dual graph of $C$, and 
\begin{equation*}
    \ell\colon E(\Gamma)\to \overline{M}_{S}
\end{equation*}
is the length function induced by the local description of the log structure 
discussed above.
\end{definition}

\begin{definition} Let $(X,M_X)$ be a logarithmic scheme. Then, we define the \emph{logarithmic multiplicative group} and the
\emph{tropical multiplicative group} to be the groups
\begin{align*}
    \mathbb{G}_m^{\rm log}(X)&=\Gamma(X,M^{gp}_X) \\
    \mathbb{G}_m^{\rm trop}(X)&=\Gamma(X,\overline{M}^{gp}_X)
\end{align*}
respectively. Furthermore, a $\mathbb{G}_m^{\rm log}$ (resp. $\mathbb{G}_m^{\rm trop}$)-torsor is called a logarithmic (resp. tropical) line bundle.
\end{definition}

\begin{definition}\label{def:divstack} We denote by $\mathbf{Div}$ 
the stack in the strict \'etale topology
on logarithmic schemes whose $S$-points are given by
\begin{enumerate}
    \item [(a)] a log curve $\pi\colon C\to S$, and
    \item [(b)] a tropical line bundle $\mathcal{P}$ 
    on $S$ and an $S$-morphism $\alpha\colon C\to \mathcal{P}$.
\end{enumerate}
Furthermore, we will denote by $\mathbf{Div}_{g,n}$ the component of 
$\mathbf{Div}$ such that the log curves are \emph{stable} of fixed genus
$g$, and where the marked points are labelled by $\{1,\dots,n\}$.
\end{definition}

Let $\pi\colon C\to S$ be a log curve. 
Then, as proven in~\cite[Proposition 4.2.3]{Marcus2017LogarithmicCO},
the $S$-points of $\mathbf{Div}_{g,n}$ whose underlying log curve is given by $C\to S$
are parametrized by sections of $\pi_*(\overline{M}_{C}^{gp})/\overline{M}_S^{gp}$. 
Evaluating an element 
$\alpha\in H^0(S,\pi_*(\overline{M}_{C}^{gp})/\overline{M}_S^{gp})$ 
at a marked point $p_i$  yields
an integer via the isomorphism 
\begin{equation*}
    \overline{M}^{gp}_{C,p_i}/\overline{M}^{gp}_{S,s}\cong\ZZ,
\end{equation*}
which is induced by the local description at markings. This integer is
called the \emph{outgoing slope at the marking} $p_i$.

\begin{definition}\label{def:PLfun} Let $C\to S$ be a log curve over an algebraically closed field. 
A \emph{combinatorial piecewise linear} (PL) 
\emph{function} on the tropicalization $(\Gamma, \ell)$ is a pair 
of functions
\begin{align*}
    F\colon V(\Gamma)&\to \overline{M}_{S}^{gp}, \\
    S\colon L(\Gamma)&\to \ZZ, 
\end{align*}
such that for every edge $e$ connecting two adjacent vertices 
$v,u$, we have
\begin{equation}\label{slopes}
    F(u)-F(v)=s_{v}(e)\ell(e)
\end{equation}
for some integer $s_v(e)$. The integer $s_v(e)$
is called the \emph{outgoing slope at} $e$ on $v$.
\end{definition}

\begin{remark} If $u, v$ are
vertices connected by an edge $e$, then we have 
\begin{equation*}
    s_v(e)=-s_u(e).
\end{equation*}
At this point, we have not imposed any condition on the 
outgoing slopes at the markings. Such conditions will be introduced
in the next subsection when 
we define $\oM^{a}_{g}$.
\end{remark}
\begin{remark}
If $C\to S$ is a log curve over an algebraically closed field, then by 
\cite[Lemma 2.12]{Chen2022ATO}, sections of
$\overline{M}_C^{gp}$ correspond 
bijectively with combinatorial PL functions on 
the tropicalization of $C$. In particular, in that case, 
sections of $\overline{M}_C^{gp}$ which differ by a constant combinatorial PL function are equivalent in $\pi_*(\overline{M}^{gp}_C)/\overline{M}^{gp}_S$.
Hence, geometric $S$-points
of $\mathbf{Div}_{g,n}$ correspond to a log curve $C\to S$
together with a choice of an equivalence class of a combinatorial 
PL function.
    
\end{remark}

In~\cite{Marcus2017LogarithmicCO}, the authors 
construct an Abel-Jacobi map 
\begin{equation*}
aj\colon\mathbf{Div}_{g,n}\to \mathbf{Pic}_{g,n},
\end{equation*}
where $\mathbf{Pic}_{g,n}$ denotes the relative Picard space, 
i.e., the quotient of the Picard stack 
$\mathfrak{Pic}_{g,n}$ with its relative inertia over 
$\oM_{g,n}$. We refer the reader to~\cite[Section 4.3]{Marcus2017LogarithmicCO} for a detailed analysis of this map. The construction is based on the observation that pushing forward the exact sequence
\begin{equation*}
    0\to \cO_{C}^*\to M_{C}^{gp}\to \overline{M}_{C}^{gp}\to 0,
\end{equation*}
along $\pi$ and taking quotients induces a map 
\begin{align*}
    \pi_*(\overline{M}_{C}^{gp})/\overline{M}_S^{gp}&\to R^1\pi_*\cO^*_C \\
       \alpha&\mapsto \cO(\alpha),
\end{align*}
where $\cO(\alpha)$ denotes the line bundle class produced by $\alpha$. 
Moreover, in~\cite[Proposition 3.3.3]{Marcus2017LogarithmicCO}, 
the authors give a more explicit description of
the line bundles $\cO(\alpha)$.  
Let $(C\to S, \alpha)$ be a geometric point
of $\mathbf{Div}_{g,n}$, and let $\nu\colon C^{\nu}\to C$ 
denote the normalization map. The restriction of $\nu^*\cO(\alpha)$ 
to an irreducible component $C_v\subset C^{\nu}$ of the normalization 
is isomorphic to
\begin{equation*}
    \cO_{C_v}(-\sum_{q}\lambda_qq)
\end{equation*}
up to a pullback from $S$,
where the sum runs over all the pre-images $q$ of nodal points 
and markings on $C_v$. The integer $\lambda_q$ is the outgoing slope 
along the edge or  marking corresponding to $q$.
\vspace{2mm}

\subsection{Extending the Abel-Jacobi map}

\begin{definition} Let $k\in\ZZ_{\geq0}$, and let $a\in\ZZ^n$ be a vector of integers such that $|a|=k(2g-2+n)$. We denote by 
$\oM^{a}_{g}$ the open substack 
of $\mathbf{Div}_{g,n}$ parametrizing points $(C\to S, \alpha)$ such that:
\begin{enumerate}
    \item [(i)] For every geometric fiber, the outgoing slopes of $\alpha$ 
    define a $k$-twist compatible with $a$ on the dual graph of the fiber.   
    \item [(ii)] The log structure is minimal with respect to the existence of $\alpha$.
\end{enumerate}
Moreover, we denote by $\oM_{g,f}^{a}$ the space defined in the same way, but considered
as a substack of $\mathbf{Div}_{g,n}$ over the category of fine (and so not necessarily saturated) log schemes.
\end{definition}

\begin{remark} Here minimality is used in the sense of~\cite{Gillam}. 
For a detailed discussion of minimality of points of $\textbf{Div}_{g,n}$
see the proof of~\cite[Theorem 4.2.4]{Marcus2017LogarithmicCO}.
\end{remark}

\begin{remark} We explain the relation of the space defined above
and the spaces constructed in~\cite{Hol} and~\cite{HolSch}. First, 
the space $\oM^{a}_{g}$ is isomorphic to $\oM^{\diamond}$ defined in the former
citation. Indeed, the minimal
log structures of points in $\mathbf{Div}_{g,n}$ yield
\'etale local charts. On the other hand, in~\cite{Hol}, the
author constructs a space by gluing affine patches built using 
toric geometry. These affine patches naturally carry a toric log 
structures, which coincides with the minimal one of points of 
$\oM^{a}_{g}$. 
Furthermore, if one considers the space $\mathbf{Div}_{g,n}$ 
over the category of \emph{fine} log schemes, then
the minimal log structures yield \'etale local charts, 
which are exactly the same as the ones considered in~\cite{HolSch}
(the space constructed in \emph{loc.cit} is denoted by $\oM^{\mathbf{m}}$).
In particular, the space $\oM^{a}_{g,f}$ is isomorphic to the space 
constructed in~\cite{HolSch}.
Finally, as we will also see later, the space 
$\oM^{a}_{g}$ is normal, as it is built using toric affine patches,
and is the normalization of $\oM^{a}_{g,f}$.
\end{remark}

Furthermore, by the definition of $\oM_{g}^{a}$, 
for any $S$-point of $\oM^{a}_{g}$, and every fiber $C_s$ over a geometric point 
$s\in S$, the line bundle
\begin{equation*}
    \omega^{\otimes k}_{C_s,\text{log}}\otimes\cO_{C_s}(\alpha)
\end{equation*}
has multi-degree $\underline{0}$. 
We define the Abel-Jacobi section on $S$-points of $\oM^{a}_{g}$ by
\begin{align*}
    \sigma_{a,k}\colon \oM^{a}_{g}(S)&\to \oJ_{g,n}(S) \\
              (C/S,\alpha)&\mapsto (C/S,\omega_{\rm log}^{k}\otimes\cO(\alpha)),
\end{align*}
where $\oJ_{g,n}\to \oM_{g,n}$ is the universal semi-abelian
Jacobian over $\oM_{g,n}$ parametrizing multi-degree zero line bundles on stable curves.
We define the $0$-section on $S$-points to be the map
\begin{align*}
    \sigma_{0}\colon\oM_{g}^{a}(S)&\to \oJ_{g,n}(S) \\
       (C\to S,\alpha)&\mapsto(C\to S,\cO_{C}).
\end{align*}

\begin{definition} The \emph{double ramification locus}
$\DRL$ in $\oM^{a}_{g}$ is the schematic pullback of the $0$-section along $\sigma_{a,k}$.
\end{definition}
\noindent By~\cite[Proposition 5.3]{Hol}, the morphism $\rho$ 
restricted to $\DRL$ is proper over $\oM_{g,n}$. Now, consider the pullback diagram
\begin{equation*}
    \xymatrix@C=1.6cm{\oJ_{g,n}\times_{\oM_{g,n}}\oM^{a}_{g}\ar[d]
    \ar[r]^{\,\,\,\,\, \rho'\!\!\!\!} &  \oJ_{g,n}\ar[d]  
    \\ \oM^{a}_{g}\ar[r]^{\rho}\ar@/^2.0pc/@[black][u]^{\sigma'_{a,k}} &  
  \overline{\cM}_{g,n}\ar@/_2pc/@{-->}@[black][u]}
\end{equation*}
\noindent where the section $\sigma'_{a,k}$ is induced by the pair 
$(\sigma_{a,k},\text{id}_{\oM^{a}_{g}})$. Let $e$ denote the zero section in $\oJ_{g,n}$, and $e'$ the zero section in $\oJ_{g,n}\times_{\oM_{g,n}}\oM^{a}_{g}$. 
\begin{definition} The \emph{double ramification cycle} 
in $\oM^{a}_{g}$ is defined as
\begin{equation*}
    \text{DRC}=\sigma'^{!}_{a,k}([e']),
\end{equation*}
which is a cycle supported on $\DRL$.
\end{definition}
\begin{definition} The \emph{double ramification cycle} 
in $\oM_{g,n}$ is defined as 
\begin{equation*}
    \DR_{g}(a,k)=\rho|_{\DRL*}\text{DRC}.
\end{equation*} 
\end{definition} 

One may similarly define the double ramification locus
and cycle in $\oM^{a}_{g,f}$~\cite{HolSch}.
We denote this locus by $\DRL_{f}$, which is again proper over $\oM_{g,n}$.

\subsection{\'Etale local description of $\oM_{g}^{a}$ and $\oM^{a}_{g,f}$} 
In the following subsections,
we study the normalization morphism 
$\oM^{a}_{g}\to \oM_{g,f}^{a}$
and its
restriction $\DRL\to \DRL_{f}$. We will then use this 
setup to prove that the lengths of the Artin local rings at generic points of 
$\DRL$ over a generic point of $\DRL_{f}$ are equal (see~\ref{cor:equal-lengths}). 
To do this, we provide a description of the \'etale local charts of the spaces $\oM^{a}_{g}$, $\oM^{a}_{g,f}$, and of a partial normalization of $\oM_{g,f}^{a}$. We obtain these charts
from the minimal log structures of points of $\textbf{Div}_{g,n}$
following~\cite{HolSch}. Let 
\begin{equation*}
   \mathbf{Div}_{g,n}\ \text{and}\ \mathbf{Div}_{g,n}^{f}
\end{equation*}
denote the stack $\mathbf{Div}_{g,n}$ defined in~\ref{def:divstack}
over the category of \emph{fine} and \emph{saturated} (fs) log schemes 
and over the category of \emph{fine} log schemes, respectively.
Furthermore, for a given stable graph $\Gamma$,
we denote by $E$ the set of edges and omit $\Gamma$ for 
notational convenience. 

\subsubsection{Charts for $\oM_{g,f}^{a}$}

Let ${\rm Spec}(k)\to \oM_{g,n}$ be a geometric point of $\oM_{g,n}$ with associated dual graph $\Gamma$.
From the data of $\Gamma$, 
we consider a \emph{combinatorial chart} of $\oM_{g,n}$ 
\begin{equation*}
    \oM_{g,n}\xleftarrow{f} U\xrightarrow{g} \mathbb{A}^{E}
\end{equation*}
in the sense of~\cite[Definition 1.10]{HolSch}, where $U$ is
a scheme, and $\mathbb{A}^{E}$
denotes the affine space of dimension $|E|$ over $k$. Recall that $g$
is smooth and that the origin $0\in \mathbb{A}^E$ lies in the 
image of $g$. The existence of $U$ and the maps $g$ and $f$
follow from the log smoothness of the divisorial log structure
on $\oM_{g,n}$ induced by the boundary $\partial\oM_{g,n}$.
Moreover, by the results of~\cite{Kato}, the space $\oM_{g,n}$ can be covered by combinatorial charts.
From the data of a $k$-twist $I$ on $\Gamma$ compatible with $a$, 
the authors define a monoid 
\begin{equation*}
    N:=\mathbb{N}^{E}\langle\sum_{e\in\gamma}
    I^{\gamma}_e e\rangle_{\gamma \in Y}\subseteq \ZZ^{E},
\end{equation*}
where $Y$ is the set of all cycles $\gamma$ in $\Gamma$, and $I^{\gamma}_e$ 
denotes the value $I(h)$ for $e=(h,h')$ such that $h\in\gamma$
in the direction of the cycle. Using these monoids, they define 
\begin{equation*}
    \mathbb{A}^E_{I,f}:=\text{Spec}(k[N])
\end{equation*}
over $\mathbb{A}^E$. The natural morphism $\NN^{E}\to N$
yields charts for the morphism
$\oM_{g,f}^{a}\to \oM_{g,n}$. In particular, $N$ gives a chart 
for the minimal log structure of a point in $\mathbf{Div}^{f}_{g,n}$
(see the proof of~\cite[Theorem 4.2.4]{Marcus2017LogarithmicCO}).
Fixing a combinatorial chart, we denote by $\oM^{f}_{U,I}$ the pullback of the diagram
\begin{equation}\label{diag:oM_{U,I}^{f}}
    \xymatrix{ & \mathbb{A}^E_{I,f}\ar[d] \\
    U\ar[r] & \mathbb{A}^{E}}.
\end{equation} 
Then, as $I$ varies over the 
$k$-twists of $\Gamma$ compatible with $a$, the spaces $\oM_{U,I}^{f}$ 
glue to a space $M_{U}^{f}$ over $U$, 
which can be upgraded
to a descent datum as $U$ varies over all
combinatorial charts of $\oM_{g,n}$ constructing the space 
$\oM_{g,f}^{a}$ (see~\cite[Section 2.2]{HolSch}). 
\subsubsection{Charts for $\oM_{g}^{a}$}
The saturated case can be treated analogously.
We start again, as in the non-saturated case, 
with a combinatorial chart of $\oM_{g,n}$ associated to a geometric point 
${\rm Spec}(k)\to \oM_{g,n}$
\begin{equation*}
    \oM_{g,n}\xleftarrow{f} U\xrightarrow{g} \mathbb{A}^{E}.
\end{equation*} 
For a given $k$-twist $I$ on the dual graph $\Gamma$ 
of this geometric point compatible with $a$,
we define the space
\begin{equation*}
    \mathbb{A}_{I,fs}^E:=\text{Spec}(k[N^{\rm sat}])
\end{equation*}
over $\mathbb{A}^E$, where $N^{\rm sat}$ denotes the saturation of $N$.
The monoid $N^{\rm sat}$ gives a chart for 
the minimal log structure of $\mathbf{Div}_{g,n}$, and 
the natural morphism $\NN^{E}\to N^{\rm sat}$ is a chart for the 
morphism $\oM^{a}_{g}\to \oM_{g,n}$. We denote by 
$\oM_{U,I}^{fs}$ the space obtained by
pulling back 
$\mathbb{A}_{I,fs}^E\to \mathbb{A}^E$ along $U\to \mathbb{A}^E$. 
Finally, as it is proven in 
~\cite[Corollary 3.13]{Hol}, the spaces $\oM_{U,I}^{fs}$ 
glue together to a space $M_{U}^{fs}$ as $I$ varies over all 
the $k$-twists on $\Gamma$ compatible with $a$.
This can again be upgraded to a descent datum 
as $U$ varies over all combinatorial charts of $\oM_{g,n}$.
Now, in order to have a more explicit description of $\oM^{a}_{g}$, 
we compute $N^{\rm sat}$.
\begin{lemma}\label{lem: minimal-logstr} 
The saturation $N^{\rm sat}$ of $N$ is given by
\begin{equation*}
    \mathbb{N}^{E}\langle\sum_{e\in\gamma}
    \frac{1}{\gcd(\gamma)}I^{\gamma}_e e\rangle_{\gamma \in Y},
\end{equation*}
where $\gcd(\gamma)$ denotes the $\gcd(I(e))_{e\in\gamma}$ of 
all the values $I(e)$ for $e\in\gamma$.
\end{lemma}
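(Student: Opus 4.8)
The plan is to realise $N^{\rm sat}$ as the lattice points of the cone spanned by $N$, and then to identify that set with the monoid in the statement. Since $\NN^{E}\subseteq N$, the groupification is $N^{gp}=\ZZ^{E}$, and for a finitely generated submonoid of a lattice one has the standard description $N^{\rm sat}=(\RR_{\geq 0}N)\cap\ZZ^{E}$. Write $\sigma:=\RR_{\geq 0}N$; it is the rational polyhedral cone spanned by the basis vectors $e$ $(e\in E)$ together with the cycle vectors $v_{\gamma}=\sum_{e\in\gamma}I^{\gamma}_{e}e$. Since $\gcd(\gamma)$ divides each $|I^{\gamma}_{e}|=I(e)$, the vector $w_{\gamma}:=\tfrac{1}{\gcd(\gamma)}v_{\gamma}$ lies in $\ZZ^{E}$ and is the primitive generator of the ray $\RR_{\geq 0}v_{\gamma}$. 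Writing $N'=\NN^{E}+\sum_{\gamma\in Y}\NN\,w_{\gamma}$ for the monoid in the statement, one has $N\subseteq N'\subseteq N^{\rm sat}$: the first inclusion holds because $v_{\gamma}=\gcd(\gamma)\,w_{\gamma}\in N'$, and the second because $\gcd(\gamma)\,w_{\gamma}=v_{\gamma}\in N$ forces $w_{\gamma}\in N^{\rm sat}$, while $\NN^{E}\subseteq N^{\rm sat}$ and $N^{\rm sat}$ is closed under addition. As $N^{\rm sat}$ is the smallest saturated submonoid of $\ZZ^{E}$ containing $N$, and $\RR_{\geq 0}N'=\sigma$ (because $w_{\gamma}$ and $v_{\gamma}$ span the same ray), it remains to prove that $N'$ is saturated, i.e. that $\sigma\cap\ZZ^{E}\subseteq N'$: every lattice point of $\sigma$ should be a non-negative integer combination of the $e$'s and the $w_{\gamma}$'s.

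To establish this I would argue by induction on a complexity of $x\in\sigma\cap\ZZ^{E}$ measuring its distance from $\NN^{E}$; when this distance is $0$ one has $x\in\NN^{E}\subseteq N'$. If $x$ has a negative coordinate along some edge $e_{0}$, then $e_{0}$ lies on a cycle of $\Gamma$, and one uses the $k$-twist conditions of \ref{def:twist} --- in particular the partial-ordering conditions, which force every cycle vector to take both a strictly positive and a strictly negative value --- to select a cycle $\gamma$ through $e_{0}$ with $(w_{\gamma})_{e_{0}}<0$ such that $x-w_{\gamma}$ still lies in $\sigma$ and is strictly closer to $\NN^{E}$; then $x-w_{\gamma}\in N'$ by the inductive hypothesis, hence $x\in N'$.

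The step I expect to be the main obstacle is exactly this choice of $\gamma$: ensuring that $x-w_{\gamma}$ stays inside $\sigma$ while the complexity drops requires controlling the facets of $\sigma$ in terms of the combinatorics of $(\Gamma,I)$, and it is here that the precise arithmetic of the twist --- and hence why the correct rescaling is by $\gcd(\gamma)$ exactly, rather than by a proper divisor or a multiple --- has to be exploited. A second route, at least useful as a cross-check, is to avoid analysing $\sigma$ directly and instead appeal to the toric geometry behind these monoids: $\spec(k[N^{\rm sat}])$ is an affine toric patch of the normal stack $\oM^{a}_{g}$ lying over the combinatorial chart of $\oM_{g,n}$ attached to $(\Gamma,I)$ (compare~\cite{Hol}), so its Hilbert basis consists of the primitive ray generators of $\sigma$ together with whatever extra lattice points are needed, and the content of the lemma is that for cones coming from $k$-twists the only generators required are $\NN^{E}$ and the $w_{\gamma}$. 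I would expect the final proof to combine the reduction of the first paragraph with whichever of the combinatorial induction or the direct chart computation is shorter to make rigorous.
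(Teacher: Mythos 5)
Your reduction coincides with the paper's: one notes $N^{gp}=\ZZ^{E}$ and invokes the standard fact (the paper cites \cite[Corollary 2.3.20]{Ogus_2018}) that $N^{\rm sat}=C_N\cap\ZZ^{E}$, where $C_N$ is the cone spanned by $N$, and the inclusions $N\subseteq N'\subseteq N^{\rm sat}$, with $N'$ the monoid in the statement, are immediate. But everything past this point is the actual content of the lemma, and you do not prove it: the containment $C_N\cap\ZZ^{E}\subseteq N'$ is exactly what the paper's proof disposes of as ``a direct computation,'' whereas in your write-up it is replaced by a proposed induction whose decisive step --- choosing a cycle $\gamma$ so that $x-w_\gamma$ stays in $C_N$ while the complexity drops --- you yourself label ``the main obstacle'' and leave open, together with a second route (Hilbert bases of the toric charts) that is only gestured at. A monoid generated by lattice vectors spanning a cone is in general far from saturated (already $\NN(1,0)+\NN(1,2)$ misses $(1,1)$), so this step cannot be waved through: it is where the combinatorics of the twist must enter, and as written your proposal establishes nothing beyond the trivial inclusions.

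Moreover the descent you sketch cannot work verbatim, which shows the gap is genuine rather than cosmetic. Take two vertices joined by two parallel edges $e_1,e_2$ with $I(e_1)=3$, $I(e_2)=2$ (same sign on the two edges, as the twist axioms require; the vertex condition is easily arranged by choosing genera and leg twists). The cycle vectors are $\pm v_\gamma=\pm(3e_1-2e_2)$ and $\gcd(\gamma)=1$, so here $N'$ is generated by $e_1,e_2$ and $\pm(3e_1-2e_2)$. The lattice point $x=e_2-e_1$ lies in $C_N$, since $x=\tfrac13 e_2+\tfrac13(2e_2-3e_1)$, and even satisfies $3x=e_2-v_\gamma\in N$ (using the reversed orientation of the cycle, i.e.\ the unit $\gamma^{-1}$ of $k[N]$), hence $x\in N^{\rm sat}$; but writing $x=u+m(3e_1-2e_2)$ with $u\in\NN^{2}$, $m\in\ZZ$ forces $m\le -1$ and $m\ge 0$ simultaneously, so no sequence of subtractions of $e_i$'s and $\pm w_\gamma$'s ever certifies $x\in N'$. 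So the inductive step you propose fails already on this configuration, and any complete argument --- or any reading of the statement that makes it hold, e.g.\ a restriction on the twists actually used or a finer interpretation of the generating set indexed by $Y$ --- must confront precisely this B\'ezout-type phenomenon. Since your proposal neither identifies the extra input needed nor completes either of its two suggested routes, it does not prove the lemma; what you supply is the part of the paper's argument that is routine, and what you omit is the part the paper calls the direct computation.
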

\begin{proof} First, we note that $N^{gp}=\ZZ^{E(\Gamma)}$. We denote by
$C_N\subseteq \mathbb{Q}^{E(\Gamma)}$ the $\QQ$-cone spanned by $N$. By 
~\cite[Corollary 2.3.20]{Ogus_2018}, we have that 
\begin{equation*}
    N^{\rm sat}=\ZZ^{E(\Gamma)}\cap C_N,
\end{equation*}
where the intersection is taken inside $\QQ^{E(\Gamma)}$. Then we obtain the
result by a direct computation.
\end{proof}

\subsubsection{Explicit description of the rings $k[N]$ and $k[N^{\rm sat}]$}
For notational convenience, in this subsection we identify the finite set of edges $E$ with $\{e_1,\dots,e_n\}$. 
For each cycle $\gamma\in Y$, we define the following ideal
\begin{equation*}
    J_{\gamma}=\left(\prod_{\substack{e_i\in \gamma \\ I_{e_{i}}^{\gamma}<0}} e_i^{-I_{e_{i}}^{\gamma}}\gamma-
    \prod_{\substack{e_i\in \gamma \\ I_{e_{i}}^{\gamma}>0}} e_i^{I_{e_{i}}^{\gamma}}
    \right).
\end{equation*}
Then
\begin{equation*}
    k[N]\cong\frac{k[ e\ : \ e\in E,\ \gamma^{\pm1}\ :\ \gamma\in Y]}{(A_Y, J_{\gamma}\ :\ \gamma\in Y) },
\end{equation*}
where $A_Y$ denotes the ideal generated by
all the homology relations among the cycles in 
$\Gamma$. Furthermore, since the groupifications $N^{gp}=(N^{\rm sat})^{gp}=\ZZ^E$ are
torsion-free,
both $k[N]$ and $k[N^{\rm sat}]$ are integral 
domains (see~\cite[Proposition 3.3.1]{Ogus_2018}).
The inclusion 
$N\to N^{\rm sat}$ induces a morphism 
\begin{equation*}
    \phi\colon \mathbb{A}_{I,fs}^E\to \mathbb{A}_{I,f}^E.
\end{equation*} 
Moreover, again by~\cite[Proposition 3.3.1]{Ogus_2018}, we see that 
$k[N^{\rm sat}]$ is the integral closure
of $k[N]$ in $\text{Frac}\left(k[N]\right)$, and that $\phi$ 
is the normalization morphism.
We observe that every cycle $\gamma\in Y$ 
induces an integral element, namely
\begin{equation*}
    u_{\gamma} := \frac{\prod_{\substack{e_i\in \gamma \\ 
    I_{e_{i}}^{\gamma}>0}} e_i^{I_{e_{i}}^{\gamma}/\gcd(\gamma)}}
    {\prod_{\substack{e_i\in \gamma \\ I_{e_{i}}^{\gamma}<0}} 
    e_i^{-I_{e_{i}}^{\gamma}/\gcd(\gamma)}}\in k[N^{\rm sat}],\ 
    \ \text{with}\ \ u_{\gamma}^{\gcd(\gamma)}=\gamma.
\end{equation*}
For each such $u_\gamma$, we obtain a relation in $k[N^{\rm sat}]$ 
of the form 
\begin{equation}\label{relation}
    u_{\gamma}\prod_{\substack{e_i\in \gamma \\ I_{e_{i}}^{\gamma}<0}} 
    e_i^{-I_{e_{i}}^{\gamma}/\gcd(\gamma)}-\prod_{\substack{e_i\in \gamma \\ 
    I_{e_{i}}^{\gamma}>0}} e_i^{I_{e_{i}}^{\gamma}/\gcd(\gamma)}=0.
\end{equation}
Note that the elements $u_\gamma$ correspond to the generators 
\begin{equation*}
    \sum_{e_i\in\gamma} \frac{1}{\gcd(\gamma)}I^{\gamma}_{e_i}e_i
\end{equation*} 
in $N^{\rm sat}$. The elements in $\text{Frac}(k[N])$ which 
are integral over $k[N]$ are exactly the elements $u_{\gamma}$ for all $\gamma\in Y$. 
However, in order to further describe $k[N^{\rm sat}]$, we have to 
describe the relations between the elements $u_{\gamma}$. 
We have the following relations:
\begin{enumerate}
    \item [(i)] The relations $u_{\gamma}^{\gcd(\gamma)}=\gamma$.
    \item [(ii)] The relations given by~\ref{relation}. We denote  by $U_{\gamma}$ the ideal
    of $k[N^{\rm sat}]$ generated by these relations.
    \item [(iii)]  The relations among $u_\gamma$ arising from the relations of 
$\gamma$ in the homology of the graph, which we denote by $U_{Y}$.
\end{enumerate}
The above description of the generators and relations of $k[N^{\rm sat}]$ yield
the following isomorphism
\begin{equation*}
    k[N^{\rm sat}]\cong\frac{k[e: e\in E,\ \gamma, \ u_{\gamma}: \gamma\in Y]}
    {(U_{Y}, u_{\gamma}^{\gcd(\gamma)}-\gamma,U_{\gamma} : \gamma\in Y)}.
\end{equation*}

\begin{remark}
Given a $k$-twisted graph $(\Gamma,I)$ compatible 
with $a$, one can recover the spaces $\oM_{U,I}^{fs}$ for the various combinatorial charts of $\oM_{g,n}$ as the pullback of
\begin{equation*}
    \xymatrix{ & \mathbb{A}_{I,fs}^E\ar[d]^{\phi} \\
    \oM_{U,I}^{f}\ar[r] &\mathbb{A}_{I,f}^E}
\end{equation*}
By construction, the morphism $\oM_{U,I}^{f}\to \mathbb{A}_{I,f}^E$
is smooth (see~\ref{diag:oM_{U,I}^{f}}). Moreover, since 
$\phi$ is the normalization morphism, it follows by smooth base change that the induced morphism
\begin{equation*}
    \phi'\colon \oM_{U,I}^{fs}\to \oM_{U,I}^{f}    
\end{equation*}
is also the normalization morphism. In particular, $\oM_{g}^{a}$ is the 
normalization of $\oM_{g,f}^{a}$. 
\end{remark}

\subsubsection{Irreducible components of $\DRL_{f}$ and $\DRL$}\label{subsec:irred-comp}

As shown in~\cite[Lemma 2.13-14]{HolSch}, the map
$\oM^{a}_{g,f}\to \oM_{g,n}$ restricts to a quasi-finite map 
\begin{equation*}
    \DRL_{f}\to \widetilde{\cH}^{k}_{g}(a), 
\end{equation*}
where $\widetilde{\cH}^{k}_{g}(a)$ is the moduli space of twisted
canonical divisors, introduced in~\cite{FarPan,Sch}. 
Moreover, the same authors showed that the dual 
graphs $\Gamma$ of the generic points 
of irreducible components $Z$ of 
$\widetilde{\cH}_{g}^{k}(a)$ are star graphs
admitting a $k$-twist $I$ making the $k$-twisted graph
$(\Gamma,I)$
a $k$-simple star graph, and such that $Z$ is an irreducible
component of $\oM_{\Gamma,I}$. In the proof of~\cite[Proposition 2.19]{HolSch} it is shown that there exists 
exactly one point $p$ in $\DRL_{f}$
lying over each generic point of $\widetilde{\cH}_{g}^{k}(a)$.
Let $\DRL_{f}^{(\Gamma,I)}$ denote an irreducible component of $\DRL_f$, and let $(\Gamma,I)$ be the corresponding 
$k$-simple star graph. Finally, further pulling back 
$\phi\colon \mathbb{A}^E_{I,fs}\to \mathbb{A}^E_{I,f}$ 
along the inclusion $\DRL_{f}^{(\Gamma,I)}\to \oM_{U,I}^{f}$ yields the following cartesian square
\begin{equation*}
    \xymatrix{\DRL^{(\Gamma,I)}\ar[r]\ar[d]_{\phi'|_{\DRL_{fs}^{\Gamma,I}}} & \mathbb{A}_{I,fs}^E\ar[d]^{\phi} \\
           \DRL_{f}^{(\Gamma,I)}\ar[r] & \mathbb{A}_{I,f}^E.}
\end{equation*}
Thus, since $\phi$ is finite and surjective, we conclude
that $\DRL^{(\Gamma,I)}\to \DRL_{f}^{(\Gamma,I)}$ is a finite surjective
morphism. The pullback $\DRL^{(\Gamma,I)}$ is the union 
of all irreducible components corresponding to generic
points of $\DRL$ over the generic point of $\DRL_{f}^{(\Gamma,I)}$.

\begin{corollary}\label{irrcomp:DRL}
The irreducible components of $\DRL$ are 
parametrized by a generic point of $\DRL_{f}$
together with a choice of a point $\widetilde{p}$ of $\DRL$
lying over the generic point $p$.
\end{corollary}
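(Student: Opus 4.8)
The plan is to translate the description of the irreducible components of $\DRL_f$ established above, together with the cartesian squares of the previous subsection, into a statement about generic points. Write $f\colon\oM_g^a\to\oM_{g,f}^a$ for the normalization morphism, which is finite and surjective; since a point of $\oM_g^a$ lies in $\DRL$ if and only if its image lies in $\DRL_f$, the induced map $\DRL\to\DRL_f$, still denoted $f$, is finite and surjective, and $\DRL^{(\Gamma,I)}=f^{-1}(\DRL_f^{(\Gamma,I)})$ for each generic point $p$ of $\DRL_f$, with associated $k$-simple star graph $(\Gamma,I)$ and component $\DRL_f^{(\Gamma,I)}=\overline{\{p\}}$. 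As the $\DRL_f^{(\Gamma,I)}$ exhaust the finitely many irreducible components of $\DRL_f$, we get $\DRL=\bigcup_p\DRL^{(\Gamma,I)}$; and recall from above that $\DRL^{(\Gamma,I)}$ is precisely the union of those irreducible components of $\DRL$ whose generic point lies over $p$, and that $\DRL^{(\Gamma,I)}\to\DRL_f^{(\Gamma,I)}$ is finite and surjective.

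The one substantive point is to show that the fibre of $\DRL^{(\Gamma,I)}\to\DRL_f^{(\Gamma,I)}$ over the generic point $p$ consists exactly of the generic points of $\DRL^{(\Gamma,I)}$. One inclusion is immediate from the description just recalled. For the converse, let $\widetilde p\in\DRL^{(\Gamma,I)}$ lie over $p$, choose an irreducible component $C'$ of $\DRL^{(\Gamma,I)}$ through $\widetilde p$, and suppose $\widetilde p\neq\eta_{C'}$, where $\eta_{C'}$ is the generic point of $C'$. Then $\widetilde p$ is a proper specialization of $\eta_{C'}$ in $C'$, so $\dim\overline{\{\widetilde p\}}<\dim C'$. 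Now $f$ is finite, hence closed, and does not change the dimension of a closed subset; so $f(C')=\overline{\{f(\eta_{C'})\}}=\overline{\{p\}}=\DRL_f^{(\Gamma,I)}$ and $\dim C'=\dim\DRL_f^{(\Gamma,I)}$, while $f(\overline{\{\widetilde p\}})=\overline{\{f(\widetilde p)\}}$ is a closed irreducible subset of $\DRL_f^{(\Gamma,I)}$ of strictly smaller dimension; being proper, it omits the generic point $p$, which contradicts $f(\widetilde p)=p$. Hence $\widetilde p=\eta_{C'}$, a generic point of $\DRL^{(\Gamma,I)}$.

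Granting this, the corollary is formal. Since $\DRL=\bigcup_p\DRL^{(\Gamma,I)}$ and each irreducible component of $\DRL^{(\Gamma,I)}$ is an irreducible component of $\DRL$ with generic point over $p$ (so that the collections of components of $\DRL$ attached to distinct generic points $p$ of $\DRL_f$ are disjoint), the irreducible components of $\DRL$ are exactly the closures $\overline{\{\widetilde p\}}$, as $p$ ranges over the generic points of $\DRL_f$ and $\widetilde p$ over the finitely many points of $\DRL$ above $p$; moreover $(p,\widetilde p)\mapsto\overline{\{\widetilde p\}}$ is injective, since $\widetilde p$ is the generic point of the component and $p=f(\widetilde p)$. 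I expect the second paragraph to be the real content: ruling out that a point of $\DRL$ over a generic point of $\DRL_f$ could fail to be generic in $\DRL$. This uses in an essential way both that $f$ is finite — so that it drops no dimensions and its image of a component of $\DRL^{(\Gamma,I)}$ is all of $\DRL_f^{(\Gamma,I)}$ — and that $\DRL_f^{(\Gamma,I)}$ is irreducible, so that a proper closed subset cannot contain its generic point; the rest is bookkeeping with the cartesian squares.
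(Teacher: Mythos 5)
Your proposal is correct and takes essentially the same route as the paper: the corollary is extracted from the preceding cartesian chart squares, which exhibit $\DRL^{(\Gamma,I)}=f^{-1}(\DRL_{f}^{(\Gamma,I)})\to \DRL_{f}^{(\Gamma,I)}$ as finite and surjective, together with the already-stated description of $\DRL^{(\Gamma,I)}$ as the union of the components of $\DRL$ lying over the generic point $p$. Your dimension-theoretic argument that any point of $\DRL$ over a generic point $p$ of $\DRL_{f}$ must itself be a generic point (a strict specialization would force the image of its closure to be a proper closed subset of the irreducible $\DRL_{f}^{(\Gamma,I)}$ missing $p$) is correct and makes explicit a step the paper leaves implicit.
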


\begin{remark}\label{rem:number-of-points} It can be proven~\cite{PhDGeorgios}, 
using the \'etale local picture of $\oM_{g}^{a}$ and 
$\oM_{g,f}^{a}$, that over the generic point $p$
of $\DRL_{f}^{(\Gamma,I)}$ there are exactly 
\begin{equation*}
    \frac{\prod_{e\in E(\Gamma)}I(e)}{\prod_{v\in V_{\rm out}}\lcm_{e\to v}(I(e))}
\end{equation*}
points in $\DRL$.
\end{remark}

We complete this subsection with a lemma about the
charts $\oM_{U,I}^{fs}$ that will be used in later 
parts of the text to identify the cycle associated to $\DRL$ with
${\rm DRC}$. 
\begin{lemma}\label{lem:l.c.i-charts}~\cite[Lemma 2.7]{HolSch}
Let $(\Gamma,I)$ be a 
$k$-simple star graph. Then $\oM_{U,I}^{fs}$ is a local complete
intersection over $k$.
\end{lemma}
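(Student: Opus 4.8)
The plan is to reduce the statement to a combinatorial assertion about the affine semigroup ring $k[N^{\mathrm{sat}}]$ and then to exploit the very rigid shape of the cone $C_{N}$ attached to a star graph. By construction one has $\oM_{U,I}^{fs}=U\times_{\mathbb{A}^{E}}\mathbb{A}^{E}_{I,fs}$, and the projection $\oM_{U,I}^{fs}\to\mathbb{A}^{E}_{I,fs}=\spec k[N^{\mathrm{sat}}]$ is the base change of the \emph{smooth} morphism $g\colon U\to\mathbb{A}^{E}$ coming from the combinatorial chart. Since a composition of local complete intersection morphisms is again one and smooth morphisms are local complete intersections, it suffices to prove that $k[N^{\mathrm{sat}}]$ is a complete intersection $k$-algebra; in fact I will show it is regular. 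As in the proof of~\ref{lem: minimal-logstr}, $N^{\mathrm{sat}}=\ZZ^{E}\cap C_{N}$ with $C_{N}=\RR_{\geq0}^{E}+W$, where $W\subseteq\RR^{E}$ is the $\RR$-span of the vectors $\sum_{e\in\gamma}I^{\gamma}_{e}e$, $\gamma\in Y$; because $Y$ contains each cycle together with its reverse, $W$ is precisely the image of $H_{1}(\Gamma;\RR)$ under the linear map scaling the $e$-coordinate of a $1$-cycle by the twist value $I(e)$.

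\textbf{Splitting over the outlying vertices.} Here the simple-star hypothesis enters. Every edge joins the central vertex $v_{0}$ to an outlying vertex, so $E=\coprod_{v\in V_{\mathrm{out}}}E_{v}$ with $E_{v}$ the set of edges at $v$, and $\Gamma$ is the wedge at $v_{0}$ of the two-vertex subgraphs $\Gamma_{v}$ on $\{v_{0},v\}$ with edge set $E_{v}$; hence $H_{1}(\Gamma;\RR)=\bigoplus_{v\in V_{\mathrm{out}}}H_{1}(\Gamma_{v};\RR)$. Consequently $W=\bigoplus_{v}W_{v}$ with $W_{v}\subseteq\RR^{E_{v}}$, $C_{N}=\prod_{v}(\RR_{\geq0}^{E_{v}}+W_{v})$, and $N^{\mathrm{sat}}=\bigoplus_{v}\bigl(\ZZ^{E_{v}}\cap(\RR_{\geq0}^{E_{v}}+W_{v})\bigr)$, so $k[N^{\mathrm{sat}}]\cong\bigotimes_{v\in V_{\mathrm{out}}}k[N^{\mathrm{sat}}_{v}]$. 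A tensor product over $k$ of regular (resp.\ complete intersection) $k$-algebras is again such, so it is enough to treat a single outlying vertex.

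\textbf{The one-vertex computation.} Fix an outlying vertex $v$ with $m=|E_{v}|$ parallel edges carrying twist values $kb_{1},\dots,kb_{m}$, where $b_{i}\in\NN$ since $(\Gamma,I)$ is $k$-simple. Put $T=\{i:b_{i}>0\}$ and $S=\{i:b_{i}=0\}$. A short computation of the image $W_{v}$ of $H_{1}(\Gamma_{v};\RR)=\{c:\sum_{i}c_{i}=0\}$ under coordinatewise scaling by $(kb_{i})_{i}$ gives three cases: $W_{v}=0$ if $T=\emptyset$; $W_{v}=\{x:\sum_{i}x_{i}/b_{i}=0\}$ if $S=\emptyset$; and $W_{v}=\RR^{T}$ otherwise. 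In each case $\RR_{\geq0}^{E_{v}}+W_{v}$ is a \emph{smooth} cone --- respectively $\RR_{\geq0}^{E_{v}}$, the half-space $\{x:\sum_{i}x_{i}/b_{i}\geq0\}$, and $\{x:x_{i}\geq0\text{ for all }i\in S\}$. Choosing a splitting of $\ZZ^{E_{v}}$ adapted to a primitive integral functional cutting this cone out yields $N^{\mathrm{sat}}_{v}\cong\NN^{p}\times\ZZ^{q}$ with $p+q=m$ (with $(p,q)$ equal to $(m,0)$, $(1,m-1)$, or $(|S|,|T|)$ in the three cases). Hence $k[N^{\mathrm{sat}}_{v}]$ is a Laurent polynomial ring over $k$, so regular; assembling the tensor decomposition and the smooth base change of the first step, $\oM_{U,I}^{fs}$ is regular over $k$, in particular a local complete intersection.

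\textbf{Main obstacle.} The crux is the explicit computation of $W_{v}$ in the last step, and this is exactly where the simple-star hypothesis is indispensable: because every edge is incident to $v_{0}$, the cone $C_{N}$ is forced to be a product of orthants, half-spaces and linear subspaces, and no genuinely singular toric chart can occur. For a general $k$-twisted graph the subspace $W$ can meet $\RR_{\geq0}^{E}$ so that $\RR_{\geq0}^{E}+W$ is no longer a smooth cone, and this argument breaks down.
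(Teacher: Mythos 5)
Your proof is correct, and it takes a genuinely different route from the paper: the paper gives no argument of its own, simply citing \cite[Lemma 2.7]{HolSch}, a statement proved there for the corresponding \emph{fine} (non-saturated) charts via an explicit presentation of $k[N]$, whereas you work directly with the saturated monoid and prove the stronger assertion that $\oM_{U,I}^{fs}$ is in fact smooth over $k$. Your key observations all check out: since every $\gamma\in Y$ occurs with both orientations (equivalently, the $\gamma$ are units in $k[N]$, exactly as in the paper's presentation), the cone is $C_N=\RR_{\geq0}^{E}+W$ with $W$ the linear image of $H_1(\Gamma;\RR)$; the simple-star hypothesis splits $E$, $H_1$, $W$ and hence $C_N$ and $N^{\rm sat}=\ZZ^{E}\cap C_N$ into blocks indexed by outlying vertices; and in each block the cone is an orthant, a half-space cut out by a primitive integral functional, or an orthant times a coordinate subspace, so $N^{\rm sat}\cong\NN^{p}\times\ZZ^{q}$ and $k[N^{\rm sat}]$ is a Laurent-polynomial ring. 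Combined with smoothness of $U\to\mathbb{A}^{E}$ this yields the lemma, and more: since normalization does not in general preserve the l.c.i.\ property, deducing the statement for the saturated charts from the citation alone is not automatic, so your direct computation actually closes that gap, and it recovers \ref{cor:rank-of-mininal-log} (free characteristic monoid of rank $|V_{\rm out}|$) as a by-product. Two cosmetic remarks: you wisely use only the robust identity $N^{\rm sat}=\ZZ^{E}\cap C_N$ (Ogus) rather than the displayed generator formula for the saturation, which is the safer choice; and ``regular over $k$'' should read ``smooth over $k$'' (equivalent here, as everything is over $\CC$ and each factor is a localization of a polynomial ring, so the tensor-product step is harmless).
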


\subsection{Computation of lengths of DRL} 
This subsection is devoted to showing that the lengths
of Artin local rings at generic points of irreducible components of $\DRL$ over a generic point of $\DRL_{f}$
are equal. To do so, we will employ a variant of 
$\oM_{g,f}^{a}$ introduced in~\cite{HolSch}, which 
was used to compute the lengths of Artin local rings
at generic points of $\DRL_{f}$.

We consider the open substack of $\oM^{a}_{g,f}$ 
whose $S$-points are given by $(C\to S,\alpha)$, 
where, fiberwise, the PL function $\alpha$ has slopes 
divisible by $k$ on the edges of the graph. We denote this 
substack by $\oM^{a}_{g,f,k|I}$, and we consider a 
partial normalization of $\oM^{a}_{g,f,k|I}$
\begin{equation*}
    \oM^{a,1/k}_{g,f}\to \oM^{a}_{g,f,k|I}.
\end{equation*}
The space $\oM^{a,1/k}_{g,f}$ can be described \'etale locally by charts similar to the ones presented in the previous sections. 
In particular, given a $k$-twisted graph $(\Gamma,I)$ where $k$ divides
all values of $I$,
the charts of $\oM^{a,1/k}_{g,f}$ are given by
    \begin{equation*}
   \mathbb{A}^{E}_{I',f}:=\text{Spec}k[N'],\ {\rm where}\  
   N':=\mathbb{N}^{E}\langle\sum_{e\in\gamma}
    I^{'\gamma}_e e\rangle_{\gamma \in Y},
\end{equation*}
and where $I^{'\gamma}_{e}:=I^{\gamma}_{e}/k$. 
If $e\in E$ is an edge of $\Gamma$, we define 
$I'(e):=I(e)/k$.
The structure of $\oM^{a,1/k}_{g,f}$ mirrors that of $\oM^{a}_{g,f}$.
In particular, this space admits an Abel-Jacobi section 
\begin{equation*}
    \oM^{a,1/k}_{g,f}\xrightarrow{\sigma_{1/k}} \oJ_{g,n},
\end{equation*}
a double ramification locus, which we denote by $\DRL^{1/k}_{f}$, defined as the schematic pullback of the
$0$-section along $\sigma_{1/k}$, and a cycle
$\text{DRC}^{1/k}_{f}$ supported on $\DRL^{1/k}_{f}$. 
The authors in~\cite{HolSch}
prove that the lengths of the Artin local rings at 
generic points of $\DRL_{f}^{1/k}$ over the generic point of $\DRL^{(\Gamma,I)}_{f}$,
are all equal to
\begin{equation*}
    \prod_{e\in E}I'(e)=\frac{\prod_{e\in E}I(e)}{k^{|E|}}.
\end{equation*}
In fact, they prove even more by giving an explicit description of the 
Artin local rings at such generic points.
\begin{theorem}~\cite[Theorem 5.2]{HolSch}\label{thm: holsch-length} Let $p$ be a generic point of
$\DRL_{f}^{1/k}$ over the generic point of $\DRL^{(\Gamma,I)}_{f}$.
Let $k(p)$ denote the 
residue field at $p$. Then the $k(p)$-algebra structure can be chosen so that
    \begin{equation*}
        \cO_{\DRL^{1/k}_{f},p}\cong \frac{k(p)[e\ \colon\ e\in E]}{(e^{I'(e)}\ \colon\ e\in E)}.
    \end{equation*}
\end{theorem}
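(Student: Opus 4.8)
The plan is to unwind the explicit presentation of the chart ring $k[N^{\mathrm{sat}}]$ established above, restrict it to the double ramification locus, and localize at the generic point $p$. First I would use the fact that, for a $k$-simple star graph $(\Gamma,I)$ with $k\mid I(e)$ for every edge, the graph has no cycles other than those passing through the central vertex, and in the partially normalized space $\oM^{a,1/k}_{g,f}$ we are working with the twist $I'=I/k$. Since $\Gamma$ is a star graph, it has no cycles at all (every edge is separating once we pass to the star), so the monoid $N'=\NN^E$ and the chart $\mathbb{A}^E_{I',f}=\mathbb{A}^E$ is smooth over $U$; hence $\oM^{a,1/k}_{U,I}$ is smooth over the combinatorial chart $U$ of $\oM_{g,n}$, with $E=E(\Gamma)$ coordinates $e$.

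Next I would identify $\DRL^{1/k}_f$ inside this chart. The double ramification locus is the schematic pullback of the zero section of $\oJ_{g,n}$ along the Abel--Jacobi section $\sigma_{1/k}$, i.e. the locus where the line bundle $\omega^{\otimes k}_{\mathrm{log}}\otimes\O(\alpha)$ is trivial (not merely multidegree zero) on the fibers. On the smooth chart above, restricting the universal line bundle to the boundary stratum $\oM_{\Gamma,I}$ and trivializing vertex by vertex, the obstruction to triviality localizes at the nodes: for each edge $e$, the gluing data for the trivializing sections across the node is governed by the smoothing parameter raised to the outgoing slope $I'(e)$ of $\alpha$ at that edge. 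Concretely, arguing as in \cite[Section 3]{HolSch}, the ideal cutting out $\DRL^{1/k}_f$ in $\O_{\oM^{a,1/k}_{U,I},0}$ is generated by the elements $e^{I'(e)}$, one per edge, up to units coming from the nonvanishing of the section at the central vertex (which is guaranteed because $(\Gamma,I)$ is a \emph{simple} star graph, so the section does not vanish identically along the interior of the central component). This is precisely where the simple-star hypothesis is used: it ensures the ``non-degeneracy'' at $v_0$ that turns each node contribution into a single monomial relation $e^{I'(e)}=0$ rather than a more complicated condition.

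Finally I would pass to the generic point $p$ of an irreducible component of $\DRL^{1/k}_f$ lying over the generic point of $\DRL^{(\Gamma,I)}_f$. Localizing the smooth chart at $p$ kills all coordinate functions transverse to the stratum $\oM_{\Gamma,I}$, leaving exactly the $|E|$ edge-coordinates $e$ and the residue field $k(p)$; the ideal becomes $(e^{I'(e)}:e\in E)$, yielding
\begin{equation*}
    \O_{\DRL^{1/k}_f,p}\cong \frac{k(p)[e:e\in E]}{(e^{I'(e)}:e\in E)}
\end{equation*}
as claimed, and in particular the length is $\prod_{e\in E}I'(e)=\prod_{e\in E}I(e)/k^{|E|}$. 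The main obstacle I anticipate is the second step: making rigorous the claim that the Abel--Jacobi triviality condition reduces \emph{on the nose} to the monomial ideal $(e^{I'(e)})$ — this requires carefully tracking the trivializing sections of the universal bundle over the partial normalization, controlling the ambiguity in the choice of such trivializations (which only contributes units), and invoking the simple-star condition to rule out vanishing of the section along the central vertex. Everything else is a matter of unpacking the chart descriptions already recorded above and a routine localization.
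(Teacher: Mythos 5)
This statement is not proved in the paper at all: it is imported verbatim from~\cite[Theorem 5.2]{HolSch}, so there is no internal proof to match your attempt against, and your proposal has to stand on its own. It does not, for a concrete reason: your first step asserts that a ($k$-simple) star graph ``has no cycles at all'', so that $N'=\NN^E$ and the chart $\mathbb{A}^E_{I',f}$ is smooth over $U$. That is false in this setting. A $k$-simple star graph may have several edges joining the central vertex $v_0$ to the same outlying vertex, so $h^1(\Gamma)>0$ is both allowed and typical; the monoid $N'$ then contains the extra generators $\sum_{e\in\gamma}I^{'\gamma}_e e$ for cycles $\gamma\in Y$, and $\spec k[N']$ is neither smooth nor normal. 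The paper's whole apparatus around this point exists precisely because of these cycles: the explicit presentations of $k[N]$ and $k[N^{\rm sat}]$ with the variables $\gamma$, $u_\gamma$ and the homology relations $A_Y$, the footnote recording $k^{h^1(\Gamma)}$ generic points of $\DRL_f^{1/k}$ over a generic point of $\DRL_f^{(\Gamma,I)}$, and the lemma exhibiting $\spec\cO_{\DRL^{1/k}_f,p}$ inside $\mathbb{A}^{E}_{\DRL^{1/k}_f,I'}$ cut out by $(e^{I'(e)},\ \gamma-w_\gamma,\ A_Y)$. If star graphs were trees, all of this would be vacuous and, e.g., $\DRL^{1/k}_f\to\DRL_{f,k|I}$ would be an isomorphism over these strata, contradicting the $k^{h^1(\Gamma)}$ count. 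So your ``routine localization'' on a smooth $\mathbb{A}^E$ chart does not exist; any correct argument must also show that the cycle coordinates become units in the local ring at $p$ (this is what the relations $\gamma-w_\gamma$ encode), not merely that coordinates transverse to the stratum die under localization.

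The second, independent gap is that the step you yourself flag as the main obstacle --- that the Abel--Jacobi triviality condition cuts out exactly the monomial ideal $(e^{I'(e)}:e\in E)$ up to units, using the simple-star nondegeneracy at $v_0$ --- is precisely the content of~\cite[Theorem 5.2]{HolSch}. Appealing to ``arguing as in [HolSch, Section 3]'' for it makes the proposal circular rather than a proof: one must actually track the trivializations of $\omega_{\rm log}^{\otimes k}\otimes\cO(\alpha)$ over the versal deformation, show the per-edge gluing obstruction is $e^{I'(e)}$ times a unit to all orders in the smoothing parameters, and handle the cycle relations simultaneously. Your outline correctly identifies the expected answer and the final length $\prod_e I'(e)$, but as written it neither supplies that core computation nor works on the correct (singular) local model.
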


Working as in the previous section, we can normalize this space and obtain 
a space $\oM_{g}^{a,1/k}$ and a morphism $\oM^{a,1/k}_{g}\to \oM^{a,1/k}_{g,f}$,
which is the normalization morphism. The construction of
$\oM^{a,1/k}_{g}$ is given \'etale locally by considering the
saturation of the charts of $\oM_{g,f}^{a,1/k}$.
Moreover, we denote by $\DRL^{1/k}$
the double ramification locus in $\oM^{a,1/k}_{g}$.
\begin{remark}\label{rem:comparisonDRL} Up to this point, we have defined several moduli
spaces and double ramification loci. It is thus meaningful to compare
these loci and cycles. In~\cite[Section 2.6]{HolSch} the 
authors give such a comparison for the various DR cycles and loci on the 
spaces $\oM_{g,f}^{a,1/k}$, $\oM_{g,f}^{a}$, and $\oM^{a}_{g}$. In addition,
we can also compare the $\DRL^{1/k}$ with the rest by 
using~\cite[Lemma 2.12]{HolSch} on the proper birational morphism
\begin{equation*}
    \oM^{a,1/k}_{g}\to\oM^{a,1/k}_{g,f}.
\end{equation*}
Moreover, we remark that, according to our discussion on 
the previous subsection, the charts of
$\oM^{a,1/k}_{g}$ are given by
\begin{equation*}
    \mathbb{A}^{E}_{I',fs}:=\text{Spec}k[N^{'\rm sat}],
\end{equation*}
where 
\begin{equation*}
    {N}^{'sat}=\NN^E\langle\sum_{e\in\gamma}
    \frac{1}{\gcd(\gamma')}I^{'\gamma}_e e\rangle_{\gamma \in Y},
\end{equation*}
and where $\gcd(\gamma')$ denotes the value $\gcd(I'(e))_{e\in\gamma}$. 
Furthermore, we observe that $\gcd(\gamma)=k\gcd(\gamma')$.
As a result, the charts of $\oM^{a}_{g}$
for $k$-twists $I$ compatible with $a$, whose values are
divisible by $k$, coincide with those of 
$\oM^{a,1/k}_{g}$.
In that way, we can identify $\oM^{a,1/k}_{g}$ as an open substack
of $\oM^{a}_{g}$. Additionally, the inclusion 
$\oM^{a,1/k}_{g}\to \oM^{a}_{g}$ restricts
to an  open immersion $\DRL^{1/k}\to \DRL$. 
As it is discussed in~\ref{subsec:irred-comp}, 
the dual graph and $k$-twist of  
a generic point of $\DRL$ form a $k$-simple star graph $(\Gamma,I)$, and thus generically the values of the  
$k$-twists $I$ 
are divisible by $k$ on the edges. Therefore, 
the open substack 
$\DRL^{1/k}$ contains all generic points of $\DRL$, and
hence, the computation of
the lengths of the Artin local rings at generic points of $\DRL^{1/k}$ 
is equivalent to computing those of $\DRL$. We provide the following commutative
diagram to summarize the relations of our spaces and the respective double ramification loci:
\begin{equation*}
  \xymatrix{
 & \DRL^{1/k} \ar@{->}[rr]^{{\rm open \ immesion}} \ar@{^{(}->}[ld] \ar@{->}[dd] &  & \DRL \ar@{^{(}->}[ld] \ar@{->}[dd] &  \\
\oM^{a,1/k}_{g} \ar@{^{(}->}[rr] \ar@{->}[rdd] &  & \oM^{a}_{g} \ar@{->}[rdd] &  &  \\
 & \DRL^{1/k}_{f} \ar@{->}[rr] \ar@{^{(}->}[d] &  & \DRL_{f} \ar@{^{(}->}[d] \ar@{->}[r] & \widetilde{\cH}_{g}^{k}(a) \ar@{^{(}->}[d] \\
 & \oM^{a,1/k}_{g,f} \ar@{->}[r] & \oM^{a}_{g,f,k|I} \ar@{^{(}->}[r]^{} & \oM^{a}_{g,f} \ar@{->}[r] & \oM_{g,n}
}
\end{equation*}
\end{remark}

Let $\DRL_{f}^{1/k,(\Gamma,I)}$ denote an irreducible component of  
$\DRL_{f}^{1/k}$ corresponding to a generic point $p$ lying over the generic point of
$\DRL_{f}^{(\Gamma,I)}$\footnote{In~\cite[Lemma 2.5]{HolSch}
the authors show that over the generic point $p$ of $\DRL_{f}^{(\Gamma,I)}$ there are $k^{h^{1}(\Gamma)}$ generic points in $\DRL_{f}^{1/k}$ lying over $p$}. We consider the natural morphism 
\begin{equation*}
    \text{Spec}\cO_{\DRL_f^{1/k},p}\xrightarrow{i} \DRL_{f}^{1/k,(\Gamma,I)},
\end{equation*}
and we post-compose with 
\begin{equation*}
    \DRL_{f}^{1/k,(\Gamma,I)}\xrightarrow{j} \mathbb{A}^E_{I',f}\xrightarrow{g}\mathbb{A}^E,
\end{equation*}
where $j$ is the restriction of $\oM_{U,I'}^{f}\to \mathbb{A}^{E}_{I',f}$. 
We denote the aforementioned composition with $\psi$ (i.e., 
$\psi:=g\circ j\circ i$).
Furthermore, we consider the pullback diagram 
\begin{equation*}
    \xymatrix{ \mathbb{A}^{E}_{\DRL^{1/k}_f,I'}\ar[d]_{g'}\ar[r]^{\psi'}& \mathbb{A}^E_{I',f}\ar[d]^{g} \\
              \text{Spec}\cO_{\DRL^{1/k}_f,p}\ar[r]^{\psi} & \mathbb{A}^E},
\end{equation*}
where 
\begin{equation*}
    \mathbb{A}^{E}_{\DRL^{1/k}_f,I'}:= \text{Spec}k(p)\frac{[e\ \colon\ e\in E, \gamma\ \colon \gamma\in Y]}{(e^{I'(e)}\ : e\in E, A_Y)}.
\end{equation*}
In the equation above, $A_Y$ denotes the ideal generated by the homology relations among cycles in $\Gamma$.
Then we have a section
$\sigma\colon 
\text{Spec}\cO_{\DRL^{1/k}_f,p}\to \mathbb{A}^{E}_{\DRL^{1/k}_f,I'}$
induced by the pair $(\text{id},j\circ i)$. By definition, 
we have 
\begin{equation*}
    \psi'\circ\sigma=j\circ i\ \text{and}\ g'\circ \sigma = 
\text{id}_{\text{Spec}\cO_{\DRL^{1/k}_f,p}}.    
\end{equation*}
\begin{lemma} The morphism ${\rm Spec}\cO_{\DRL^{1/k}_f,p}
\xrightarrow{\sigma} \mathbb{A}^{E}_{\DRL^{1/k}_f,I'}$ is a closed immersion and it is cut out by the ideal
\begin{equation*}
    (e^{I'(e)}\ : e\in E, \gamma-w_\gamma\ :\ \gamma\in Y, A_Y),
\end{equation*} 
where $w_{\gamma}\in \cO^{*}_{\DRL_{f}^{1/k},p}$. Moreover, the invertible elements $(w_{\gamma})_{\gamma\in Y}$ can be chosen to be
$(1)_{\gamma\in Y}$.
\end{lemma}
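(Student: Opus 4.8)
The plan is to pass to coordinate rings throughout. Write $A:=\mathcal{O}_{\DRL^{1/k}_{f},p}$ and, using the explicit presentation recalled above from~\cite{HolSch}, fix an isomorphism $A\cong k(p)[e:e\in E]/(e^{I'(e)}:e\in E)$; let $R=k(p)[e:e\in E,\ \gamma:\gamma\in Y]/(e^{I'(e)}:e\in E,\ A_Y)$ be the coordinate ring of $\mathbb{A}^{E}_{\DRL^{1/k}_{f},I'}$, and let $(g')^{\#}\colon A\to R$ be the map of the cartesian square, i.e.\ the inclusion $e\mapsto e$. Since $g'\circ\sigma=\mathrm{id}$, the ring map $\sigma^{\#}\colon R\to A$ is a retraction of $(g')^{\#}$; in particular it is surjective, so $\sigma$ is a closed immersion, and $\sigma^{\#}$ restricts to the canonical inclusion on $k(p)$ and to the identity on the $e$'s. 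Hence $\sigma^{\#}$ is determined by the elements $w_{\gamma}:=\sigma^{\#}(\gamma)\in A$; and since each $\gamma$ is invertible in $R$ --- the relations $A_Y$ contain $\gamma\cdot\overline{\gamma}=1$, with $\overline{\gamma}$ the reverse cycle --- we get $w_{\gamma}\in A^{*}=\mathcal{O}^{*}_{\DRL^{1/k}_{f},p}$.

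Next I would compute $\ker\sigma^{\#}$. Consider the quotient $q\colon R\to R/(\gamma-w_{\gamma}:\gamma\in Y)$. Setting $\gamma=w_{\gamma}$ turns the relations $A_Y$ into relations among the $w_{\gamma}$ in $A$, and these hold precisely because $\sigma^{\#}$ is a well-defined homomorphism (this also uses $\sigma^{\#}(e^{I'(e)})=0$, which is automatic). Therefore $R/(\gamma-w_{\gamma}:\gamma\in Y)\cong k(p)[e:e\in E]/(e^{I'(e)}:e\in E)=A$, via an isomorphism under which $q$ is identified with $\sigma^{\#}$ (the two maps agree on $k(p)$ and on every generator $e$ and $\gamma$). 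Consequently $\ker\sigma^{\#}=(\gamma-w_{\gamma}:\gamma\in Y)$, which, lifted to the polynomial ring, is exactly $(e^{I'(e)}:e\in E,\ \gamma-w_{\gamma}:\gamma\in Y,\ A_Y)$. This gives the first two assertions.

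Finally, to arrange $w_{\gamma}=1$ I would use the geometric meaning of $w_{\gamma}=\sigma^{\#}(\gamma)$: it is the value at the generic point $p$ of the coordinate $\gamma$, which along $\DRL^{1/k}_{f}$ records the gluing (monodromy) of the line bundle $\mathcal{O}(\alpha)$ around the cycle $\gamma$; on $\DRL^{1/k}_{f}$ the bundle $\mathcal{O}(\alpha)$ agrees with a tensor power of $\omega_{\rm log}$, whose gluing around any cycle is the canonical one, so $w_{\gamma}=1$. Failing a clean argument for this, one can still absorb each unit $w_{\gamma}$ formally by replacing the cycle-coordinate $\gamma$ by $w_{\gamma}^{-1}\gamma$: this is admissible because the $w_{\gamma}$ satisfy the homology relations $A_Y$, so the rescaled generators satisfy the same relations and present $R$ in the same form, and afterwards $\sigma^{\#}$ sends them to $1$, so that the cutting ideal becomes $(e^{I'(e)}:e\in E,\ \gamma-1:\gamma\in Y,\ A_Y)$. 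I expect this normalisation of the $w_{\gamma}$ to be the only genuinely delicate step: it requires either unwinding the construction of $\mathcal{O}(\alpha)$ from the PL function $\alpha$ in~\cite{Marcus2017LogarithmicCO}, or else being content with the coordinate change just described. The remaining steps --- closedness of $\sigma$, the retraction property, and the kernel computation --- are formal once the presentation of $A$ from~\cite{HolSch} is in hand.
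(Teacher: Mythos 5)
Your proposal is correct and follows essentially the same route as the paper: closedness because $\sigma$ is a section (retraction on rings), the identification of the image of $\sigma$ as the graph cut out by $(\gamma-w_\gamma\,:\,\gamma\in Y)$ on top of the ambient relations $(e^{I'(e)},A_Y)$, and the normalisation $w_\gamma=1$ by rescaling the cycle coordinates $\gamma\mapsto w_\gamma^{-1}\gamma$, which is exactly the paper's automorphism argument and is legitimate because the $w_\gamma$ themselves satisfy the homology relations $A_Y$. Two small remarks: the speculative geometric claim that $w_\gamma=1$ holds on the nose is not needed (your fallback coordinate change is the intended argument), and the invertibility of the $w_\gamma$ is more safely deduced from the fact that $\psi'\circ\sigma=j\circ i$ factors through $\mathbb{A}^{E}_{I',f}=\spec k[N']$, where the elements $\gamma$ are units by construction, rather than from reading the relation $\gamma\overline{\gamma}=1$ into $A_Y$, which depends on the orientation conventions for $Y$.
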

\begin{proof} First, since $\sigma$ is a section of the separated morphism $g'$, it is a closed immersion. We denote by $\mathbb{A}^{Y}$ the affine scheme
$\spec k(p)[\gamma\ \colon \gamma\in Y]$. 
We identify $\mathbb{A}^{E}_{\DRL^{1/k}_f,I'}$ 
as the closed 
sub-scheme of 
\begin{equation*}
    \text{Spec}\cO_{\DRL^{1/k}_f,p} \times \mathbb{A}^Y,
\end{equation*} 
cut out by the ideal $A_Y$.
Then, the morphism $g'$ is
the projection to the first factor and hence, to give a section of $g'$ is the 
same as giving a morphism
\begin{equation*}
     \text{Spec}\cO_{\DRL^{1/k}_f,p}\to \mathbb{A}^Y,
\end{equation*}
where the images are compatible with the ideal $A_Y$. That is, 
to define a section we have to give an element $w_\gamma$
of $ \cO^{*}_{\DRL^{1/k}_f,p}$ for each $\gamma\in Y$ compatible
with $A_Y$. This proves the first assertion of the lemma. Furthermore, for the second assertion,
we observe that 
different choices, $(w_{\gamma})_{\gamma\in Y}$
and $(w'_{\gamma})_{\gamma\in Y}$ defining sections $\sigma$ and 
$\sigma'$ yield isomorphic images in 
$\mathbb{A}^{E}_{\DRL^{1/k}_f,I'}$. Indeed, the automorphism
\begin{align*}
    \mathbb{A}^{E}_{\DRL_{f}^{1/k},I}&\to \mathbb{A}^{E}_{\DRL_{f}^{1/k},I} \\
    \gamma&\mapsto\gamma w_{\gamma}w^{'-1}_{\gamma} \\
    e&\mapsto e
\end{align*}
maps the image of $\sigma$ to the image of $\sigma'$. Finally,
we observe that elements in $\cO_{\DRL_{f}^{1/k},p}^*$
are given by polynomials in the edges $e\in E$ with non-zero 
constant term. Thus,
we may assume that $w_{\gamma}=1$
for all $\gamma\in Y$.
\end{proof}

\begin{corollary}\label{cor:equal-lengths} Let 
$\DRL^{(\Gamma,I)}_{f}$ be an irreducible component of $\DRL_{f}$ whose generic dual graph and $k$-twist correspond to a fixed $k$-simple star graph $(\Gamma,I)$, and let $p$ denote its generic point. Then all generic points of $\DRL$ lying over $p$ have isomorphic Artin local rings. In particular, the lengths of the Artin local rings at generic points
of such irreducible components of $\DRL$ are equal.
\end{corollary}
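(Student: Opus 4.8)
The plan is to reduce the statement to the explicit description of Artin local rings given in~\ref{thm: holsch-length}, transported through the normalization morphism $\oM^{a,1/k}_g \to \oM^{a,1/k}_{g,f}$, and then to observe that the fiber of this normalization over a generic point $p$ of $\DRL^{1/k}_f$ consists of points whose local rings are obtained from $\cO_{\DRL^{1/k}_f,p}$ by a ``saturation'' operation that is insensitive to which point in the fiber we pick. By~\ref{rem:comparisonDRL} it suffices to work with $\DRL^{1/k}$ in place of $\DRL$, since the open immersion $\DRL^{1/k}\hookrightarrow \DRL$ contains all generic points of $\DRL$ and identifies the corresponding local rings. So fix a $k$-simple star graph $(\Gamma,I)$ and the associated component $\DRL^{(\Gamma,I)}_f$ with generic point $p$; I want to show all generic points $\widetilde p$ of $\DRL^{1/k}$ over $p$ have isomorphic local rings.

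First I would set up the local picture. Using the charts recalled in~\ref{rem:comparisonDRL}, the map $\oM^{a,1/k}_g\to\oM^{a,1/k}_{g,f}$ is étale-locally the normalization $\spec k[N^{'\mathrm{sat}}]\to\spec k[N']$, which by the explicit presentation of $k[N^{'\mathrm{sat}}]$ is obtained by adjoining the roots $u_{\gamma}$ with $u_\gamma^{\gcd(\gamma')}=\gamma$ for cycles $\gamma\in Y$, subject to the relations~\ref{relation} and the homology relations $U_Y$. Restricting to the double ramification locus and applying the preceding lemma, the local ring $\cO_{\DRL^{1/k}_f,p}$ sits inside $\mathbb{A}^E_{\DRL^{1/k}_f,I'}=\spec k(p)[e:e\in E,\gamma:\gamma\in Y]/(e^{I'(e)},A_Y)$ as the closed subscheme cut out by $(e^{I'(e)}, \gamma-1, A_Y)$; equivalently $\cO_{\DRL^{1/k}_f,p}\cong k(p)[e:e\in E]/(e^{I'(e)}:e\in E)$, recovering~\ref{thm: holsch-length}. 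The fiber of $\oM^{a,1/k}_g\to\oM^{a,1/k}_{g,f}$ over $p$ is then computed by base-changing the normalization map $\spec k[N^{'\mathrm{sat}}]\to\spec k[N']$ along $\spec\cO_{\DRL^{1/k}_f,p}\to\spec k[N']$: the points $\widetilde p$ over $p$ correspond to the choices of compatible roots $u_\gamma$ of the now-invertible-or-nilpotent elements $\gamma$, and for each such choice the local ring of $\DRL^{1/k}$ at $\widetilde p$ is
\[
\cO_{\DRL^{1/k},\widetilde p}\cong \frac{k(\widetilde p)[e:e\in E,\,u_\gamma:\gamma\in Y]}{\bigl(e^{I'(e)}:e\in E,\ u_\gamma^{\gcd(\gamma')}-1,\ U_\gamma,\ U_Y\bigr)}.
\]

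Next I would argue that this ring is independent of $\widetilde p$ up to isomorphism. Since $\gamma\mapsto 1$ on $\cO_{\DRL^{1/k}_f,p}$, each equation $u_\gamma^{\gcd(\gamma')}=1$ is a separable equation over the residue field $k(p)$ (as $\mathrm{char}=0$), so the residue field extension $k(\widetilde p)/k(p)$ and the choice of root $u_\gamma$ among the $\gcd(\gamma')$-th roots of unity is precisely the ambiguity in picking $\widetilde p$; but any two such choices differ by an automorphism of the presentation sending $u_\gamma\mapsto \zeta_\gamma u_\gamma$ for a suitable root of unity $\zeta_\gamma$ and fixing all $e$, which is exactly the Galois-type symmetry of the cover. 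Concretely, the group $\prod_\gamma \mu_{\gcd(\gamma')}$ (modulo the homology relations, i.e.\ $H^1(\Gamma,\ldots)$-many independent cycles) acts transitively on the points of the fiber and acts by ring isomorphisms compatible with the presentations above; hence all the $\cO_{\DRL^{1/k},\widetilde p}$ are isomorphic as local $k$-algebras, and in particular they have equal length over $k(p)$ — indeed length $\prod_{e\in E}I'(e)=\prod_{e\in E}I(e)/k^{|E|}$, unchanged by the adjunction of the $u_\gamma$ since those are roots of unity. This gives the corollary.

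The main obstacle I anticipate is making the last ``transitive symmetry'' step genuinely rigorous at the level of schemes rather than just counting: one must check that base-changing the normalization $\spec k[N^{'\mathrm{sat}}]\to\spec k[N']$ along $\spec\cO_{\DRL^{1/k}_f,p}$ really does produce the presentation displayed above (in particular that the relations $U_\gamma$ and $U_Y$ descend cleanly and no further relations appear), and that the $\mu$-action is well-defined after imposing $\gamma=1$ and passing to the quotient by homology relations. The cleanest route is probably to phrase everything in terms of the monoid algebras: the fiber ring is $k(p)\otimes_{k[N']} k[N^{'\mathrm{sat}}]$, and because $N'\to N^{'\mathrm{sat}}$ is the saturation with torsion-free groupification $\ZZ^E$, the cokernel group $(N^{'\mathrm{sat}})^{gp}/(N')^{gp}$ is finite, and one identifies its character group with $\prod_\gamma\mu_{\gcd(\gamma')}/(\text{homology})$; the action of this group on $k[N^{'\mathrm{sat}}]$ over $k[N']$ then descends to the fiber and permutes its points simply transitively, yielding the isomorphism of local rings. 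Everything else is the bookkeeping of~\ref{thm: holsch-length} and~\ref{rem:comparisonDRL}, which I would cite rather than reprove.
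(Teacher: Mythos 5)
Your overall route is the paper's: reduce to $\DRL^{1/k}$ via \ref{rem:comparisonDRL}, base-change the chart-level normalization $\spec k[N'^{\rm sat}]\to \spec k[N']$ along $\spec\cO_{\DRL^{1/k}_f,p}$, and identify the points over $p$ with choices of roots of unity for the $u_\gamma$ (the paper splits the same fiber ring by Sunzi's remainder theorem along the factorization of $u_\gamma^{\gcd(\gamma')}-1$; note that your displayed ring is the whole fiber, not the local ring at a single $\widetilde p$). The genuine gap is in the symmetry step that is supposed to identify the components. The map $u_\gamma\mapsto\zeta_\gamma u_\gamma$ ``fixing all $e$'' is \emph{not} a ring automorphism: it does not preserve the relations \ref{relation}, which couple $u_\gamma$ to monomials in the edge variables. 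For instance, take two vertices joined by two edges with $I'(e_1)=I'(e_2)=2$, so $\gcd(\gamma')=2$ and the fiber is $k(p)[e_1,e_2,u]/(e_1^2,e_2^2,u^2-1,ue_2-e_1)$; sending $u\mapsto -u$ and fixing $e_1,e_2$ sends $ue_2-e_1$ to $-ue_2-e_1\equiv -2e_1\neq 0$, so it is not an automorphism. The two components here are $k(p)[e_1,e_2]/(e_1^2,e_2^2,e_1\mp e_2)$, and the isomorphism between them must rescale an edge variable. Your proposed ``cleanest route'' also collapses: since $N'^{gp}=(N'^{\rm sat})^{gp}=\ZZ^{E}$ (this is exactly how the paper sees that $k[N]$ and $k[N^{\rm sat}]$ are domains), the quotient of groupifications is trivial and cannot act transitively on a fiber with several points. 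The symmetries that do the job are roots of unity acting on the edge coordinates $e$ (characters of $\ZZ^E$ trivial on the cycle monomials after setting $\gamma=1$), which then induce the rescaling of the $u_\gamma$; equivalently, one decomposes the fiber as in the paper and matches the non-zero factors by rescaling the $e$'s.

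A second, related error is the parenthetical length claim: the length at $\widetilde p$ is \emph{not} $\prod_e I'(e)$, and the saturation is not ``length-preserving''. Once $u_\gamma$ is specialized to a unit, \ref{relation} becomes a new relation among the nilpotent variables $e$, strictly cutting the length whenever $h^1(\Gamma)>0$ (in the example above each component has length $2$, not $4$). The correct values are those recorded in \ref{rem:number-of-points} and \ref{rem:DRL-length}: the length of $\cO_{\DRL^{1/k}_f,p}$, namely $\prod_e I'(e)$, is distributed over several points of $\DRL^{1/k}$. This does not affect the statement of the corollary (which only asserts the local rings over a fixed $p$ are mutually isomorphic), but as written both the automorphism and the length computation would fail, so the decisive step needs to be replaced by the edge-rescaling argument sketched above.
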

\begin{proof} First, we recall that, by~\ref{rem:comparisonDRL}, it suffices to prove the corollary above for $\DRL^{1/k}$. Let 
$\DRL_{f}^{1/k,(\Gamma,I)}$ be an irreducible component
of $\DRL_{f}^{1/k}$ whose generic dual graph and $k$-twist
are correspond to a 
$k$-simple star graph $(\Gamma,I)$ and let $p$
denote its generic point.
We consider the following pullback diagram
\begin{equation*}
    \xymatrix{\coprod_{\Tilde{p}\mapsto p}\text{Spec}\cO_{\DRL^{1/k},\Tilde{p}}\ar[r]\ar[d] & \DRL^{1/k,(\Gamma, I)}\ar[d]\ar[r] & \mathbb{A}^E_{I',fs}\ar[d] \\
    \text{Spec}\cO_{\DRL_{f}^{1/k},p}\ar[r] & \DRL^{1/k,(\Gamma, I)}_f\ar[r] & \mathbb{A}^E_{I',f}}
\end{equation*}
On the other hand, we can compute the pullback of the diagram,
\begin{equation}\label{fig:pbODRL_p}
    \xymatrix{ & \mathbb{A}^E_{I',fs}\ar[d] \\
    \text{Spec}\cO_{\DRL_{f}^{1/k},p}\ar[r]_{\ \ \ \ \ \psi'\circ\sigma} & \mathbb{A}^E_{I',f}}
\end{equation}
say $X$, which will be isomorphic to 
$\coprod_{\Tilde{p}\mapsto p}\text{Spec}\cO_{\DRL,\Tilde{p}}$.
All schemes in diagram~\ref{fig:pbODRL_p} are affine,
so, in order to compute the pullback we only have to compute 
the tensor product. We obtain that $X$ is given by 
\begin{equation*}
    X:=\text{Spec}\frac{k(p)[e\ : e\in E, u_{\gamma}\ : \gamma\in Y]}{(U_Y,U_{\gamma}, u_{\gamma}^{\gcd(\gamma')}-1:\ \gamma\in Y, e^{I'(e)}\ :\ e\in E)}.
\end{equation*}
We recall the factorization
\begin{equation*}
    u_{\gamma}^{\gcd(\gamma')}-1=\prod_{i=1}^{\gcd(\gamma')}\left(u_{\gamma}-\zeta_{\gcd(\gamma')}^i\right)   ,
\end{equation*}
where $\zeta_{\gcd(\gamma')}$ is a primitive $\gcd(\gamma')$-root of unity. 
For a given $\gamma$, the ideals generated by 
$(u_\gamma-\zeta_{\gcd(\gamma')}^i)$ for all $i$ are pairwise coprime. Thus, using Sunzi's remainder theorem iteratively
to decompose $X$ into $\prod_{\gamma\in Y}\gcd(\gamma')$ components. Of course, not all components will be non-zero due to the relations of the ring. 
However, all the non-zero rings
appearing in the decomposition of $X$ are clearly isomorphic.
\end{proof}

\begin{remark}\label{rem:DRL-length} In~\cite{HolSch}
the authors show that the lengths of the Artin local rings
at generic points of $\DRL_{f}$ with generic dual graph and $k$-twist
corresponding to a fixed $k$-simple star graph $(\Gamma,I)$
is given by
\begin{equation*}
    \frac{\prod_{e\in E(\Gamma)}I(e)}{k^{|V_{\rm out}|}}.
\end{equation*}
Now, using~\cite[Lemma 2.12]{HolSch} 
on the proper birational
morphism $\oM_{g}^{a}\to \oM_{g,f}^{a}$, together 
with~\ref{rem:number-of-points} and the corollary above,
we obtain that the lengths of the Artin local rings at generic points of $\DRL$ over generic points of $\DRL_{f}$
as described above are given by 
\begin{equation*}
    \frac{\prod_{v\in V_{\rm out}}\lcm_{e\to v}(I(e))}{k^{|V_{\rm out}|}}.
\end{equation*}
\end{remark}

\subsection{Extending the Abel Jacobi map for roots} Throughout
this section, we fix a vector of \emph{odd} integers $a\in\ZZ^n$, and
an odd integer $k\in\ZZ_{\geq1}$ such that 
$$|a|=k(2g-2+n).$$ 
We denote by $\cC_{g,n}^{1/2}\to \cM_{g,n}^{1/2}$ the universal curve
over $\cM_{g,n}^{1/2}$, and by 
$\cL\to \cC_{g,n}^{1/2}$ the universal spin structure. Under these assumptions, and similarly to
the case of $\cM_{g,n}$,
we define the section
\begin{align*}
    \sigma_{a,k}^{1/2}&\colon \cM_{g,n}^{1/2}\to \mathcal{J}_{g,n}\\
            (C,\cL)&\mapsto \left(C,\cL\left(-\sum_{i=1}^n\frac{a_i-1}{2}p_{i}\right)\otimes
            \omega_{\rm log}^{\frac{k-1}{2}}\right),
\end{align*}
where $\cJ_{g,n}$ is the universal Jacobian over $\cM_{g,n}^{1/2}$. 
We denote by $\cJ^{qst}_{g,n}$ the unique extension of $\cJ_{g,n}$
over $\oM_{g,n}^{1/2}$ parametrizing multi-degree $\underline{0}$ line bundles on quasi-stable curves. 
As explained in the introduction, the section $\sigma_{a,k}^{1/2}$ does not extend to $\oM_{g,n}^{1/2}$.
Similarly to the non-spin case, to define the birational model 
$$\rho_{\frac{1}{2}}\colon\oM_{g}^{1/2,a}\to \oM_{g,n}^{1/2}$$ which extends $\sigma_{a,k}^{1/2}$ we
consider
the component of $\mathbf{Div}$ parametrizing \emph{quasi-stable} log
curves of a fixed genus $g$, and where the marked points are labeled
by $\{1,\dots,n\}$. 
We denote this space by $\mathbf{Div}_{g,n}^{qst}$. 
\begin{definition}~\label{def:twist-on-gamma_w} Let $(\Gamma,w)$ be a stable graph and 
a $2$-weighting compatible with $a$. Then a \emph{half} $k$-\emph{twist on $\Gamma_w$ compatible with 
$a\in\ZZ^n$}
is a function $J\colon H(\Gamma_w)\to \ZZ$ such that:
\begin{enumerate}
    \item [(i)] For all edges $e=(h,h')$ we have 
    \begin{equation*}
        J(h)+J(h')=0.
    \end{equation*}

    \item [(ii)] For all legs $\ell_i$ we have 
    \begin{equation*}
        J(\ell_i)=\frac{a_i-1}{2}.
    \end{equation*}
    \item [(iii)]\label{cond2} For all $v\in V(\Gamma)
    \subseteq V(\Gamma_w)$ we have 
\begin{equation*}
    \sum_{h\in H(v)}J(h)=\frac{k(2g(v)-2+n(v))-\sum_{h\in H(v)}w(h)}{2}.
\end{equation*}
Here we recall that for vertices $v\in V(\Gamma)$ we can identify $H(v)$ of $\Gamma_w$ with $H(v)$ of $\Gamma$.
    \item [(iv)] For all $v\in V(\Gamma_{w})\setminus V(\Gamma)$ we have 
    \begin{equation*}
        \sum_{h\in H(v)}J(h)=w(e_v),
    \end{equation*}
\end{enumerate}
\noindent 
where $e_{v}$ denotes the edge of $\Gamma$ subdivided by inserting 
$v$ in the construction of $\Gamma_{w}$ (see the start of~\ref{subsec:strat-of-roots} for the definition of $\Gamma_{w}$).
\end{definition}

\begin{remark}\label{roottwist} 
Let $\Gamma$ be a stable graph, $w$ a $2$-weighting compatible with $a$, 
and $J$ a half $k$-twist on $\Gamma_w$ compatible with 
$a$. 
Then, using $J$, we can define a $k$-twist 
on $\Gamma$ compatible with $a$ as follows.
Under the identification 
of $H_{\Gamma}(v)$ with the set $H_{\Gamma_w}(v)$ for all $v\in V(\Gamma)$,
we write $h$ for a half-edge in $H(\Gamma)$
and $\overline{h}$ for the corresponding half edge in $H(\Gamma_w)$.
Then the assignment
\begin{equation*}
    h\mapsto 2J(\overline{h})+w(h)
\end{equation*}
defines a $k$-twist $I$ on $\Gamma$ compatible with $a$. Indeed, let 
$e=(h,h')$ be an edge of $\Gamma$. Then we have 
\begin{equation*}
    2J(\overline{h})+w(h)+2J(\overline{h}')+w(h')
    =2(J(\overline{h})+J(\overline{h}')+w(h))=0.
\end{equation*}
Here we used that $w(h)=w(h')$ and the fact that 
$J(\overline{h})+J(\overline{h}')=-w(e)$. For all legs
$\ell_i\in L(\Gamma)$, we have 
\begin{equation*}
    I(\ell_i)=2J(\ell_i)+w(\ell_i)=a_i.    
\end{equation*}
Finally,
by condition $(iii)$ of~\ref{cond2}, the following equation holds  
for all vertices $v\in V(\Gamma)$
\begin{equation*}
    \sum_{h\in H(v)}(2J(\overline{h})+w(h))=k(2g(v)-2+n(v)).
\end{equation*}
We will call
such $k$-twists \emph{associated to} $(\Gamma,w,J)$, or to $(J,w)$
when $\Gamma$ is clear from the context. One can observe 
that for a given stable graph $\Gamma$,
the above assignment yields a bijection of sets
\begin{equation*}
    \coprod_{w}\{\text{half }k\text{-}\text{twists}\ J\  \text{on}\ \Gamma_{w}\ \text{compatible with}\ a\}\to \{k\text{-twists}\ I\ \text{on}\ \Gamma\ \text{compatible with}\ a \}.
\end{equation*}
\end{remark} 

\begin{definition}\label{def:half-Div} Let $a\in\ZZ^{n}$ be a vector of odd integers. 
We denote by $\mathbf{Div}^{1/2}_{g,a}$ 
the open substack of $\mathbf{Div}^{qst}_{g,n}$ parametrizing points 
$(\mathcal{C}\to S, \alpha)$ with the minimal log structure and
such that, fiberwise, for all geometric points $s\in S$, 
the dual graph of $\mathcal{C}_{s}$ is given by a
quasi-stable graph $\Gamma_{w}$ for some $2$-weighting 
$w$, and the outgoing slopes of $\alpha$ are given by a half $k$-twist on the dual graph of 
$\mathcal{C}_{s}$ compatible with $a$.
\end{definition}
\begin{definition}\label{def:half-bir-model} We denote by $\oM^{1/2,a}_{g}$ the space obtained
as the pullback\footnote{Here the pullback is taken in the category of stacks.} of the diagram
\begin{equation*}
    \xymatrix{& \oM^{1/2}_{g,n}\ar[d]^{\pi}\\
           \mathbf{Div}^{1/2}_{g,a}\ar[r]& \oM^{qst}_{g,n},}
\end{equation*}
\noindent where the horizontal and the vertical arrows are defined by
forgetting, respectively, the section $\alpha$ and the spin structure.  
We denote by 
$\rho_{\frac{1}{2}}\colon \oM^{1/2,a}\to \oM^{1/2}_{g,n}$ the pullback 
morphism.     
\end{definition}
\noindent Let $S$ be a geometric point of $\oM^{1/2,a}_{g}$, that is,
a tuple $((\mathcal{C}/S,\cL,\phi),\alpha)$. 
Then, by the definition of $\alpha$, one can see that the line bundle
\begin{equation*}
    \cL\otimes
            \omega_{\rm log}^{\frac{k-1}{2}}\otimes\cO(\alpha)
\end{equation*}
has multi-degree $\underline{0}$. Thus, as in the 
previous section, we have an Abel-Jacobi map defined on 
$S$-points as
\begin{align*}
    \sigma_{a,k}^{1/2}\colon \oM^{1/2,a}_{g}(S)&\to \mathcal{J}^{qst}_{g,n}(S) \\
              ((\mathcal{C}\to S,\cL,\phi),\alpha)&\mapsto \left(\mathcal{C}\to S,\cL\otimes
            \omega_{\rm log}^{\frac{k-1}{2}}\otimes\cO(\alpha)\right),
\end{align*}
\noindent where $\mathcal{J}^{qst}_{g,n}$ parametrizes multi-degree $\underline{0}$ line bundles on quasi-stable curves.

\begin{definition} We define the double ramification 
locus $\DRL^{1/2}$ in $\oM^{1/2,a}_{g}$ to be the schematic 
pullback of the $0$-section along $\sigma_{a,k}^{1/2}$.
\end{definition}
\noindent By~\cite[Proposition 4.5.3]{Marcus2017LogarithmicCO}, the morphism $\rho_{\frac{1}{2}}$ 
restricted to $\DRL^{1/2}$ is proper over $\oM^{1/2}_{g,n}$. Now, consider the cartesian diagram 
\begin{equation*}
    \xymatrix@C=1.6cm{
  \mathcal{J}_{g,n}^{qst}\times_{\oM^{1/2}_{g,n}}\oM^{1/2,a}_{g}\ar[d]
    \ar[r]^{\,\,\,\,\, \rho_{\frac{1}{2}}'\!\!\!\!\!}  & \mathcal{J}_{g,n}^{qst}\ar[d]  
    \\ 
  \oM^{1/2,a}_{g}\ar[r]^{\rho_{\frac{1}{2}}}\ar@/^2.0pc/@[black][u]^{\sigma_{a,k}^{1/2'}}  & 
  \overline{\cM}^{1/2}_{g,n}\ar@/_2pc/@{-->}@[black][u]}
\end{equation*}
\noindent Here again, $\sigma_{a,k}^{1/2'}$ denotes the section induced by 
$(\sigma_{a,k}^{1/2},\text{id}_{\oM^{1/2,a}})$. Let
$e$ denote the image $0$-section in $\mathcal{J}^{qst}_{g,n}$, and $e'$ the pullback of the $e$ in $\mathcal{J}^{qst}_{g,n}\times_{\oM^{1/2}_{g,n}}\oM_{g}^{1/2,a}$.

\begin{definition} We define the \emph{double ramification cycle}
in $\oM^{1/2,a}_{g}$ to be 
\begin{equation*}
    \text{DRC}^{1/2}=\sigma^{1/2'!}_{a,k}([e']),
\end{equation*}
which is a cycle supported on $\DRL^{1/2}$.
\end{definition}

\begin{definition}\label{def:spinDR} The \emph{double ramification cycle} 
in $\oM_{g,n}^{1/2}$ is defined as 
\begin{equation*}
    \DR_{g}^{1/2}(a,k)=\rho_{\frac{1}{2}}|_{\DRL^{1/2}*}
    \text{DRC}^{1/2}. 
\end{equation*} 
Moreover, we define the \emph{spin double ramification cycle} as
\begin{equation*}
    \DR_{g}^{\pm}(a,k)=2\epsilon_*(\DR_{g}^{1/2}(a,k)\cdot[\pm]).
\end{equation*}
\end{definition}

\subsection{Relation between \texorpdfstring{$\oM^{a}_{g}$}{} and 
\texorpdfstring{$\oM^{1/2,a}_{g}$}{}.}
Now that we have defined the spaces $\oM_{g}^{a}$ and 
$\oM^{1/2,a}_{g}$ and their associated 
double ramification loci, it is
important to study their relationship. At the level
of smooth curves we have a commutative diagram 
\begin{equation*}
    \xymatrix{\cM_{g,n}^{1/2}\ar[r]^{\sigma_{a,k}^{1/2}}\ar[d]_{\epsilon}& \cJ_{g,n}\ar[d]^{\otimes 2} \\
    \cM_{g,n}\ar[r]_{\sigma_{a,k}}& \cJ_{g,n}.}
\end{equation*}
Moreover, after a straightforward computation one verifies that
$\epsilon$ restricts to $\DRL^{1/2}\to \DRL$, and
that this restriction is actually a $\mu_{2}$-gerbe. 
In this section, we extend this picture to the birational
models $\oM_{g}^{1/2,a}\to \oM_{g}^{a}$ (see~\ref{mu2gerbe} and~\ref{mu_2DR:gerbe}).

We begin with 
the following key observation. Let $\mathcal{C}\to S$ be a 
family of quasi-stable
models of $C\to S$. We denote by 
\begin{equation*}
    \mathbf{st}\colon \mathcal{C}\to C
\end{equation*}
the stabilization morphism over $S$. Then any line bundle $\cL$ on 
$\mathcal{C}$, which is fiberwise of degree $0$ 
on every exceptional component, is a pullback from $C$ in
a unique way. Now, let $(\mathcal{C}\to S, \cL,\phi)$ be a geometric
point of $\oM^{1/2}_{g,n}$, and let $(\Gamma,w)$ be the corresponding 
stable graph and $2$-weighting compatible with $a$. 
By~\cite[Lemma 2.2.5]{Chiodo2006TowardsAE}
we obtain that 
\begin{equation*}
    \cL^{\otimes2}\cong \omega_{\rm log}\otimes \cO(D)
\end{equation*}
where $D$ is a Cartier divisor on $\mathcal{C}$. In~\cite[Chapter 6]{Holmes2022LogarithmicMO} 
the authors give a translation of $D$ to a
piecewise linear function in the following sense; they define
a piecewise linear function $\beta$ on the tropicalization of 
$\mathcal{C}$ such that 
\begin{equation*}
    c_1(\cO(\beta))=D.
\end{equation*}
Additionally, they prove that up to the addition of a constant, $\beta$ is given by
\begin{equation}\label{def:betaPL}
    \beta(v)=\left\{\begin{matrix}
 0 & \text{for all}\  v\in V(\Gamma)\\ 
 -w(e_v)\frac{\ell(e_v)}{2} & \text{otherwise,}
\end{matrix}\right.
\end{equation}
where $e_v$ is the edge of $\Gamma$ corresponding to the exceptional vertex 
$v$ (see~\ref{def:twist-on-gamma_w} (iv) for the definition
of $e_{v}$). It follows that the sum of outgoing slopes of $\beta$ on 
a vertex $v$ corresponding to an exceptional component
is $-2w(e_v)$. Now, let $((\mathcal{C}\to S, \cL,\phi),\alpha)$
be a geometric point of $\oM^{1/2,a}_{g}$, let 
$\Gamma_{w}$ be the dual graph of $\cC$ for some  
$2$-weighting $w$, and let $J$ be the half $k$-twist compatible with $a$ corresponding to the slopes of $\alpha$ (see~\ref{def:half-Div,def:half-bir-model}). Then we have 
\begin{equation*}
    \left(\cL\otimes
            \omega_{\rm log}^{\frac{k-1}{2}}\otimes\cO(\alpha)\right)^{\otimes2}\cong\omega^{\otimes k}_{\rm log}\otimes\cO(\beta)\otimes\cO(2\alpha).
\end{equation*}

\noindent Note that the sum of outgoing slopes 
of $2\alpha + \beta$  on every vertex corresponding to an exceptional
component is $0$. Indeed,
let $v$ be such a vertex, 
and let $h,h'$ be the two half edges incident to $v$. The
sum of the outgoing slopes of $2\alpha + \beta$ on $v$ is
given by 
\begin{equation*}
    2J(h)-w(e_v)+2J(h')-w(e_v)=2(J(h)+J(h')-w(e_v))=0.
\end{equation*}
The equation above holds by condition $(iv)$ of~\ref{cond2}.
Hence, $\cO(2\alpha+\beta)$ is a pullback from $C$ in a unique way.
In particular, there exists a PL function $\alpha'$ on the tropicalization of $C$
such that
\begin{equation*}
    \mathbf{st}^*(\alpha')=2\alpha +\beta,
\end{equation*}  
and which is defined, up to an addition of a constant, by 
\begin{equation*}
    \alpha'(u)=2\alpha(u)
\end{equation*}
for all vertices $u\in V(\Gamma)$. Note also that if the slopes of 
$\alpha$ are given by a half $k$-twist $J$ compatible with $a$, then the slopes of $\alpha'$ are given
by the $k$-twist $I$ associated to the pair $(J,w)$ (see~\ref{roottwist}).
Moreover, the following isomorphism holds
\begin{equation*}
    \mathbf{st}^*\left(\omega_{\rm log}^{\otimes k}\otimes\cO(\alpha')\right)\cong
  \omega^{\otimes k}_{\rm log}\otimes\cO(\beta)\otimes\cO(2\alpha).
\end{equation*}
We define a map on $S$-points 
\begin{align*}
    \epsilon_{a}\colon \oM^{1/2,a}_{g}&\to \oM^{a}_{g}\\
          ((\mathcal{C}\to S,\cL,\phi),\alpha)&\mapsto(C\to S, \alpha'),
\end{align*}
where $C$ is the stabilization of $\mathcal{C}$. 

\begin{proposition}\label{mu2gerbe} The space $\oM^{1/2,a}_{g}$ fits in a commutative diagram
\begin{equation}\label{diagram: prop3.26}
    \xymatrix{ \oM^{1/2,a}_{g}\ar[r]^{\sigma^{1/2}_{a,k}}\ar[d]_{\epsilon_{a}}
                & \mathcal{J}^{qst}_{g,n}\ar[d]^{\otimes2} \\
           \oM^{a}_{g}\ar[r]_{\sigma_{a,k}} & \oJ_{g,n}.}
\end{equation}
\noindent Moreover, $\epsilon_{a}\colon \oM^{1/2,a}_{g}\to \oM^{a}_{g}$ 
restricts to a proper morphism $\DRL^{1/2}\to \DRL$.
\end{proposition}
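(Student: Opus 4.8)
Here is a plan for proving the final statement of \ref{mu2gerbe}.

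\medskip

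The plan is to (1) confirm that $\epsilon_a$ is a well-defined morphism $\oM^{1/2,a}_g\to\oM^a_g$, (2) check that the square~\ref{diagram: prop3.26} commutes, (3) deduce that $\epsilon_a$ carries $\DRL^{1/2}$ into $\DRL$, and (4) deduce properness of the resulting morphism by a cancellation argument. Almost all of the geometric input has already been assembled in the paragraphs immediately preceding the proposition, so the proof is largely a matter of organising it; the one genuinely technical point is the comparison of minimal log structures in Step~(1), which I expect to be the main obstacle.

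\emph{Step (1).} On $S$-points $\epsilon_a$ is the assignment $((\mathcal{C}\to S,\cL,\phi),\alpha)\mapsto(C\to S,\alpha')$ with $C=\mathbf{st}(\mathcal{C})$ and $\mathbf{st}^*\alpha'=2\alpha+\beta$, which is well posed because $\cO(2\alpha+\beta)$ has multidegree $\underline{0}$ on every exceptional component and hence descends uniquely along $\mathbf{st}$, and it is visibly functorial in $S$. The point requiring work is that $(C,\alpha')$ lies in the \emph{open} substack $\oM^a_g\subseteq\mathbf{Div}_{g,n}$: the outgoing slopes of $\alpha'$ form a $k$-twist compatible with $a$ by~\ref{roottwist}, and one must check that the log structure carried by $S$ — which is minimal for the existence of $\alpha$ on the quasi-stable curve $\mathcal{C}$ — is also minimal for the existence of $\alpha'$ on $C$. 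I would verify this on the étale-local charts of $\oM^a_g$ and of $\oM^{1/2,a}_g$ (see~\ref{subsec:strat-of-roots},~\ref{lem: minimal-logstr} and the surrounding discussion), showing that the minimal monoid attached to a half $k$-twist $J$ on a quasi-stable graph $\Gamma_w$ is canonically isomorphic to the one attached to the associated $k$-twist $I$ on $\Gamma$; the isomorphism is the one dictated by $\mathbf{st}^*\alpha'=2\alpha+\beta$ together with the explicit formula~\ref{def:betaPL} for $\beta$ on the exceptional vertices, so that the bijection of~\ref{roottwist} is promoted to an identification of charts. \textbf{This log-theoretic bookkeeping is the main obstacle of the proof.}

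\emph{Step (2).} Along the right-then-down path, $((\mathcal{C},\cL,\phi),\alpha)$ maps to the class on $C$ obtained from $\bigl(\cL\otimes\omega_{\rm log}^{\frac{k-1}{2}}\otimes\cO(\alpha)\bigr)^{\otimes2}$ by descent along $\mathbf{st}$ (the square of a multidegree-$\underline{0}$ bundle being again multidegree $\underline{0}$, hence descending uniquely). By the two isomorphisms displayed just before the proposition, that bundle equals $\mathbf{st}^*\bigl(\omega_{\rm log}^{\otimes k}\otimes\cO(\alpha')\bigr)$, so its descent to $C$ is $\omega_{\rm log}^{\otimes k}\otimes\cO(\alpha')=\sigma_{a,k}(\epsilon_a(-))$, which is exactly the down-then-right path. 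Hence~\ref{diagram: prop3.26} commutes.

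\emph{Steps (3)--(4).} Since $\DRL^{1/2}$ is the locus (scheme-theoretically) where $\sigma^{1/2}_{a,k}$ meets the zero section, Step~(2) shows it is contained in the locus where $\sigma_{a,k}\circ\epsilon_a$ meets the zero section, i.e.\ in $\epsilon_a^{-1}(\DRL)$, so $\epsilon_a$ restricts to a morphism $f\colon\DRL^{1/2}\to\DRL$. Properness then follows from the commuting square
\[
\xymatrix{
\DRL^{1/2}\ar[r]^{f}\ar[d]_{\rho_{\frac{1}{2}}} & \DRL\ar[d]^{\rho}\\
\oM^{1/2}_{g,n}\ar[r]_{\epsilon} & \oM_{g,n}
}
\]
(both composites send $((\mathcal{C},\cL,\phi),\alpha)$ to $\mathbf{st}(\mathcal{C})$): the left vertical map is proper by~\cite[Proposition~4.5.3]{Marcus2017LogarithmicCO} and $\epsilon$ is finite, hence proper, so $\rho\circ f=\epsilon\circ\rho_{\frac{1}{2}}$ is proper; and $\rho\colon\DRL\to\oM_{g,n}$ is proper by~\cite[Proposition~5.3]{Hol}, in particular separated, so the cancellation property for proper morphisms (if $h\circ f$ is proper and $h$ is separated, then $f$ is proper) shows that $f$ is proper. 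This last step is routine once Steps~(1) and~(2) are settled.
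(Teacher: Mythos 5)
Your proposal is correct and takes essentially the same route as the paper: commutativity is read off from the two isomorphisms displayed just before the proposition, the containment of $\DRL^{1/2}$ in $\epsilon_{a}^{-1}(\DRL)$ is checked via descent of $\cO(2\alpha+\beta)$ along $\mathbf{st}$, and properness is obtained by composing the properness of $\DRL^{1/2}\to\oM^{1/2}_{g,n}$ with the (finite, hence proper) map $\epsilon$ and then cancelling against the separated morphism $\DRL\to\oM_{g,n}$, exactly as in the paper's appeal to the cancellation property. The only difference is your Step (1) on comparing minimal log structures, which the paper leaves implicit in its $S$-point definition of $\epsilon_{a}$ (and only makes explicit later, in the star-graph analysis); including it is a reasonable extra precaution but does not change the argument.
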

\begin{proof} The first assertion follows from the
discussion above. For the second assertion, we observe that
under $\epsilon_{a}$ an element of $\DRL^{1/2}$ lands in 
$\DRL$. To show this, let 
$((\mathcal{C}\to S,\cL,\phi),\alpha)$ be a geometric point of
$\DRL^{1/2}$. 
Then we have 
\begin{align*}
    \cO_{\mathcal{C}}&\cong \left(\cL\otimes\omega_{\rm log}^{\frac{k-1}{2}}\otimes\cO(\alpha)\right)^{\otimes2}\\ 
    &\cong \omega^{\otimes k}_{\rm log}\otimes\cO(\beta)\otimes\cO(2\alpha).
\end{align*}
As we noted before, the line bundle $\cO(2\alpha+\beta)$
on $\mathcal{C}$ has degree $0$ on the exceptional components, and hence it is a pullback under the stabilization
morphism $\mathbf{st}\colon \mathcal{C}\to C$. We have
\begin{equation*}
    \cO_{\mathcal{C}}\cong \mathbf{st}^*\left(\omega^{\otimes k}_{\rm log}\otimes\cO(\alpha')\right),
\end{equation*}
which implies that $\omega^{\otimes k}_{\rm log}\otimes\cO(\alpha')\cong \cO_C$
as desired.
Furthermore, $\DRL^{1/2}$ is proper over $\oM^{a}_{g}$. 
Indeed, we have the following commutative diagram
\begin{equation*}
    \xymatrix{\oM^{1/2,a}_{g}\ar[r]\ar[d]_{\epsilon_{a}} & \oM^{1/2}_{g,n}\ar[d]^{\epsilon} \\
           \oM^{a}_{g}\ar[r] & \oM_{g,n}.} 
\end{equation*}
\noindent We already know that $\DRL^{1/2}$ is proper
over $\oM^{1/2}_{g,n}$, and that the
morphism $\epsilon$ is proper in general. Thus, by the commutativity
of the diagram above we have that $\DRL^{1/2} 
\to \oM_{g,n}$ is proper. The proof is completed
by considering
the commutative triangle
\begin{equation*}
    \xymatrix{ \DRL^{1/2}\ar[d]\ar[dr]& \\
           \DRL\ar[r]& \oM_{g,n}}
\end{equation*}
and using ~\cite[Tag01W6]{stacks-project}.
\end{proof}

\begin{corollary}\label{mu_2DR:gerbe} The morphism 
\begin{equation*}
    \epsilon_{a}|_{\DRL^{1/2}}\colon \DRL^{1/2}\to \DRL 
\end{equation*}
is a $\mu_2$-gerbe, and we have
\begin{equation*}
\epsilon_{a*}|_{\DRL^{1/2}}\text{DRC}^{1/2}=\frac{1}{2}\text{DRC}.
\end{equation*}
\end{corollary}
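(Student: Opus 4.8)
The plan is to use Proposition~\ref{mu2gerbe} to exhibit $\DRL^{1/2}$ as a well-understood piece of the schematic preimage $\epsilon_a^{-1}(\DRL)$, and then prove the two assertions separately. Since the square in~\ref{mu2gerbe} commutes we have $\sigma_{a,k}\circ\epsilon_a=(\otimes2)\circ\sigma^{1/2}_{a,k}$, so $\epsilon_a^{-1}(\DRL)$ is exactly the locus in $\oM^{1/2,a}_g$ where $\left(\cL\otimes\omega_{\rm log}^{(k-1)/2}\otimes\cO(\alpha)\right)^{\otimes2}$ is trivial, i.e. the schematic pullback along $\sigma^{1/2}_{a,k}$ of the $2$-torsion subscheme $\cJ^{qst}_{g,n}[2]$, whereas $\DRL^{1/2}$ is the pullback of the identity section. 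Working over $\spec(\CC)$, multiplication by $2$ on the semi-abelian Jacobian is finite étale, so $\cJ^{qst}_{g,n}[2]$ is finite étale over $\oM^{1/2}_{g,n}$ and the identity section is an open and closed substack of it; hence $\DRL^{1/2}$ is an open and closed substack of $\epsilon_a^{-1}(\DRL)$, carrying the scheme structure pulled back from $\DRL$. In particular $f:=\epsilon_a|_{\DRL^{1/2}}\colon\DRL^{1/2}\to\DRL$ is obtained from the base change $\epsilon_a^{-1}(\DRL)=\DRL\times_{\oM^a_g}\oM^{1/2,a}_g\to\DRL$ of $\epsilon_a$ by restriction to an open-closed substack, and it is proper by~\ref{mu2gerbe}.

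To see that $f$ is a $\mu_2$-gerbe I would compute its geometric fibres over a point $(C,\alpha')\in\DRL$ with dual graph $\Gamma$ and $k$-twist $I$ given by the outgoing slopes of $\alpha'$. By Remark~\ref{roottwist} there is a \emph{unique} pair $(w,J)$, with $w$ a $2$-weighting on $\Gamma$ and $J$ a half $k$-twist on $\Gamma_w$, such that $I$ is associated to $(J,w)$; so the quasi-stable model $\cC$ (dual graph $\Gamma_w$) and the PL function $\alpha$ (slopes $J$, values pinned down by $2\alpha+\beta=\mathbf{st}^*\alpha'$) underlying any point of the fibre are forced. The condition of lying in $\DRL^{1/2}$ then forces $\cL\cong\omega_{\rm log}^{-(k-1)/2}\otimes\cO(-\alpha)$ up to isomorphism, so only the morphism $\phi$ remains free; by~\ref{honroot} and~\ref{rem:exc-parity} it is determined by an isomorphism on the non-exceptional subcurve, and two such choices differ by an automorphism of $\cL$ acting through its square, hence define isomorphic spin structures over an algebraically closed base. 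Thus the fibre is a single isomorphism class with automorphism group $\mu_2$, the residual $\mu_2$ being multiplication by $-1$ on $\cL$ (there are no further automorphisms of $\cC$ over $C$ compatible with an admissible $\cL$, exactly as in Cornalba's description of $\oM^{1/2}_{g,n}$). Every geometric fibre of $f$ is therefore $B\mu_2$, its inertia is the constant group $\mu_2$, and $f$ is flat (by miracle flatness from the base-change description, equivalently as an fppf $\mu_2$-gerbe); this proves the first assertion.

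For the second assertion, recall that a $\mu_2$-gerbe $f$ is flat and proper with $f_*[\DRL^{1/2}]=\tfrac12[\DRL]$, so $f_*f^*=\tfrac12\,\mathrm{id}$ on Chow groups with $\QQ$-coefficients. It therefore suffices to show $\text{DRC}^{1/2}=f^*\text{DRC}$. By the base-change description above and compatibility of refined Gysin pull-back with flat base change, $(\sigma^{1/2}_{a,k})^{!}$ applied to $\big[\cJ^{qst}_{g,n}[2]\big]$ computes the pull-back of $\text{DRC}$ to $\epsilon_a^{-1}(\DRL)$; since in characteristic $0$ the cycle $\big[\cJ^{qst}_{g,n}[2]\big]$ is the sum of its open-closed components, each with multiplicity $1$, the summand supported on $\DRL^{1/2}$ is on the one hand the restriction of $f^*\text{DRC}$ and on the other hand precisely $\text{DRC}^{1/2}=\sigma_{a,k}^{1/2'!}([e'])$. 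Hence $\text{DRC}^{1/2}=f^*\text{DRC}$ and $\epsilon_{a*}\text{DRC}^{1/2}=f_*f^*\text{DRC}=\tfrac12\text{DRC}$.

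The main obstacle is the first assertion: the combinatorial rigidity of Remark~\ref{roottwist} makes the fibre computation possible, but one still has to check carefully that the remaining freedom is exactly a $B\mu_2$ — the interplay between Cornalba's description of quasi-stable spin structures and the triviality of the line bundle imposed by $\DRL^{1/2}$ — and that $f$ is flat. The characteristic-zero hypothesis is also doing real work, both in making $\cJ^{qst}_{g,n}[2]$ étale (so that $\DRL^{1/2}$ is genuinely open and closed in $\epsilon_a^{-1}(\DRL)$ and there is no excess intersection in the Gysin computation) and, tacitly, throughout the rational Chow-theoretic manipulations.
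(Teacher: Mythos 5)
Your opening reduction is attractive and genuinely different from the paper's (one-sentence) argument: realizing $\epsilon_a^{-1}(\DRL)$ as the pullback along $\sigma^{1/2}_{a,k}$ of the relative $2$-torsion of $\cJ^{qst}_{g,n}$, and observing that in characteristic $0$ this torsion is \'etale so that $\DRL^{1/2}$ sits inside it as an open and closed substack, is correct and useful for the cycle identity (one slip: the $2$-torsion of a degenerating semi-abelian Jacobian is only quasi-finite \'etale, not finite, over $\oM^{1/2}_{g,n}$; this is harmless since you only use openness of the identity section). The paper instead exhibits the gerbe directly via the action of twisting by $2$-torsion line bundles pulled back from the base.

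The genuine gap is in the fibre computation, which is the actual content of the first assertion. Your claim that two choices of $\phi$ differ by an automorphism of $\cL$, and the parenthetical claim that there are no further automorphisms of $\cC$ over $C$ compatible with an admissible $\cL$, are both false at the boundary: each exceptional component carries a $\mathbb{G}_m$ of inessential automorphisms of $\cC$ over $C$, and these do lift to the pair $(\cL,\phi)$ — this re-gluing mechanism is exactly what the paper uses in \ref{prop:cancellation} and in the $\mu_{I(e)}$-actions of Section 4. Concretely, when the non-exceptional subcurve $\widetilde{\cC}$ is disconnected (already for a generic point of a boundary component of $\DRL$ given by a one-edge star graph with odd twist), two choices of $\phi$ differ by a locally constant unit, which a global automorphism of $\cL$ (a single scalar) cannot absorb; one needs the inessential automorphisms, and the group of automorphisms of the spin curve over the identity of $C$ then has order $4$, not $2$ (this is also forced by Cornalba's ramification $\epsilon^*\delta_i=\alpha_i+2\beta_i$ together with $\deg\epsilon=2^{2g-1}$). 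So the relative inertia of $\epsilon_a|_{\DRL^{1/2}}$ is not identified by your argument: to pin it down to $\mu_2$ one must compare these automorphisms, together with the stabilizers and torsor coordinates coming from the logarithmic $\mathbb{G}_m$-quotient presentations of the strata of $\oM^{a}_{g}$ and $\oM^{1/2,a}_{g}$ (\ref{loc:div}, \ref{lem:fibre_rooted}), with the automorphisms of the image point in $\DRL$ — none of this appears in your proof, and it is precisely where the content lies. Relatedly, the flatness of $f$ is asserted (``miracle flatness'') but not verified, and it is also what your last paragraph needs, both for $f^*$ to exist and for the base-change compatibility of the refined Gysin class, since $\epsilon_a$ itself is not known to be flat; with flatness unproved, the second assertion is not yet established either (a fix would argue via compatibility of refined Gysin maps with proper pushforward rather than via $f^*$).
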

\begin{proof}
    To see that
$\epsilon_{a}$  restricts to a $\mu_2$-gerbe on $\DRL^{1/2}$, one has to observe that twisting an $S$-point of 
$\DRL^{1/2}$ by the pullback of a $2$-torsion line bundle from $S$
yields another $S$-point of $\DRL^{1/2}$. 
\end{proof}

\begin{remark} The gerbe structure of $\DRL^{1/2}$ over $\DRL$ is the reason
that we need to implement a factor of $2$ in the definition of $\DR^{\pm}_{g}(a,k)$. Moreover, again the gerbe structure
is the only obstruction to making the diagram~\ref{diagram: prop3.26}
a pullback diagram when we restrict the left vertical map to $\DRL^{1/2}\to \DRL$.
Finally, it is important to remark that the above result also shows
that the map $\epsilon_{a}$ restricted to $\DRL^{1/2}\to \DRL$ is
actually an isomorphism on the coarse spaces.
\end{remark}
\begin{corollary}\label{mult:root} The irreducible components of the 
substack $\DRL^{1/2}$ are in bijection with those of $\DRL$. Moreover,
the lengths of the Artin local rings at generic points of $\DRL^{1/2}$ lying
over  the generic point of $\DRL_{f}^{(\Gamma,I)}$ via
the composition $\DRL^{1/2}\to \DRL\to \DRL_{f}$
are equal.
\end{corollary}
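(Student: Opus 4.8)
The plan is to deduce both assertions from the $\mu_2$-gerbe structure of $\epsilon_a|_{\DRL^{1/2}}\colon \DRL^{1/2}\to\DRL$ recorded in~\ref{mu_2DR:gerbe}, combined with~\ref{cor:equal-lengths}. First I would use that a gerbe is surjective, flat, and has non-empty geometrically connected fibres (a fibre being a $\mu_2$-gerbe over a field, whose coarse space is a single point), so that $\epsilon_a|_{\DRL^{1/2}}$ is a homeomorphism on underlying topological spaces — equivalently an isomorphism on coarse spaces, as already noted in the remark following~\ref{mu_2DR:gerbe}. Consequently the preimage of an irreducible component of $\DRL$ is again irreducible, and a generic point $q$ of a component of $\DRL$ lying over the generic point $p$ of $\DRL_{f}^{(\Gamma,I)}$ has a unique preimage $\widetilde q$ in $\DRL^{1/2}$. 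This yields the claimed bijection of irreducible components and identifies the generic points over $p$ on the two sides.

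For the equality of lengths I would fix such a pair $(q,\widetilde q)$ and base change the gerbe $\epsilon_a|_{\DRL^{1/2}}$ along $\spec\cO_{\DRL,q}\to\DRL$. This produces a $\mu_2$-gerbe over the Artinian local ring $\cO_{\DRL,q}$ whose only point is $\widetilde q$; hence the induced local homomorphism $\cO_{\DRL,q}\to\cO_{\DRL^{1/2},\widetilde q}$ is flat (gerbes are flat), and its closed fibre is the structure sheaf of the residual $\mu_2$-gerbe over $\kappa(q)$, which equals $\kappa(q)$ because $2$ is invertible and $\mu_2$ is linearly reductive. A flat, module-finite local homomorphism of Artinian local rings whose fibre has length one is an isomorphism, so $\cO_{\DRL^{1/2},\widetilde q}\cong\cO_{\DRL,q}$. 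Since~\ref{cor:equal-lengths} asserts that the rings $\cO_{\DRL,q}$ are mutually isomorphic as $q$ ranges over the generic points of $\DRL$ above $p$, the same holds for the $\cO_{\DRL^{1/2},\widetilde q}$, and in particular their lengths coincide (and, by~\ref{rem:DRL-length}, equal $\prod_{v\in V_{\rm out}}\lcm_{e\to v}(I(e))\,/\,k^{|V_{\rm out}|}$). As a cross-check, the equality $\operatorname{length}(\cO_{\DRL^{1/2},\widetilde q})=\operatorname{length}(\cO_{\DRL,q})$ can also be extracted from the cycle identity $\epsilon_{a*}\mathrm{DRC}^{1/2}=\tfrac12\mathrm{DRC}$ of~\ref{mu_2DR:gerbe} by comparing coefficients of corresponding components, using that a $\mu_2$-gerbe pushes a fundamental cycle forward with coefficient $\tfrac12$.

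The step I expect to be the main obstacle is making the identification of the gerbe fibre precise at the level of local rings of stacks rather than merely of coarse spaces. I would handle this by passing to an fppf (in fact étale, over $\CC$) chart $U\to\DRL$ over which the gerbe trivialises as $U\times B\mu_2\to U$, and using that pushing the structure sheaf of $B\mu_2$ down to $\spec\CC$ returns $\CC$, so that the relevant local rings upstairs and downstairs agree; the compatibility of scheme structures needed here is exactly the content of the remark after~\ref{mu_2DR:gerbe} that the only obstruction to the diagram~\ref{diagram: prop3.26} being cartesian along $\DRL^{1/2}\to\DRL$ is the gerbe.
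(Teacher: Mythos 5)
Your argument is correct and is essentially the one the paper intends: the corollary is stated without proof, and the implicit reasoning is exactly your combination of the $\mu_2$-gerbe structure of \ref{mu_2DR:gerbe} (isomorphism on coarse spaces, hence bijection of components, and isomorphism of local rings at corresponding generic points, checked on an étale chart trivialising the gerbe) with \ref{cor:equal-lengths}; indeed the paper later invokes precisely this, saying that by \ref{mu_2DR:gerbe} the local rings at generic points of $\DRL$ are isomorphic to those of $\DRL^{1/2}$. Your write-up just fills in the details the authors left implicit, and the only caveat is that the cycle-identity ``cross-check'' via $\epsilon_{a*}\mathrm{DRC}^{1/2}=\tfrac12\mathrm{DRC}$ should not be used as a substitute argument, since identifying $\mathrm{DRC}^{1/2}$ with the cycle of $\DRL^{1/2}$ is established only afterwards using this corollary.
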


\section{Parity of the Double Ramification locus} 
We start this section by setting up some notation.
First, we fix a $k$-simple star graph $(\Gamma,I)$. 
We use the same symbol $p$ for the generic points of 
irreducible components $\DRL_{f}$ 
whose generic dual graph and $k$-twist
correspond to $(\Gamma,I)$.
Now, using~\ref{irrcomp:DRL} and~\ref{mult:root}, by abuse of
notation, we denote by $\DRL^{\widetilde{p}}_{\Gamma,I}$
and $\DRL^{1/2,\widetilde{p}}_{\Gamma,I}$ the irreducible components of $\DRL$ and $\DRL^{1/2}$ corresponding to
some point $\widetilde{p}$ in $\DRL$ lying over a generic
point $p$ as above. Furthermore, we define the closed substacks
\begin{equation*}
   \DRL_{\Gamma, I}:=\bigcup_{p,\ \widetilde{p}\to p}\DRL_{\Gamma,I}^{\widetilde{p}}\ \ \text{and}\ \  \DRL^{1/2}_{\Gamma,I}:=\bigcup_{p,\ \widetilde{p}\mapsto p}\DRL^{1/2,\widetilde{p}}_{\Gamma,I}
\end{equation*}
of $\DRL$ and $\DRL^{1/2}$, respectively. Here, $p$ runs over all generic points of $\DRL_{f}$ whose generic dual graph and $k$-twist correspond to $(\Gamma,I)$, and $\widetilde{p}$ runs over
points in $\DRL$ lying over $p$.

Furthermore, by~\ref{rem:DRL-length} the lengths of the Artin local rings at all generic points 
$\widetilde{p}$ over generic points of irreducible components of $\DRL_{f}$ with generic dual graph and 
twist given by a fixed $k$-simple star graph $(\Gamma,I)$ 
are equal. Therefore,
the cycle associated to the closed
substack $\DRL_{\Gamma,I}^{1/2}$ is given by 
\begin{equation*}
    \DR^{1/2}_{\Gamma,I}:={\rm length}(\widetilde{p})\sum_{p,\ \widetilde{p}\mapsto p}[\DRL^{1/2,\widetilde{p}}_{\Gamma,I}], 
\end{equation*}
and similarly for $\DRL_{\Gamma,I}$.
Additionally, since 
$\DRL^{1/2}$ lies over $\oM_{g,n}^{1/2}$, it splits according to parity
\begin{equation*}
    \DRL^{1/2}=\DRL^{1/2,+}\coprod \DRL^{1/2,-}.
\end{equation*}
This also applies to the substack $\DRL^{1/2}_{\Gamma,I}$.
This section is devoted
to establishing a $\mu_{2}$-action on $\DRL^{1/2}_{\Gamma,I}$
whenever $I$ admits an even value, which exchanges the two components
$\DRL^{1/2,+}_{\Gamma,I}$ and $\DRL^{1/2,-}_{\Gamma,I}$ 
(see~\ref{lem:exchpar}).

\subsection{Description of the locus $\oM^{a}_{\Gamma,I}$}
\begin{definition}Let $(\Gamma,I)$ be a $k$-simple star graph and let $(\Gamma,w,J)$
be the associated triple given by~\ref{roottwist}. We denote by $\oM^{a}_{\Gamma,I}$ (resp. $\oM^{1/2,a}_{\Gamma,I}$) the substack of $\oM^{a}_{g}$ (resp. $\oM^{1/2,a}_{g}$) 
consisting of curves whose topological type is given by $\Gamma$ (resp. $(\Gamma,w)$)
and the slopes of $\alpha$ are given by $I$ (resp. $J$).
\end{definition}
\begin{lemma}\label{cor:rank-of-mininal-log} Let $(\Gamma,I)$ be a 
$k$-simple star graph. Then for all points of $\oM_{\Gamma,I}^{a}$, 
the characteristic monoid of the
minimal log structure is free of rank $|V_{\rm out}|$.
\end{lemma}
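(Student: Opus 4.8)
The statement to prove is Lemma~\ref{cor:rank-of-mininal-log}: for a $k$-simple star graph $(\Gamma,I)$, at every point of $\oM^a_{\Gamma,I}$ the characteristic monoid $\overline{M}$ of the minimal log structure is free of rank $|V_{\rm out}|$. The plan is to read this off from the explicit \'etale-local charts of $\oM^a_g$ constructed earlier in the excerpt. Recall that for a combinatorial chart associated to a geometric point with dual graph $\Gamma$, the chart of $\oM^a_g\to\oM_{g,n}$ is given by the monoid $N^{\rm sat}$, the saturation of $N=\NN^{E}\langle\sum_{e\in\gamma}I^\gamma_e e\rangle_{\gamma\in Y}\subseteq\ZZ^{E}$, computed in \ref{lem: minimal-logstr}. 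So the characteristic monoid at a point of $\oM^a_{\Gamma,I}$ is the quotient of $N^{\rm sat}$ by the face generated by those edge-coordinates $e$ which do not vanish at that point; at the deepest stratum (the generic point of $\oM^a_{\Gamma,I}$, where all edges $e\in E$ survive) this is all of $N^{\rm sat}$ itself, and at shallower points one passes to a quotient by a face. First I would therefore reduce to showing that $N^{\rm sat}$ (and each of its quotients by faces corresponding to sub-configurations of surviving edges that still occur on $\oM^a_{\Gamma,I}$) is free of the asserted rank.

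\textbf{Key steps.} First, observe that for a $k$-simple star graph the graph $\Gamma$ has a central vertex $v_0$ joined to each outlying vertex, so $h^1(\Gamma)=|E(\Gamma)|-|V_{\rm out}|$, and the cycle space $Y$ is spanned by cycles each of which passes through exactly two edges (a ``pair of parallel edges'' between $v_0$ and a common outlying vertex) — or, after choosing a spanning tree, by $h^1(\Gamma)$ independent such cycles. Second, I would argue that $N^{\rm sat}=\ZZ^E\cap C_N$, where $C_N$ is the rational cone spanned by $N$; this cone is cut out inside $\QQ^E$ by the inequalities $e\ge 0$ for all $e\in E$ together with the relations that force the cycle-sums $\sum_{e\in\gamma}(1/\gcd(\gamma))I^\gamma_e e$ to lie in it, and one checks (using condition (ii') of a $k$-simple star graph, that edge-weights on outlying vertices lie in $k\NN$, hence are positive, together with the sign condition (ii')/(iii') of \ref{def:twist} which makes $\geq$ a partial order) that the localised problem on the star is: the cone is the image of $\NN^{V_{\rm out}}$ under the map sending the generator at an outlying vertex $v$ to $\sum_{e\to v}(\text{something})e$. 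More precisely, since all edges incident to $v$ carry twist values of the \emph{same sign} (all pointing the same way, because they all go between $v_0$ and $v$), the cycles through $v$ impose that the ratios of the $e$'s are pinned, so the free coordinates are exactly one per outlying vertex. Third, I would conclude that $C_N\cap\ZZ^E$ is a free monoid on $|V_{\rm out}|$ generators — concretely, the generators are indexed by outlying vertices $v$ and given by the primitive lattice vector along the ray $\{\,x_e \text{ proportional to } I(e)/\lcm_{e\to v}I(e) \text{ for } e\to v,\ x_{e'}=0 \text{ else}\,\}$. Finally, for points of $\oM^a_{\Gamma,I}$ that are not the generic point, the characteristic monoid is a quotient of this free monoid by a face, which is again free of rank $\le|V_{\rm out}|$; but the condition ``topological type given by $\Gamma$ with slopes $I$'' forces all edge-nodes of $\Gamma$ to be present (i.e.\ all corresponding $e$'s to vanish), so in fact one is always at the deepest stratum and the rank is exactly $|V_{\rm out}|$.

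\textbf{Main obstacle.} The delicate point is the second step: carefully identifying the saturated monoid $N^{\rm sat}$ with a free monoid of the right rank, rather than merely bounding its rank by $|E(\Gamma)|$. The subtlety is that a priori $N^{\rm sat}$ has groupification $\ZZ^E$ of rank $|E(\Gamma)|>|V_{\rm out}|$ in general, so one must use crucially that on a $k$-simple star graph all edges at an outlying vertex have twists of one sign (so no cancellation occurs and the cycle relations genuinely cut the dimension down to $|V_{\rm out}|$), and that the relevant generators $u_\gamma$ from the explicit presentation of $k[N^{\rm sat}]$ are, on the star, expressible in terms of $|V_{\rm out}|$ algebraically independent monomials. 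I expect the cleanest route is to write down the map $\NN^{V_{\rm out}}\to N^{\rm sat}$, $\mathbf{e}_v\mapsto \sum_{e\to v}\frac{I(e)}{\lcm_{e'\to v}I(e')}\,e$, show it is an isomorphism of monoids by producing an explicit inverse on the saturation (using that for each $v$ the values $I(e)/\lcm_{e'\to v}I(e')$ for $e\to v$ are coprime, so some $\ZZ$-combination equals $1$), and then pass to the stratum quotient as above. This also dovetails with \ref{rem:DRL-length}, where the factor $\prod_{v\in V_{\rm out}}\lcm_{e\to v}I(e)$ already appears, confirming that $|V_{\rm out}|$ is the correct rank.
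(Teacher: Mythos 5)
Your overall strategy (read the answer off the explicit charts $N^{\rm sat}$ of \ref{lem: minimal-logstr} and use the star shape to split the problem into one piece per outlying vertex) is the same as the paper's, but the key step as you state it is wrong. You claim that $N^{\rm sat}=C_N\cap\ZZ^{E}$ is itself a free monoid of rank $|V_{\rm out}|$, and you propose an isomorphism $\NN^{V_{\rm out}}\to N^{\rm sat}$. This cannot hold as soon as some outlying vertex carries at least two edges: the groupification of $N^{\rm sat}$ is all of $\ZZ^{E}$, of rank $|E|>|V_{\rm out}|$, whereas a free monoid of rank $|V_{\rm out}|$ has groupification $\ZZ^{|V_{\rm out}|}$; moreover a free monoid is sharp, while $N^{\rm sat}$ has a nontrivial unit group, namely the lattice points of the lineality space of $C_N$ spanned by the cycle vectors $\pm\bigl(\tfrac{1}{\gcd(\gamma)}\sum_{e\in\gamma}I^{\gamma}_e e\bigr)$ (both orientations of each cycle lie in $N$). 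For the same reason your identification of the stalk of the characteristic monoid is off: at a point of $\oM^{a}_{\Gamma,I}$ one must quotient $N^{\rm sat}$ by the face of \emph{all} elements whose monomials are invertible there, and this face contains the cycle units (and the elements $u_\gamma$), not only the non-vanishing edge coordinates; so the characteristic monoid is the sharpening $N^{\rm sat}/(N^{\rm sat})^{\times}$, not $N^{\rm sat}$. A secondary problem is that your candidate generators $\sum_{e\to v}\frac{I(e)}{\lcm_{e'\to v}I(e')}\,e$ are not even lattice vectors (the coefficients are $1/L_e$); the correct relation is the other way around: in the sharpened monoid the length of an edge $e\to v$ becomes $\bigl(\lcm_{e'\to v}I(e')/I(e)\bigr)\,\delta_v$ for a single generator $\delta_v$.

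The argument is salvageable, and once corrected it essentially becomes the paper's proof: since $\Gamma$ is a star and every cycle involves only edges at a single outlying vertex, $N^{\rm sat}$ decomposes as a direct sum over $v\in V_{\rm out}$, hence so does its sharpening, and it remains to show that each summand sharpens to a free monoid of rank $1$. The paper does this by reducing to the one-outlying-vertex case and invoking the description of the minimal characteristic monoid in the reference [Chen--Grushevsky--Holmes--M\"oller--Schmitt] (rank $=$ number of horizontal edges plus number of levels $=0+1$, using $I(e)\neq 0$ for all $e$, which is where the $k$-simple star hypothesis enters). If you prefer a self-contained computation, the clean route is: for a vertex $v$ with incident twists $I(e)$, the lineality space of the local cone is the hyperplane $\sum_{e\to v}I(e)x_e=0$, and the map $x\mapsto\sum_{e\to v}I(e)x_e$ identifies the sharpening with a saturated rank-one sharp monoid, i.e.\ with $\NN$; the identity $\gcd_{e\to v}\bigl(\lcm_{e'\to v}I(e')/I(e)\bigr)=1$ (your gcd observation, with the corrected ratios) is what pins down the generator $\delta_v$ and recovers $\ell(e)=L_e\delta_v$ as used later in the paper.
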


\begin{proof}The proof of this lemma is demonstrated in two steps: 
we reduce the general case to the case of a 
$k$-simple star graph $(\Gamma,I)$ with only one outlying vertex and then
we prove this simpler case.

Let $(C\to S,\alpha)$ be an $S$-point of 
$\oM_{\Gamma,I}^{a}$, and let $s\in S$ be a geometric point. To prove
our lemma, we may 
work in a sufficiently small neighborhood around $s$, so that $C$ 
has constant topological type, i.e., it has a well-defined
dual graph $\Gamma$, $\alpha$ is given by
a PL function on the tropicalization of $C_{s}$, and that 
$\alpha$ is (up to addition of constant) 
$0$ on the central vertex. Using the description of the minimal log
structure of points of $\textbf{Div}_{g,n}$ discussed 
in~\cite[Theorem 4.2.4]{Marcus2017LogarithmicCO}, which is also the one we are using throughout the previous sections, 
we may also assume that the minimal log structure
in this sufficiently small neighborhood is \emph{isomorphic} to
\begin{equation}\label{eq:proof-of-rank}
    M:=\mathbb{N}^{E}\langle\sum_{e\in\gamma}
    \frac{1}{\gcd(\gamma)}I^{\gamma}_e e\rangle_{\gamma \in Y}
\end{equation}
(see also~\ref{lem: minimal-logstr} and the text above it). 
Moreover, since $\Gamma$ is a star graph we obtain a 
decomposition 
\begin{equation*}
    \mathbb{N}^{E}\langle\sum_{e\in\gamma}
    \frac{1}{\gcd(\gamma)}I^{\gamma}_e e\rangle_{\gamma \in Y}\cong\bigoplus_{v\in V_{\rm out}}
    \mathbb{N}^{E_{v}}\langle\sum_{e\in\gamma}
    \frac{1}{\gcd(\gamma)}I^{\gamma}_e e\rangle_{\gamma \in Y_{v}},
\end{equation*}
where $E_v$ denotes the set of edges incident to the outer 
vertex $v$, and $Y_v$ denotes the subset of $Y$ consisting of cycles
going through edges in $E_v$. We denote by
$M_{v}$ each direct summand in the isomorphism above. 
Then, in order to pass to the characteristic monoid,
we have to quotient 
by the group of invertible elements. We observe that 
\begin{equation*}
    M/M^{\times}\cong \bigoplus_{v\in V_{\rm out}}M_{v}/M^{\times}_{v}.
\end{equation*}
Thus, to show that the characteristic monoid $M/M^{\times}$ is free 
of rank $|V_{\rm out}|$, it suffices to show that for each 
$v\in V_{\rm out}$ the monoid $M_{v}/M_{v}^{\times}$ 
is free of rank $1$, which finishes the reduction to the case of
a single outlying vertex. 

We will use the description
of the minimal log structure of points of a log blow-up of 
$\textbf{Div}$ given 
in~\cite[Definition 4.3 and Lemma 4.5]{Chen2022ATO}. 
The minimal log structure at points of this log-blow up whose topological type 
is given by $\Gamma$ and whose PL function has slopes 
given by a $k$-twist $I$ compatible with $a$,
such that the pair $(\Gamma,I)$ is a $k$-simple
star graph with a single outlying vertex
coincides with
that of $\oM_{\Gamma,I}^{a}$. 
There, the authors prove this in that case the characteristic monoid of
the minimal log structure of a point of $\oM_{\Gamma,I}^{a}$
is free of rank\footnote{In~\cite{Chen2022ATO},
the authors describe the characteristic monoid of the 
minimal log structure as the free monoid
generated by the \emph{horizontal edges} (see~\cite[Definition 2.10]{Chen2022ATO}), i.e. edges such that $I(e)=0$, and the number of levels given by the \emph{normalized level function} 
(see below~\cite[Definition 2.1]{Chen2022ATO}). 
In the case of single outlying vertex we have only
$1$ level and no horizontal edges.}
\begin{equation*}
    \#\{e\in E(\Gamma)\ |\ I(e)=0\ \}+ 1.
\end{equation*}
The fact that $(\Gamma,I)$ is a $k$-simple star graph implies
that $I(e)\neq0$ for all $e\in E(\Gamma)$, which completes the proof.
\end{proof}

\begin{lemma}\label{lem:normal_bundle_length}
    Let $C/S$ be a log curve with a persistent node $e$, with length $\ell(e) \in \overline{M}_S(S)$. Let $\nu\colon \tilde C \to C$ be the partial normalisation along $e$, and let $h$, $h'$ be the sections of $\tilde C$ lying over $e$. Then there is a canonical isomorphism 
    \begin{equation}
    \cO_S(\ell(e)) \cong h^*\cO_{\tilde C}(p_h)\otimes (h')^*\cO_{\tilde C}(p_{h'})
    \end{equation}
    of line bundles on $S$. 
\end{lemma}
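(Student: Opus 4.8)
The plan is to produce the isomorphism \'etale-locally on $S$ from an explicit chart for the log structure near the node, and then to check that the resulting identification is independent of the chosen chart, so that it glues to a canonical isomorphism over all of $S$. Since both sides are line bundles on $S$ and the comparison we build will be canonical, it is harmless to pass to a strict \'etale neighbourhood of a geometric point $s\in S$ together with the node $c$ of $C_s$ corresponding to $e$.

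First I would set up the local model. By the local description of log curves at a node (case (iii) above) one may, after shrinking, choose a lift $\widetilde{\ell}\in\Gamma(S,M_S^{gp})$ of $\ell(e)$, together with lifts $u_h,u_{h'}\in\Gamma(C,M_C^{gp})$ of the two branch generators $\delta_h,\delta_{h'}\in\overline{M}_{C,c}$, normalised so that $u_h\cdot u_{h'}=\pi^*\widetilde{\ell}$. Writing $x,y,t$ for the images of $u_h,u_{h'},\widetilde{\ell}$ under the respective structure maps, this exhibits $C$ near $c$ as $\spec_{\cO_S}\cO_S[x,y]/(xy-t)$, with $t$ nilpotent because $e$ is persistent; the partial normalisation $\nu$ then separates the node over all of $S$, and near the two preimages $p_h=h(S)$ and $p_{h'}=h'(S)$ the curve $\widetilde{C}$ is, \'etale-locally, the disjoint union of the two branches, with $p_h$ cut out by $x$ on its branch and $p_{h'}$ cut out by $y$ on its branch.

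Next I would compute both sides in this chart and match them. On the one hand, $h^*\cO_{\widetilde{C}}(p_h)$ is the normal line of the section $h$, free over $\cO_S$ on the class dual to $x\bmod x^2$, which I denote $x^{-1}$; likewise $(h')^*\cO_{\widetilde{C}}(p_{h'})$ is free on $y^{-1}$, so the right-hand side is free on $x^{-1}\otimes y^{-1}$. On the other hand, unwinding the definition of $\cO_S(\ell(e))$ as the line bundle attached to $\ell(e)$ via $0\to\cO_S^*\to M_S^{gp}\to\overline{M}_S^{gp}\to0$ (with the sign convention for which the canonical map $\cO_S(-\ell(e))\to\cO_S$ sends the chart generator to $t$), the chart exhibits $\cO_S(\ell(e))$ as free over $\cO_S$ on a generator I denote $t^{-1}$. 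The candidate isomorphism is then $t^{-1}\mapsto x^{-1}\otimes y^{-1}$.

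Finally I would verify independence of the chart, which is the step that needs the most care. Once $\widetilde{\ell}$ is fixed, the residual freedom is $u_h\mapsto w\,u_h$, $u_{h'}\mapsto w^{-1}u_{h'}$ for $w\in\Gamma(C,\cO_C^*)$ (forced by $u_h u_{h'}=\pi^*\widetilde{\ell}$); this multiplies $x^{-1}\otimes y^{-1}$ by $(h^*w)^{-1}(h'^*w)$, which is trivial because $h$ and $h'$ both restrict to the point $c$, so $h^*w=h'^*w$ is the value of $w$ at the node. Replacing $\widetilde{\ell}$ by $v\widetilde{\ell}$ for $v\in\Gamma(S,\cO_S^*)$ forces a compensating rescaling of (say) $u_h$, hence of $x$ by $v$, and one checks that $t^{-1}$ and $x^{-1}\otimes y^{-1}$ are then both rescaled by $v^{-1}$, so the identification is preserved. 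Thus the map is canonical and descends to the asserted isomorphism of line bundles on $S$. The main obstacle is precisely this bookkeeping of the gauge freedom — i.e. confirming that the relation $u_h u_{h'}=\pi^*\widetilde{\ell}$ makes the two sides transform identically; conceptually this is the statement that $\cO_S(\ell(e))$ is the line bundle of smoothing parameters of the node and that $xy=t$ identifies it with the tensor of the normal lines of the two branches, so an alternative would be to quote that standard fact, but carrying out the chart computation keeps the argument self-contained.
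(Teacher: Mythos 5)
Your proof is correct, but it takes a genuinely different route from the paper's. The paper stays inside the PL-function formalism: working locally around the node it takes the piecewise linear function $\alpha$ with value $0$ on the branch carrying $p_h$, value $\ell(e)$ on the branch carrying $p_{h'}$, and slope $1$ along the edge, identifies $\nu^*\cO_C(\alpha)$ with $\cO_{\tilde C}(p_h)$ on the first branch and with $\cO_{\tilde C}(\ell(e)-p_{h'})$ on the second, and then gets the isomorphism in one line by computing $e^*\cO_C(\alpha)$ once through $h$ and once through $h'$; everything is delegated to the standard description of $\cO(\alpha)$ on the partial normalisation. You instead re-derive the underlying classical fact (the smoothing-parameter line is the tensor product of the normal lines of the two branches) directly from the local structure $xy=t$ of a log curve at a node: you build the map $t^{-1}\mapsto x^{-1}\otimes y^{-1}$ in a chart and check invariance under the two gauge freedoms, namely $u_h\mapsto w\,u_h$, $u_{h'}\mapsto w^{-1}u_{h'}$ (harmless because $\nu\circ h=\nu\circ h'=e$, so both pullbacks of $w$ equal $e^*w$) and rescaling of the lift of $\ell(e)$. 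This bookkeeping is right, and it has the merit of pinning down the sign convention for $\cO_S(\ell(e))$ via the canonical map $\cO_S(-\ell(e))\to\cO_S$, which the paper leaves implicit; the cost is length, since you redo by hand what the $\cO(\alpha)$-machinery already packages. One small point to tighten: persistence of the node gives more than nilpotence of $t$ --- since the node is a section of $C/S$ along which $xy=t$ holds, restricting to that section forces $t=0$ --- and it is this vanishing, not mere nilpotence, that justifies your claim that the two branches separate \'etale-locally so that the partial normalisation with its two sections has the form you use.
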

\begin{proof}
Working locally around $e$, there is a unique PL function $\alpha$ on $C$ taking value $0$ on the branch carrying $p_h$, the value $\ell(e)$ on the branch carrying $p_{h'}$, and having slope $1$ along the edge. Then $\nu^*\cO_C(\alpha)$ is naturally identified with $\cO_{\tilde C}(p_h)$ on the branch of $\tilde C$ carrying $e^+$, and with $\cO_{\tilde C}(\ell(e) - p_{h'})$ on the branch of $\tilde C$ carrying $p_{h'}$. We compute
\begin{equation}
e^*\cO_C(\alpha) = h^*\nu^*\cO_C(\alpha) = (h')^*\nu^*\cO_C(\alpha), 
\end{equation}
the second term is $h^*\cO_{\tilde C}(p_h)$ and the third is $\cO_S(\ell(e)) \otimes (h')^*\cO_{\tilde C}(-p_{h})$. 
\end{proof}

In this section we denote by 
$\underline{X}$ the underlying scheme of a log scheme $X$. 
Let $(\Gamma,I)$ be a $k$-simple star graph with two vertices. We define 
\begin{equation*}
    L:= \lcm_e(I(e))\ \text{and}\ L_e:= L/I(e). 
\end{equation*}
As shown in~\ref{cor:rank-of-mininal-log}, in that case
the log structure of the substack $\oM^{a}_{\Gamma,I}$ has rank $1$ everywhere, 
say with characteristic monoid generated by $\delta$. 
In particular, given a geometric point $(C,\alpha)$ of
$\oM_{\Gamma,I}^{a}$, for each edge $e$ of the dual graph of $C$
we have
\begin{equation*}
    \ell(e) = L_e\delta. 
\end{equation*} 
Indeed, for such a geometric point and
for all edges $e\in E(\Gamma)$, the equation
$I(e)\ell(e)=L\delta$ holds. These are exactly 
the relations which define the minimal 
log structure over which a PL function with slopes $I(e)$ exists.
We write $\mathcal{T}$ for the $\mathbb{G}_m$-torsor on $\oM_\Gamma$ given by 
$\cO^\times(\delta)$.
\begin{lemma}\label{loc:div}
Let $(\Gamma,I)$ be a $k$-simple star graph with two vertices. 
There is an isomorphism 
\begin{equation}
\oM^{a}_{\Gamma,I} \stackrel{\sim}{\longrightarrow} \left(\bigoplus_{e\in E} \mathcal{T}^{\otimes L_e}\right)/\mathbb G_m,
\end{equation}
where the action is given by
\begin{equation}\label{eq:aut_action}
\lambda \cdot (u_e)_{e \in E} = (\lambda^{L_e}u_e)_{e \in E}. 
\end{equation}
\end{lemma}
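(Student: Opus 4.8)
The plan is to exhibit the isomorphism by first understanding the étale-local structure of $\oM^{a}_{\Gamma,I}$ and then recognizing it as a quotient stack. First I would work in a combinatorial chart $U \to \oM_{g,n}$ attached to the dual graph $\Gamma$, so that by the analysis in \ref{subsec:strat-of-roots} and \ref{lem: minimal-logstr} the space $\oM^{a}_{\Gamma,I}$ is cut out inside $\mathbb{A}^E_{I,fs} = \spec k[N^{\rm sat}]$ by the vanishing of the smoothing parameters transverse to the stratum $\oM_\Gamma$; concretely this restricts the ambient toric chart to the locus where all edge-lengths $\ell(e)$ are forced to lie in a single rank-one submonoid. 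Since $(\Gamma,I)$ is a $k$-simple star graph with two vertices, there are no cycles ($Y = \emptyset$), so $N^{\rm sat} = \NN^E$ and the only relations imposed are precisely $I(e)\ell(e) = L\delta$ for all $e$, i.e.\ $\ell(e) = L_e\delta$ where $\delta$ generates the (rank one, by \ref{cor:rank-of-mininal-log}) characteristic monoid. On underlying schemes this says that each section $e \in \cO(\ell(e))$ becomes an $L_e$-th tensor power, canonically, of the fixed generator $\delta$-coordinate.

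Next I would make the identification geometric using \ref{lem:normal_bundle_length}. For each edge $e$ the line bundle $\cO_S(\ell(e))$ on the boundary stratum $\oM_\Gamma$ is canonically $h^*\cO_{\tilde C}(p_h) \otimes (h')^*\cO_{\tilde C}(p_{h'})$, the tensor of the two cotangent-inverse lines at the node; call the corresponding $\mathbb{G}_m$-torsor $\cL_e$. The condition defining $\oM^{a}_{\Gamma,I}$ amounts to choosing, Zariski-locally, a generator $\delta$ of the rank-one log structure together with trivializations exhibiting each $\cL_e$ as $\mathcal{T}^{\otimes L_e}$, where $\mathcal{T} = \cO^\times(\delta)$; the ambiguity in the choice of $\delta$ is exactly a $\mathbb{G}_m$, acting on a chosen generator $u_e$ of $\cL_e$ by $\lambda \mapsto \lambda^{L_e} u_e$ since $\ell(e)$ scales as $L_e$ times $\delta$. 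Packaging all edges together, $\oM^{a}_{\Gamma,I}$ is the stack of: a point of $\oM_\Gamma$, plus a point $(u_e)_{e\in E}$ of $\bigoplus_e \mathcal{T}^{\otimes L_e}$, modulo the simultaneous $\mathbb{G}_m$-rescaling $\lambda\cdot(u_e) = (\lambda^{L_e}u_e)$. This is the asserted quotient $\bigl(\bigoplus_{e\in E}\mathcal{T}^{\otimes L_e}\bigr)/\mathbb{G}_m$.

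To turn this into a clean proof I would construct the morphism in both directions. One way: the universal log curve over $\oM^{a}_{\Gamma,I}$ carries the tautological PL function $\alpha$ with slopes $I$, hence a canonical generator $\delta$ of its characteristic monoid and hence a canonical section of $\bigoplus_e \mathcal{T}^{\otimes L_e}$ well-defined up to $\mathbb{G}_m$, giving a map to the quotient; conversely, from a point of the quotient one reconstructs the log structure on $\oM_\Gamma$ with $\ell(e) = L_e\delta$ together with the divisor data, i.e.\ a point of $\mathbf{Div}_{g,n}$ lying in $\oM^{a}_{\Gamma,I}$, using \ref{lem:normal_bundle_length} to match the line-bundle normalizations. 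Checking these are mutually inverse is then a matter of unwinding definitions, the key input being the explicit chart $k[N^{\rm sat}] = k[N]$ with $N = \NN^E$ in the two-vertex star case.

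The main obstacle I expect is bookkeeping the log structure carefully: one must verify that the minimal log structure of a point of $\oM^{a}_{\Gamma,I}$ really is generated by the single element $\delta$ with $\ell(e) = L_e\delta$ (so that the relations are exactly $I(e)\ell(e) = L\delta$ and nothing more), and that the resulting $\mathbb{G}_m$-action matches \cref{eq:aut_action} rather than some rescaling of it; \ref{cor:rank-of-mininal-log} supplies the rank-one statement, and the toric description of the charts pins down the weights, but identifying $L$ with $\lcm_e I(e)$ and the exponents with $L_e = L/I(e)$ requires tracing through the saturation computation of \ref{lem: minimal-logstr} in this special case. Everything else is formal once the chart is in hand.
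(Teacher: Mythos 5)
Your overall strategy is the same as the paper's: describe points of $\oM^{a}_{\Gamma,I}$ through the minimal log structure, use that its characteristic monoid is free of rank one with $\ell(e)=L_e\delta$, and identify the residual ambiguity with a $\mathbb{G}_m$ acting on the edge data with weight $L_e$. But your combinatorial input is wrong, and it is wrong exactly at the step you flag as the ``main obstacle'' and defer. A $k$-simple star graph with two vertices has $h^1(\Gamma)=|E|-1$, so as soon as there are at least two edges (the only situation in which the weights $L_e$ are nontrivial) the set $Y$ of cycles is \emph{not} empty and $N^{\rm sat}\neq\NN^{E}$. If $N^{\rm sat}$ were $\NN^{E}$, the characteristic monoid along the stratum would be free of rank $|E|$ and there would be no relation whatsoever among the $\ell(e)$ --- contradicting the rank-one statement of \ref{cor:rank-of-mininal-log} that you cite, and removing the relation $\ell(e)=L_e\delta$ on which your whole identification (and the weights in \cref{eq:aut_action}) rests. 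The relations are produced precisely by the cycles: by \ref{lem: minimal-logstr}, $N^{\rm sat}$ contains, for each pair of edges $e,e'$, the elements $\pm\frac{1}{\gcd(I(e),I(e'))}\bigl(I(e)e-I(e')e'\bigr)$; these are units, and quotienting by the unit group sends $e\mapsto L_e\delta$ (this is how the text preceding the lemma and the proof of \ref{cor:rank-of-mininal-log} obtain both the rank-one claim and the exponents $L_e$, using $\gcd_e(L_e)=1$). Since you invoke ``the explicit chart $k[N^{\rm sat}]=k[N]$ with $N=\NN^E$'' again as the key input for checking that your two maps are mutually inverse, the error propagates through the argument rather than being a cosmetic slip, so as written the proposal has a genuine gap.

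The geometric part of your outline is in substance the paper's proof and would survive the correction: the paper fixes a scheme $\underline{X}$ over $\oM_\Gamma$ and computes the minimal log maps to $\oM^{a}_{\Gamma,I}$ up to automorphisms of the log structure of $X$; because $\oM^{a}_{\Gamma,I}\to\oM_\Gamma$ is a log monomorphism with constant characteristic $\NN$, such a map amounts to a map of torsors $\cO^{\times}_X(x^*\ell(e))\to\cO^{\times}_X$ for each edge, i.e.\ a section of $\cO^{\times}_X(-L_e x^*\delta)$, and the automorphism group $\mathbb{G}_m$ of the log structure acts on $\cO^{\times}_X(n)$ by $n$-th powers, giving the weights $L_e$ directly. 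This functor-of-points phrasing avoids your two-sided construction and the ``unwinding'' step entirely; if you prefer your route, replace the claim $N^{\rm sat}=\NN^E$ by the correct description of $N^{\rm sat}$ and its unit group as above, which is what actually forces $\ell(e)=L_e\delta$ and pins down the action \cref{eq:aut_action}.
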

\begin{proof}
The map $\oM^{a}_{\Gamma,I}\to \oM_\Gamma$ is a log monomorphism, that is, a representable monomorphism of log stacks. 
A logarithmic map $x\colon X \to \oM_{g,n}$ factors via 
$\oM^{a}_{\Gamma,I}$ if and only if the topological type is 
$\Gamma$ and the sections 
$x^*{I(e)}\ell(e)$ of $\overline{M}_X$ are the same for all edges $e$. 

We fix a scheme $\underline{X}$ over $\oM_\Gamma$, and compute the maps 
from this to $\oM^{a}_{\Gamma,I}$. This comes down to computing the 
\emph{minimal} maps from log schemes $X$, with underlying scheme $\underline{X}$, 
to $\oM^{a}_{\Gamma,I}$, up to automorphisms of the log structure on $X$. 
Since $\oM^{a}_{\Gamma,I}$ has constant log structure $\NN$, 
the same will be true for any minimal $X$. Thus, we may 
assume without loss of generality that the log structure of 
$X$ is $\NN \oplus \cO_X^\times$. 
The log maps $X \to \oM^{a}_{\Gamma,I}$ are then determined by 
their action on the monoids; this amounts to specifying, for 
each edge $e$, a map of torsors 
$\cO_X^\times(x^*\ell(e)) \to \cO_X^\times$; dually, to specifying a 
section of 
$$\cO_X^\times(-x^*\ell(e)) = \cO_X^\times(-L_ex^*\delta).$$ 
The lemma now follows by observing that the automorphisms of the 
log structure of $X$ (a $\mathbb{G}_m$) act on $\cO_X^\times(n)$ 
by $n$th powers.
\end{proof}

Let $\underline{u} = (u_e)_{e \in E} \in \bigoplus_{e \in E} \mathcal{T}^{\otimes L_e}$. 
The fibre of the line bundle $\cO(\alpha)$ over the point $\underline{u}$  satisfies
\begin{equation*}
    \cO(\alpha)|_{C_0} = \cO_{C_0}(-\sum_{h\to v_0} I(h)p_h) 
    \text{ and } \cO(\alpha)|_{C_v} = \cO_{C_v}(- \sum_{h\to v} I(h) p_h), 
\end{equation*}
with gluing data at node $e=(h,h')$ given by the section 
$u_e^{I(e)} \in \cO(I(e)\ell(e))$, which by~\ref{lem:normal_bundle_length} 
is identified with an element in 
\begin{equation*}
    {\rm Isom}(\cO_{C_0}(-I(h) p_h)|_{p_h}, \cO_{C_v}(-I(h') p_{h'})|_{p_{h'}}) 
\end{equation*}

Different $\underline{u}$ mapping to the same element in $\oM^{a}_{\Gamma,I}$ 
evidently give isomorphic bundles. 
Let $\mathcal{G}'$ be the group scheme $\bigoplus_E \mathbb{G}_m$, acting on 
$\oM^{a}_{\Gamma,I}$ coordinate-wise: $(\lambda_e)_e \cdot (u_e)_e = (\lambda_e u_e)_e$.
Let $\mathcal{G}$ be the quotient of $\mathcal{G}'$ by $\mathbb{G}_m$ acting on 
the $e$-th factor via an $L_e$-th power; then the action of $\mathcal{G}'$ 
descends to an action of $\mathcal{G}$ on $\oM^{a}_{\Gamma,I}$.
\begin{lemma}\label{lem:twisting_glueing}
Given $\lambda = (\lambda_e)_e \in \mathcal{G}'$ and $\underline{u} \in \oM^{a}_{\Gamma,I}$, the fibres 
\begin{equation*}
    \cO_{\underline{u}}(\alpha) \text{ and }\cO_{\lambda\cdot \underline{u}}(\alpha)
\end{equation*}
    are related by changing the gluing at node $e$ by $\lambda_e^{I(e)}$. 
\end{lemma}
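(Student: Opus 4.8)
The plan is to read the statement off the explicit description of $\cO(\alpha)$ recalled in the discussion above, by isolating which parts of that description depend on the coordinates $(u_e)_e$. Fix $\underline u = (u_e)_{e\in E}\in\bigoplus_{e\in E}\mathcal{T}^{\otimes L_e}$ and write $C$ for the associated (quasi-)stable curve, with central component $C_0$ and outlying components $C_v$. First I would recall, following \cite[Proposition 3.3.3]{Marcus2017LogarithmicCO}, that $\cO_{\underline u}(\alpha)$ is assembled from the component line bundles $\cO_{C_0}\bigl(-\sum_{h\to v_0}I(h)p_h\bigr)$ and $\cO_{C_v}\bigl(-\sum_{h\to v}I(h)p_h\bigr)$ — which depend only on $\Gamma$, on $I$, and on the image of $\underline u$ in $\oM_\Gamma$, and in particular \emph{not} on the $u_e$ — together with, for each edge $e=(h,h')$, a gluing isomorphism of the fibres at $p_h$ and $p_{h'}$; by \ref{lem:normal_bundle_length} (raised to the $I(e)$-th power, using $I(e)L_e=L$) this gluing isomorphism is exactly the section $u_e^{I(e)}\in\cO\bigl(I(e)\ell(e)\bigr)=\mathcal{T}^{\otimes L}$, regarded as an element of ${\rm Isom}\bigl(\cO_{C_0}(-I(h)p_h)|_{p_h},\cO_{C_v}(-I(h')p_{h'})|_{p_{h'}}\bigr)$.

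The second, and essentially only remaining, step is to observe that the action of $\mathcal{G}'=\bigoplus_E\mathbb{G}_m$ given by $\underline u\mapsto\lambda\cdot\underline u$, i.e.\ $(u_e)_e\mapsto(\lambda_e u_e)_e$, is linear over $\oM_\Gamma$: it fixes the underlying stable curve $C$, its components, and the marked points $p_h$, hence it leaves the component line bundles above unchanged. Its only effect is therefore to replace the gluing section at the node $e$ by $(\lambda_e u_e)^{I(e)}=\lambda_e^{I(e)}\,u_e^{I(e)}$. Comparing with the description of $\cO_{\underline u}(\alpha)$ just given, $\cO_{\lambda\cdot\underline u}(\alpha)$ is obtained from the same component bundles with the gluing at each node $e$ multiplied by $\lambda_e^{I(e)}$, which is precisely the claim.

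The one point that genuinely needs care — and where I expect most of the actual verification to live — is to make the identification $\cO\bigl(I(e)\ell(e)\bigr)\cong{\rm Isom}\bigl(\cO_{C_0}(-I(h)p_h)|_{p_h},\cO_{C_v}(-I(h')p_{h'})|_{p_{h'}}\bigr)$ canonical and to check that it is equivariant for scaling of $u_e$, i.e.\ that rescaling $u_e$ by $\lambda_e$ rescales the corresponding gluing isomorphism by exactly $\lambda_e^{I(e)}$ and alters nothing else. This is a direct unwinding of the exact sequence $0\to\cO_C^*\to M_C^{gp}\to\overline{M}_C^{gp}\to 0$ defining $\cO(\alpha)$, together with \ref{lem:normal_bundle_length} and the relation $I(h)=-I(h')$, and requires no new idea beyond careful bookkeeping of signs. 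In particular, when $\Gamma$ has several edges joining $v_0$ to a single outlying vertex, the cocycle/descent conditions hold automatically, since $\lambda\cdot\underline u$ again lies in $\oM^{a}_{\Gamma,I}$ and so $\cO_{\lambda\cdot\underline u}(\alpha)$ exists a priori.
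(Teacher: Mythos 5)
Your argument is correct and is essentially the paper's: the paper states this lemma without a separate proof precisely because it is read off the immediately preceding description of $\cO_{\underline{u}}(\alpha)$, whose restrictions to the components depend only on $(\Gamma,I)$ and the underlying point of $\oM_\Gamma$, while the gluing at a node $e=(h,h')$ is the section $u_e^{I(e)}$ identified via \ref{lem:normal_bundle_length}, so rescaling $u_e$ by $\lambda_e$ multiplies the gluing by $\lambda_e^{I(e)}$ and changes nothing else. Your closing remark about making the identification canonical and scaling-equivariant is exactly the bookkeeping the paper delegates to \ref{lem:normal_bundle_length} and the construction of $\cO(\alpha)$ from the sequence $0\to\cO_C^*\to M_C^{gp}\to\overline{M}_C^{gp}\to 0$, so no gap remains.
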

\begin{lemma}
The subgroup of $\mathcal{G}'$ consisting of elements which act trivially on $\cO(\alpha)$ is given by $\prod_e \mu_{I(e)}$. The image of this subgroup in $\mathcal{G}$ is given by 
\begin{equation*}
\mu_I := (\prod_e \mu_{I(e)})/\mu_L, 
\end{equation*}
where the implicit map is given by 
\begin{equation*}
\mu_L \to \prod_e \mu_{I(e)}; u \mapsto (u^{L_e})_e. 
\end{equation*}
\end{lemma}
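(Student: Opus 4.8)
The plan is to read off both assertions directly from~\ref{lem:twisting_glueing}, which already pins down the effect of an element $\lambda=(\lambda_e)_{e\in E}$ of $\mathcal{G}'$ on $\cO(\alpha)$: it leaves the restrictions of $\cO_{\underline u}(\alpha)$ to $C_0$ and to $C_v$ unchanged, and multiplies the gluing datum at the node $e$ by $\lambda_e^{I(e)}$. So the first step is to note that $\cO_{\lambda\cdot\underline u}(\alpha)$ and $\cO_{\underline u}(\alpha)$ coincide as line bundles with gluing data, for every point $\underline u$ of $\oM^{a}_{\Gamma,I}$, precisely when $\lambda_e^{I(e)}=1$ for all $e\in E$, that is, when $\lambda\in\prod_{e\in E}\mu_{I(e)}$; this identifies the stabiliser of $\cO(\alpha)$ inside $\mathcal{G}'$.

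For the second assertion, write $\iota\colon\mathbb{G}_m\to\mathcal{G}'$ for the map $\lambda\mapsto(\lambda^{L_e})_e$, so that $\mathcal{G}=\mathcal{G}'/\iota(\mathbb{G}_m)$. I would then identify the image of $\prod_e\mu_{I(e)}$ under the quotient map $\mathcal{G}'\to\mathcal{G}$ as $\big(\prod_e\mu_{I(e)}\big)\big/\big(\prod_e\mu_{I(e)}\cap\iota(\mathbb{G}_m)\big)$, so that the only computation left is the intersection: an element $\iota(\lambda)=(\lambda^{L_e})_e$ lies in $\prod_e\mu_{I(e)}$ if and only if $\lambda^{L_eI(e)}=1$ for all $e$, and since $L_eI(e)=L$ this is equivalent to $\lambda\in\mu_L$. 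Hence $\prod_e\mu_{I(e)}\cap\iota(\mathbb{G}_m)=\iota(\mu_L)$, and the image is the quotient of $\prod_e\mu_{I(e)}$ by $\mu_L$ along $u\mapsto(u^{L_e})_e$, which is $\mu_I$ by definition. As a minor aside I would record that this presenting map is injective---its kernel is $\mu_{\gcd_e L_e}$, and $\gcd_e L_e=\gcd_e(L/I(e))=1$ since for each prime some edge attains the maximal $p$-adic valuation of $L=\lcm_e I(e)$---so $\mu_I$ genuinely is a subgroup of $\mathcal{G}$ in the stated presentation.

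Given~\ref{lem:twisting_glueing} there is essentially no computational content, and the only point requiring care---hence the main obstacle, such as it is---is the interpretation of ``acts trivially on $\cO(\alpha)$''. I would take it to mean that $\lambda$ preserves $\cO(\alpha)$ together with its canonical gluing, not merely its isomorphism class: under the weaker reading one would additionally pick up every $\lambda$ for which $(\lambda_e^{I(e)})_e$ is the diagonal of $\mathbb{G}_m$ in $\bigoplus_E\mathbb{G}_m$, and obtain a strictly larger group, whereas $\cO(\alpha)$ is built from the PL function $\alpha$ with a definite gluing, so the ``on the nose'' notion is the natural one. With this fixed, both statements are immediate.
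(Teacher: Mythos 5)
Your proof is correct and is exactly the argument the paper intends (the lemma is stated there without proof, immediately after \ref{lem:twisting_glueing}): triviality of the action forces $\lambda_e^{I(e)}=1$ for every $e$, and the image in $\mathcal{G}$ is computed by intersecting $\prod_e\mu_{I(e)}$ with the embedded $\mathbb{G}_m$, where $\iota(\lambda)=(\lambda^{L_e})_e$ lies in $\prod_e\mu_{I(e)}$ precisely when $\lambda^{L_eI(e)}=\lambda^{L}=1$, giving the quotient by $\mu_L$ along $u\mapsto(u^{L_e})_e$. Your side remarks — that ``acts trivially'' must be read as preserving the gluing data on the nose rather than the isomorphism class, and that $\gcd_e L_e=1$ so $\mu_L$ embeds — are accurate and welcome clarifications of points the paper leaves implicit.
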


\begin{corollary} The action of $\mu_{I}$ restricts to an action on 
$\DRL_{\Gamma,I}$.
\end{corollary}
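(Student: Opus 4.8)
The plan is to reduce the corollary to the single assertion that the restriction of the Abel--Jacobi section $\sigma_{a,k}$ to $\oM^a_{\Gamma,I}$ is invariant under the $\mu_I$-action. Granting this, one argues as follows. Recall that $\DRL_{\Gamma,I}$ is a union of irreducible components of the schematic pullback of the zero section of $\oJ_{g,n}$ along $\sigma_{a,k}|_{\oM^a_{\Gamma,I}}$; the $\mu_I$-action on $\oM^a_{\Gamma,I}$ constructed above is by automorphisms that do not change the underlying family of curves, hence it fixes the topological type $(\Gamma,I)$ and permutes exactly those components. If $\sigma_{a,k}$ is $\mu_I$-invariant, then by the universal property of the fibre product defining the double ramification locus $\mu_I$ preserves this pullback, and therefore preserves $\DRL_{\Gamma,I}$; the group axioms for the resulting action are inherited from those of the $\mu_I$-action on $\oM^a_{\Gamma,I}$.

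To prove the invariance of $\sigma_{a,k}$, I would fix $\zeta\in\mu_I$, choose a lift $\lambda=(\lambda_e)_{e\in E}\in\prod_e\mu_{I(e)}\subseteq\mathcal{G}'$, and apply~\ref{lem:twisting_glueing}: the induced automorphism of $\oM^a_{\Gamma,I}$ carries the universal line bundle $\cO(\alpha)$ to the line bundle obtained by re-gluing at each node $e$ by $\lambda_e^{I(e)}$. Since $\lambda_e\in\mu_{I(e)}$ this scalar equals $1$, so the gluing data are unchanged, and one obtains a canonical isomorphism $\zeta^*\cO(\alpha)\cong\cO(\alpha)$ over the universal curve, covering the identity on the universal family (the $\mathcal{G}'$-action alters only the gluing of line bundles, not the curves). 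This isomorphism is independent of the chosen lift: two lifts of $\zeta$ differ by an element of $\mu_L\subseteq\prod_e\mu_{I(e)}$, which also acts trivially on $\cO(\alpha)$, this being exactly the content of the lemma identifying $\mu_I$ with the image of the subgroup of $\mathcal{G}'$ acting trivially on $\cO(\alpha)$. Finally, since $\omega_{\rm log}$ is $\mu_I$-invariant (the action does not change the curves), we get $\zeta^*\bigl(\omega_{\rm log}^{\otimes k}\otimes\cO(\alpha)\bigr)\cong\omega_{\rm log}^{\otimes k}\otimes\cO(\alpha)$, which is precisely $\sigma_{a,k}\circ\zeta=\sigma_{a,k}$, up to the canonical $2$-isomorphism just produced.

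The hard part will be the bookkeeping in the second step: one must upgrade the fibrewise statement of~\ref{lem:twisting_glueing} to a canonical isomorphism of the universal line bundles over $\oM^a_{\Gamma,I}$, and verify that it is compatible with the trivialization $\omega_{\rm log}^{\otimes k}\otimes\cO(\alpha)\cong\cO_C$ defining $\DRL_{\Gamma,I}$ inside $\oM^a_{\Gamma,I}$; this is where the explicit scalar $\lambda_e^{I(e)}$ is genuinely needed, as opposed to the mere coincidence of isomorphism classes of line bundles. Once that compatibility is in hand, the rest --- properness, the group structure, and the compatibility with the further degenerations already contained in $\oM_\Gamma$ --- is formal.
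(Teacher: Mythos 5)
Your proposal is correct and follows essentially the route the paper intends: the corollary is stated there without proof precisely because it is immediate from~\ref{lem:twisting_glueing} together with the lemma identifying $\mu_I$ as the image of the subgroup acting trivially on $\cO(\alpha)$ — a lift $(\lambda_e)_e\in\prod_e\mu_{I(e)}$ changes the gluing at each node by $\lambda_e^{I(e)}=1$, the underlying curve (hence $\omega_{\rm log}$) is untouched, so $\sigma_{a,k}$ is invariant and the schematic pullback of the zero section inside $\oM^a_{\Gamma,I}$, and hence $\DRL_{\Gamma,I}$, is preserved. Your extra care about lift-independence (lifts differ by $\mu_L$, which also acts trivially) and about passing from fibrewise to family statements is sound but not a genuinely different argument; note only that, since the target $\oJ_{g,n}$ parametrizes isomorphism classes of multidegree-zero bundles, invariance of the composite section is all that is needed.
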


\subsection{Description of the locus $\oM^{1/2,a}_{\Gamma,I}$}

As before, similar arguments apply to $\oM^{1/2,a}_{\Gamma,I}$. 
In particular, if $(\Gamma,I)$ is a $k$-simple star
graph, the minimal log structure at all points of 
$\oM^{1/2,a}_{\Gamma,I}$ is also of rank $|V_{\rm out}|$. 
Indeed, by the description of the minimal log structure
of points of $\textbf{Div}$ given in~\cite[Theorem 4.2.4]{Marcus2017LogarithmicCO}
one can compute that of points
of $\oM_{\Gamma,I}^{1/2,a}$; restricting to a sufficiently small neighborhood of a geometric point $s\in S$ of an $S$-point of 
$\oM_{\Gamma,I}^{1/2,a}$,
the minimal log structure is isomorphic to
\begin{equation*}
    \mathbb{N}^{E(\Gamma)}\langle\sum_{e\in\gamma}
    \frac{1}{2\gcd(\gamma)}I^{\gamma}_e e\rangle_{\gamma \in Y},
\end{equation*}
which in turn is isomorphic to the minimal log structure of points
of $\oM_{\Gamma,I}^{a}$.

Let $(\Gamma,I)$ be a $k$-simple star graph with two vertices, and let $(J,w)$ be the associated half $k$-twist  given by~\ref{roottwist}.
We denote by $\overline{\delta}$ the generator of the minimal 
characteristic monoid. Let 
$\textbf{x}:=((\mathcal{C}\to S,\cL,\phi),\alpha)$
be a geometric $S$-point of $\oM^{1/2,a}_{\Gamma,I}$.
We will be
interested in the case where there exists an edge $e\in E(\Gamma)$ such 
that $I(e)\equiv0\mod 2$, which implies that the corresponding $2$-weighting $w$
of the associated pair $(J,w)$ (i.e., the reduction of $I$ modulo $2$) satisfies $w(e)=0$. 
We will make this assumption for the rest of this 
subsection. Moreover, we define
\begin{equation*}
    L_J:=\lcm_{e}(\underbrace{J(e)}_{e\ :\ w(e)=0},\underbrace{I(e)}_{e\ :\ w(e)=1}),
\end{equation*}
where $J(e)=I(e)/2$ for all edges such that $w(e)=0$.
By the correspondence in~\ref{roottwist}, we have $2L_J=L$.

Furthermore, without loss of generality, we may choose 
$\alpha$ such that it takes the value $0$ at the central vertex. 
By definition of the PL function $\alpha$, 
moving along an edge with $w(e)=0$, we have  
\begin{equation*}
    \alpha(v)=J(e)\ell(e),
\end{equation*}
where $v$ is the outer vertex. Similarly, moving along an edge with 
$w(e)=1$, we obtain
\begin{equation*}
    \alpha(v)=I(e)\ell(e)/2.
\end{equation*}
Then for all edges $e$ with
$w(e)=0$, we have
\begin{equation}\label{eq:w(e)=0}
    J(e)\ell(e)=\frac{L}{2}\overline{\delta},
\end{equation}
and since $2J(e)=I(e)$, we obtain $\ell(e)=L_e\overline{\delta}$. For the
edges $e$ with $w(e)=1$, we obtain that
\begin{equation}\label{eq:w(e)=1}
    I(e)\frac{\ell(e)}{2}=\frac{L}{2}\overline{\delta},
\end{equation}
and so $\frac{\ell(e)}{2}=\frac{L_e}{2}\overline{\delta}$.
We denote by $\overline{\mathcal{T}}$ the $\mathbb{G}_{m}$-torsor on $\oM_{\Gamma}$ given
by $\cO^{\times}(\overline{\delta})$.
\begin{lemma} \label{lem:fibre_rooted} Let $(\Gamma,I)$ be a $k$-simple star graph
with two vertices,
and let $w$ be the reduction of $I$ modulo $2$.
There is an isomorphism
\begin{equation*}
    \oM^{1/2,a}_{\Gamma,I}\stackrel{\sim}{\longrightarrow} \left(\bigoplus_{w(e)=0}
    \overline{\mathcal{T}}^{L_e}\oplus\bigoplus_{w(e)=1}\overline{\mathcal{T}}^{\frac{L_e}{2}}\right)/\mathbb{G}_{m},
\end{equation*}
where the action is given by
\begin{equation*}
    \lambda\cdot(u_{e})_{e\in E(\Gamma)}= \left\{\begin{matrix}
 \lambda^{L_{e}}u_{e}& w(e)=0 \\
 \lambda^{\frac{L_{e}}{2}}u_{e}& w(e)=1. \\
\end{matrix}\right.
\end{equation*}
\end{lemma}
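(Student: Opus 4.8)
The plan is to follow the same template as the proof of~\ref{loc:div}, adapting it to the quasi-stable setting and the extra subdivided edges coming from the $2$-weighting $w$. The key structural input is that, by the discussion preceding the statement, the minimal log structure on $\oM^{1/2,a}_{\Gamma,I}$ is constant of rank $1$, generated by $\overline{\delta}$, and that the lengths are forced to be $\ell(e) = L_e\overline{\delta}$ for edges with $w(e)=0$ and $\ell(e) = L_e\overline{\delta}$ (equivalently $\ell(e)/2 = (L_e/2)\overline{\delta}$) for edges with $w(e)=1$, which are precisely~\ref{eq:w(e)=0} and~\ref{eq:w(e)=1}. So as in~\ref{loc:div}, the map $\oM^{1/2,a}_{\Gamma,I}\to \oM_{\Gamma}$ (through the forgetful map to the moduli of stable curves with dual graph $\Gamma$, remembering the spin structure) is a log monomorphism, and to compute $\underline{X}$-points for a scheme $\underline{X}$ over $\oM_{\Gamma}$ it suffices to compute minimal log maps $X \to \oM^{1/2,a}_{\Gamma,I}$ up to automorphisms of the log structure on $X$.

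First I would fix $\underline{X}$ over $\oM_{\Gamma}$; since the target has constant log structure $\NN$, any minimal $X$ has log structure $\NN \oplus \cO_X^\times$, and the datum of a log map is equivalent to specifying, for each edge $e$ of $\Gamma_w$, a map of $\cO_X^\times$-torsors from $\cO_X^\times(x^*\ell_{\Gamma_w}(e))$ to $\cO_X^\times$. For an edge $e$ of $\Gamma$ with $w(e)=0$ this edge is unchanged in $\Gamma_w$, and the length is $L_e x^*\overline{\delta}$, so the datum is a section of $\cO_X^\times(-L_e x^*\overline{\delta})$, i.e.\ an element of $\overline{\mathcal{T}}^{L_e}$ after identifying $\overline{\mathcal{T}} = \cO^\times(\overline{\delta})$. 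For an edge $e$ of $\Gamma$ with $w(e)=1$, $\Gamma_w$ subdivides it into two half-length edges $e_1,e_2$ each of length $(L_e/2) x^*\overline{\delta}$; but the two slopes of $\alpha$ on these sub-edges are equal (both $\pm I(e)$ in the notation of~\ref{def:twist-on-gamma_w}(iv), with the exceptional vertex carrying sum $w(e)=1$), and the gluing at the exceptional $\PP^1$ is rigidified, so the two torsor-maps are determined by a single section of $\cO_X^\times(-(L_e/2) x^*\overline{\delta})$, i.e.\ an element of $\overline{\mathcal{T}}^{L_e/2}$. Assembling these over all edges gives the map to $\bigoplus_{w(e)=0}\overline{\mathcal{T}}^{L_e}\oplus\bigoplus_{w(e)=1}\overline{\mathcal{T}}^{L_e/2}$.

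Finally I would quotient by the automorphisms of the log structure on $X$, which form a $\mathbb{G}_m$ acting on $\cO_X^\times(n)$ by $n$-th powers; tracing through the above identifications, this $\mathbb{G}_m$ acts on the $e$-th summand by the $L_e$-th power when $w(e)=0$ and by the $(L_e/2)$-th power when $w(e)=1$, which is exactly the action displayed in the statement. This yields the claimed isomorphism.

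The main obstacle, and the step requiring the most care, is the treatment of the subdivided edges $w(e)=1$: one must check that inserting the exceptional $\PP^1$ with its admissible line bundle genuinely collapses the two would-be torsor data on $e_1,e_2$ into a single one over $\overline{\mathcal{T}}^{L_e/2}$ rather than over $\overline{\mathcal{T}}^{L_e}\oplus\overline{\mathcal{T}}^{L_e}$ — i.e.\ that the constraint $J(h)+J(h')=w(e)$ together with the admissibility of $\cL$ on the exceptional component (as in~\ref{rem:exc-parity}, which rigidifies the gluing up to a canonical identification) forces the half-length relation~\ref{eq:w(e)=1}. This is where the half-integral exponents $L_e/2$ come from, and verifying it amounts to unwinding~\ref{def:betaPL} and~\ref{roottwist} together with~\ref{lem:normal_bundle_length} applied on $\Gamma_w$. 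The rest is a verbatim transcription of the proof of~\ref{loc:div}.
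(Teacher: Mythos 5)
Your overall strategy is the same as the paper's: fix a schematic map over the boundary stratum, observe that the target has constant rank-one characteristic monoid generated by $\overline{\delta}$, identify the lifting data with sections of $\cO_X^{\times}$-torsors, rewrite those torsors via \ref{eq:w(e)=0} and \ref{eq:w(e)=1} as powers of $\overline{\mathcal{T}}$, and quotient by the $\mathbb{G}_m$ of automorphisms of the log structure acting by $n$-th powers on $\cO_X^{\times}(n)$. For the $w(e)=0$ edges and for the final quotient your argument is a faithful transcription of \ref{loc:div}, as in the paper.

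However, the step you yourself single out as the crucial one — why a $w(e)=1$ edge contributes a \emph{single} datum in $\overline{\mathcal{T}}^{L_e/2}$ — is justified by a false claim. You assert that the two slopes of $\alpha$ on the two sub-edges of $\Gamma_w$ are equal ("both $\pm I(e)$"), citing condition (iv) of \ref{def:twist-on-gamma_w}; but that condition says the slopes at the exceptional vertex sum to $w(e_v)=1$, an odd number, so they can never be equal: they are $J(e)$ and $J(e)+1$ with $J(e)=(I(e)-1)/2$ (this is exactly how the gluing data $u_e^{J(e)}$ and $u_e^{J(e)+1}$ arise in the text following the lemma). The appeal to a "rigidified" gluing on the exceptional $\PP^1$ does not repair this, and as written your argument would more naturally produce two torsor data per such edge. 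The correct mechanism, which is what the paper uses, is not a collapse of two data into one but the structure of the (minimal, equivalently boundary) log structure on $\oM^{1/2}_{g,n}$ and on $\mathbf{Div}^{1/2}_{g,a}$: a $w(e)=1$ edge contributes a single length parameter, namely $\ell(e)/2$ (the two exceptional nodes share this common smoothing parameter, whose double is the stable-model parameter $\ell(e)$), so lifting a minimal log map amounts to giving one section of $\cO_X^{\times}(-x^*(\ell(e)/2))$ per such edge, which \ref{eq:w(e)=1} identifies with $\overline{\mathcal{T}}^{L_e/2}$ since $\ell(e)/2=(L_e/2)\overline{\delta}$. With that correction your proof coincides with the paper's; without it, the decisive step is unsupported.
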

\noindent The proof closely mirrors that of~\ref{loc:div}. 

\begin{proof} Let $x\colon X\to \oM_{g,n}^{1/2}$ be a log map. Then
$x$ lifts to $\oM^{1/2,a}_{\Gamma,I}$ if and only if the topological type is given by $(\Gamma,w)$ and the elements $x^*(I(e)\ell(e)/2)$
are the same in $\overline{M}_{X}$ for all $e\in E(\Gamma)$. 
We endow $X$ with the log 
structure $\NN\oplus\cO_{X}^{\times}$, and fix a schematic map
$x\colon \underline{X}\to \oM^{1/2}_{g,n}$ with topological type given by 
$(\Gamma,w)$. To lift a minimal log map 
$X\to \oM^{1/2,a}_{\Gamma,I}$
is to specify a section of $\cO_{X}^{\times}(-x^*(\ell(e)))$ for all edges
$e$ with $w(e)=0$ and a section of
$\cO_{X}^{\times}(-x^*(\ell(e)/2))$ for all edges
$e$ with $w(e)=1$. By equations \ref{eq:w(e)=0} and \ref{eq:w(e)=1}, we obtain
\begin{equation*}
    \cO_{X}^{\times}(-x^*\ell(e))=\cO_{X}^{\times}(-L_{e}
x^*\overline{\delta}),
\end{equation*}
and 
\begin{equation*}
    \cO_{X}^{\times}(-x^*\frac{\ell(e)}{2})=\cO_{X}^{\times}(-\frac{L_{e}}{2}
x^*\overline{\delta})
\end{equation*}
respectively.
The lemma follows by observing that the automorphisms of the log structure of $X$ act on $\cO_{X}^{\times}(n)$ by $n$-th powers.
\end{proof}

Let $\underline{u}=(u_e)_{e\in E(\Gamma)}$ be any representative of an 
equivalence class of $\oM^{1/2,a}_{\Gamma,I}$.
The fiber of $\cO(\alpha)$ over the point $\underline{u}$ satisfies
\begin{equation*}
    \cO(\alpha)|_{\mathcal{C}_0} = \cO_{\mathcal{C}_0}(-\sum_{h\to v_0} J(h) p_h)\ \text{ and }\ \cO(\alpha)|_{\mathcal{C}_v} = \cO_{\mathcal{C}_v}(- \sum_{h\to v} J(h) p_h), 
\end{equation*}
on the central and outer vertices respectively. If $e=(h,h')$ is such that 
$w(e)=0$ then the gluing data is given by $u_e^{J(e)}\in \cO(J(e)\ell(e))$,
which is identified with 
\begin{equation*}
    {\rm Isom}\left(\cO_{\cC_0}(-J(h) p_{h})|_{p_{h}}, \cO_{\cC_v}(-J(h') p_{h'})|_{p_{h'}}\right) 
\end{equation*}
by~\ref{lem:normal_bundle_length}. On the other hand, if $e$ is such that $w(e)=1$, then this edge is
subdivided into two edges on the quasi stable model,
corresponding to the two nodal points on the exceptional component.
Additionally, by the correspondence in~\ref{roottwist}, 
we obtain that $I(e)$ is an odd number. 
If we write $J(e)=(I(e)-1)/2$, then the gluing data at the two nodal 
points are given by 
\begin{equation*}
    u_{e}^{J(e)}\in\cO(J(e)\frac{\ell(e)}{2})\ \ \text{and}\ \  u_{e}^{J(e)+1}\in\cO((J(e)+1)\frac{\ell(e)}{2}) 
\end{equation*}
which are identified with the corresponding ${\rm Isom}$ spaces,
by~\ref{lem:normal_bundle_length}, as before.

\subsection{An action on $\DRL_{\Gamma,I}^{1/2}$} 
Let $e$ be an edge such that $I(e)\equiv0\mod 2$, and let $w$ be the 
$2$-weighting corresponding to the associated pair $(J,w)$. In particular, $w$
satisfies $w(e)=0$. We let $\mu_{I(e)}$ act on $\oM^{1/2,a}_{\Gamma,I}$ by scalar multiplication on the summand in the presentation of~\ref{lem:fibre_rooted} corresponding to $e$, in 
a similar fashion to~\ref{lem:twisting_glueing}
and in the text above it. 
Let $t_e\in \mu_{I(e)}$, and let 
\begin{equation*}
    \underline{u}\in\bigoplus_{w(e)=0}\overline{\mathcal{T}}^{L_e}\oplus\bigoplus_{w(e)=1}\overline{\mathcal{T}}^{\frac{L_e}{2}}.  
\end{equation*}
Then the fibers of   
\begin{equation*}
    \cO(\alpha)_{\underline{u}}\ \text{and}\ \cO(\alpha)_{t_{e}\cdot \underline{u}}
\end{equation*}
are related by changing the gluing at the node $e$ by $t_{e}^{J(e)}\in \mu_{2}$. The action by $\mu_{I(e)}$ 
restricts to an action on $\DRL_{\Gamma,I}^{1/2}$. Indeed, it suffices to observe that the map $\oM^{1/2,a}_{\Gamma,I}\to \oM^{a}_{\Gamma,I}$ squares the gluing data at all nodes $p$ corresponding to 
an edge $e_{p}$ such that $w(e_p)=0$, and that $t_{e}^{J(e)}\in \mu_{2}$.
Thus, we obtain  
\begin{equation*}
    \underline{u}\in \DRL^{1/2}_{\Gamma,I}\Rightarrow t_{e}\cdot\underline{u}\in \DRL^{1/2}_{\Gamma,I}.
\end{equation*}
\begin{lemma}\label{lem:exchpar} Let $(\Gamma,I)$ 
be a $k$-simple star graph with
two vertices
such that $I$ admits an even value at an edge $e$. Let $t_{e}\in \mu_{I(e)}$ 
such that $t_{e}=\zeta_{I(e)}^i$ for $i\equiv1\mod2$.
Then the isomorphism 
\begin{equation*}
    \theta_{t_{e}}\colon \DRL^{1/2}_{\Gamma,I}\to \DRL^{1/2}_{\Gamma,I} 
\end{equation*}
exchanges the two components  $\DRL^{1/2,+}_{\Gamma,I}$
and $\DRL^{1/2,-}_{\Gamma,I}$.
\end{lemma}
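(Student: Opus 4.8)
The plan is to reduce the statement to geometric points, where the effect of $\theta_{t_e}$ on the spin structure can be made completely explicit, and then to recover the global statement from the fact that parity is locally constant.

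First I would record the relevant numerics. Since $I(e)$ is even, the reduction $w$ of $I$ modulo $2$ satisfies $w(e)=0$, so that $J(e)=I(e)/2$ by~\ref{roottwist}; hence for $t_e=\zeta_{I(e)}^i$ with $i$ odd,
\[
t_e^{J(e)}=\bigl(\zeta_{I(e)}^{I(e)/2}\bigr)^{i}=(-1)^i=-1 .
\]
By the description given just before the statement (using the presentation of $\oM^{1/2,a}_{\Gamma,I}$ from~\ref{lem:fibre_rooted}, which is where the two-vertex hypothesis enters), $\theta_{t_e}$ fixes the underlying log curve $\mathcal{C}$ and the PL function $\alpha$ and multiplies the gluing of $\cO(\alpha)$ at the node corresponding to $e$ by $t_e^{J(e)}=-1$. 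Since on $\DRL_{\Gamma,I}^{1/2}$ one has $\cL\cong\bigl(\omega_{\mathrm{log}}^{(k-1)/2}\otimes\cO(\alpha)\bigr)^{-1}$, with $\omega_{\mathrm{log}}$ carrying its canonical gluing, the net effect on a geometric point $((\mathcal{C},\cL,\phi),\alpha)$ is to replace $\cL$ by the spin structure $\cL'$ obtained from $\cL$ by multiplying the gluing at the node $e$ by $-1$ (adjusting $\phi$ accordingly, which is possible because $(-1)^2=1$).

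Next I would show that this operation reverses parity. As $w(e)=0$, the node corresponding to $e$ lies on the non-exceptional subcurve and, by the degree argument recalled in the proof of~\ref{prop:cancellation} (cf.~\cite[Example 6.2]{Cornalba1989ModuliOC}), it is non-separating there. Combining~\ref{rem:exc-parity} — which identifies the parity of the point with the parity of $h^0$ of the restriction of the spin structure to the non-exceptional subcurve — with the argument in the proof of~\ref{prop:cancellation} (normalise at $e$, exhibit a section not vanishing at the two branches, and note that it descends for exactly one of the two gluings), one obtains that $\cL$ and $\cL'$ have opposite parities. Thus $\theta_{t_e}$ reverses the parity of every geometric point of $\DRL_{\Gamma,I}^{1/2}$; since parity is locally constant and $\theta_{t_e}$ is an isomorphism, it permutes the open and closed loci $\DRL_{\Gamma,I}^{1/2,\pm}$ and, being parity-reversing, must exchange them.

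The only delicate point is the bookkeeping in the second paragraph: transporting the stated $\mu_{I(e)}$-action from $\cO(\alpha)$ to the spin structure $\cL$ itself through the defining relation of $\DRL^{1/2}$, so that ``changing the gluing at $e$ by $-1$'' really refers to $\cL$. Everything else — in particular the assertion that flipping the gluing of a spin structure at a non-separating node flips the parity — is already contained in the proof of~\ref{prop:cancellation}, so I anticipate no further obstacle.
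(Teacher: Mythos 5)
Your proposal is correct and follows essentially the same route as the paper's proof: the same computation $t_e^{J(e)}=-1$, the same transfer of the sign change on the gluing of $\cO(\alpha)$ to the spin structure via the defining relation of $\DRL^{1/2}$ (the paper's bundle $\cF\cong\cL$), and the same parity-flip argument by passing to the non-exceptional subcurve and reusing the normalization/gluing argument of~\ref{prop:cancellation}, with the reduction to geometric points justified by local constancy of parity.
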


\begin{proof} We work over geometric points since the
parity is invariant in families. If we fix $\textbf{x}=((\mathcal{C},\cL,\phi),\alpha)\in\DRL^{1/2}_{\Gamma,I}$, 
we have to show that the parities of $\textbf{x}$ and $t_{e}\cdot \textbf{x}$
are opposite. In particular, the parity of $\textbf{x}$ is the parity 
of $\cL$ and so it suffices to show that the spin structure
corresponding to $t_{e}\cdot\textbf{x}$ has opposite parity. 
We write $i=2l+1$, where $l\in \ZZ$, and recall that 
$2J(e)=I(e)$. Then we have
\begin{equation*}
    t_{e}^{J(e)}=\left(\zeta_{I(e)}^{2l+1}\right)^{\frac{I(e)}{2}}
    =\left(\zeta_{I(e)}^{\frac{I(e)}{2}}\right)=-1
\end{equation*}
Hence, $t_{e}$ acts on the gluing of $\cO(\alpha)$
at the node corresponding to $e$ by changing 
the sign. We write $\cF$ 
to denote the line bundle 
\begin{equation*}
    \omega_{\rm log}^{\frac{1-k}{2}}\left(\sum_{i=1}^n\frac{a_i-1}{2}\right)\otimes\cO(\alpha)^{\vee}.
\end{equation*}
In particular, since $\textbf{x}\in \DRL^{1/2}_{\Gamma,I}$, we have that 
$\cF\cong \cL$. Recall that the associated $2$-weighting $w$
satisfies $w(e)=0$. Now, similarly to the proof of~\ref{prop:cancellation},
since we are interested in parity calculations, we may pass to the
non-exceptional sub-curve of $\mathcal{C}$.
Hence, passing to the non-exceptional sub-curve $\widetilde{\cC}$, if 
\begin{equation*}
    \nu_{e}\colon \widetilde{\cC}'\to \widetilde{\cC}    
\end{equation*}
is the partial normalization of the node corresponding
to $e$ we obtain that $\nu^*\widetilde{\cF}$ is an honest square root of 
$\omega_{\widetilde{\cC}'}(p_h+p_{h'})$, where $\widetilde{\cF}$ is
the restriction of $\cF$ to $\widetilde{\cC}$.
By the same reasoning
as in~\ref{prop:cancellation} there are $2$ possible identifications of $\nu^*\widetilde{\cF}$
resulting in $2$ different spin structures, which have opposite parities.
The two possible identifications differ up to sign and the action we described 
switches from one identification to the other.
\end{proof}

\begin{corollary} Let $(\Gamma,I)$ be a $k$-simple
star graph as in the lemma above.
Then the following equality holds in $A^*(\oM_{g,n})$
    \begin{equation*}
        \rho_{*}\left(
 \epsilon_{a*}\left(\DR^{1/2}_{\Gamma,I}\cdot\rho^*_{\frac{1}{2}}[\pm]\right)\right)=0.
    \end{equation*}
\end{corollary}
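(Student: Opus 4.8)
The idea is to promote the isomorphism $\theta_{t_e}$ of \ref{lem:exchpar} to an automorphism of $\DRL^{1/2}_{\Gamma,I}$ \emph{over} $\DRL_{\Gamma,I}$ which negates the parity class, and then to conclude by a projection–formula argument. First I would rewrite the class in question. Since $\DR^{1/2}_{\Gamma,I}$ is supported on $\DRL^{1/2}_{\Gamma,I}=\DRL^{1/2,+}_{\Gamma,I}\sqcup\DRL^{1/2,-}_{\Gamma,I}$, and $\rho_{\frac{1}{2}}^*[\pm]$ restricts to the locally constant function equal to $+1$ on $\DRL^{1/2,+}_{\Gamma,I}$ and $-1$ on $\DRL^{1/2,-}_{\Gamma,I}$, the cap product $\DR^{1/2}_{\Gamma,I}\cdot\rho_{\frac{1}{2}}^*[\pm]$ lives in the Chow group of $\DRL^{1/2}_{\Gamma,I}$ and is the ``internal parity cycle'' of that substack, weighted by the length ${\rm length}(\widetilde p)$, which by \ref{cor:equal-lengths} and \ref{mult:root} is the same for every generic point $\widetilde p$.

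Next, by \ref{lem:exchpar} the automorphism $\theta_{t_e}$ of $\DRL^{1/2}_{\Gamma,I}$ interchanges $\DRL^{1/2,+}_{\Gamma,I}$ and $\DRL^{1/2,-}_{\Gamma,I}$. In particular $\theta_{t_e}$ permutes the irreducible components $\DRL^{1/2,\widetilde p}_{\Gamma,I}$, so — the length coefficients being equal — it fixes the cycle $\DR^{1/2}_{\Gamma,I}$, while $\theta_{t_e}^*\big(\rho_{\frac{1}{2}}^*[\pm]|_{\DRL^{1/2}_{\Gamma,I}}\big)=-\,\rho_{\frac{1}{2}}^*[\pm]|_{\DRL^{1/2}_{\Gamma,I}}$. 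By the projection formula this gives
\[
\theta_{t_e*}\big(\DR^{1/2}_{\Gamma,I}\cdot\rho_{\frac{1}{2}}^*[\pm]\big)=-\,\DR^{1/2}_{\Gamma,I}\cdot\rho_{\frac{1}{2}}^*[\pm].
\]

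The main point, and the step I expect to be the real obstacle, is to check that $\epsilon_a\circ\theta_{t_e}=\epsilon_a$ as morphisms $\DRL^{1/2}_{\Gamma,I}\to\DRL_{\Gamma,I}$ (equivalently into $\oM^{a}_g$). By construction of $\epsilon_a$, together with the presentations in \ref{loc:div}, \ref{lem:fibre_rooted} and \ref{lem:twisting_glueing}, the morphism $\oM^{1/2,a}_{\Gamma,I}\to\oM^{a}_{\Gamma,I}$ \emph{squares} the gluing data of $\cO(\alpha)$ at every node whose edge $e_p$ satisfies $w(e_p)=0$; on the other hand $\theta_{t_e}$ changes only the gluing at the node $e$ — and since $I(e)$ is even we have $w(e)=0$ — by the scalar $t_e^{J(e)}=-1\in\mu_2$, which is killed upon squaring. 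Hence a point and its $\theta_{t_e}$-translate have the same image in $\oM^{a}_{\Gamma,I}$, so $\theta_{t_e}$ lies over $\DRL_{\Gamma,I}$. Here is the one place the hypothesis ``$I$ takes an even value'' is genuinely used: it is what makes $w(e)=0$ and the twisting scalar a square, so that $\theta_{t_e}$ descends over $\DRL_{\Gamma,I}$ rather than merely over $\oM_{g,n}$.

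Finally, $\epsilon_a|_{\DRL^{1/2}}$ is proper by \ref{mu2gerbe}, so we may push forward, and combining the last two paragraphs:
\[
\epsilon_{a*}\big(\DR^{1/2}_{\Gamma,I}\cdot\rho_{\frac{1}{2}}^*[\pm]\big)=(\epsilon_a\circ\theta_{t_e})_*\big(\DR^{1/2}_{\Gamma,I}\cdot\rho_{\frac{1}{2}}^*[\pm]\big)=\epsilon_{a*}\big(-\,\DR^{1/2}_{\Gamma,I}\cdot\rho_{\frac{1}{2}}^*[\pm]\big),
\]
so this class is $2$-torsion and hence vanishes with $\QQ$-coefficients; applying $\rho_*$ gives the asserted identity in $A^*(\oM_{g,n})$. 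All the delicate verifications are concentrated in the compatibility $\epsilon_a\circ\theta_{t_e}=\epsilon_a$; the rest is formal.
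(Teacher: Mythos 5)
Your strategy is the one the paper itself intends (it deduces the corollary directly from \ref{lem:exchpar} together with the equality of lengths from \ref{cor:equal-lengths} and \ref{mult:root}), and most of your steps are fine: the rewriting of $\DR^{1/2}_{\Gamma,I}\cdot\rho_{\frac{1}{2}}^*[\pm]$ as a length-weighted signed sum of components, the invariance $\theta_{t_e*}\DR^{1/2}_{\Gamma,I}=\DR^{1/2}_{\Gamma,I}$, the sign flip of $\rho_{\frac{1}{2}}^*[\pm]$, and the projection-formula manipulation. However, the equality you single out as the crux, $\epsilon_a\circ\theta_{t_e}=\epsilon_a$ on $\DRL^{1/2}_{\Gamma,I}$, is not correct, and your justification for it conflates ``the image line bundle $\cO(\alpha')$ is unchanged up to isomorphism'' with ``the image point of $\oM^{a}_{\Gamma,I}$ is unchanged''. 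A point of $\oM^{a}_{\Gamma,I}$ carries more data than the isomorphism class of $\cO(\alpha')$: in the presentation of \ref{loc:div} the fibre over a point of $\oM_\Gamma$ is the torus $\bigl(\bigoplus_e\mathcal{T}^{\otimes L_e}\bigr)/\mathbb{G}_m$, and $\epsilon_a$ does not square the coordinate $u_e$ at a node with $w(e)=0$ (the squaring is only in the exponent of the gluing, $u_e^{J(e)}$ upstairs versus $u_e^{I(e)}=u_e^{2J(e)}$ downstairs). Consequently $\epsilon_a\circ\theta_{t_e}=\tau\circ\epsilon_a$, where $\tau$ is the translation by $t_e\in\mu_{I(e)}\subset\mu_I$ in the $e$-th coordinate of $\oM^{a}_{\Gamma,I}$; this $\tau$ is exactly the nontrivial $\mu_I$-action introduced after \ref{lem:twisting_glueing}, which fixes $\cO(\alpha)$ only up to isomorphism (hence preserves $\DRL_{\Gamma,I}$) but genuinely moves points. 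So a point and its $\theta_{t_e}$-translate do \emph{not} have the same image in $\oM^{a}_{\Gamma,I}$ in general, and the displayed identity $\epsilon_{a*}(\gamma)=(\epsilon_a\circ\theta_{t_e})_*(\gamma)$ is unjustified as written.

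The good news is that the error is inessential and the repair is immediate: what your argument actually needs is invariance of the composite map to $\oM_{g,n}$, not of $\epsilon_a$. Since $\theta_{t_e}$ is defined fibrewise in the presentation of \ref{lem:fibre_rooted} — it fixes the underlying spin curve $(\cC,\cL,\phi)$ and only changes the log datum — one has $\rho_{\frac{1}{2}}\circ\theta_{t_e}=\rho_{\frac{1}{2}}$, and the commutative square $\rho\circ\epsilon_a=\epsilon\circ\rho_{\frac{1}{2}}$ (equivalently, $\rho\circ\tau=\rho$ because $\tau$ acts over $\oM_\Gamma$) then gives, with $\gamma=\DR^{1/2}_{\Gamma,I}\cdot\rho_{\frac{1}{2}}^*[\pm]$,
\begin{equation*}
(\rho\circ\epsilon_a)_*\gamma=(\rho\circ\epsilon_a)_*\theta_{t_e*}\gamma=-(\rho\circ\epsilon_a)_*\gamma,
\end{equation*}
so $\rho_*\epsilon_{a*}\gamma=0$ in $A^*(\oM_{g,n},\QQ)$, which is the corollary. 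With this substitution your proof coincides with the paper's intended deduction from \ref{lem:exchpar}.
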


\subsection{Multiple outlying vertices}
Suppose now that $(\Gamma,I)$ is a $k$-simple star graph with more than one
outlying vertex, and let $w$ be the $2$-weighting of the pair $(J,w)$
associated to $I$. Then the log structure of 
$\oM_{\Gamma,I}^{1/2,a}$ is free of rank equal 
to $|V_{\rm out}|$. 
Working as before, we obtain a generator 
$\overline{\delta}_{v}$ for each $v\in V_{\rm out}$. Let $\overline{\mathcal{T}}_{v}$ denote the $\mathbb{G}_{m}$-torsor on $\oM_{\Gamma}$ given by $\cO^{\times}(\overline{\delta}_{v})$.
By applying a similar analysis as for the case of 
$k$-simple star graphs with two vertices, we have an isomorphism
\begin{equation*}
    \oM^{1/2,a}_{\Gamma,I}\stackrel{\sim}{\longrightarrow} \prod_{v\in V_{\rm out}}\left(\bigoplus_{\substack{e\to v \\ w(e)=0}}
    \overline{\mathcal{T}}_{v}^{L^v_e}\oplus\bigoplus_{\substack{e\to v \\ w(e)=1}}\overline{\mathcal{T}}_{v}^{\frac{L^v_e}{2}}\right)/\mathbb{G}_{m},
\end{equation*}
where $L^v=\lcm_{e\to v}(I(e))$ and $L^v_{e}=L^{v}/I(e)$. 
The action described above and the corollaries carry over unchanged for such graphs.

\begin{corollary}\label{DRcancel} Let $(\Gamma,I)$ be a
$k$-simple star graph such that the $k$-twist $I$ admits an even value at an
edge $e$.
Then the following equality holds in $A^*(\oM_{g,n})$
    \begin{equation*}
        \rho_{*}\left(
 \epsilon_{a*}\left(\DR^{1/2}_{\Gamma,I}\cdot\rho^*_{\frac{1}{2}}[\pm]\right)\right)=0.
    \end{equation*}
\end{corollary}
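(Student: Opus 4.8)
The plan is to reduce the general case to the parity-swapping automorphism that was constructed in this subsection (and, in the two-vertex case, already used in the corollary preceding the subsection ``Multiple outlying vertices''), and then run a short symmetry argument for the pushforward. Since $(\Gamma,I)$ is a $k$-simple star graph, an edge $e\in E(\Gamma)$ with $I(e)$ even necessarily joins the central vertex $v_0$ to a unique outlying vertex $v$, and the associated $2$-weighting $w=I\bmod 2$ has $w(e)=0$. Choosing $t_e=\zeta_{I(e)}^{i}$ with $i$ odd, I would take $\theta_{t_e}$ to be the automorphism of $\oM^{1/2,a}_{\Gamma,I}$ acting by scalar multiplication by $t_e$ on the summand $\overline{\mathcal{T}}_v^{L^v_e}$ in the product presentation
$$\oM^{1/2,a}_{\Gamma,I}\ \stackrel{\sim}{\longrightarrow}\ \prod_{v'\in V_{\mathrm{out}}}\Big(\bigoplus_{\substack{e'\to v'\\ w(e')=0}}\overline{\mathcal{T}}_{v'}^{L^{v'}_{e'}}\ \oplus\ \bigoplus_{\substack{e'\to v'\\ w(e')=1}}\overline{\mathcal{T}}_{v'}^{L^{v'}_{e'}/2}\Big)\big/\mathbb{G}_m$$
recorded in this subsection, and trivially on all other factors. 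As noted above, $\theta_{t_e}$ restricts to an automorphism of $\DRL^{1/2}_{\Gamma,I}$, and the argument of~\ref{lem:exchpar}---which is entirely local at the node $e$ and uses only that $t_e^{J(e)}=-1$ in $\mu_2$, after passing to the non-exceptional subcurve and partially normalizing at $e$---shows that $\theta_{t_e}$ exchanges the two parity components $\DRL^{1/2,+}_{\Gamma,I}$ and $\DRL^{1/2,-}_{\Gamma,I}$.

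Next I would record that $\theta_{t_e}$ lies over $\oM_{g,n}$: it changes the gluing of $\cO(\alpha)$ only at the node $e$, by $t_e^{J(e)}\in\mu_2$, while $\epsilon_a$ squares the gluing data at nodes with $w(e)=0$; hence $\epsilon_a\circ\theta_{t_e}=\epsilon_a$, and therefore $\pi\circ\theta_{t_e}=\pi$ where $\pi:=\rho\circ\epsilon_a|_{\DRL^{1/2}_{\Gamma,I}}\colon \DRL^{1/2}_{\Gamma,I}\to\oM_{g,n}$. Since the parity is constant on each irreducible component of $\DRL^{1/2}_{\Gamma,I}$ and all such components lying over $(\Gamma,I)$ have the same length (by~\ref{mult:root} and~\ref{rem:DRL-length}), the class $\DR^{1/2}_{\Gamma,I}\cdot\rho^*_{\frac{1}{2}}[\pm]$ equals $\mathrm{length}(\widetilde p)$ times the difference of the fundamental cycles of $\DRL^{1/2,+}_{\Gamma,I}$ and $\DRL^{1/2,-}_{\Gamma,I}$; call this class $\mathcal{D}$. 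Because $\theta_{t_e}$ maps $\DRL^{1/2,+}_{\Gamma,I}$ isomorphically onto $\DRL^{1/2,-}_{\Gamma,I}$ and vice versa, $\theta_{t_e*}\mathcal{D}=-\mathcal{D}$, so
$$\rho_{*}\Big(\epsilon_{a*}\big(\DR^{1/2}_{\Gamma,I}\cdot\rho^*_{\frac{1}{2}}[\pm]\big)\Big)=\pi_*\mathcal{D}=\pi_*\theta_{t_e*}\mathcal{D}=-\pi_*\mathcal{D},$$
whence $\pi_*\mathcal{D}=0$, which is the assertion.

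I expect the only delicate point is confirming that $\theta_{t_e}$ really interchanges the two parity components when $\Gamma$ has several outlying vertices: one needs the parity computation of~\ref{lem:exchpar} to be insensitive to the gluings at the other nodes, i.e.\ that restricting to the non-exceptional subcurve and normalizing at $e$ exhibits the restricted bundle as an honest square root of $\omega_{\widetilde{\cC}'}(p_h+p_{h'})$ so that flipping the sign at $e$ flips the parity no matter what happens elsewhere---exactly as in the two-vertex case (cf.\ the proof of~\ref{prop:cancellation}). Everything else (compatibility of $\theta_{t_e}$ with $\epsilon_a$ and $\rho$, uniformity of the lengths via~\ref{cor:rank-of-mininal-log},~\ref{mult:root} and~\ref{rem:DRL-length}, and the identity $\pi_*\theta_{t_e*}=\pi_*$) is formal.
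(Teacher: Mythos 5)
Your proposal is correct and follows essentially the paper's own (largely implicit) argument: the paper's \ref{lem:exchpar} together with the remark that the action and its corollaries carry over to several outlying vertices is exactly the mechanism you use, and your explicit bookkeeping ($\epsilon_a\circ\theta_{t_e}=\epsilon_a$ since the gluing is squared and $t_e^{J(e)}\in\mu_2$, equal lengths of components, hence $\pi_*\mathcal{D}=-\pi_*\mathcal{D}=0$) is just the intended deduction spelled out. Nothing is missing; your flagged ``delicate point'' about multiple outlying vertices is handled exactly as in the paper, since the parity flip in \ref{lem:exchpar} is local at the node $e$ (which is automatically non-separating when $I(e)$ is even) and independent of the gluings elsewhere.
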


\subsection{Proof of~\ref{thm:DR}}
In this section we prove~\ref{thm:DR}. We start with a 
simple remark about the classes $[\oM_{\Gamma,I}]^{\pm}$.

\begin{remark}\label{rem:tensor-decomp} Using standard theory of tensor products,
we can decompose the class $[\oM_{\Gamma,I}^{\pm}]$ as follows;
\begin{align*}
    [\oM_{\Gamma,I}]^{\pm}=\sum_{\zeta\colon V(\Gamma)\to\{1,-1\}}
    \left(\prod_{v\in V}\zeta(v)\right) \times[\oM_{g(v_0)}(I(v_0),k)^{\zeta(v_0)}]\otimes
    \prod_{v\in V_{\rm out}}[\oM_{g(v)}(I(v)/k)^{\zeta(v)}].
\end{align*}
\end{remark}

 Now, consider the following commutative diagram:
\begin{equation*}
    \xymatrix{\DRL^{1/2} \ar[r]^{\rho_{\frac{1}{2}}}\ar[d]_{\epsilon_{a}} & \oM^{1/2}_{g,n}\ar[d]^{\epsilon} \\
            \DRL\ar[r]_{\rho} & \oM_{g,n}.}
\end{equation*}
Then, by the commutativity of the diagram above, we obtain
\begin{align*}
    \DR^{\pm}_{g}(a)&=2\epsilon_{*}\left(\rho_{\frac{1}{2}*}\mathrm{DRC}
    ^{1/2}\cdot[\pm]\right) \\
                    &=2\epsilon_{*}\left(\rho_{\frac{1}{2}*}\left((\mathrm{DRC}
    ^{1/2}\cdot\rho_{\frac{1}{2}}^*[\pm]\right)\right) \\
                    &=2\rho_{*}\left(\epsilon_{a*}\left(\mathrm{DRC}
    ^{1/2}\cdot\rho_{\frac{1}{2}}^*[\pm]\right)\right).
\end{align*}
By definition, the cycle associated to $\DRL^{1/2}$ is given by
\begin{equation*}
    [\DRL^{1/2}]=\sum_{(\Gamma,I)\in \mathrm{SStar}_{g}(a,k)}
    \DR^{1/2}_{\Gamma,I}.
\end{equation*}
\begin{lemma} The cycle associated to the closed substack $[\DRL^{1/2}]$ coincides with the cycle ${\rm DRC}^{1/2}$. 
\end{lemma}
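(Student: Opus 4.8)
The plan is to deduce this from its well-known non-spin counterpart by means of the $\mu_2$-gerbe $\epsilon_a|_{\DRL^{1/2}}$. Recall that in the non-spin situation one has $\mathrm{DRC}=[\DRL]$ in $A_*(\oM^a_g)$, where $[\DRL]$ denotes the fundamental cycle of the double ramification locus weighted by the lengths of the Artin local rings at its generic points; this is the content of \cite{HolSch}, and rests on the facts that $\oM^a_g$ is l.c.i.\ (hence Cohen--Macaulay) in a neighbourhood of $\DRL$ (see \ref{lem:l.c.i-charts}), that $\DRL$ has pure codimension $g$, and that for a regular embedding into a smooth ambient space with dimensionally proper intersection and Cohen--Macaulay source the refined Gysin pullback carries no higher Tor contributions, so it returns exactly the weighted fundamental cycle of the scheme-theoretic intersection.

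First I would invoke \ref{mu_2DR:gerbe}: the morphism $\epsilon_a$ restricts to a $\mu_2$-gerbe $\DRL^{1/2}\to\DRL$ and satisfies $(\epsilon_a|_{\DRL^{1/2}})_*\,\mathrm{DRC}^{1/2}=\frac12\,\mathrm{DRC}$. Next, by \ref{mult:root} the irreducible components of $\DRL^{1/2}$ are in bijection with those of $\DRL$, with equal lengths of the Artin local rings at the generic points; since $\epsilon_a$ restricts to a $\mu_2$-gerbe on each component of $\DRL^{1/2}$ onto the corresponding component of $\DRL$, pushing forward the weighted fundamental cycle componentwise gives $(\epsilon_a|_{\DRL^{1/2}})_*\,[\DRL^{1/2}]=\frac12\,[\DRL]$. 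Combining these two identities with the non-spin equality $\mathrm{DRC}=[\DRL]$ yields $(\epsilon_a|_{\DRL^{1/2}})_*\,\mathrm{DRC}^{1/2}=\frac12\,[\DRL]=(\epsilon_a|_{\DRL^{1/2}})_*\,[\DRL^{1/2}]$. It then suffices to observe that $(\epsilon_a|_{\DRL^{1/2}})_*$ is injective on Chow groups with $\QQ$-coefficients: this is formal for a gerbe banded by a finite group, since the flat pullback $\epsilon_a^*$ is then an isomorphism on $A_*(-,\QQ)$ and $(\epsilon_a|_{\DRL^{1/2}})_*\circ\epsilon_a^*=\frac12\,\mathrm{id}$, so $(\epsilon_a|_{\DRL^{1/2}})_*=\frac12(\epsilon_a^*)^{-1}$ is injective. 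Hence $\mathrm{DRC}^{1/2}=[\DRL^{1/2}]$. As an alternative I would note that one can argue directly, mimicking \cite{HolSch}: $\mathrm{DRC}^{1/2}=\sigma_{a,k}^{1/2'!}([e'])$ is the refined Gysin pullback of the fundamental cycle of the $0$-section along the regular closed embedding $\sigma_{a,k}^{1/2'}$ of codimension $g$, and since $\oM^{1/2,a}_g$ is l.c.i.\ near $\DRL^{1/2}$ — its \'etale-local charts near the $(\Gamma,I)$-strata coincide, up to the spin data, with those of $\oM^a_g$ near $\DRL$, cf.\ \ref{cor:rank-of-mininal-log} and \ref{lem:fibre_rooted} — and $\DRL^{1/2}$ has pure codimension $g$, the excess intersection formula produces precisely the weighted fundamental cycle $[\DRL^{1/2}]$.

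The main obstacle is thus not the homological bookkeeping but the two geometric inputs underlying it (and underlying the non-spin identity $\mathrm{DRC}=[\DRL]$): that the intersection defining $\DRL^{1/2}$ is dimensionally proper, i.e.\ $\DRL^{1/2}$ has no excess-dimensional components, and that the birational model $\oM^{1/2,a}_g$ is Cohen--Macaulay in a neighbourhood of $\DRL^{1/2}$. Both are established in the non-spin setting in \cite{HolSch} and transfer to the spin setting through the local descriptions of $\oM^{1/2,a}_{\Gamma,I}$ recorded above together with the gerbe $\epsilon_a$; once they are in place, the identification of $\mathrm{DRC}^{1/2}$ with the weighted fundamental cycle of $\DRL^{1/2}$ is a formal consequence of the refined intersection formalism, exactly as in the non-spin case.
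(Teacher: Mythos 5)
Your proposal is correct, and your ``alternative'' route is in fact the paper's own proof: the paper argues directly on the spin side, using \ref{lem:l.c.i-charts} to get that the ambient space is l.c.i.\ (hence Cohen--Macaulay) at the generic points, \ref{mu_2DR:gerbe} to identify the local rings of $\DRL^{1/2}$ with those of $\DRL$, and then \cite[Proposition 7.1b)]{Fulton} to conclude that the cycle-theoretic multiplicities of $\mathrm{DRC}^{1/2}$ equal the lengths, i.e.\ $\mathrm{DRC}^{1/2}=[\DRL^{1/2}]$. Your primary route is a genuine repackaging: you first assert the non-spin identity $\mathrm{DRC}=[\DRL]$ on $\oM^{a}_{g}$, then transfer it through the gerbe using $\epsilon_{a*}\mathrm{DRC}^{1/2}=\tfrac12\mathrm{DRC}$, $\epsilon_{a*}[\DRL^{1/2}]=\tfrac12[\DRL]$, and injectivity of $\epsilon_{a*}$ on $A_*(-,\QQ)$ (which is fine, but rests on the Vistoli-type comparison with coarse spaces rather than being purely formal from the projection formula). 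Two caveats on that route: the non-spin identity is not literally available off the shelf, since \cite{HolSch} works on the fine model $\oM^{a}_{g,f}$, where the lengths differ from those of $\DRL$ (compare \ref{rem:DRL-length}); on the saturated model it requires exactly the l.c.i.\ + Cohen--Macaulay + purity + Fulton argument you list, so the detour buys nothing over arguing directly on the spin side as the paper does. Also, the matching of lengths needed for $\epsilon_{a*}[\DRL^{1/2}]=\tfrac12[\DRL]$ is supplied by the gerbe structure of \ref{mu_2DR:gerbe} (isomorphism of local rings), not by \ref{mult:root}, which only compares the various generic points of $\DRL^{1/2}$ lying over a fixed generic point of $\DRL_{f}$. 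With these points adjusted, both of your arguments are sound, and you correctly isolate the two geometric inputs (dimensional properness of $\DRL^{1/2}$ and Cohen--Macaulayness of the ambient space near it) as the real content.
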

\begin{proof} First we recall that, by~\ref{lem:l.c.i-charts},
the local rings of $\oM_{g}^{a}$ at generic points 
of irreducible components of $\DRL$ are local complete 
intersections, and hence Cohen-Macaulay.
Moreover, by~\ref{mu_2DR:gerbe}, the local rings
at generic points of $\DRL$ are isomorphic to those of $\DRL^{1/2}$.
Then it follows from~\cite[Proposition 7.1b)]{Fulton} that
the lengths at generic points of irreducible components of 
$\DRL^{1/2}$ agree with the cycle
theoretic multiplicity of $\mathrm{DRC}^{1/2}$, and so we obtain 
\begin{equation*}
    \mathrm{DRC}^{1/2}=[\DRL^{1/2}].
\end{equation*}
\end{proof}
\vspace{-3mm}

Furthermore, using the lemma above, we obtain 
\begin{align*}
    \rho_{*}\left(\epsilon_{a*}\left(\mathrm{DRC}
    ^{1/2}\cdot\rho_{\frac{1}{2}}^*[\pm]\right)\right)&=\sum_{(\Gamma,I)\in\mathrm{SStar}_{g}(a,k)}
 \rho_{*}\left(\epsilon_{a*}\left(\DR^{1/2}_{\Gamma,I}\cdot\rho^*_{\frac{1}{2}}[\pm]\right)\right).
\end{align*}
However, as we proved in the previous subsection, the contribution
above arising from $k$-simple star graphs $(\Gamma,I)$ 
such that the $k$-twist $I$ admits an even value
on an edge vanishes (see~\ref{DRcancel}). 
Hence, we can restrict to the case that
$I$ takes only odd values.
We will study the contribution of the even and odd components individually. 
The fact that the associated $2$-weighting $w$ 
is the constant function $1$ allows us to describe the 
parity of a point in $\DRL^{1/2}_{\Gamma,I}$. Indeed, in this case, i.e.,
if a geometric point $((\mathcal{C},\cL,\phi),\alpha)$ 
of $\DRL^{1/2}$ has topological type given by $(\Gamma,w)$, we have
\begin{equation*}
    h^0(\cC,\cL)=\sum_{v\in V(\Gamma)}h^0(\cC_v,\cL|_{C_v}),
\end{equation*}
where $\cC_v$ runs over the non-exceptional components of $\cC$ 
(see~\ref{rem:exc-parity}).
Thus, the line bundle $\cL$ defines a parity function
\begin{align*}
    \zeta_{\cL}\colon V(\Gamma)&\to \{\pm1\} \\ 
    v&\mapsto (-1)^{h^0(\cC_v,\cL|_{\cC_v})}.
\end{align*}
In multiplicative notation, the parity of $\cL$ is even if and only if 
$\prod_{v\in V(\Gamma)}\zeta_\cL(v)=1$, and odd otherwise.
Additionally, if 
the slopes of $\alpha$
are given by a half $k$-twist $J$ on the dual graph of $\cC$
we have
\begin{equation*}
    \cL|_{\cC_v}\cong \cO_{\cC_v}\left(\sum_{h\in H(v)}J(h)
    \right)\otimes\omega_{\rm log}^{\frac{1-k}{2}}
\end{equation*}
for all vertices $v\in V(\Gamma)$. Furthermore,  
using the reuslts of~\cite[Section 2.6]{HolSch} and~\ref{mu_2DR:gerbe},  we have that both $\DRL$ and $\DRL^{1/2}$ map surjectively 
to $\widetilde{\cH}_{g}^{k}(a)$. Moreover,
the composition
\begin{equation*}
    \DRL^{1/2}\to \DRL\to\widetilde{\cH}_{g}^{k}(a)
\end{equation*}
restricts to
\begin{equation*}
    \DRL^{1/2}_{\Gamma,I}\to \DRL_{\Gamma,I}\to \zeta_{\Gamma}(\oM_{\Gamma,I}).
\end{equation*}
In particular, this implies that if $v\in V_{\rm out}(\Gamma)$
we also have that 
\begin{equation*}
    \cO_{\cC_v}\left(\sum_{h\in H(v)}\frac{2J(h)+1}{k}\right)\cong \omega_{\rm log}.
\end{equation*}
Hence, in that case, combining the above, we obtain
\begin{equation*}
    \cL|_{\cC_v}\cong \cO_{\cC_v}\left(\sum_{h\in H(v)}\frac{2J(h)+1-k}{2k}
    \right).
\end{equation*}
Therefore, the locus of points in $\DRL_{\Gamma,I}^{1/2}$ 
with parity function $\zeta$ maps onto the closed subset
\begin{equation*}
    \zeta_{\Gamma}\left(\oM_{g(v_0)}(I(v_0),k)^{\zeta(v_0)}\times\prod_{v\in V_{\rm out}}\oM_{g(v)}(I(v)/k)^{\zeta(v)}\right).
\end{equation*}
Suppose we consider  
the even component $\DRL^{1/2,+}_{\Gamma,I}$. The pushforward 
$2\rho_*(\epsilon_{a*}([\DRL^{1/2,+}_{\Gamma,I}]))$ will be a multiple of 
the sum 
\begin{equation*}
    \sum_{\substack{\zeta\colon V(\Gamma)\to\{1,-1\} \\ \prod_{v\in V(\Gamma)}\zeta(v)=1}}\frac{1}{|\Aut(\Gamma)|}
    \zeta_{\Gamma*}\left([\oM_{g(v_0)}(I(v_0),k)^{\zeta(v_0)}]\otimes
    \prod_{v\in V_{\rm out}}[\oM_{g(v)}(I(v)/k)^{\zeta(v)}]\right).
\end{equation*}
On the other hand, using a similar argument for the 
odd component, we obtain that the pushforward $2\rho_*(\epsilon_{a*}([\DRL^{1/2,-}_{\Gamma,I}]))$
will be a multiple of the sum
\begin{equation*}
     \sum_{\substack{\zeta\colon V(\Gamma)\to\{1,-1\} \\ \prod_{v\in V(\Gamma)}\zeta(v)=-1}}\frac{1}{|\Aut(\Gamma)|}
    \zeta_{\Gamma*}\left([\oM_{g(v_0)}(I(v_0),k)^{\zeta(v_0)}]\otimes
    \prod_{v\in V_{\rm out}}[\oM_{g(v)}(I(v)/k)^{\zeta(v)}]\right).
\end{equation*}
Altogether, by~\ref{rem:tensor-decomp}, we conclude 
that the pushforward $2\rho_{*}\left(\epsilon_{a*}\left(\mathrm{DRC}
^{1/2}\cdot\rho_{\frac{1}{2}}^*[\pm]\right)\right)$ will be of the form
\begin{equation}\label{eq:form-of-spDR}
    \sum_{(\Gamma,I)\in\text{SStar}^{\rm odd}_{g}(a,k)}c_{\Gamma,I}\frac{1}{|\Aut(\Gamma)|}\zeta_{\Gamma*}[\oM_{\Gamma,I}]^{\pm},
\end{equation}
where $\text{SStar}_{g}^{\rm odd}(a,k)$ is the subset of 
$k$-simple star graphs
with only odd values.

\begin{theorem}[\ref{thm:DR}]~\cite[Conjecture 2.5]{CosSauSch} Let $k\in\ZZ_{\geq1}$ be
odd, and let $a\notin k\ZZ^n_{>0}$ be a vector of odd integers 
such that $|a|=k(2g-2+n)$. 
Then we have 
\begin{equation*}
     \mathsf{H}_{g}^{\pm}(a,k)=\DR^{\pm}_{g}(a,k).
\end{equation*}
\end{theorem}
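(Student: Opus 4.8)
The plan is to assemble the pieces built up in this section. By the commutative square relating $\DRL^{1/2}$, $\oM^{1/2}_{g,n}$, $\DRL$ and $\oM_{g,n}$ one has $\DR^{\pm}_{g}(a,k)=2\rho_{*}\big(\epsilon_{a*}(\mathrm{DRC}^{1/2}\cdot\rho_{\frac{1}{2}}^{*}[\pm])\big)$; by the lemma just proved $\mathrm{DRC}^{1/2}=[\DRL^{1/2}]=\sum_{(\Gamma,I)\in\mathrm{SStar}_{g}(a,k)}\DR^{1/2}_{\Gamma,I}$; the summands for which the $k$-twist $I$ takes an even value vanish after capping with $[\pm]$ and pushing forward, by~\ref{DRcancel} --- here one uses that an even value of $I$ must occur on an edge, since every leg carries the odd value $a_i$, so that~\ref{lem:exchpar} applies; and the parity decomposition, via~\ref{rem:exc-parity} and~\ref{rem:tensor-decomp}, has already been used to bring the answer into the form~\ref{eq:form-of-spDR}, namely $\sum_{(\Gamma,I)\in\mathrm{SStar}^{\rm odd}_{g}(a,k)}c_{\Gamma,I}\,\tfrac{1}{|\Aut(\Gamma)|}\,\zeta_{\Gamma*}[\oM_{\Gamma,I}]^{\pm}$. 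So the remaining task is to identify the constants $c_{\Gamma,I}$ and to check that $\tfrac{c_{\Gamma,I}}{|\Aut(\Gamma)|}=\dfrac{\prod_{e\in E(\Gamma)}I(e)}{k^{\#V_{\rm out}}\,|\Aut(\Gamma,I)|}$, which is exactly the coefficient appearing in $\mathsf{H}^{\pm}_{g}(a,k)$.

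I would compute $c_{\Gamma,I}$ as a product of elementary factors. The factor $2$ in the definition of $\DR^{\pm}_{g}(a,k)$ is absorbed by the $\tfrac12$ coming from the $\mu_{2}$-gerbe $\epsilon_{a}|_{\DRL^{1/2}}\colon\DRL^{1/2}\to\DRL$ of~\ref{mu_2DR:gerbe} (which preserves parity, the twisting being trivial on geometric fibres), so the count may be performed on $\DRL$. The multiplicity $\mathrm{length}(\widetilde p)$ carried by $\DR^{1/2}_{\Gamma,I}$ at a generic point equals, by~\ref{rem:DRL-length}, $\prod_{v\in V_{\rm out}}\lcm_{e\to v}(I(e))\big/k^{\#V_{\rm out}}$; and by~\ref{rem:number-of-points} there are $\prod_{e\in E(\Gamma)}I(e)\big/\prod_{v\in V_{\rm out}}\lcm_{e\to v}(I(e))$ irreducible components of $\DRL$ over each generic point of $\DRL_{f}^{(\Gamma,I)}$, whose images coincide in $\oM_{g,n}$. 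Multiplying, the $\lcm$-factors cancel and leave $\prod_{e\in E(\Gamma)}I(e)\big/k^{\#V_{\rm out}}$. Finally, the mismatch between $1/|\Aut(\Gamma)|$ and $1/|\Aut(\Gamma,I)|$ is an orbit--stabiliser count: for a fixed underlying graph $\Gamma$, the $k$-twists compatible with $a$ that are isomorphic to $I$ form the $\Aut(\Gamma)$-orbit of $I$, of size $|\Aut(\Gamma)|/|\Aut(\Gamma,I)|$, and collecting their identical contributions into the single term indexed by the isomorphism class $(\Gamma,I)$ replaces $|\Aut(\Gamma)|$ by $|\Aut(\Gamma,I)|$ in the denominator. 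Combining, $c_{\Gamma,I}=\dfrac{|\Aut(\Gamma)|}{|\Aut(\Gamma,I)|}\cdot\dfrac{\prod_{e\in E(\Gamma)}I(e)}{k^{\#V_{\rm out}}}$, which yields the claimed identity of coefficients.

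The step I expect to be delicate is the bookkeeping at the trivial star graph $\Gamma=\{v_0\}$ with no edges and all legs on $v_0$: this term must contribute precisely $[\oM_{g}(a,k)]^{\pm}$ with coefficient $1$, which requires knowing that $\widetilde{\cH}_{g}^{k}(a)$ (equivalently the image of $\DRL$) is generically reduced of the expected codimension $g$ along the corresponding locus, with no component of excessive dimension intruding there. This is exactly where the hypothesis $a\notin k\ZZ^{n}_{>0}$ enters: when $a\in k\ZZ^{n}_{>0}$ the closure $\overline{\M_{g}(a/k)}$ sits inside $\widetilde{\cH}_{g}^{k}(a)$ with codimension only $g-1$, and the clean star-graph form fails (compare~\ref{lem:computation1}). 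Under the stated hypothesis, the dimension-theoretic argument of~\cite{HolSch} in the non-spin case carries over verbatim --- all of the local and multiplicity statements invoked above having already been established in that setting. Together with the vanishing of the even-valued terms and the coefficient computation, this gives $\mathsf{H}^{\pm}_{g}(a,k)=\DR^{\pm}_{g}(a,k)$.
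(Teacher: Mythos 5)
Your argument is correct and follows the paper's own proof essentially step for step: the same reduction through the commutative square and the identification $\mathrm{DRC}^{1/2}=[\DRL^{1/2}]$, the same vanishing of the even-twist terms via \ref{DRcancel}, the same parity decomposition into the form \ref{eq:form-of-spDR}, and a multiplicity count that (by combining \ref{rem:DRL-length} with \ref{rem:number-of-points}) reproduces exactly the length $\prod_{e\in E(\Gamma)}I(e)/k^{\#V_{\rm out}}$ that the paper extracts from \cite{HolSch}. Your two additions --- the orbit--stabiliser bookkeeping reconciling $|\Aut(\Gamma)|$ with $|\Aut(\Gamma,I)|$, and the explicit observation that the hypothesis $a\notin k\ZZ^n_{>0}$ is what keeps $\DRL$ pure of expected dimension so that the fundamental-cycle identification applies --- are points the paper leaves implicit, but they do not change the route.
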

\begin{proof}  As we mentioned above, the class $\DR^{\pm}_{g}(a,k)$
is of the form~\ref{eq:form-of-spDR}.
Therefore, we only have to compute the numbers
$c_{\Gamma,I}$. By definition of the proper pushforward,
these numbers are the sum of the lengths of the Artin local
rings at generic points of $\DRL^{1/2}$ 
over the generic points of 
$\zeta_{\Gamma}(\oM_{\Gamma,I})\subset
\widetilde{\cH}_{g}^{k}(a)$. Using~\ref{mu_2DR:gerbe} 
we reduce this computation to computing 
the lengths at such generic points of $\DRL$. Then applying~\cite[Lemma 2.12]{HolSch}
on the proper birational morphism 
$\oM_{g}^{a}\to \oM_{g,f}^{a}$ 
reduces this computation to computing the length of the Artin local rings at generic points
of $\DRL_{f}$.
In that case, these lengths are computed 
in~\cite{HolSch} and they are equal to
\begin{equation*}
    c_{\Gamma,I}=\frac{\prod_{e\in E(\Gamma)}I(e)}{k^{|V_{\rm out}|}}.
\end{equation*}
\end{proof}

\vspace{-4mm}
\section{Tautological calculus of moduli spaces of differentials}   The overall strategy to compute classes of strata of differentials developed in~\cite{Sau} imposes to work with an alternative compactification (the incidence variety compactification) than the one presented in the introduction.  Besides,  we need to consider strata of differentials supported on disconnected curves,  and with linear conditions on residues at the poles.  We recall this language here and prove \ref{lem:computation3}: the main~\ref{thm:main} is a consequence of the spin DR cycle formulas (\ref{thm:DR} and~\ref{spDR = spPix} proven above) and the expression of signed Segre classes of cones of spin sections (\ref{thm:segre}) that will be established in the last section. The last part of the section is dedicated to proving the identity~\ref{for:pm1main} that will be required in the next section for the proof of~\ref{thm:segre}. 

\subsection{Incidence variety compactification,  and generalized strata} 

If $P=(p_1,\ldots,p_n)$ is a vector of non-negative integers, then we denote by $\oH_{g}[P]\to \oM_{g,n}$ the vector bundle with fiber over $(C,x_1,\ldots,x_n)$ given by
$$H^0(C,\omega_C(p_1x_1+\ldots +p_nx_n)).$$
This space is the moduli space of marked curves with a meromorphic differential, and generalizes the Hodge bundle as $\oH_{g,n}=\oH_g[0,\ldots]$. Let $\ell\in \NN^*,$ and let $\bg=(g^1,\ldots,g^\ell)$ and $\bn=(n^1,\ldots,n^\ell)$ be vectors of non-negative integers satisfying $2g^j-2+n^j>0$ for all $j\in \{1,\ldots,\ell\}$.    Besides,  let $\mathbf{P}=(P^j=(p^j_i)_{1\leq  i\leq n^j})_{ 1\leq j\leq \ell}$ be a vector of vectors of non-negative integers.  We denote
 $$
 \M_{\bg,\bn}= \prod_{j=1}^\ell \M_{g^j,n^j}, \,\text{ and }  \oH_{\mathbf{g}}[\mathbf{P}]=  \prod_{j=1}^\ell \oH_{g^j}[P^j].
$$ 
If we fix a vector of vectors of integers $\ba=(a^j=(a_i^j)_{1\leq  i\leq n^j})_{ 1\leq j\leq \ell}$   such that $-p^j_{i} \leq a^j_{i} -1 $ for all pairs $(j,i)$,  then we denote by  $\H_\bg(\ba)$ the sub-cone of $\oH_{\bg}[\bP]$ of differentials supported on smooth curves and with singularities of order $a^j_{i}-1$ at $x^j_{i}$ for all pairs $(j,i)$. The {\em incidence variety} $\oH_\bg(\ba)$ is  the Zariski closure of $\H_\bg(\ba)$ in $\H_\bg[\bP]$ (different choices of $\bP$ provide isomorphic cones).  

\subsubsection*{Residue conditions} We denote by ${\rm Pol}(\ba)$ the set of pairs $(j,i)$ such that $a_{i}^j\leq 0$.  The space of residues $\fR(\ba)$ is the linear subspace of $\CC^{|{\rm Pol}(\ba)|}$ defined as the set of vectors $(r^j_{i})_{{\rm Pol}(\ba)}$ satisfying 
$$
\sum_{\begin{smallmatrix} 1\leq i\leq n^j,  \\ \text{s. t. } (j,i)\in {\rm Pol}(\ba) \end{smallmatrix}} r^j_{i} = 0, \text{  for all $j\in \{1,\ldots,\ell\}$.}
$$
A {\em space of residue conditions} is a linear subspace of $R\subset \fR(\ba)$ defined by linear conditions of the form $\sum_{(j,i)\in E} r^{j}_{i} = 0$ for a strict subset $E$ of ${\rm Pol}(\ba)$.  We denote by $\oH_{\bg}(\ba,R)\subset \oH_{\bg}(\ba)$ the sub-cone of differentials with residues in $R$.

\subsection{Bi-colored graphs} Here we recall the description of (part) of the boundary of $\oH_\bg(\ba,R)$.  First,  a marked stable graph $\Gamma$ of type $(\bg,\bn)$ is the datum of stables graphs $\Gamma^1,\ldots,\Gamma^\ell$ such that the legs of $\Gamma^j$ are labeled $(j,i)$ for  $i\in \{1,\ldots,n_j\}$, and the genus of $\Gamma^j$ is $g^j$. As for connected stable graphs, we have a gluing morphism
$
\zeta_\Gamma \colon \oM_{\Gamma} = \prod_{v\in V} \oM_{g(v),n(v)} \to \oM_{\bg,\bn}.
$ 
 Moreover, a twist on a disconnected graph is a twist on each component. 

\begin{definition}
A {\em bi-colored} graph  is a twisted graph $(\Gamma,I)$ together with a non-trivial partition of the set of vertices  $V=V_0\sqcup V_{-1}$ such that:
\begin{enumerate}
    \item For each vertex $v$ we have $\sum_{h\mapsto v} I(h) \leq 2g(v)-2+n(v)$.
    \item If $(h,h')$ is an edge, then $I(h)\neq 0$. If we assume that $I(h)>0,$ then $h$ is incident to a vertex in $V_0$ and $h'$ to a vertex in $V_{-1}$. 
\end{enumerate}  The {\em multiplicity} of $\oGamma=(\Gamma,I,V_0)$ is the integer
$
m(\oGamma)=\prod_{(h,h') \in E} \sqrt{-I(h)I(h')}.
$
\end{definition}

A bi-colored graph $\oGamma$  determines vectors $\bg[i]$ and $ \bn[i]$ for $i=0$ or $-1$ (the genera and number of half-edges at each vertex of level $i$),  while the twist function determines vectors $\ba[i]$.
As explained in~\cite{BCGGM} and~\cite{Sau}, given a space of residue condition $R$ (possibly the full space of residues), we can construct a boundary component 
$$\zeta_{\oGamma}\colon \oH(\oGamma,R)\to  \oH_\bg(\ba,R).
$$
The space is $\oH(\oGamma,R)$ is isomorphic to 
$$
\oH_{\bg[0]}(\ba[0],R[0]) \times p_{-1} \left(\PP\oH_{\bg[-1]}(\ba[-1],R[-1]) \right) \subset \oH_{\bg[0]}(\ba[0]) \times \oM_{\bg[-1],\bn[-1]},
$$
where $p_{-1}\colon \PP\oH_{\bg[-1]}(\ba[-1])\to \oM_{\bg[-1],\bn[-1]}$ is the forgetful morphism of the differential, while the vector spaces $R[i]$ are determined explicitly by $R$.  The morphism $\zeta_{\Gamma}$ associates to each point, a differential that vanishes on the vertices of level $-1$. It is finite of degree $|{\rm Aut}(\oGamma)|$.

\subsection{Intersection formulas including spin signs}\label{ssec:intersectionformulas}  From now on,  we assume that $\ba$ is odd. 
Generalizing the notation of the introduction, we set $\mathcal{SQ}_\bg(\ba,R)\subset \oH_\bg(\ba,R)$ to be the locus of differentials with singularities (zeros or poles) of even orders. We denote by $\overline{\mathcal{SQ}}_\bg(\ba,R)$ its closure. As the space ${\mathcal{SQ}}_\bg(\ba,R)$ admits a partition according to parity, we denote by 
$$[\PP\overline{\mathcal{SQ}}_\bg(\ba,R)]^\pm = [\PP\overline{\mathcal{SQ}}_\bg(\ba,R)^+] - [\PP\overline{\mathcal{SQ}}_\bg(\ba,R)^-].\footnote{This class will be  considered as a class in  $A^*(\PP\oH_\bg[\bP],\QQ)$ for sufficiently large $\bP$ as we will only be interested in classes of the form $p_*\left(\xi^q[\PP\overline{\mathcal{SQ}}_\bg(\ba,R)]^\pm\right)$ that do not depend on $\bP$.}$$
 If $\oGamma$ is an odd bi-colored graph $\oGamma$ (the twist only takes odd values), then we denote 
\begin{eqnarray*}
    \overline{\mathcal{SQ}}(\oGamma,R) &=&\overline{\mathcal{SQ}}_{\bg[0]}(\ba[0],R[0]) \times p_{-1} \left(\PP\overline{\mathcal{SQ}}_{\bg[-1]}(\ba[-1],R[-1]) \right) \subset  \overline{\mathcal{H}}(\oGamma,R), \\
    \text{and } [\PP\overline{\mathcal{SQ}}(\oGamma,R)]^\pm &=&\left[\PP\overline{\mathcal{SQ}}_{\bg[0]}(\ba[0],R[0])\right]^\pm \otimes p_{-1*}\left[\PP\overline{\mathcal{SQ}}_{\bg[-1]}(\ba[-1],R[-1]) \right]^\pm.
\end{eqnarray*}
We emphasize that the locus $\PP\overline{\mathcal{SQ}}(\oGamma,R)$ can be of co-dimension greater than 1 in $\PP\overline{\mathcal{SQ}}_\bg(\ba,R)$. However, the class $[\PP\overline{\mathcal{SQ}}(\oGamma,R)]^\pm$  is the (weighted) Poincar\'e-dual class of $\PP\overline{\mathcal{SQ}}(\oGamma,R)$ if it is of co-dimension 1 and 0 otherwise. Finally, we denote by ${\rm Bic}_{\bg,\bn}(i,j)$ the set of odd bi-colored graphs such that the leg $(j,i)$ is incident to a vertex of level $-1$.   Besides, if  $R'\subset R$ is a sub-space of residue conditions of co-dimension 1,  then we denote by ${\rm Bic}_{\bg,\bn}(R')$ the set of bi-colored graphs such that $\PP\oH(\oGamma,R)$ lies in $\PP\oH_\bg(\bP, R')$ (i.e. the linear residue constraints defined by $R'$ are ensured by the topological type of the graph). 

\begin{proposition} \cite[Propositions~5.9 and 5.10]{Wong} \label{prop:induction1} Here we assume that $\ba$ is {\em full}, i.e. for all $j$ we have $|a^j|=2g^j-2+n^j$. Then for all labels $(j,i)$, we have
\begin{equation}\label{for:induction1}
(\xi + a^j _{i}  \psi^j_{i})  [\PP\overline{\mathcal{SQ}}_\bg({\ba},R)]^\pm =  \sum_{\oGamma \in {\rm Bic}_{\bg,\bn}(i,j)} \frac{m(\oGamma)}{|\Aut(\oGamma)|} \zeta_{\oGamma\, *} [\PP\overline{\mathcal{SQ}}(\oGamma,R)]^\pm
\end{equation}
(here $\psi^{j}_i$ is the class of the co-tangent line the marking with label $(j,i)$). Besides, if $R'\subset R$ is a sub-space of residue conditions of co-dimension $1$,  then  
\begin{equation}\label{for:induction2}
\xi   [\PP\overline{\mathcal{SQ}}_\bg({\ba},R)]^\pm = [\PP\overline{\mathcal{SQ}}_\bg({\ba},R')]^\pm + \sum_{\oGamma \in {\rm Bic}_{\bg,\bn}(R')} \frac{m(\oGamma)}{|\Aut(\oGamma)|} \zeta_{\oGamma\, *} [\PP\overline{\mathcal{SQ}}(\oGamma,R)]^\pm.
\end{equation}
\end{proposition}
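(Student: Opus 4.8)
The plan is to reduce both identities to the corresponding non-spin statements of \cite{Wong} (Propositions 5.9 and 5.10 there) by tracking the spin signs through the geometry. The key observation is that the projectivized Hodge bundle $\PP\oH_\bg[\bP]$ pulls back to the moduli of spin structures: over the locus $\overline{\mathcal{SQ}}_\bg(\ba,R)$ a differential with singularities of even order determines, by taking half the divisor of zeros and poles plus the appropriate twist of $\omega$, a square root of $\omega_{\rm log}^{\otimes?}$, hence a point of a suitable space $\oM_{g,n}^{1/2,\ast}$; this is precisely the canonical spin structure recalled in the introduction. Under this correspondence, the sign cycle $[\pm]$ on the spin moduli space pulls back to the class distinguishing $\overline{\mathcal{SQ}}^+$ from $\overline{\mathcal{SQ}}^-$, so $[\PP\overline{\mathcal{SQ}}_\bg(\ba,R)]^\pm$ is literally $\epsilon_*$ of (a multiple of) $[\pm]$ capped with the class of the square-root locus. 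The strategy is therefore: first establish the unsigned version of \crefrange{for:induction1}{for:induction2} (which is exactly Wong's result, applied on each factor of the disconnected curve), then observe that every step in Wong's argument — the splitting principle/Thom--Porteous-type relation for $\xi + a^j_i\psi^j_i$, the description of the boundary in terms of bi-colored graphs, the identification of the excess intersection contributions — is compatible with pulling back to the spin cover and capping with $[\pm]$, because $[\pm]$ is locally constant and the normalization maps $\nu$ behave as in \ref{prop:cancellation} and \ref{degree:even-odd}.

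Concretely, I would first recall Wong's intersection formulas in the generalized (disconnected, with residue conditions) setting: the relation $(\xi + a^j_i\psi^j_i)[\PP\oH_\bg(\ba,R)] = \sum_{\oGamma} \frac{m(\oGamma)}{|\Aut(\oGamma)|}\zeta_{\oGamma*}[\PP\oH(\oGamma,R)]$ comes from the vanishing of a canonical section of a line bundle on the blow-up realizing the incidence variety compactification, together with a residue/order-of-vanishing analysis at the boundary; the residue-drop formula \ref{for:induction2} comes from intersecting with a boundary divisor where a prescribed residue becomes zero. Then I would pass to the square-root cover: restricting to $\overline{\mathcal{SQ}}$, the order of vanishing $a^j_i - 1$ is odd, so $a^j_i$ is odd and $(a^j_i - 1)/2$ is an integer, and the canonical spin structure extends over the boundary bi-colored strata exactly when the twist $I$ is odd — which is why only odd bi-colored graphs $\oGamma$ appear on the right-hand side. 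Capping the unsigned identity with $[\pm]$ and pushing forward along $\epsilon$ then converts each $[\PP\oH(\oGamma,R)]$ into $[\PP\overline{\mathcal{SQ}}(\oGamma,R)]^\pm$, using the multiplicativity of parity over the two levels of a bi-colored graph (the parity of a spin structure on a nodal curve glued from pieces with odd twists is the product of the parities of the pieces, as in \ref{rem:exc-parity} and the proof of \ref{degree:even-odd}), which yields precisely the tensor-product formula defining $[\PP\overline{\mathcal{SQ}}(\oGamma,R)]^\pm$.

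The main obstacle, I expect, will be bookkeeping the multiplicities $m(\oGamma) = \prod_{(h,h')\in E}\sqrt{-I(h)I(h')}$ and checking that the factor produced by the spin cover is exactly trivial on the relevant strata. Specifically, one must verify that when a bi-colored boundary stratum is approached inside the square-root locus, the local degree of the spin cover over that stratum relative to the cover over the generic point contributes a factor that, after capping with $[\pm]$ and using \ref{prop:cancellation}/\ref{degree:even-odd}, leaves $m(\oGamma)$ unchanged rather than altering it — i.e. that there is no extra power of $2$ discrepancy. This is the same kind of length/ramification computation carried out in Section 3 and Section 4 of the present paper for the DR loci, and I would handle it by the identical local toric/log-étale chart analysis: the square-root structure only subdivides edges with odd twist (exactly as $\Gamma_w$ subdivides edges with $w(e)=1$), and on those edges the two gluings give opposite parities, so the $[\pm]$-weighted count is governed by the non-exceptional normalization, which carries no spurious multiplicity. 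Once that local compatibility is in hand, both \ref{for:induction1} and \ref{for:induction2} follow formally by applying $\epsilon_*\big((-)\cdot[\pm]\big)$ to Wong's unsigned identities factor by factor.
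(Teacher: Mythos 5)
The paper does not prove this proposition at all: it is quoted verbatim from Wong, and Wong's Propositions 5.9 and 5.10 are already the spin-refined statements (the unsigned analogues, for the full stratum classes $[\PP\oH_\bg(\ba,R)]$, are Sauvaget's). So your plan, which presents Wong's results as ``unsigned'' identities to be refined by a sign, is really an attempt to re-derive Wong's theorem rather than to reduce to it; as a comparison with the paper there is nothing to match, and the question is whether your sketch would stand on its own.

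It does not, because the final step is not formally valid. The unsigned induction formula is an equality of cycle classes on $\PP\oH_\bg[\bP]$ (equivalently on $\oM_{\bg,\bn}$ after pushforward), i.e.\ \emph{downstairs}; knowing $\epsilon_*X=\epsilon_*Y$ for lifts $X,Y$ to a compactified space of square roots does not give $\epsilon_*([\pm]\cdot X)=\epsilon_*([\pm]\cdot Y)$, so one cannot ``apply $\epsilon_*\big((-)\cdot[\pm]\big)$ to the unsigned identities'' and conclude \ref{for:induction1} and \ref{for:induction2}. What is actually needed is the identity \emph{upstairs}: one must redo the whole degeneration analysis on the spin side of the incidence variety — describe the boundary components of the closure of the lifted locus of squares, prove that bi-colored strata with an even twist contribute equally to the even and odd components (the analogue of \ref{prop:cancellation}, but now on the compactified incidence variety rather than on $\oM^{1/2}_{g,n}$), verify the parity factorization of \ref{degree:even-odd} for the two-level curves that occur, and recompute the multiplicities so that the coefficient is exactly $m(\oGamma)/|\Aut(\oGamma)|$, with no stray powers of $2$ coming from the ramification of the square-root cover along odd-twist edges, from the automorphisms $\pm 1$ of the spin structure, or from the fact that the squaring map multiplies the tautological class ($\mathrm{sq}^*\mathcal O(1)=\mathcal O(2)$, as exploited in Section 5 of the paper). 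You explicitly flag this multiplicity bookkeeping as ``the main obstacle'' and defer it, but it is precisely the content of Wong's Propositions 5.9--5.10; with that step missing, the proposal assumes the conclusion rather than proving it.
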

The first consequence of this proposition is that classes of strata of meromorphic differentials with poles and residue constraints are determined by classes of strata of holomorphic differentials, namely.

\begin{lemma}\label{lem:computation2}
Let $g_0\geq 0$. We assume that all the classes $p_*[\PP\oSQ_g(a)]^\pm$ with $g\leq g_0$ are tautological and computable.  Then the class 
$p_*\left(\xi^{q}\PP\oSQ_{[\bg]}(\ba,R)^\pm\right)$ is tautological and can be explicitly computed if the associated vector of genera $\bg$ has coordinates at most equal to $g_0$. 
\end{lemma}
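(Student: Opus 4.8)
The plan is to argue by a double induction: the outer induction is on the total number of poles appearing in $\ba$ (i.e. on $|\mathrm{Pol}(\ba)|$ together with the codimension of $R$ inside the full residue space $\fR(\ba)$), and the inner bookkeeping is the reduction of $\psi$- and $\xi$-powers using \ref{prop:induction1}. The base case is $\mathrm{Pol}(\ba)=\emptyset$: then $\ba$ is a vector of positive odd integers (on each connected component $j$), and $\oSQ_{\bg}(\ba)$ is the product over $j$ of the spaces $\oSQ_{g^j}(a^j)$ of holomorphic squares. For such a product the class $p_*\bigl(\xi^q [\PP\oSQ_{\bg}(\ba)]^\pm\bigr)$ is a (sign-twisted) sum of external products of the classes $p_*\bigl(\xi^{q_j}[\PP\oSQ_{g^j}(a^j)]^\pm\bigr)$, and these are exactly the classes which the hypothesis of the lemma asserts to be tautological and computable (via the intersection formula~\ref{for:induction1} reducing $a^j$ to $(1,\dots,1)$, whose Segre push-forwards are the $s_{g^j}^{\pm,c}$, and then \ref{thm:segre} in terms of the $L$-classes). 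So I would first record carefully that the base case is precisely the content of the standing assumption, modulo the reduction of a general holomorphic $a^j$ to $(1,\dots,1)$ via repeated application of~\ref{for:induction1}.

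\textbf{The inductive step.} Suppose $\mathrm{Pol}(\ba)\neq\emptyset$, or more precisely that $R$ is a proper subspace of the full residue space. Pick a pole $(j,i)$, i.e. $a^j_i\le 0$. I would distinguish two sub-cases according to whether $\ba$ is \enquote{full} (all $|a^j|=2g^j-2+n^j$) or not; if it is not full one first passes to the full case by the standard trick of enlarging a pole order (changing $a^j_i$ by $2$ keeps parity, and one tracks the passage through $\oSQ_{\bg}(\ba_1,\dots,a^j_i-2,\dots)$ exactly as in the $\xi$-recursion sketched in the introduction). Once full, apply~\ref{for:induction1} at the label $(j,i)$: this writes $(\xi + a^j_i\psi^j_i)[\PP\oSQ_{\bg}(\ba,R)]^\pm$ as a sum over bi-colored graphs $\oGamma\in\mathrm{Bic}_{\bg,\bn}(i,j)$ of $\zeta_{\oGamma*}[\PP\oSQ(\oGamma,R)]^\pm$. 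Each such $[\PP\oSQ(\oGamma,R)]^\pm$ is, by the product description of $\oH(\oGamma,R)$, an external product of a \enquote{level $0$} factor $[\PP\oSQ_{\bg[0]}(\ba[0],R[0])]^\pm$ and a $p_{-1}$-pushed \enquote{level $-1$} factor; crucially, on the level $-1$ side the marking $(j,i)$ now sits at a vertex whose twist data has \emph{strictly smaller} pole contribution (the residue at $(j,i)$ on that component is forced to vanish, so $R[-1]\subsetneq\fR(\ba[-1])$ or $\ba[-1]$ has one fewer pole), which is what drives the induction down. To also push down the codimension of $R$ itself one uses~\ref{for:induction2}: multiplying by $\xi$ trades $R$ for a codimension-one-larger $R'$ plus boundary terms $\zeta_{\oGamma*}[\PP\oSQ(\oGamma,R)]^\pm$ with $\oGamma\in\mathrm{Bic}_{\bg,\bn}(R')$, and again every boundary piece has simpler residue data on at least one component.

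\textbf{Assembling the computation.} Iterating, any monomial $\xi^q\,[\PP\oSQ_{\bg}(\ba,R)]^\pm$ gets rewritten, after clearing the $\psi^j_i$-corrections (which live on $\oM_{\bg,\bn}$ and are harmless for the eventual $p_*$), as a $\ZZ$-linear combination of classes $\zeta_{\oGamma*}$ of external products of $\xi^{q'}$-twisted squared-strata classes on strictly smaller pole/residue data, with coefficients $\pm m(\oGamma)/|\Aut(\oGamma)|$. The push-forward $p_*$ commutes with $\zeta_{\oGamma*}$ up to the explicit relation between $\xi$ on the ambient projectivized Hodge bundle and $\xi$ on the level-$0$ factor (the level-$-1$ factor is already projectivized and contributes via $p_{-1*}$), so after at most finitely many steps — the induction terminates because each step strictly decreases the pair $(\mathrm{codim}\,R, |\mathrm{Pol}(\ba)|)$ in the lexicographic order, bounded below by the holomorphic case — every term reduces to the base case and is therefore tautological and computable. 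Since all the coefficients $m(\oGamma)$, $|\Aut(\oGamma)|$ and the graph sums $\mathrm{Bic}_{\bg,\bn}(i,j)$, $\mathrm{Bic}_{\bg,\bn}(R')$ are finite and explicit, the whole procedure is effective.

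\textbf{Main obstacle.} I expect the delicate point to be the bookkeeping that guarantees the induction genuinely terminates and that the genus hypothesis \enquote{coordinates of $\bg$ at most $g_0$} is preserved: when a bi-colored graph splits the curve into levels, each resulting connected component has genus at most that of the original component it came from (vertices of a stable graph have genus $\le$ the total genus of the piece), so the hypothesis propagates — but one must check that the new generalized strata $\oSQ_{\bg[0]}$, $\oSQ_{\bg[-1]}$ that appear really do have genus vector dominated by the old one, including the case where an edge is non-separating and could a priori redistribute genus. A careful lemma stating \enquote{the genera occurring in any $\oGamma\in\mathrm{Bic}_{\bg,\bn}(i,j)$ (resp.\ $\mathrm{Bic}_{\bg,\bn}(R')$) are $\le$ the corresponding genera of $\bg$} handles this, and I would prove it directly from the definition of a bi-colored graph. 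The rest is a finite, if somewhat intricate, linear-algebra-over-graphs computation.
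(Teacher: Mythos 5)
Your overall toolkit is the right one (\ref{prop:induction1}, the projection formula via $p\circ\zeta_{\oGamma}=\zeta_\Gamma\circ p_0$ and $\zeta_{\oGamma}^*\mathcal{O}(1)\simeq\mathcal{O}(1)$, and the observation that the genera of $\bg[0],\bg[-1]$ stay $\leq g_0$), but the backbone of your induction is not well-founded. You claim each application of \ref{for:induction1} or \ref{for:induction2} strictly decreases the lexicographic pair $({\rm codim}\,R,\,|{\rm Pol}(\ba)|)$. It does not: the boundary terms $\zeta_{\oGamma*}[\PP\overline{\mathcal{SQ}}(\oGamma,R)]^\pm$ are generalized strata in which the half-edges of $\oGamma$ become \emph{new} marked singularities, and on one of the two levels these carry \emph{new poles}, while the induced spaces $R[0]$ and $R[-1]$ contain \emph{new} residue constraints (including those forced by the nodes). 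So both coordinates of your measure typically increase along the recursion, and the procedure you describe need not terminate. What does strictly decrease for every bi-colored graph is the quantity $\chi(\ba)=\sum_j(2g^j-2+n^j)$, since the partition $V_0\sqcup V_{-1}$ is non-trivial and each level inherits only a proper part of the vertices; this is exactly the induction the paper runs, with base a single $(g,n)=(0,3)$ component, treating the reduction of ${\rm codim}\,R$ (via \ref{for:induction2}) and of the $\xi$-power (via \ref{for:induction1}) as inner steps whose graph terms are absorbed by the outer $\chi$-induction regardless of how many poles or residue conditions they acquire.

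Two further points. First, your ``base case'' is not self-contained: the hypothesis of the lemma only gives the classes $p_*[\PP\oSQ_g(a)]^\pm$ at $\xi^0$, so even in the pole-free case the powers $\xi^q$ must be removed by \ref{for:induction1}, and the resulting boundary terms are again meromorphic strata with residue conditions -- so ``holomorphic'' cannot sit at the bottom of your induction; only a complexity measure like $\chi$ can. Second, your reduction to the full case by ``enlarging a pole order by $2$'' is not the right operation: fullness is achieved by marking the $N$ unmarked zeros (adding entries $3$) and using that the non-full class is $\tfrac{1}{N!}\pi_{N*}$ of the full one, i.e.\ a push-forward along a forgetful map, not a change of the entries of $\ba$; the deepening relation you invoke from the introduction is only proved in the paper in the restricted setting of \ref{pr:indformulanonmarked} and is not available here. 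Your genus-bookkeeping worry at the end is fine and is handled exactly as you suggest, but the termination argument needs to be rebuilt around $\chi(\ba)$.
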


\begin{proof} The images of tautological classes under push-forward along the forgetful morphism of markings are tautological and can be explicitly computed.  Therefore, we may assume that $\ba$ is full. We set $\chi(\ba)=\sum_{j=0}^\ell (2g_j-2+n_j)$.   If $\oGamma$ is a bi-colored graph compatible with $\ba$, then $\chi(\ba[0])$ and $\chi(\ba[-1])$ are smaller than $\chi(\ba)$,  so we will prove the lemma by induction on $\chi(\ba)$.  The base is given by the a single connected component with $(g,n)=(0,3)$: in this case the class is $1$ if $q=0$ and the residue conditions are trivial (i.e.  $\fR=R$), and 0 otherwise. 
\smallskip

The first step is to reduce the statement to the case $R=\fR$ by induction on the co-dimension of $R$  by using formula~\ref{for:induction2}
$$
    \xi^{q}  [\PP\overline{\mathcal{SQ}}_\bg({\ba},R')]^\pm =   \xi^{q+1}  [\PP\overline{\mathcal{SQ}}_\bg({\ba},R)]^\pm + \!\!\!\! \sum_{\oGamma \in {\rm Bic}_{\bg,\bn}(R')} \frac{m(\oGamma)}{|\Aut(\oGamma)|} \xi^{q}\zeta_{\oGamma\, *} [\PP\overline{\mathcal{SQ}}(\oGamma,R)]^\pm,
$$
where $R'\subset R$ is of co-dimension $1$. If the coordinate of $\bg$ are smaller than or equal to $g_0$, then the coordinates of $\bg[0]$ and $\bg[-1]$ are also smaller than or equal to $g_0$.  Then, for each term $\oGamma$ in the sum,  we use the projection formula
\begin{align*}
    p_*\left(\xi^{q-1}\zeta_{\oGamma\, *} [\PP\overline{\mathcal{SQ}}(\oGamma,R)]^\pm\right) \\   &&\!\!\!\!\!\!\!\!\!\!\!\!\!\!\!\!\!\!\!\!\!\!\!\!\!\!\!\!\!\!\!\! =  \zeta_{\Gamma\, *} \left(p_{0\, *}\left(\xi^{q-1}  [\PP\oSQ_{\bg[0]}(\ba[0],R[0])]^\pm\right) \otimes p_{-1\, *}  [\PP\oSQ_{\bg[1]}(\ba[-1],R[-1])]^\pm \right).
\end{align*}
The above equality is valid because we have $p \circ \zeta_{\oGamma}=\zeta_\Gamma \circ p_0$ and $ \zeta_{\oGamma}^*\mathcal{O}(1)\simeq \mathcal{O}(1)$. Finally, we reduce the to the case $q=0$ using the induction formula~\ref{for:induction1}: 
$$
    \xi^q  [\PP\overline{\mathcal{SQ}}_\bg({\ba})]^\pm =  - a^j _{i}  \psi^j_{i} p_*\left(\xi^{q-1}  [\PP\overline{\mathcal{SQ}}_\bg({\ba})]^\pm\right) + \!\!\!\! \sum_{\oGamma \in {\rm Bic}_{\bg,\bn}(i,j)} \frac{m(\oGamma)}{|\Aut(\oGamma)|} \xi^{q-1}\zeta_{\oGamma\, *} [\PP\overline{\mathcal{SQ}}(\oGamma)]^\pm.
$$
\end{proof}

\subsection{Relations with non-marked zeros}  Here, we extend the relation~\ref{for:induction1} to certain non-full vectors. Until the end of the section we assume that $\ell=1$, $a_1\geq -1,$ and $a_j>0$ for all $j>i$, i.e. we allow only one pole of order at most $2$ at the first marking. We set $N=g-1-(|a|-n)/2$: it is the number of non-marked zeros of order $2$ for a generic differential, and, we denote by $\pi_N\colon  \oM_{g,n+N}\to \oM_{g,n}$ the forgetful morphism of the last $N$ markings, and by $\widetilde{a}$ the vector  obtained from $a$ by adding $N$ times $3$.  The morphism $\pi_N$ lifts to a finite morphism $\pi_N\colon\PP\oSQ_{g}(\widetilde{a})\to\PP\oSQ_g(a)$ of degree $N!$ for which $\pi_N^*(\mathcal{O}(1))=\mathcal{O}(1)$. 

\begin{proposition}\label{pr:indformulanonmarked}
    In this setup, we have 
    \begin{equation}\label{for:induction3}
(\xi + a_{1}  \psi_{1})  [\PP\overline{\mathcal{SQ}}_g(a)]^\pm = 2\,[\PP\overline{\mathcal{SQ}}_g(a')]^\pm + \frac{1}{N!}\sum_{\oGamma \in {\rm Bic}_{g,n+N}^*(1)} \frac{m(\oGamma)}{|\Aut(\oGamma)|} \pi_{N*} \zeta_{\oGamma\, *} [\PP\overline{\mathcal{SQ}}(\oGamma)]^\pm.
\end{equation}
where $a'=(3,a_2,\ldots)$ if $a_1=-1$, and $(a_1+2,a_2,\ldots)$ otherwise. Besides, in this formula ${\rm Bic}_{g,n+N}^*(i)\subset {\rm Bic}_{g,n+N}(i)$ stands for set of bi-colored graphs for which the stabilization of the graph obtained by removing the last $N$ legs is not trivial.
\end{proposition}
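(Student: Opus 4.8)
The plan is to reduce Proposition~\ref{pr:indformulanonmarked} to the full case already covered by Proposition~\ref{prop:induction1}, applied upstairs on $\PP\oSQ_g(\widetilde a)$, and then push down along $\pi_N$. First I would observe that $\widetilde a$ is a full vector (by the definition of $N$, one has $|\widetilde a|=|a|+3N=2g-2+n+N$), so~\ref{for:induction1} applies to it with respect to the label $1$: it gives
\begin{equation*}
(\xi + a_1\psi_1)[\PP\oSQ_g(\widetilde a)]^\pm = \sum_{\oGamma\in{\rm Bic}_{g,n+N}(1)}\frac{m(\oGamma)}{|\Aut(\oGamma)|}\zeta_{\oGamma*}[\PP\oSQ(\oGamma,\fR)]^\pm,
\end{equation*}
where I use that the first entry of $\widetilde a$ is still $a_1$. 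Then I would apply $\pi_{N*}$ to both sides. On the left, using $\pi_N^*\cO(1)=\cO(1)$, $\pi_N^*\psi_1=\psi_1$ (up to boundary corrections that I should check vanish against the relevant class, or absorb into the notation as in~\cite{Sau}), and the projection formula together with $\pi_{N*}[\PP\oSQ_g(\widetilde a)]^\pm = N!\,[\PP\oSQ_g(a)]^\pm$ (this is the degree-$N!$ statement quoted just above the proposition), the left-hand side becomes $N!\,(\xi+a_1\psi_1)[\PP\oSQ_g(a)]^\pm$.

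The core of the argument is then to analyze the right-hand sum after $\pi_{N*}$, splitting ${\rm Bic}_{g,n+N}(1)$ into two pieces: the graphs $\oGamma$ for which the stabilization after forgetting the last $N$ legs is trivial (a single vertex), and the rest, which by definition form ${\rm Bic}_{g,n+N}^*(1)$. For the second piece I get exactly the sum $\sum_{\oGamma\in{\rm Bic}^*}\frac{m(\oGamma)}{|\Aut(\oGamma)|}\pi_{N*}\zeta_{\oGamma*}[\PP\oSQ(\oGamma)]^\pm$, which after dividing by $N!$ is the last term in~\ref{for:induction3}. The main work is the first piece: here the two-level bi-colored graph has its level-$0$ part reduced to a point under stabilization, which forces the level-$0$ vertex to carry only genus-$0$ components and the extra markings, and the contribution should collapse, after $\pi_{N*}$, to the term $2[\PP\oSQ_g(a')]^\pm$. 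Concretely, when $a_1=-1$ the label $1$ must lie on a level-$(-1)$ vertex attached to level $0$ by a single edge of twist $\pm 2$ (giving $m(\oGamma)=2$) whose level-$0$ side is a $3$-pointed genus-$0$ bubble absorbing one of the $N$ auxiliary order-$2$ zeros; summing over which auxiliary marking is used and dividing by the $N!$ and the automorphisms produces the clean factor $2$ and shifts $a_1=-1$ to a zero of order $3$, i.e.\ $a'=(3,a_2,\dots)$. When $a_1\geq 0$ (so $a_1=1$, since $a_1$ is odd and $\geq -1$ excludes $0$; but I should keep the statement uniform) the analogous degeneration merges the marking $1$ with an auxiliary order-$2$ zero, raising its order by $2$, again with multiplicity giving the coefficient $2$ after the $1/N!$.

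The step I expect to be the main obstacle is exactly this bookkeeping of the ``trivially-stabilizing'' bi-colored graphs: one must check that $\pi_{N*}$ of each such boundary contribution is genuinely a class pulled back from $\PP\oSQ_g(a')$ (rather than a more complicated boundary class), that the combinatorial factor is precisely $2$ and not some other rational number, and that no further terms survive — in particular that graphs where the level-$0$ part stabilizes trivially \emph{but} also involves the residue space $\fR$ nontrivially do not contribute, using that $a_1\geq -1$ means there is at most a double pole with automatically vanishing residue. I would handle this by the same local/normalization analysis of the incidence variety near these boundary strata as in~\cite[Section~5]{Sau} and~\cite{BCGGM}, carefully tracking the degree of $\pi_N$ restricted to each stratum and the multiplicities $m(\oGamma)$. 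The parity decoration $[\,\cdot\,]^\pm$ plays no special role here beyond being carried along multiplicatively through all the identities, exactly as in the non-signed case, since the forgetful morphism $\pi_N$ and the gluing maps $\zeta_{\oGamma}$ are compatible with the spin splitting.
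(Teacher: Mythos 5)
Your overall route is the same as the paper's: apply the full-vector relation \ref{for:induction1} to $\widetilde a$, apply $\tfrac{1}{N!}\pi_{N*}$, and split ${\rm Bic}_{g,n+N}(1)$ into the trivially-stabilizing graphs and ${\rm Bic}^*_{g,n+N}(1)$. However, the two points you defer are exactly where the content lies, and your sketched resolutions fail. The correction terms in the comparison of $\psi_1$ upstairs with $\pi_N^*\psi_1$ do \emph{not} vanish against $[\PP\oSQ_g(\widetilde a)]^\pm$: the boundary divisors $\Gamma_E$ (a genus-$0$ bubble carrying the first leg and a subset $E$ of the auxiliary markings) meet $\PP\oSQ_g(\widetilde a)$ transversally along $\PP\oSQ(\oGamma_E)$, and $\pi_{N*}$ of these intersections is a nonzero multiple of $[\PP\oSQ_g(a')]^\pm$. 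In the paper this produces the extra summand $\frac{a_1}{1+\delta_{a_1=-1}}[\PP\oSQ_g(a')]^\pm$ on the left-hand side, and without it the final coefficient cannot come out to $2$.

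Your explanation of the factor $2$ is also wrong in detail: in the odd (squares) setting every twist is odd, so an edge of twist $\pm 2$ with $m(\oGamma)=2$ cannot occur. For the collapsing graphs $\oGamma_E$ the twist at the unique edge is $a_E=a_1+2|E|=3$ for the only $E$ that survive $\pi_{N*}$ (a singleton of auxiliary markings when $a_1=1$, a pair when $a_1=-1$; for $a_1=-1$ and $|E|=1$ the codimension-one factor $Z_{-1}\subset\oM_{0,3}$ is empty), hence $m(\oGamma_E)=3$, and the trivially-stabilizing part of the boundary sum contributes $3\,[\PP\oSQ_g(a')]^\pm$ when $a_1=1$ and $\tfrac32\,[\PP\oSQ_g(a')]^\pm$ when $a_1=-1$. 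The coefficient $2$ only emerges after combining with the $\psi$-correction above: $3-1=2$, resp. $\tfrac32-(-\tfrac12)=2$. You also still need the dimension count showing that, since $a_j>0$ for $j\geq 2$, no genus-$0$ vertex can lie in $V_0$ and $[\PP\oSQ(\oGamma)]^\pm=0$ once a trivially-stabilizing graph has two or more genus-$0$ bubbles, so that only the $\oGamma_E$ survive (and note these bubbles sit at level $-1$, not level $0$ as in your description). So the strategy is the right one, but as written the proposal has a genuine gap at precisely the bookkeeping it postpones, and the heuristics offered to fill it would give the wrong coefficient.
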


\begin{proof}  If $E\subset \{2,\ldots,n+N\}$ is a non-trivial subset, then we denote by $\Gamma_E$ the stable graph with one edge separating a genus $0$ vertex carrying the legs in $\{1\}\cup E$. There is only one bi-colored graph $\overline{\Gamma}_E$ with underlying stable graph $\Gamma_E$: the vertex of genus 0 is in $V_{-1}$, and the twist at the unique edge is equal to $$
a_E=a_1+2 |E\cap \{n+1,\ldots,n+N\}| + \sum_{j \in E\cap \{2,\ldots,n\}} (a_j-1) . 
$$ For this graph, we have 
$$
\PP\overline{\mathcal{SQ}}(\oGamma_E)\simeq \PP\overline{\mathcal{SQ}}_g(\ldots,a_E,\ldots) \times Z_{-1},
$$
where $Z_{-1}\subset \oM_{0,2+|E|}$ is a locus of co-dimension 1 if $a_1=-1$ (and it  and $0$ otherwise.
If $E$ is included in $\{n+1,\ldots,n+N\}$, then after forgetting the last $N$ markings we obtain:
\begin{equation}\label{for:proofpositive1}
\pi_{N*}\, \zeta_{\oGamma_E*}[\PP\overline{\mathcal{SQ}}(\oGamma_E)]^\pm = \left\{\begin{array}{cl}(N-1)! [\PP\overline{\mathcal{SQ}}_g(a')] &\text{ if $a_1>0$ and $E$ is a singleton,}\\
(N-2)! [\PP\overline{\mathcal{SQ}}_g(a')] &\text{ if $a_1=-1$ and $E$ is of size 2,}\\0 & \text{otherwise.} \end{array} \right.
\end{equation}

Let $\oGamma$ be a bi-colored graph in ${\rm Bic}_{g,n+N}(1)\setminus {\rm Bic}^*_{g,n+N}(1)$. We claim that the class of $\PP\oSQ(\oGamma)$ vanishes unless $\oGamma=\oGamma_E$ for some set $E$. Indeed, as the $a_j$ are positive for $j\neq 1$,  there can be no vertices of genus 0 in $V_0$. Therefore the graph $\oGamma$ is obtained by attaching a genus $g$ 
 vertex in $V_0$ to vertices of genus 0 in $V_{-1}$. The locus $\PP\overline{\mathcal{SQ}}(\oGamma)$ is of co-dimension equal to the number of vertices of genus 0 in $\PP\overline{\mathcal{SQ}}_g(\widetilde{a})$ (by a dimension count). So $[\PP\overline{\mathcal{SQ}}(\oGamma)]^\pm=0$ if there are at least two vertices of genus 0. Then, the last identities needed to complete the proof of the proposition are
\begin{align} \label{for:proofpositive2}
    \pi_N^*\psi_1=\, \, &\psi_1 + \sum_{E} \zeta_{\Gamma_E*}[\oM_{\Gamma_E}], \text{ and } \\ 
    \label{for:proofpositive3}
     \zeta_{\Gamma_E}^* [\PP\overline{\mathcal{SQ}}_g(\widetilde{a})]^\pm =\, \,  & [\PP\overline{\mathcal{SQ}}(\oGamma_E)]^\pm.
\end{align}
 The second identity follows from the fact that $\zeta_{\Gamma_E}(\oM_{\Gamma_E})$ and $\PP\overline{\mathcal{SQ}}_g(a)$ intersect transversally along $\zeta_{\oGamma_E}(\PP\overline{\mathcal{SQ}}(\oGamma_E))$ (see~\cite{CMSZ}). With these identities at hand, we apply formula~\ref{for:induction1} to $\widetilde{a}$, and then the operator $\frac{1}{N!}\pi_{N*}$. The left-hand side is then given by
\begin{eqnarray*}
    \frac{1}{N!}\pi_{N*}(\xi + a_1 \psi_1)[\PP\overline{\mathcal{SQ}}_g(\widetilde{a})]^\pm &=& (\xi+a_1 \psi_1) [\PP\overline{\mathcal{SQ}}_g({a})]^\pm +  \sum_{E} \frac{a_1}{N!} \pi_{N*}\left(\delta_E\cdot [\PP\overline{\mathcal{SQ}}_g(\widetilde{a})]^\pm\right)\\
    &=& (\xi+a_1 \psi_1) [\PP\overline{\mathcal{SQ}}_g({a})]^\pm \\ && + \frac{a_1}{N!} \times \binom{N}{1+\delta_{a_1=-1}} \times (N-1-\delta_{a_1=-1})!  [\PP\overline{\mathcal{SQ}}_g({a'})]^\pm.\\
    &=& (\xi+a_1 \psi_1) [\PP\overline{\mathcal{SQ}}_g({a})]^\pm  + \frac{a_1}{1+\delta_{a_1=-1}}   [\PP\overline{\mathcal{SQ}}_g({a'})]^\pm.
\end{eqnarray*}
For the equality in the first line we have used the projection formula and the expression of $\pi_N^*\psi_1$ given by~\ref{for:proofpositive2}, and from the first line to the second we have used the expression~\ref{for:proofpositive2} and the push-forward formula~\ref{for:induction1}. Finally the proposition follows from the following expression
\begin{eqnarray*}
    &&\!\!\!\!\!\!\!\!\!\!\!\!\!\!\!\!\!\!\!\!\!\!\! \sum_{\oGamma \in {\rm Bic}_{g,n+N}(1)} \frac{m(\oGamma)}{|\Aut(\oGamma)|} \frac{1}{N!} \pi_{N*} \zeta_{\oGamma\, *} [\PP\overline{\mathcal{SQ}} (\oGamma)]^\pm  \\ &=& \sum_{E} \frac{a_1+2|E|}{N!} \pi_{N*} \zeta_{\oGamma_E*}[\PP\overline{\mathcal{SQ}}(\oGamma_E)]^\pm + \sum_{\oGamma \in {\rm Bic}_{g,n+N}^*(1)} \frac{m(\oGamma)}{|\Aut(\oGamma)|} \frac{1}{N!} \pi_{N*} \zeta_{\oGamma\, *} [\PP\overline{\mathcal{SQ}}(\oGamma)]^\pm  \\
    &=& \frac{a_1+2+2\delta_{a_1=-1}}{1+\delta_{a_1=-1}} [\PP\overline{\mathcal{SQ}}_g({a'})]^\pm + \sum_{\oGamma \in {\rm Bic}_{g,n+N}^*(1)} \frac{m(\oGamma)}{|\Aut(\oGamma)|} \frac{1}{N!} \pi_{N*} \zeta_{\oGamma\, *} [\PP\overline{\mathcal{SQ}}(\oGamma)]^\pm .
\end{eqnarray*}
From the first to the second line we have used the fact that the only graphs in the complement of ${\rm Bic}_{g,n+N}^*(1)$ with non-trivial contribution are the graphs $\oGamma_E$, and from the second to the third line we have used the push-forward formula~\ref{for:induction1}.  
\end{proof}

We are now ready to prove the main lemma of the present section. 
\begin{lemma}\label{lem:computation3}
    \ref{thm:main} is a consequence of~\ref{thm:DR} and~\ref{thm:segre}.
\end{lemma}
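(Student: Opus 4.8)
The plan is to combine the reductions established in this section with the two principal identities. By \ref{lem:computation1} --- whose proof already invokes \ref{thm:DR}, \ref{spDR = spPix} and the polynomiality of the spin Pixton class --- it suffices to prove that $[\oM_g(a)]^\pm$ is tautological and explicitly computable for every genus $g$ and every vector $a$ of positive odd integers with $|a|=2g-2+n$. For such $a$ one has $\SQ_g(a)=\H_g(a)$, and the restriction of $p\colon\PP\oH_g(a)\to\oM_{g,n}$ to the closure of $\PP\H_g(a)$ is birational onto $\oM_g(a)$, so $[\oM_g(a)]^\pm=p_*[\PP\oSQ_g(a)]^\pm$. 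I will prove, more generally, that $p_*\bigl(\xi^q[\PP\oSQ_g(a)]^\pm\bigr)$ is tautological and computable for all $q\ge0$ and all vectors $a$ of positive odd integers, by an induction whose secondary parameter is $|a|=\sum_i a_i$ and whose primary parameter is the combinatorial complexity $\chi$ used in the proof of \ref{lem:computation2}; the boundary contributions, which involve meromorphic, residue-constrained and disconnected strata, will be handled by the argument of \ref{lem:computation2}, run jointly with the present induction.

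The base case $a=(1,\dots,1)$ is supplied verbatim by \ref{thm:segre}: formula \ref{for:mainseg2} presents $s^\pm_g(t)=\sum_{c\ge0}t^c\,p_*\bigl(\xi^c[\PP\oSQ_g(1,\dots,1)]^\pm\bigr)$ as a finite sum of gluing pushforwards of tensor products, over trees with no genus-$0$ vertices, of the classes $L_{g(v)}(t)=2^{g(v)-1}\Lambda(2t)\Lambda(-t)-2^{2g(v)-1}$. Each $L_{g(v)}(t)$ is a polynomial in the $\lambda$-classes of $\oM_{g(v),n(v)}$, hence tautological and explicitly computable by Mumford's formula, and $g(v)\le g$ for every vertex of such a tree; so all the classes $s^{\pm,c}_g=p_*\bigl(\xi^c[\PP\oSQ_g(1,\dots,1)]^\pm\bigr)$ are tautological and computable.

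For the inductive step, fix $a$ with positive odd entries and $|a|>n$, choose a coordinate that is $\ge3$, relabel it to be the first, and write $a=a''+2e_1$ with $|a''|=|a|-2$. Applying \ref{pr:indformulanonmarked} to $a''$, formula \ref{for:induction3} reads $(\xi+a''_1\psi_1)[\PP\oSQ_g(a'')]^\pm=2[\PP\oSQ_g(a)]^\pm+\tfrac{1}{N!}\sum_{\oGamma}\tfrac{m(\oGamma)}{|\Aut(\oGamma)|}\pi_{N*}\zeta_{\oGamma*}[\PP\oSQ(\oGamma)]^\pm$. Multiplying by $\xi^q$ and pushing forward along $p$, and using that $\psi_1$ is pulled back from $\oM_{g,n}$, the projection formula, and the identities $p\circ\zeta_{\oGamma}=\zeta_\Gamma\circ p_0$ and $\zeta_{\oGamma}^*\cO(1)\cong\cO(1)$ (as in the proof of \ref{lem:computation2}), one obtains $2\,p_*\bigl(\xi^q[\PP\oSQ_g(a)]^\pm\bigr)$ as a $\QQ$-linear combination of $p_*\bigl(\xi^{q+1}[\PP\oSQ_g(a'')]^\pm\bigr)$, $\psi_1\,p_*\bigl(\xi^q[\PP\oSQ_g(a'')]^\pm\bigr)$, and terms $\pi_{N*}\zeta_{\Gamma*}$ of tensor products of $p_{0*}\bigl(\xi^{q}[\PP\oSQ_{\bg[0]}(\ba[0],R[0])]^\pm\bigr)$ with $p_{-1*}[\PP\oSQ_{\bg[-1]}(\ba[-1],R[-1])]^\pm$. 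The first two summands are covered by the induction at $|a''|<|a|$, over all powers of $\xi$; in the boundary summands $\chi(\ba[0])$ and $\chi(\ba[-1])$ are strictly below $\chi(a)$, so the argument of \ref{lem:computation2} --- which reduces meromorphic strata with residue conditions, and disconnected strata, to connected holomorphic ones of strictly smaller complexity --- expresses each factor through pushforwards already computed. This completes the induction; taking $|a|=2g-2+n$ gives $[\oM_g(a)]^\pm$, and \ref{lem:computation1} then yields \ref{thm:main}.

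The genuinely delicate ingredient is not any individual identity but the bookkeeping that makes the whole scheme well-founded: one must verify that every class produced by \ref{for:induction1}, \ref{for:induction2} and \ref{for:induction3} --- including those carried by the level-$0$ vertex, which may still have genus $g$ --- strictly decreases the chosen complexity, that the present induction and the induction internal to \ref{lem:computation2} can be interleaved consistently, and that the auxiliary manipulations (Mumford's formula for $\lambda$-classes; pushforward and pullback along gluing and forgetful morphisms; multiplication by $\psi$- and $\kappa$-classes) remain algorithmic and stay inside the tautological subring. All of this runs in complete parallel to the arguments already carried out in the proofs of \ref{lem:computation2} and \ref{pr:indformulanonmarked}, so I expect this step to be organisational rather than conceptual.
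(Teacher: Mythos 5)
Your route is the paper's route: reduce to positive odd $a$ via \ref{lem:computation1}, take $a=(1,\dots,1)$ as the base case via \ref{thm:segre}, climb in $|a|$ using \ref{for:induction3} from \ref{pr:indformulanonmarked} together with the projection-formula manipulations, and absorb the boundary (meromorphic, residue-constrained, disconnected) contributions through \ref{lem:computation2}; so the content matches. The one place you diverge, and where your sketch is not quite right, is the well-foundedness bookkeeping that you defer as ``organisational'': the claim that $\chi(\ba[0])$ and $\chi(\ba[-1])$ are strictly below $\chi(a)$ fails in general, because \ref{for:induction3} is obtained from the marked-zero formula on $\oM_{g,n+N}$ and the $N$ auxiliary markings inflate the complexity (indeed $\chi(\ba[0])+\chi(\ba[-1])=2g-2+n+N$, and a level-$0$ vertex can have genus $g$ with at least $n$ special points), so an induction with $\chi$ as primary parameter does not obviously close. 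The paper closes it differently: it runs an outer induction on $(g,n)$ (with $g>0$) and an inner induction on $|a|$, and for the bi-colored boundary terms it observes that, since all entries of $a$ are positive, every vertex in $V_0$ has positive genus, hence every level-$(-1)$ vertex has genus strictly less than $g$, so \ref{lem:computation2} applies to the level-$(-1)$ factors using the outer hypothesis, while the level-$0$ factors are holomorphic strata falling under the induction hypothesis. Replacing your $\chi$-ordering by this $(g,n)$-plus-$|a|$ ordering and the positive-genus observation is exactly what is needed to make your argument complete.
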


\begin{proof}
    First, we use~\ref{lem:computation1} to reduce the computation of the classes of strata of $k$-differentials to the computation of the $[\oM_{g}(a)]^\pm=p_*[\PP\oSQ_g(a)]$ with $a$ positive. Then we work by induction on $(g,n)$ to prove the proposition $(P_{g,n})$:  all classes of the form  $p_*\left(\xi^q[\PP\oSQ_g(a)]^\pm\right)$ are tautological and computable . The base of the induction is trivial as  there are no strata of holomorphic differentials for $g=0$, so  we assume that $(P_{g',n'})$ holds for $(g',n')$ smaller than a fixed $(g,n)$ with $g$ positive. In order to prove $(P_{g,n})$ we work by induction on $|a|$. The base of the induction $a=(1,\ldots,1)$ is given by~\ref{thm:segre}. Then  we use formula~\ref{for:induction3} to obtain
    \begin{equation}
2\xi^q [\PP\overline{\mathcal{SQ}}_g(a')]^\pm = p_*\left(\xi^{q+1}[\PP\overline{\mathcal{SQ}}_g(a)]^\pm \right) \xi + a_{i}  \psi_{i} p_*\left(\xi^{q}  [\PP\overline{\mathcal{SQ}}_g(a)]^\pm\right) - p_*(\xi^q\Delta)
\end{equation}
where $\Delta$ is a linear combination of terms defined by bi-colored graphs. For these graphs we use the induction hypothesis for the upper vertices in $V_0$. For the vertices in $V_{-1}$, we remark that the genus of each vertex is smaller than $g$ (as vertices in $V_0$ are of positive genus), and use~\ref{lem:computation2} to prove that their contribution is tautological and computable.   
\end{proof}

\subsection{Relation between $[\PP\oH_g(-1,1^n)]^\pm$ and $[\PP\oH_g(1^{n+1})]^\pm$} 
Here we fix some $n\geq 0$, and consider the vectors $(1^{n+1})$ and $=(-1,1^n)$. These two vectors play a special role as 
$$
{\rm dim}(\PP\oSQ_g(1^{n+1}))={\rm dim}(\PP\oSQ_g(-1,1^n))={\rm dim}(\oM_{g,n+1}).
$$
In particular, by analogy with the notation~\ref{for:notations} we set
$$
s_g^{1,\pm}(t)= \sum_{c\geq 0} t^c p_*\left(\xi^c[\PP\oSQ_g(-1,1^n)]^\pm\right).
$$
The main purpose of the section is to compare $s_g^\pm$ and $s_g^{1,\pm}$. To do so, we begin with computations in degree 0. For all $(g,m)\in \NN^2$, there exists $d(g,m)$ in $\QQ$ satisfying: for all $n\in \NN$ such that $2g-2+n+m>0$, we have
    $$p_*[\PP\oSQ_g(((-1)^m,1^{n},\{0\})]^\pm= 
 d(g,m) \cdot [\oM_{g,n+m}],$$
where the notation $\{0\}$ stands for the trivial space of residue conditions.
\begin{proposition}\label{pr:pm1}
    For all $(g,m)$ we have $d(g,m)=(-1)^{m+1} 2^{2g-1} + 2^{g-1}.$
\end{proposition}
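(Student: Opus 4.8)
The plan is to express $p_*[\PP\oSQ_g((-1)^m,1^n,\{0\})]^\pm$ as a Segre class of the cone of spin sections over $\oM_{g,n+m}^{1/2}$ and evaluate it. First I would observe that, unlike the case in~\ref{thm:segre}, the marking with the zero residue condition forces us to work with a fixed full vector once we add non-marked zeros; but for the degree-$0$ statement the class $p_*[\PP\oSQ_g(\ldots)]^\pm$ is, by dimension count, a multiple of the fundamental class $[\oM_{g,n+m}]$, so it suffices to compute that multiple. Since the number $d(g,m)$ is claimed to be independent of $n$, I would fix a convenient value of $n$ (or argue the independence directly from the projection formula $\pi^*s_g^{\pm,c}=s_g^{\pm,c}$ as in the footnote on $s_g^{\pm,c}$, together with the fact that adding a marked simple zero does not change the relevant Segre class). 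The main input is the identity already quoted in the introduction:
\begin{equation*}
\epsilon_*\left([\pm]\cdot s_*(R^*\pi_*\cL)\right) = 2^{g-1}\Lambda(-1)\Lambda(1/2),
\end{equation*}
from~\cite{GKL,GKLSau}, whose degree-$0$ part is exactly $2^{g-1}$ (both $\Lambda$-factors have constant term $1$). This will produce the $+2^{g-1}$ summand.

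Next I would account for the $(-1)^{m+1}2^{2g-1}$ summand, which must come from the poles. The cone $\oSQ_g((-1)^m,1^n,\{0\})$ of meromorphic differentials with even orders and \emph{trivial} residue conditions at the $m$ poles, being the square of a section of the appropriate twisted spin bundle, maps to the affine cone of sections of $\cF = \cL\otimes \omega_{\log}^{\text{(twist)}}(-\sum \widetilde a_i p_i)$; its projectivized closure, pushed to $\oM_{g,n+m}$, computes a Segre class of $R^\bullet\pi_*$ of that twisted bundle. The key point is that twisting the spin structure by $-p_i$ at a pole of order $2$ (the $a_i=-1$ case, so $\widetilde a_i = -1$) changes the relevant $K$-theory class by $[\cO_{p_i}]$-type corrections, and each such twist contributes a factor whose constant term is recorded by the ``quadratic Hodge symbol'' bookkeeping. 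I would compute this twisting effect via the standard normalization/excess-intersection argument (as in~\ref{lem:normal_bundle_length} and the proof of~\ref{prop:cancellation}): each pole contributes a factor that flips a sign in the parity count, which is where the $(-1)^m$ enters, while the $2^{2g-1}$ is the degree of $\epsilon$ (Cornalba's theorem), reflecting that when $m\geq 1$ the residue theorem is vacuous and the whole space $\oM_{g,n+m}^{1/2}$ contributes rather than the proper sub-locus cut out by $[\pm]$.

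Concretely, I expect the cleanest route is an induction on $m$: the base case $m=0$ is precisely the degree-$0$ part of~\ref{thm:segre}, giving $d(g,0) = L_g$ in degree $0$, i.e. $2^{g-1}\cdot 1 \cdot 1 - 2^{2g-1}\cdot 1 \cdot \text{[constant]}$; one reads off $d(g,0)= 2^{g-1}- 2^{2g-1} = -2^{2g-1}+2^{g-1}$, matching $(-1)^{0+1}2^{2g-1}+2^{g-1}$. For the inductive step, I would use a version of the intersection formula~\ref{for:induction2} relating the class with $m$ poles and trivial residue conditions to the class with $m-1$ poles (introducing one residue constraint, which is automatically satisfied by the boundary bi-colored graphs), keeping only the degree-$0$ parts; the bi-colored-graph correction terms drop out in degree $0$ except for the ``sphere bubble'' carrying the poles, whose contribution I can evaluate directly on $\oM_{0,\bullet}$. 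The sign flip $d(g,m) = -d(g,m-1) + 2^{g}$ (or the analogous recursion) then yields the closed form, and the $2^{g-1}$ fixed point of $x\mapsto -x+2^g$ is exactly the persistent summand. \textbf{The main obstacle} I anticipate is correctly tracking the residue conditions through the degeneration: with trivial residue conditions at the poles one is in the setting where $\oM_g((a_1,\ldots,a_n,0))$-type emptiness fails, so the recursion genuinely picks up the full degree $2^{2g-1}$ of $\epsilon$ rather than the signed contribution, and making this precise requires carefully identifying which $R[0], R[-1]$ appear for each boundary bi-colored graph in formula~\ref{for:induction2} and checking that the degree-$0$ pushforward of the genus-$0$ pole-bubble contributes the right rational number. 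A secondary subtlety is the factor $2$ versus $2^{2g-1}$ normalization coming from the $\mu_2$-gerbe structure (\ref{mu_2DR:gerbe}) and the definition of $[\pm]$, which must be threaded consistently through $\epsilon_*$.
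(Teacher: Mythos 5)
Your proposal has a genuine circularity at its base case, and the inductive step is not actually established. You take $d(g,0)$ from the degree-zero part of~\ref{thm:segre} (equivalently from $\epsilon_*\left([\pm]\cdot s_*(R^\bullet\pi_*\cL)\right)=2^{g-1}\Lambda(-1)\Lambda(1/2)$), but in this paper~\ref{thm:segre} is proved only in the last section, and its proof runs through~\ref{pr:pm1main}, hence through the very proposition you are proving. The GKL identity alone does not compute $d(g,0)=p_*[\PP\oSQ_g(1^n)]^{\pm}$ in degree $0$: the cone $\wH_g^{1/2}[P]$ of spin sections has additional irreducible components supported over boundary trees, and disentangling the contribution $s_g^{\pm}$ of the closure of the smooth stratum from the class of the full cone is precisely the content of~\ref{thm:segre}. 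So reading off the summand $2^{g-1}$ (and $d(g,0)=2^{g-1}-2^{2g-1}$) from that identity presupposes the theorem whose proof requires~\ref{pr:pm1} as an input; any proof of~\ref{pr:pm1} along your lines would have to re-prove the tree decomposition independently.

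The inductive step in $m$ is likewise only sketched. The recursion $d(g,m)=-d(g,m-1)+2^{g}$ is arithmetically consistent with the closed formula, but you do not derive it: formula~\ref{for:induction2} multiplies by $\xi$ and cuts the residue space down by one dimension, it does not remove a pole, and the identification of which bi-colored graphs survive under $p_*$ in degree $0$ and with which coefficients --- exactly where the sign $(-1)^m$ and the power of $2$ must come from --- is the step you yourself flag as the main obstacle and leave open. The paper's argument is different and more elementary: it stays entirely in degree $0$, restricts the class to boundary divisors $\zeta_\Gamma$ with a single separating edge, and uses the resulting multiplicativity of the surviving bi-colored-graph contributions to obtain the two recursions $d(g,m+2)=d(g,m)$ and $d(g+1,m)=d(g,m+1)-3\,d(g,m)$; these, together with directly computed small cases ($d(0,0)=0$, $d(0,1)=1$, $d(0,3)=1$ and the genus-one evaluations), determine $d$ uniquely, with no appeal to Segre classes of spin cones and hence no circularity. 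To salvage your approach you would need either an independent proof of the degree-zero part of~\ref{thm:segre} or an honest derivation of your recursion from~\ref{for:induction1} and~\ref{for:induction2}, neither of which is supplied.
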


\begin{proof}
    We will prove that the function $d$ satisfies the following induction formulas
    \begin{align}
        & d(g,m+2)=d(g,m), \label{for:inddfunction1}\\&d(g+1,m)=d(g,m+1)- 3\cdot d(g,m).\label{for:inddfunction2}
    \end{align}
    These two identities determine uniquely $d$ together with the base cases
    $$
    d(0,0)=0, \text{ and } d(0,1)=1.
    $$
      To prove these identities, we will fix a  stable graph $\Gamma \in {\rm Stab}_{g,n+m}$ with one separating edge. If we set $N=(g-1)+m$ (the number of non-marked zeros),  
    then the intersection 
    $$\pi_N^{-1}(\zeta_{\Gamma}[\oM_{\Gamma}]) \times_{\oM_{g,n+m+N}} \PP\oSQ_{g}(((-1)^{m},1^n,3^N),\{0\})$$ is the union of the divisor of $\PP\oSQ_{g}(((-1)^{m},1^n,3^N),\{0\})$ defined by twisted graphs $(\Gamma',I)$ for which  the stabilization  of $\Gamma'$ after forgetting the last $N$ legs is a specialization of $\Gamma$. These graphs are either: a graph with one horizontal edge or a bi-colored graph. The  morphism $p$ has fibers of positive dimension along the first type of divisor so they contribute trivially.  We denote by ${\rm Bic}_{g,n+m+N}(\Gamma)$ the set of bi-colored defining a component of this intersection. 
    Then, with this notation we have
    $$
    \pi_N^*\zeta_{\Gamma*}[\oM_{\Gamma}]\cdot p_*\left(\PP\oSQ_{g}(((-1)^{m},1^n),\{0\}\right)= \sum_{\oGamma \in {\rm Bic}_{g,n+m}(\Gamma)} c_{\oGamma} \cdot p_* \zeta_{\oGamma*}[\oSQ({\oGamma},\{0\})]^\pm 
    $$
    where $c_{\oGamma}$ is some coefficient in $\QQ$. Then, the function $d$ is  computed by the projection formula
    \begin{equation}
        \label{for:inductiond3}
    d(g,m) \cdot \zeta_{\Gamma*}[\oM_{\Gamma}]=\frac{1}{N!}\sum_{\oGamma \in {\rm Bic}_{g,n+m+N}(\Gamma)} c_{\oGamma}\cdot  \pi_{N*} p_*\left(\zeta_{\oGamma*}[\oSQ({\oGamma},\{0\})]^\pm\right).
    \end{equation}
    Let $\oGamma=(\Gamma',I) \in {\rm Bic}_{g,n+m+N}(\Gamma)$ be a graph with non-trivial contribution to this expression. We can check that after removing the last $N$ legs and stabilizing $\Gamma'$  we obtain  precisely $\Gamma$: indeed, otherwise the associated contribution would be supported on a locus of co-dimension greater than $1$ in $\oM_{g,n+m}$. This implies in particular that $h^1(\Gamma')=0$. As the residue condition defining our stratum is trivial, there can be only vertex in $V_{-1}$ and $V_0$ (to have a non-trivial push-forward under $p$). Therefore, the graph $\Gamma'$ is obtained from $\Gamma$ by a assigning the last $N$ legs to each vertex, and the twist is uniquely determined by this choice. The contribution of this graph is of the form 
    $$
    p_*(\zeta_{\oGamma}[\PP\oSQ(\oGamma,\{0\})]^\pm= \zeta_{\Gamma*}\left(p_{0*}[\PP\oSQ_{g[0]}(a[0],\{0\})]^\pm\otimes p_{-1*}[\PP\oSQ_{g[-1]}(a[-1],\{0\})]^\pm\right).
    $$
     This class vanishes after applying $\pi_{N*}$ if a coordinate of $a[-1]$ is smaller than $-1$. So, once we have chosen which vertex of $\Gamma'$ is in $V_0$,  then the number of legs incident to each vertex is determined and the twist at the edge has absolute value 1.   Then, the identities~\ref{for:inddfunction1} and~\ref{for:inddfunction2} will be derived from~\ref{for:inductiond3} for two different choices of graphs $\Gamma$: for the first one, $\Gamma$ has a vertex of genus $0$ carrying only the first two legs, while for the second one $\Gamma$ has a vertex of genus 1 without legs. Indeed for these choices of graphs~\ref{for:inductiond3} implies
    \begin{eqnarray*}
        d(g,m+2)&=&d({0,3}) \cdot d(g,m) + d({0,2}) \cdot d(g,m+1), \text{ and }\\
        d(g+1,m)&=&d(1,1)\cdot d(g,m)+d(1,0)\cdot d(g,m+1).
    \end{eqnarray*}
    We have already mentioned that $d(0,2)$ vanishes so the identities~\ref{for:inddfunction1} and~\ref{for:inddfunction2} follow from
    \begin{align*}
        d({0,3})=1, 
        d(1,0)=-3, \text{ and } d(1,0)=1.
    \end{align*}
    For any 3 points on a genus 0 curve, there is exactly one differential with poles of order $2$ without residues at each marking and zeros of order 2, so $d(0,3)=1$. To compute $d(0,1)$ we remark that that only the trivial (and even) spin structure has an holomorphic section. Finally, a point in   $\PP\SQ_1(-1,3)$ is the data of  $(E,x_1,x_2)$ such that  $2(x_2)-2(x_1)$ is a nontrivial 2-torsion divisor (which is odd)  so $d(1,1)=-3$. 
\end{proof}

We turn now to the higher degrees. For the second statement, we consider the set of {\em back-bones graphs} ${\rm BB}_{g,1+n} \subset {\rm Stab}_{g,1+n}$, i.e. the set of star graphs with no loops, and such that the central vertex  carries the first leg.  To a non-trivial back-bone graph $\Gamma$, we associate two bi-colored graphs $\oGamma_{-1}$ and $\oGamma_{1}$ compatible with $(-1,1^n)$ and $(1^{n+1})$ respectively: the central vertex is in $V_{-1}$ and the twist function has value $1$ at each half-edge in $V_{\rm out}$. 

\begin{lemma}\label{lem:pm1}
    For $a_1=-1$ or $1$, we have
    \begin{equation}\label{for:pm1main}
    \left(\xi+ a_1\psi_1\right) [\PP\oSQ_g(a_1,1^n)]^\pm = 2 [\PP\oSQ(3,1^n)] + \underset{\text{non-trivial}}{\sum_{\Gamma \in {\rm BB_{g,1+n}}}} \frac{1}{|{\rm Aut}(\Gamma)|} \zeta_{\oGamma_{a_1} *} [\PP\oSQ(\oGamma_{a_1})]^\pm
 \end{equation}
\end{lemma}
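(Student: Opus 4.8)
The plan is to deduce this from the general induction formula \ref{for:induction3} of \ref{pr:indformulanonmarked} applied to the vector $a=(a_1,1^n)$ with $a_1=-1$ or $1$, by identifying exactly which bi-colored graphs in ${\rm Bic}^*_{g,n+N}(1)$ contribute after pushing forward along $\pi_N$. The starting point is that \ref{for:induction3} already gives
\begin{equation*}
(\xi+a_1\psi_1)[\PP\overline{\mathcal{SQ}}_g(a_1,1^n)]^\pm = 2[\PP\overline{\mathcal{SQ}}_g(a')]^\pm + \frac{1}{N!}\sum_{\oGamma\in{\rm Bic}^*_{g,n+N}(1)}\frac{m(\oGamma)}{|{\rm Aut}(\oGamma)|}\pi_{N*}\zeta_{\oGamma*}[\PP\overline{\mathcal{SQ}}(\oGamma)]^\pm,
\end{equation*}
where $a'=(3,1^n)$ in both cases ($a_1=-1$ gives $a'=(3,1^n)$; $a_1=1$ gives $a'=(3,1^n)$ as well). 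So the content of the lemma is purely the identification of the sum over ${\rm Bic}^*_{g,n+N}(1)$ with the sum over non-trivial back-bone graphs.

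First I would analyze which bi-colored graphs $\oGamma=(\Gamma',I,V_0)$ with leg $1$ on level $-1$ give a non-zero class $[\PP\overline{\mathcal{SQ}}(\oGamma)]^\pm$ after applying $\pi_{N*}$. Since all entries of $a$ other than the first are positive, there can be no genus-$0$ vertex in $V_0$, so every genus-$0$ vertex lies in $V_{-1}$; and for the push-forward under $p$ composed with $\pi_{N*}$ to be non-trivial, the level-$(-1)$ part must contribute in codimension exactly one and the residues on the poles created at the edges must be unconstrained of order $\le 1$ — forcing, by the dimension count already used in the proof of \ref{pr:indformulanonmarked}, that there is a single level-$0$ vertex of genus $g$ carrying leg $1$ is impossible (leg $1$ is on level $-1$), hence the level-$(-1)$ locus must be a union of genus-$0$ components and the surviving configuration is exactly: one central vertex of level $-1$ of genus $0$ carrying leg $1$, connected by edges to level-$0$ "outlying" vertices, with all twists equal to $1$ at every edge (a higher twist on an edge to an outlying vertex would force a pole of order $\ge 2$ there, killed by $\pi_{N*}$). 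After contracting the last $N$ legs and stabilizing, such a graph is precisely a non-trivial back-bone graph $\Gamma\in{\rm BB}_{g,1+n}$, and the associated bi-colored graph is the $\oGamma_{a_1}$ of the statement. This matching is a bijection once one tracks which of the $N$ extra legs are distributed to which outlying vertex, and this is exactly what produces the factor $1/N!$ cancelling against the $\frac{1}{N!}\pi_{N*}$.

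The remaining step is the bookkeeping: for each back-bone graph $\Gamma$, sum over all ways of distributing the $N$ unmarked double zeros among the outlying vertices (there is no freedom at the central genus-$0$ vertex, since a genus-$0$ vertex with only the pole, its edges of twist $1$, and double zeros would be unstable or force the wrong dimension), check that $m(\oGamma_{a_1})=\prod_{e}\sqrt{-I(h)I(h')}=1$ since every twist is $1$, and that $|{\rm Aut}(\oGamma_{a_1})|$ relates to $|{\rm Aut}(\Gamma)|$ exactly so that, after dividing by $N!$ and summing over distributions, the combinatorial factors collapse to $\frac{1}{|{\rm Aut}(\Gamma)|}$. One also needs $\pi_{N*}\zeta_{\oGamma_{a_1}*}[\PP\overline{\mathcal{SQ}}(\oGamma_{a_1})]^\pm = (\text{multinomial})\cdot\zeta_{\oGamma_{a_1}*}[\PP\overline{\mathcal{SQ}}(\oGamma_{a_1})]^\pm$ on the nose, using $\pi_N^*\mathcal O(1)=\mathcal O(1)$ and the transversality statement from \cite{CMSZ} already invoked in \ref{pr:indformulanonmarked}. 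The main obstacle I expect is precisely this last combinatorial reconciliation of automorphism factors and the $1/N!$ against the multinomials counting distributions of unmarked zeros — it is routine in spirit but must be done carefully, exactly as in the analogous non-spin computation of \cite{Wong} and in \ref{lem:computation2}, and I would model the argument on those. Everything involving parity is automatic: the $\pm$-superscripts propagate through the tensor decomposition of $[\PP\overline{\mathcal{SQ}}(\oGamma)]^\pm$ with no extra signs, since the central vertex has genus $0$ and hence a unique (even) spin contribution, so it plays no role in the parity bookkeeping.
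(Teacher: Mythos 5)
Your overall strategy is the same as the paper's (apply \ref{pr:indformulanonmarked} to $a=(a_1,1^n)$ and identify, via a dimension count, which graphs in ${\rm Bic}^*_{g,1+n+N}(1)$ survive $\pi_{N*}$), and the bookkeeping you describe at the end ($m(\oGamma_{a_1})=1$, the $1/N!$ cancelling against the labelings of the $N$ extra legs) matches the paper. But there is a genuine error in your identification of the surviving graphs. From ``no genus-$0$ vertex can lie in $V_0$'' you conclude that the level $-1$ locus is a union of genus-$0$ components, and in particular that the central vertex (which carries leg $1$ and sits in $V_{-1}$) has genus $0$. That is a non-sequitur and it is false: the correct output of the dimension count is only that $h^1(\Gamma)=0$ and $|V_{-1}|=1$, i.e.\ the unique level $-1$ vertex is the central vertex of a back-bone graph carrying leg $1$, of \emph{arbitrary} genus $g(v_0)$, with twist $1$ at every half-edge on the outlying (level $0$) vertices. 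The statement you are proving sums over all non-trivial $\Gamma\in{\rm BB}_{g,1+n}$, including those with $g(v_0)>0$, and these contributions do not vanish: the signed degree of the central-vertex factor is $d(g(v_0),m)=(-1)^{m+1}2^{2g(v_0)-1}+2^{g(v_0)-1}$ by \ref{pr:pm1}, which is what later produces the $2^{2g(v_0)}$ in \ref{pr:pm1main}. With your restricted set of graphs the right-hand side of \ref{for:pm1main} would be strictly smaller than claimed.

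The same error propagates into your final parity remark: it is not true that ``the central vertex has genus $0$ and hence a unique (even) spin contribution, so it plays no role in the parity bookkeeping.'' The class $[\PP\oSQ(\oGamma_{a_1})]^\pm$ carries the signed class $p_{-1*}[\PP\oSQ_{g(v_0)}(a_1,(-1)^{|E|},\ldots)]^\pm$ on the level $-1$ central vertex, and this sign refinement is essential (it is the whole content of \ref{pr:pm1}). Also, your argument that higher twists at outlying half-edges are ``killed by $\pi_{N*}$ because of poles of order $\ge 2$'' is not quite the right mechanism: a twist $I(h')>1$ at an outlying vertex produces a higher-order zero there and a deeper pole on the central vertex, and it is excluded because the inequality in the marking/dimension count (the bound $\sum_{(h,h')}\frac{I(h')+1}{2}\le |E(\Gamma)|$) is an equality only when all these twists equal $1$; you would need to make this step precise rather than appeal to $\pi_{N*}$ alone.
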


\begin{proof}
    The present lemma is a consequence of~\ref{pr:indformulanonmarked} applied to $a=(a_1,1^n)$. Let $\oGamma$ be a bi-colored graph in ${\rm Bic}_{g,1+n+N}^*(1)$ such that $\pi_{N_*}\zeta_{\oGamma *}[\PP\oSQ(\oGamma)]^\pm\neq 0$. First,  the dimension of $\oSQ_{\bg[-1]}(\ba[-1],R[-1])$ is equal to 
    $$
    -{\rm codim}(R[-1])+\sum_{v \in V_{-1}} (2g(v)-2+n(v)),
    $$
    so the co-dimension of $p_{-1}(\PP\oSQ_{\bg[-1]}(\ba[-1],R[-1]))$ in $\oM_{\bg[-1],\bn[-1]}$ is at most
    $$
    {\rm codim}(R[-1])-1+ \sum_{v \in V_{-1}} (g(v) +1) \leq |V_0| + \delta_{a_1=-1}-1+ \sum_{v \in V_{-1}} (g(v)+1).
    $$
    This inequality follows from the following observation: apart from the vanishing residue at $x_1$, each vertex in $V_0$ is responsible for one linear constraint of residues at edges although two such constraints can be redundant (see~\cite{BCGGM}). Besides, for each vertex we denote by $N(v)$ the number of markings in $\{n+2,\ldots,n+1+N\}$ incident to $v$. Then we have 
    $$
    \sum_{v\in V_{-1}} N(v)  = \delta_{a_1=-1} + \left(\sum_{v\in V_{-1}} g(v)-1\right) + \underset{h\mapsto v \in V_{-1} }{\sum_{(h,h')\in \Gamma }} \frac{I(h')+1}{2}  \leq \delta_{a_1=-1} + \left(\sum_{v\in V_{-1}} g(v)-1\right) + |E(\Gamma)|
    $$
    where the second sum runs over all edges, and $I(h')$ is the value of the twist at the vertex in $V_0$. Then the morphism $\pi_N$ has fibers of positive dimension unless
    $$
    |V_0| + |V_{-1}| + \delta_{a_1=-1}+ \sum_{v \in V_{-1}} g(v) \geq \delta_{a_1=-1}+ \left(\sum_{v \in V_{-1}} g(v)\right) - |V_{-1}| + |E(\Gamma)|. 
    $$
    This last inequality holds if and only if $h^1(\Gamma)=0$ and $|V_{-1}|=1$, i.e. if and only if $\Gamma$ is a back-bone graph. Finally, the above inequalities are equalities if and only if the the twist function has value $1$ at every half-edge in $V_{\rm out}$. If we denote by ${\rm BB}_{g,1+n+N}^* \subset {\rm BB}_{g,1+n+N}$ the sub-set of non-trivial graphs for which the central vertex is stable after forgetting the last $N$ legs, then we have 
    \begin{equation*}
    \left(\xi+ a_1\psi_1\right) [\PP\oSQ_g(a_1,1^n)]^\pm = 2 [\PP\oSQ(3,1^n)] + \frac{1}{N!} \sum_{\Gamma \in {\rm BB_{g,1+n+N}^*}} \frac{1}{|{\rm Aut}(\Gamma)|} \pi_{N_*}\zeta_{\oGamma_{a_1} *} [\PP\oSQ(\oGamma_{a_1})]^\pm.
 \end{equation*}
 The lemma follows as the set ${\rm BB}_{g,1+n+N}^*$ is the set of non-trivial graphs in ${\rm BB}_{g,1+n}$ plus a choice of labeling of the last $N$ legs.
 \end{proof}

Combining~\ref{pr:pm1} and~\ref{lem:pm1}, we obtain the main identity that will be used in the following section.
\begin{proposition}\label{pr:pm1main}
     For all $g\geq 0$, we have
    \begin{equation}
        (1-t\psi_1)s_g^{1,\pm}(t)= (1+t\psi_1) s_{g}^\pm(t)+  \sum_{\Gamma\in {\rm BB}_{g,1+n}} \frac{(-t)^{E(\Gamma)}2^{2g(v_0)}}{|{\rm Aut}(\Gamma)|}\zeta_{\Gamma*}\left([\oM_{g(v_0),n(v_0)}]\bigotimes_{v\in V_{\rm out}} s_{g(v)}^\pm(t)\right).
    \end{equation}
\end{proposition}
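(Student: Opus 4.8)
The plan is to combine the divisorial relation \ref{for:pm1main} (from Lemma~\ref{lem:pm1}) with the degree-zero evaluation \ref{pr:pm1} (Proposition~\ref{pr:pm1}), and to repackage everything into generating-function form. First I would take \ref{for:pm1main} for $a_1=-1$, multiply by $\xi^{c-1}$, apply $p_*$, and sum against $t^c$; since $\xi+a_1\psi_1 = \xi - \psi_1$ on the left and $p_*\xi^{c}[\PP\oSQ_g(-1,1^n)]^\pm = $ the $t^c$-coefficient of $s_g^{1,\pm}(t)$ (and similarly with one extra factor of $\psi_1$ handled by the projection formula), the left-hand side assembles precisely into $(1-t\psi_1)s_g^{1,\pm}(t)$. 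I would likewise do the computation for $a_1=1$, which yields $(1+t\psi_1)s_g^{\pm}(t)$ plus the same back-bone sum but with the central-vertex stratum $\PP\oSQ(\oGamma_{1})$ in place of $\PP\oSQ(\oGamma_{-1})$. Subtracting, or rather comparing the two, the ``first'' boundary term $2[\PP\oSQ(3,1^n)]^\pm$ cancels, and the task reduces to identifying the difference of the two back-bone sums.

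The key point is then a term-by-term comparison of $\zeta_{\oGamma_{-1}*}[\PP\oSQ(\oGamma_{-1})]^\pm$ and $\zeta_{\oGamma_{1}*}[\PP\oSQ(\oGamma_{1})]^\pm$ for a fixed back-bone graph $\Gamma \in {\rm BB}_{g,1+n}$. In both bicolored graphs the outlying vertices carry twist $1$ at each half-edge, so the contributions of the outlying vertices are identical — each is $p_{-1*}[\PP\oSQ_{g(v)}(\ldots,1)]^\pm$, matching a factor of $s_{g(v)}^\pm(t)$ after the $t^{|E|}$ bookkeeping and the sign $(-t)^{|E(\Gamma)|}$ coming from $\xi \mapsto -t$ in the generating series (cf. the sign convention already visible in \ref{for:mainseg2}). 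The only difference is at the central vertex $v_0$: for $\oGamma_{1}$ it is of level $-1$ and contributes $p_{-1*}[\PP\oSQ_{g(v_0)}(\text{all }+1)]^\pm$, i.e. a factor of $s^\pm_{g(v_0)}$, whereas for $\oGamma_{-1}$ the central vertex sees one pole of order one (the leg labelled $1$ carries $a_1=-1$) with trivial residue condition. By Proposition~\ref{pr:pm1} with $m=1$, the degree-zero part of that central contribution is $d(g(v_0),1)\,[\oM_{g(v_0),n(v_0)}] = ( (-1)^{2}2^{2g(v_0)-1} + 2^{g(v_0)-1})[\oM_{g(v_0),n(v_0)}]$; but more relevantly, I would argue that \emph{all} degrees of the central factor for $\oGamma_{-1}$ are obtained from those of $\oGamma_1$ plus a correction term proportional to $2^{2g(v_0)}[\oM_{g(v_0),n(v_0)}]$ concentrated in the appropriate degree. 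Concretely, applying \ref{for:pm1main} once more \emph{at the central vertex} — that is, to $\PP\oSQ_{g(v_0)}(-1,1^{n(v_0)})$ versus $\PP\oSQ_{g(v_0)}(1^{1+n(v_0)})$ — and using \ref{pr:pm1} to evaluate the leading discrepancy, gives exactly $s^{1,\pm}_{g(v_0)} - s^\pm_{g(v_0)} = 2^{2g(v_0)}[\oM_{g(v_0),n(v_0)}]$ as a relation of generating functions (the $2^{g(v_0)-1}$ terms from $d(g,0)$ and $d(g,1)$ coincide and cancel, leaving the $2^{2g}$ piece). Feeding this back, the difference of the two back-bone sums telescopes into $\sum_{\Gamma} \frac{(-t)^{|E(\Gamma)|}2^{2g(v_0)}}{|{\rm Aut}(\Gamma)|}\zeta_{\Gamma*}([\oM_{g(v_0),n(v_0)}]\otimes \bigotimes_{v\in V_{\rm out}} s_{g(v)}^\pm(t))$, which is the claimed formula.

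The main obstacle I anticipate is bookkeeping the $\psi_1$-insertions and the automorphism factors correctly under the two evaluations. On the left, the operator is $\xi - \psi_1$ in one case and $\xi + \psi_1$ in the other; after $p_*$ and the geometric series these become $1 - t\psi_1$ and $1 + t\psi_1$, but one must be careful that $\psi_1$ on $\PP\oSQ$ pulls back from $\oM_{g,n+1}$ and commutes past $p_*$ via the projection formula, so that it can legitimately be pulled out of the generating series as written. A second subtlety is that the back-bone graphs for the two signs share the same automorphism group (the central vertex is distinguished, and the decorations on outlying half-edges are the constant $1$ in both cases), so $|{\rm Aut}(\oGamma_{-1})| = |{\rm Aut}(\oGamma_{1})| = |{\rm Aut}(\Gamma)|$; this is needed for the term-by-term cancellation to go through cleanly. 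Finally, one should double-check the edge case $n=0$ (so the only leg is the first one) and the trivial back-bone graph, where the ``non-trivial'' sum is empty and the identity reduces to the pure central-vertex relation $s^{1,\pm}_g - s^\pm_g = 2^{2g}$ established via \ref{pr:pm1}; this is the base case that makes the inductive unwinding consistent.
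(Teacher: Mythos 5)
Your global strategy --- applying \ref{lem:pm1} for $a_1=1$ and $a_1=-1$, eliminating the common term $2[\PP\oSQ_g(3,1^n)]^\pm$, comparing the two back-bone sums graph by graph, and fixing the degree-zero constant via \ref{pr:pm1} --- is the same as the paper's. But the key step, your comparison of the central-vertex factors, does not work as stated. In both bi-colored graphs $\oGamma_{1}$ and $\oGamma_{-1}$ the central vertex lies at level $-1$: each of its half-edges carries twist $-1$ (so the differential there has a \emph{double} pole at every one of the $|E(\Gamma)|$ nodes, and also a double pole at the leg when $a_1=-1$, not ``a pole of order one''), and since the differential at level $-1$ is forgotten under $p_{-1}$, its factor in $[\PP\oSQ(\oGamma_{a_1})]^\pm$ is a degree-zero multiple of $[\oM_{g(v_0),n(v_0)}]$, namely $d\bigl(g(v_0),|E(\Gamma)|+\delta_{a_1=-1}\bigr)$ --- it is \emph{not} a copy of $s^{\pm}_{g(v_0)}(t)$ or $s^{1,\pm}_{g(v_0)}(t)$; the $\xi$-powers live on the total projectivized cone, whose Segre class splits as the central fundamental class tensored with the outlying factors $s^{\pm}_{g(v)}(t)$. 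Consequently the identity you feed back in, $s^{1,\pm}_{g(v_0)}(t)-s^{\pm}_{g(v_0)}(t)=2^{2g(v_0)}$, is false in positive degrees --- it is essentially the proposition being proven with the $\psi_1$- and back-bone terms dropped --- so the argument becomes circular at exactly the point where the work must happen.

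The correct comparison is purely numerical: by \ref{pr:pm1} the two central coefficients differ by $d(g(v_0),|E(\Gamma)|+1)-d(g(v_0),|E(\Gamma)|)=(-1)^{|E(\Gamma)|}\,2^{2g(v_0)}$. In particular the sign $(-1)^{|E(\Gamma)|}$ in the statement comes from this alternation of $d(g,m)$ in $m$, not from any ``$\xi\mapsto -t$'' convention (the generating series $s^{\pm}$, and the Segre class of the product cone, contribute $+t^{|E(\Gamma)|}$; the sign in \ref{for:mainseg2} is an artifact of M\"obius inversion and is irrelevant here). Your version, which only compares $d(g,1)$ with $d(g,0)$, would therefore lose the sign for graphs with an odd number of edges. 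Once the central factors are identified as $d(g(v_0),|E|+\delta_{a_1=-1})\cdot[\oM_{g(v_0),n(v_0)}]$ and their difference is taken as above, the remainder of your outline (identical outlying factors, $|\Aut(\oGamma_{\pm1})|=|\Aut(\Gamma)|$, and the degree-zero constant $2^{2g}$ accounting for the trivial back-bone graph) goes through and recovers the paper's proof.
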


\begin{proof}
If $\Gamma$ is a back bone graph then we denote by  
\begin{eqnarray*}
    \oSQ_\Gamma&=& \oM_{g(v_0),n(v_0)}\times \left(\prod_{v\in V_{\rm out}} \oSQ_{g(v),n(v)} \right), \\
    {[}\PP\oSQ_\Gamma{]}^\pm &=& [\oM_{g(v_0),n(v_0)}]\times \bigotimes_{v\in V_{\rm out}} 
 [\PP\oSQ_{g(v),n(v)} ]^\pm, \text{ and} \\
 s_\Gamma^\pm(t) &=& \sum_{c\geq 0} t^c p_*(\xi^c [\PP\oSQ_\Gamma]^\pm). 
\end{eqnarray*}
As $\oSQ_\Gamma$ is a product of cones, we can compute its Segre class in terms of each piece:
$$
s_\Gamma^\pm(t)= t^{E(\Gamma)-1} \left([\oM_{g(v_0),n(v_0)}]\bigotimes_{v\in V_{\rm out}} s_{g(v)}^\pm(t)\right).
$$
With this notation, for $a_1=-1$ or $1$, we have  $$[\PP\oSQ(\oGamma_{a_1})]^\pm=d(g(v_0),|E(\Gamma)|+\delta_{a_1=-1})\cdot [\PP\oSQ_\Gamma]^\pm.$$
So, combining~\ref{pr:pm1} and~\ref{lem:pm1}, we obtain the following identity:
\begin{eqnarray*}
(\xi-\psi_1)[\PP\oSQ_g(-1,1^n)]^\pm &=& (\xi+\psi_1)[\PP\oSQ_g(1^{n+1})]^\pm + \sum_{\Gamma \in {\rm BB}_{g,1+n}} \frac{f(\Gamma)}{|{\rm Aut}(\Gamma)|} \zeta_{\Gamma*}[\oSQ_\Gamma]^\pm, \text{ with }\\
    f(\Gamma)&=&d(g(v_0),|E|+1)-d(g(0),|E|) = (-1)^{|E|} 2^{2g}.
\end{eqnarray*}
Summing over all degree this identity implies
\begin{equation*}
        (1-t\psi_1)s_g^{1,\pm}(t)= (1+t\psi_1) s_{g}^\pm(t)+ C_0 + \sum_{\Gamma\in {\rm BB}_{g,1+n}} \frac{(-t)^{E(\Gamma)}2^{2g(v_0)}}{|{\rm Aut}(\Gamma)|}\zeta_{\Gamma*}\left([\oM_{g(v_0),n(v_0)}]\bigotimes_{v\in V_{\rm out}} s_{g(v)}^\pm(t)\right),
    \end{equation*}
where $C_0$ is a chow class of degree 0. We use again~\ref{pr:pm1} to deduce that $C_0=2^{2g}$. 
\end{proof}

\section{Cones of spin sections}\label{sec:cone}

We begin with a very general discussion of the cone of sections of a line bundle on a curve. 
\subsection{Cones of sections}\label{sec:cones_of_sections}
Let $\pi\colon C \to S$ be a prestable curve and $\ca L$ a line bundle on $C$. We define a functor $H_\ca L$ from $S$-schemes to sets by the formula 
$$T \mapsto H^0(T, \pi_{T*}\ca L_T)$$
where $\ca L_T$ is the pullback of $\ca L$ to $C_T = C \times_S T$. We claim that this functor is representable by an affine $S$-scheme which carries a natural $\bb G_m$-action (a `cone' over $S$). 

More concretely, we show that $H_\ca L$ is representable by the relative spectrum of the sheaf of graded $\ca O_S$-algebras defined by the formula $Sym^\bullet R^1\pi_*\ca F$ where $\ca F = \mathcal{H}om(\ca L, \omega_\pi)$. Indeed, for any $T\to S$ we have
    \begin{equation}\label{eq:representing_cone_of_sections}
        \begin{split}
            H_\ca L(T) & = H^0(T, \pi_{T*}\ca L_T)\\
            & = H^0(T, \pi_{T*}\mathcal{H}om(\ca F_T, \omega_{\pi_T}))\\
            & = H^0(T, \mathcal{H}om(R^1\pi_{T*}\ca F_T, \ca O_{T}))\\
            & = H^0(T, \mathcal{H}om((R^1\pi_{*}\ca F)_T, \ca O_{T}))\\
            & = Hom_{\ca O_T-mod}((R^1\pi_*\ca F)_T, \ca O_{T})\\
            & = Hom_{\ca O_T-alg}(Sym_{\mathcal O_T}^\bullet ((R^1\pi_*\ca F)_T), \ca O_{T})\\
            & = Hom_{\ca O_T-alg}((Sym_{\mathcal O_S}^\bullet R^1\pi_*\ca F)_T, \ca O_{T})\\
            & = Hom(T, \operatorname{Spec}_S (Sym_{\ca O_S}^\bullet R^1\pi_*\ca F);
        \end{split}
    \end{equation}
    here the third equality is by Grothendieck duality (in the form found on \cite[p243]{liu2002algebraic}), and we use that formation of $R^1\pi_*$ commutes with base change for curves, and that formation of the symmetric algebra commutes with base-change. 

\subsection{The cone of meromorphic spin sections}
In order to prove~\ref{thm:segre}, we will study the moduli spaces of (meromorphic) spin sections defined  for $P=(p_1,\ldots,p_n)\in \NN^n$ by 
$$
\wH_{g}^\half[P] = \{(C,x_1,\dots, x_n, \phi\colon L^{\otimes 2}\to \omega_{C},s), \text{with } (C,x_i,L,\phi)\in \oM^{1/2}_{g,n}, \text{ and } s\in H^0(C,L(p_1x_1+\ldots))\}.
$$
This space is a cone over $\oM_{g,n}^\half$. Besides we have a canonical morphism
\begin{eqnarray*}
    {\rm sq}\colon \PP\wH_{g}^\half[P]&\to&\PP\oH_g[2P]\\
    (\phi,s)&\mapsto& \phi(s^{2})
\end{eqnarray*}
(the underlying curve is mapped to the stabilization, and the spin structure is forgotten). Using this morphism, we will describe the irreducible components of (some of the) $\PP\wH_{g}^\half[P]$  in terms of the $\PP\oSQ_{g}(Z)$ of the previous sections. On the other hand, we will express the Segre classes of the $\PP\wH_{g}^\half[P]$ in terms of Chiodo classes. Altogether this will allow for the computation of the Segre classes of the $\oSQ_{g,n}$ and the proof of~\ref{thm:segre}.

\subsection{Resolution and virtual dimension  of $\wH_g^\half[P]$}\label{ssec:resolution} Our first task is to describe the cone $\wH_{g}^\half[P]$  as the kernel of a morphism between vector bundles. We recall the following lemma.
\begin{lemma}[{\cite[Lemma 43]{BHPSS}}] Let $(L\to C\to B,x_1,\ldots,x_n\colon B\to C)$ be a family of line bundles over a family of  pre-stable curves. There exist $m\in  \NN$,  a family of pre-stable curves $(C'\to B',x_1,\ldots,x_{n+m}\colon B'\to C')$, and morphisms $\beta\colon B'\to B$ and $\beta^\#\colon C'\to C\times_{B} B'$ commuting with the $n$ first sections such that:
\begin{itemize}
\item $\beta$ is an alteration (i.e. a proper, surjective and generically finite morphism);
\item $\beta^\#$ is a de-stabilization (i.e. it contracts part of the rational components);
\item for each fiber of $C'$, the complement of the components of $C'$ without markings are chains of $\PP^1$ for which $\beta^{\#*}{L}$ is trivial.
\end{itemize}
\end{lemma}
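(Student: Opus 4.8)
The statement is \cite[Lemma~43]{BHPSS}; here is the strategy one would follow to prove it. The plan is to trivialise $\ca L$ in terms of sections after an alteration of $B$, and then to clean up the resulting curve by a sequence of blow-ups on the total space, the inserted components being automatically chains of $\PP^1$ on which the pulled-back bundle is trivial.

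First I would reduce to the case where $C\to B$ is projective (which holds Zariski-locally on $B$, since a proper flat family of nodal curves over a Noetherian affine base is projective) and fix a relatively ample line bundle $\ca A$. For $N\gg 0$ the bundles $\ca A^{\otimes N}$ and $\ca L\otimes\ca A^{\otimes N}$ are relatively very ample and have locally free, base-change-compatible push-forwards, so over a dense open $B^\circ\subseteq B$ a generic global section of each has zero divisor $D_1$, resp.\ $D_0$, which is a relative effective Cartier divisor, finite flat over $B^\circ$, fibrewise reduced, contained in the relative smooth locus, and meeting every irreducible component of every geometric fibre (a generic hyperplane section of a projective curve meets each of its components). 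Taking generic sections so that $D_0$ and $D_1$ are also fibrewise disjoint, this yields $\ca L|_{C_{B^\circ}}\cong\ca O_{C_{B^\circ}}(D_0-D_1)$.

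Next I would split $D_0$ and $D_1$ into sections: let $B_1^\circ\to B^\circ$ be a connected component of $D_0\times_{B^\circ}\cdots\times_{B^\circ}D_0\times_{B^\circ}D_1\times_{B^\circ}\cdots\times_{B^\circ}D_1$ (with $\deg D_0$ factors of $D_0$ and $\deg D_1$ of $D_1$) with big diagonals removed and then closed up, equivalently the normalisation of $B^\circ$ in a splitting field of the finite generically étale morphism $D_0\sqcup D_1\to B^\circ$; this is finite surjective, hence an alteration, and over $B_1^\circ$ the two divisors become $\sum\sigma'_j$ and $\sum\sigma''_j$ for sections through the relative smooth locus. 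Declaring $x_{n+1},\dots,x_{n+m}$ to be the $\sigma'_j$ and $\sigma''_j$, one has $\beta^*\ca L\cong\ca O$ of a $\ZZ$-combination of the $n+m$ markings, and every fibre-component carries a marking (it meets $D_0$). Then I would apply a de-stabilisation $\beta^\#\colon C'\to C_{B_1^\circ}$, obtained by the standard iterated blow-up of the total space along the closures of the loci where markings collide, making the $n+m$ sections disjoint: it commutes with the first $n$ sections, the components it contracts are exactly the inserted ones and form chains of $\PP^1$, and a marking-free inserted component maps under $\beta^\#$ to a single point of $C_{B_1^\circ}$, so $\beta^{\#*}\ca L$ is trivial there. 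Since every component of $C_{B_1^\circ}$ already carried a marking, the marking-free components of $C'$ are precisely these $\PP^1$-chains, which gives the assertion over the dense open of $B$ so obtained.

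Finally, to pass from this dense open to all of $B$ I would run a Noetherian induction on $\dim B$: apply the lemma to the family over $(B\setminus B^\circ)_{\mathrm{red}}$ and amalgamate the two alterations (and the two families of curves) into a single one. This last step — merging alterations defined over the pieces of a stratification into one global alteration, together with the Zariski-local-on-$B$ choices of $\ca A$ — is the point I expect to be the main obstacle: alterations do not literally glue, so one has to pass to common refinements and argue as in de Jong's theory of alterations, either citing the general formalism or carrying out a careful compactification-and-boundary-resolution argument; the rest of the proof is a matter of routine (if somewhat lengthy) bookkeeping with blow-ups of curves.
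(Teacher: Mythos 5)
First, note that the paper does not actually prove this statement: it is imported verbatim as \cite[Lemma 43]{BHPSS}, so there is no internal proof to compare against; your proposal has to stand on its own as a proof of the BHPSS lemma.

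As far as it goes, your generic construction is fine: over a dense open $B^\circ\subseteq B$ one can indeed choose sections of $\mathcal{L}\otimes\mathcal{A}^{\otimes N}$ and $\mathcal{A}^{\otimes N}$ whose zero divisors are fibrewise reduced, disjoint, contained in the smooth locus and meet every component, split them into sections after a finite cover, and conclude. But notice that with these genericity assumptions no blow-ups are needed at all over $B^\circ$ (the new sections are already disjoint from each other and from the $x_i$), so the destabilization $\beta^\#$ and the chains of $\mathbb{P}^1$ with trivial pullback --- i.e.\ the actual content of the lemma, which is about sections being forced through nodes or colliding over the degenerate locus --- never appear in the part of the argument you carry out. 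All of that is deferred to the ``Noetherian induction plus amalgamation'' step, and that step, which you yourself flag as the main obstacle, is a genuine gap rather than routine bookkeeping: an alteration-with-curve-and-sections over a locally closed stratum $S\subset B$ is proper over $S$ but not over $B$, so the disjoint union over the strata is not an alteration of $B$; and extending the destabilized pointed curve over a compactification of the stratum's alteration is essentially the original problem again (the family, the extra sections and the contraction $\beta^\#$ do not extend automatically, and over the newly added boundary the required properties must be re-established, forcing further alterations). The same defect affects the initial ``choose $\mathcal{A}$ Zariski-locally on $B$'' reduction, since those local choices also have to be assembled into one global alteration. So the proposal proves the statement only over a dense open of $B$; the passage to all of $B$ --- which is where the lemma's difficulty lies and which in \cite{BHPSS} is handled by an actual construction rather than by an appeal to a general gluing principle for alterations --- is missing. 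To repair it you would need a concrete dévissage, e.g.\ first arranging (after one alteration) multisections through the generic fibre via the tautological-section trick, stabilizing the generic fibre, using properness of the moduli of stable pointed curves to extend over a further alteration, and only then comparing the two prestable models to produce $\beta^\#$ and control the unmarked components; none of this is in the current write-up.
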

Applying this lemma to the family $\L\to \oC^\half_{g,n}\to \oM_{g,n}^\half$ we obtain a family of stable curves $(\pi\colon \C\to \M, x_1,\ldots,x_{n+m})$ and a de-stabilization $(\beta^\#,\beta)\colon (\C\to\M)\to (\oC^{\half}_{g,n}\to \oM_{g,n}^{1/2})$ such that $\beta$ is an alteration and each component of a fiber of $\C$ has a marking. By abuse of notation, we still denote by $\L\to \C$ the pull-back of the universal spin structure along $\beta^{\#}$. With this notation, for $M$ sufficiently large the group $H^1(C,\mathcal{L}(Mx_1+\ldots+Mx_{n+m}))$ is trivial for all fibers of $\C$, and we can consider the vector bundle
$$
E\coloneqq  \pi_*\mathcal{L}(M(x_1+\ldots+x_{n+m})).
$$

The second type of vector bundles that we consider here are the {\em jet bundles} $J_{i}^{q_1,q_2} \to \M$ whose fiber over a point $(C,L,x_i)$ is given by 
$$
H^0(C,L(q_2 x_i)/L(q_1 x_i)).
$$ Then for all $P\geq 0$ there exists a canonical residue morphism 
$$
r_P\colon E \to F_P \coloneqq   \bigoplus_{i=1}^n J^{M,p_i}_{i} \bigoplus_{i=n+1}^{n+m} J^{M,0}_{i}
$$
defined by mapping a meromorphic section of $L$ to the coefficients of the pole at each markings. By construction, the kernel of $r_P$ is precisely the cone $\wH=\wH_{g}^\half[P]\times_{\oM_{g,n}^\half} \M$. In particular we define the {\em virtual dimension} of $\wH_{g}^\half[P]$ to be 
$$
{\rm vdim}[P]= {\rm dim}(\oM_{g,n}^\half) + {\rm rk}(E)-{\rm rk}(F_P)= 3g-3+n + \sum_{i=1}^n p_i .
$$
\begin{lemma} The dimension of $\wH_g^\half[P]$ is greater than or equal to ${\rm vdim}[P]$.
\end{lemma}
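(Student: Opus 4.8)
The plan is to exhibit $\wH_g^\half[P]$ (or rather its pullback $\wH$ to $\M$) as the kernel of the vector bundle morphism $r_P\colon E\to F_P$ constructed just above, and then invoke the standard lower bound for the dimension of a scheme cut out by a given number of equations. Concretely, $\wH = \ker(r_P)$ is a closed subscheme of the total space of the vector bundle $E$ over $\M$, and it is defined locally by the vanishing of $\mathrm{rk}(F_P)$ equations (the components of $r_P$ in a local trivialisation of $F_P$). The total space of $E$ over $\M$ is smooth (since $\M$ is smooth, being an alteration cover of the smooth DM stack $\oM_{g,n}^\half$) of dimension $\dim(\oM_{g,n}^\half) + \mathrm{rk}(E)$. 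Hence every irreducible component of $\wH$ has dimension at least
$$
\dim(\oM_{g,n}^\half) + \mathrm{rk}(E) - \mathrm{rk}(F_P) = {\rm vdim}[P],
$$
by Krull's height theorem (equivalently \cite[Tag 0BS7]{stacks-project} or the analogous statement for catenary schemes/stacks).

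The one point that needs a word of care is that this gives a lower bound for the dimension of $\wH$, the pullback to $\M$, whereas the statement is about $\wH_g^\half[P]$ itself. Since $\beta\colon \M\to \oM_{g,n}^\half$ is an alteration, it is in particular surjective and generically finite, so $\wH\to \wH_g^\half[P]$ is surjective with finite generic fibres on each component; therefore $\dim \wH_g^\half[P] = \dim \wH \ge {\rm vdim}[P]$. One should also note that ${\rm vdim}[P]$ does not depend on the auxiliary choices ($M$, $m$, the alteration), which is visible from the final formula $3g-3+n+\sum p_i$ — the contributions of the extra markings $x_{n+1},\dots,x_{n+m}$ to $\mathrm{rk}(E)$ and $\mathrm{rk}(F_P)$ cancel, as do the contributions of $M$, by Riemann--Roch.

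I would organise the proof as: (i) recall $\wH = \ker(r_P) \subseteq E$ as a closed subscheme, locally cut out by $\mathrm{rk}(F_P)$ functions; (ii) observe $E\to\M$ is smooth of the stated dimension; (iii) apply the local lower bound on the dimension of a zero locus; (iv) descend the bound along the alteration $\beta$.

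I do not expect a serious obstacle here — this is the routine "a scheme cut out by $c$ equations in a smooth ambient space of dimension $d$ has all components of dimension $\ge d-c$" argument. The only mild subtlety is that $\oM_{g,n}^\half$ and hence $\M$ are Deligne--Mumford stacks rather than schemes, so one should phrase the height bound for (locally Noetherian, regular) algebraic stacks, or simply pass to a smooth atlas; either is standard. The identification $\wH = \ker(r_P)$ and the smoothness of $\M$ have already been recorded in~\ref{ssec:resolution}, so nothing further is needed.
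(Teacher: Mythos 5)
Your first half is the same mechanism as the paper's: realise $\wH=\ker(r_P)$ as a closed substack of the total space of $E$ over $\M$ cut out by $\mathrm{rk}(F_P)$ equations, and apply the Krull--height bound. One small correction there: $\M$ need \emph{not} be smooth --- an alteration of a smooth stack can perfectly well be singular --- but this is harmless, since the codimension bound only requires the ambient total space to be equidimensional of dimension $\dim(\oM^{\half}_{g,n})+\mathrm{rk}(E)$, which follows from $\beta$ being proper, surjective and generically finite.

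The genuine gap is in your descent step. Generic finiteness of $\beta$ is a statement over a dense open substack of $\oM^{\half}_{g,n}$ only. An irreducible component $H$ of $\wH_g^{\half}[P]$ typically lies over a \emph{proper closed} substack $B\subseteq\oM^{\half}_{g,n}$ (the very next proposition shows the components sit over boundary strata), and over such a $B$ the fibres of $\beta$ may be positive-dimensional. Consequently the map $\wH\to\wH_g^{\half}[P]$ need not have finite generic fibres over each component, a top-dimensional component of $\wH$ may collapse under it, and the asserted equality $\dim\wH_g^{\half}[P]=\dim\wH$ is unjustified --- and it is exactly the direction of inequality you would need that can fail. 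This is precisely the subtlety the paper's proof is built around: it fixes a component $H$, lets $B$ be its image, and uses \cite[Tag 0DMN]{stacks-project} to produce an integral closed substack $\M'\subseteq\beta^{-1}(B)$ such that $\beta'\colon\M'\to B$ is itself an alteration, hence finite over the generic point of $B$; the dimension estimate is then carried out for $f^*\wH=\beta'^*\wH_g^{\half}[P]$, which is again the kernel of a map of vector bundles, and transported back to $H$ without loss of dimension at the generic point. Your argument can be repaired in this spirit (for the total-dimension statement it suffices, for instance, to restrict to the open locus over which $\beta$ is actually finite and to a component of $\M$ dominating $\oM^{\half}_{g,n}$), but note that the bound is later used component by component (to obtain that $\wH_g^{\half}[p,0^{n-1}]$ is of \emph{pure} dimension), and for that the blanket appeal to generic finiteness of $\beta$ does not suffice.
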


\begin{proof}
    Let $H$ be an irreducible component of $\wH^{\half}_g[P]$, with image $B$ in $\Mbar_{g,n}^{1/2}$. As $\beta^{-1}(B)\to B$ is proper,  
    there exist a space $\M'$ sitting in the following diagram 
    $$
    \xymatrix{\M' \ar[r]^f \ar[rd]_{\beta'} & \beta^{-1}(B) \ar[d]\\ & B}
    $$
    where $h$ is an alteration~\cite[\href{https://stacks.math.columbia.edu/tag/0DMN}{Tag 0DMN}]{stacks-project}. Then we have $\beta'^*\wH_g[P]=f^*\wH$, and 
    \begin{align*}
        {\rm dim}(\wH_g[P])= {\rm dim}(\beta'^*\wH_g[P])= {\rm dim}(f^*\wH)\geq {\rm vdim}[P].
    \end{align*}
    For the last inequality, we use that $f^*\wH$ is the kernel of the pull-back of $r_P$ along $f$.
\end{proof}

\subsection{Irreducible components of $\wH_{g}[p,0^{n-1}]$} 
We denote by 
$\H_{g}^\half[P] \subset \wH_g^\half[p]$ the sub-cone of sections supported on a smooth curve, and with pole of order exactly $p_i$ at $x_i$. We denote by $\oH_{g}^\half[P]$ its closure. The squaring morphism restricts to a morphism
$$
{\rm sq}: \PP\oH_{g}^\half[P] \to \PP\oSQ_{g}((-2p_1+1,-2p_2+1\ldots))
$$
which is finite. Thus, the dimension of $\oH_g^\half[P]$ is given by 
$$
d[P]= {\rm vdim}[P]+ \delta_{P=(0^n)}.
$$

From now on, we fix $p\geq 0$, and we restrict our attention to the case of $P=(p,0^{n-1})$. We denote by ${\rm Tree}_{g,n}$ the set of stable graphs with $h^1(\Gamma) = 0$. If $\Gamma$ is a graph in ${\rm Tree}_{g,n}$, then there is only one weighting function on this graph, and it takes the value 1 at each edge, so will omit it in the notation. We set
$$
\oH^{1/2}_{\Gamma}[p]= \oH^{\half,\mu(v_0)}_{g(v_0)}[(p, 0^{n(v_0)-1})] \times \prod_{v\neq v_0} \oH_{g(v),n(v)}^{\half,\mu(v)}.
$$
where $v_0$ is the vertex carrying the first leg. This space is a cone over $\oM^{\half}_\Gamma$ and the gluing morphism $\zeta_{\Gamma}\colon \oM_\Gamma^\half\to \oM_{g,n}^\half$ lifts to a morphism $\zeta_{\Gamma}\colon \oH^{1/2}_{\Gamma}[p]\to \oH_{g,n}^\half[p]$ as the sections defined component by component glue to define a unique global section of the spin structure. Besides, we have
$$
{\rm dim}(\oH^{1/2}_{\Gamma}[p]) = d[P].
$$
In particular, by this dimension count, we see that  $\wH_{g,n}^\half[p]\neq \oH_{g,n}^\half[p]$ in general. In other words,  the cone $\wH_{g,n}^\half[p]$ has irreducible components supported on the boundary of $\oM_{g,n}^\half$.

\begin{proposition}
    For all $p\geq 1$,  the space  $\wH_{g}^\half[(p,0^{n-1})]$ is of pure dimension $d[p,0,\ldots]$, and 
    $$
    \wH_{g}^\half[p,0^{n-1}] = \bigcup_{\Gamma \in {\rm Tree}_{g,n}} \zeta_{\Gamma}\left(\oH^{1/2}_{\Gamma}[p]\cup \oH^{1/2}_{\Gamma,\mu}[0] \right). 
    $$
\end{proposition}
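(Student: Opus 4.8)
The strategy is to combine the resolution of $\wH_g^\half[p,0^{n-1}]$ as the kernel of the residue map $r_P\colon E\to F_P$ (constructed in \ref{ssec:resolution}) with a dimension count on each stratum of the boundary of $\oM_{g,n}^\half$. First I would establish the easy inclusion: every $\zeta_\Gamma(\oH^{1/2}_{\Gamma}[p])$ and every $\zeta_\Gamma(\oH^{1/2}_{\Gamma,\mu}[0])$ lies in $\wH_g^\half[p,0^{n-1}]$, since gluing compatible spin sections on the components of a nodal curve produces a genuine global meromorphic spin section with the prescribed pole order at $x_1$; this is exactly the statement that the gluing morphism $\zeta_\Gamma$ lifts to the cones, already observed in the text. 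Both of these loci have dimension $d[p,0^{n-1}]=\mathrm{vdim}[p,0^{n-1}]=3g-3+n+p$ by the dimension formula recorded just above the proposition (the extra $+1$ in $\delta_{P=(0^n)}$ appears precisely for the $[0]$-pieces, but those are supported on strata where one fewer modulus is available, so the total dimensions match — this bookkeeping needs to be checked carefully).

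**Main steps.** (1) Use the lemma of \ref{ssec:resolution} that $\dim \wH_g^\half[P]\geq \mathrm{vdim}[P]$ everywhere, so every irreducible component has dimension at least $d[p,0^{n-1}]$ (note $p\geq 1$ so $\delta_{P}=0$ and $\mathrm{vdim}=d$). (2) Stratify $\wH_g^\half[p,0^{n-1}]$ by the topological type of the underlying quasi-stable curve, i.e.\ by pairs $(\Gamma,w)$ with $\Gamma$ a stable graph and $w$ a $2$-weighting. On the open stratum (smooth curve) the fibre of $\mathrm{sq}$ onto $\PP\oSQ_g(-2p+1,1^{n-1})$ is finite, giving the component $\oH_g^\half[p,0^{n-1}]=\zeta_{\mathrm{triv}}(\oH^{1/2}_{\Gamma_{\mathrm{triv}}}[p])$ of the expected dimension. (3) For a boundary stratum indexed by $(\Gamma,w)$ with $|E(\Gamma)|\geq 1$, bound the dimension of the part of $\wH$ lying over it: a meromorphic spin section on a quasi-stable curve with dual graph $\Gamma_w$ restricts, after pulling back to the partial normalization $\oM_{\Gamma,w}$, to a collection of spin sections on the vertex curves whose orders of vanishing at the branches of each node must match (so that the section glues). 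Since $\cL$ has degree $0$ on each exceptional component, by \ref{rem:exc-parity} the section is determined by its restriction to the non-exceptional subcurve. Counting: the node-matching conditions cut the dimension of $\prod_v \oH^{\half,\mu(v)}_{g(v),n(v)}$ (suitably twisted) down exactly to $d[p,0^{n-1}]$ when $\Gamma$ is a \emph{tree} and the vanishing orders at all half-edges are minimal, i.e.\ the $\mu$-twist is either the ``$[p]$''-twist (central vertex absorbs the pole, all edges get weight carrying no extra vanishing) or the degenerate $[0]$-configuration; and strictly below $d[p,0^{n-1}]$ in all other cases — in particular whenever $h^1(\Gamma)>0$, because a loop forces an extra linear condition with no compensating modulus. (4) Conclude: the only boundary loci of full dimension $d[p,0^{n-1}]$ are the $\zeta_\Gamma(\oH^{1/2}_{\Gamma}[p])$ and $\zeta_\Gamma(\oH^{1/2}_{\Gamma,\mu}[0])$ for $\Gamma\in{\rm Tree}_{g,n}$, which, together with step (1), shows $\wH_g^\half[p,0^{n-1}]$ is equidimensional of dimension $d[p,0^{n-1}]$ and is the union claimed.

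**The hard part.** The delicate point is the precise dimension count in step (3): one must show that for a stratum $(\Gamma,w)$ with $\Gamma$ not a tree, or with a $\mu$-twist other than the two prescribed ones, the locus of \emph{gluable} meromorphic spin sections drops in dimension by at least one relative to $d[p,0^{n-1}]$. This requires carefully tracking (a) the degrees of the relevant line bundles on each vertex curve after twisting by the node-orders, (b) the $h^1$ contributions (a vertex curve on which the twisted bundle has negative degree on some component contributes nothing, killing that stratum), and (c) the comparison between $\dim\oM_{\Gamma,w}$, the number of gluing parameters, and the rank drop of the residue map. I expect the cleanest route is to run the argument of \ref{ssec:resolution} fiberwise over each stratum: pull back $r_P$ to (an alteration of) the closure of the stratum and estimate the rank of the restricted residue map, exactly as in the proof that $\dim\wH\geq\mathrm{vdim}$, but now producing the reverse inequality on each non-tree or non-minimal stratum. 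This, combined with $h^1(\Gamma)=0$ forcing the weighting to be constant (as noted in the text, ${\rm Tree}_{g,n}$ carries a unique weighting), pins down the list of top-dimensional components.
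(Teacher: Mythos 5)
Your skeleton (easy inclusion via lifting $\zeta_\Gamma$ to the cones, lower bound $\dim\geq\mathrm{vdim}$ from the resolution of \ref{ssec:resolution}, then an upper bound stratum by stratum) is the right one, but the entire content of the proposition sits in the upper bound, and you have not actually produced it: step (3) asserts the expected outcome and defers it to a ``hard part'' whose proposed mechanism (estimating the rank of the residue map $r_P$ pulled back to each stratum) is never carried out, and whose key heuristic --- ``a loop forces an extra linear condition with no compensating modulus'' --- is not an argument. The paper's proof gets the upper bound by a different and completely explicit device: to the generic point of a component $H$ it attaches not just $(\Gamma,w)$ but also the subset $V_0\subset V(\Gamma)$ of vertices on which the section is \emph{not} identically zero, and the orders $I_h$ of the squared section at the half-edges; after ruling out (by smoothing the corresponding node) edges with $w(h)=I_h=0$ and edges joining two vertices of $V\setminus V_0$, it bounds $\dim H$ through the finite squaring morphism by the dimension of $\prod_{v\in V_0}\SQ_{g(v)}(I(v))\times\prod_{v\notin V_0}\oM_{g(v),n(v)}$, which equals $\delta_{1\mapsto V_0}(p-1)+(3g-3+n)-|E|+|V_0|$; comparing with $\mathrm{vdim}=3g-3+n+p$ forces $V_0=V$ and $|V|=|E|+1$, i.e.\ $h^1(\Gamma)=0$, plus positive genus at every vertex not carrying the leg. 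Your stratification only by $(\Gamma,w)$ and a ``$\mu$-twist'' cannot even formulate this count, because it implicitly assumes the section has well-defined vanishing orders at every branch of every node, i.e.\ that it is nowhere identically zero; the possibility of identically vanishing vertex curves (which contribute full moduli $\oM_{g(v),n(v)}$ rather than strata of differentials) must be listed and then excluded by the combination of the smoothing argument and the dimension formula, and this is missing from your proposal.

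Two smaller points. First, your claim that the $\oH^{1/2}_{\Gamma,\mu}[0]$-pieces also have dimension $d[p,0^{n-1}]$ is false for $p\geq2$: for a nontrivial tree their dimension is $\sum_v(3g(v)-2+n(v))=3g+n-2$, which equals $3g-3+n+p$ only when $p=1$; this does not contradict the statement (the union need not be irredundant), but the ``bookkeeping'' you flagged does not in fact close the way you suggest. Second, the finiteness of $\mathrm{sq}$ on the smooth locus does not by itself identify $\oH^{1/2}_g[p,0^{n-1}]$ as a top-dimensional component; it is again the comparison $\dim\oH^{1/2}_\Gamma[p]=d[P]$ together with the lower bound that does, and that comparison is exactly the count you have postponed.
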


\begin{remark} Using the classification of irreducible components of strata of meromorphic differentials by Boissy~\cite{Boi}, we see that the irreducible components of $\oH_{\Gamma}^\half[p]$ are completely determined by assigning a sign to each vertex of $\Gamma$. Therefore, this proposition provides a full classification of the irreducible components of $\wH_{g}^\half[p,0,\ldots]$.
\end{remark}

\begin{proof}  Let $H$ be an irreducible component of $\wH_{g}^\half[(p,0^{n-1})]$. The generic point of this component determines the following numerical data: 
\begin{itemize}
    \item a weighted graph $(\Gamma,w)$;
    \item a set $V_0\subset V(\Gamma)$ of vertices where the section does not vanish identically;
    \item for any half-edge $h$ incident to a vertex in $V_0$, a positive integer $I_h$ equal to $1$+the order of the square of the section at this marking.
\end{itemize}
(we cannot say that the $I_h$ define a twist because they are not defined at all half-edges). If $(h,h')$ is an edge, and $w(h)=0=I_h=0$, then we also have $w(h')=0$ and $I_{h'}=0$ by the residue condition. Then, the generic point of $H$ is obtained as a limit of spin sections on curves with one less node so $H$ cannot be an irreducible component. Therefore we have the constraint $I_h>0$ for all half-edges. Besides, if an edge connects two vertices in $V\setminus V_0$ then the node can also be smoothed, and by the same reasoning, we can assume that such edges are not allowed.
\smallskip

In order to conclude, we remark that the image of $H$ under the squaring morphism is contained in the image (after gluing) of the following space
$$
\M_H=\prod_{v\in V_0} \SQ_{g(v)}(I(v)) \prod_{v\notin V_0} \oM_{g(v),n(v)}.
$$
The dimension of $\M_H$ is equal to 
\begin{equation*} \sum_{v\notin V_0} 3g(v)-3+n(v) + \sum_{v\in V_0} 3g(v)-2+n(v) - (|I'(v)|-n(v)) + \delta_{1\mapsto v} \cdot (p_1-1)
\end{equation*}
where $\delta_{1\mapsto v}= 1$ if the unique leg is incident to $v$, while $I'(v)$ is the vector of the $I_h$ for $h$ incident to $v$ and different from the leg.  This expression can be re-organized as follows
\begin{eqnarray*}
    {\rm dim}(\M_H)&=& \delta_{1\mapsto V_0}\cdot (p_1-1) +   \left(\sum_{v\in V} 3g(v)-3+n(v)\right) - \sum_{v \in V_0} (|I(v)|-n(v)-1) \\
    &=& \delta_{1\mapsto V_0}\cdot (p_1-1) + (3g-3+n) - |E|+|V_0|,
\end{eqnarray*}
where $\delta_{1\mapsto V_0}$ is equal to 1 if the leg is incident to any vertex in $V_0$, and 0 otherwise. This dimension is equal to ${\rm vdim}[p_1]$ if and only if  $V=V_0$, and $|V|=|E|+1$, i.e. $h^1(\Gamma)=0$. Finally, as $V=V_0$, for a generic point of $H$, the restriction of the section at any vertex is non trivial. This imposes that the vertex is of positive genus unless it carries the unique leg.
\end{proof}

\subsection{Computing the signed Segre  classes of $\wH_{g}[P]$} For all $(g,n)$, we set
\begin{eqnarray*}
\widetilde{s}^{0,\pm}_g(t)&=&\sum_{\Gamma \in {\rm Tree}_{g,n}} \frac{t^{|E(\Gamma)|}}{|{\rm Aut}(\Gamma)|} \zeta_{\Gamma *} \left( \bigotimes_{v} s_{g(v)}^\pm(t)\right), \\
    \widetilde{s}^{1,\pm}_g(t)&=&\sum_{\Gamma \in {\rm Tree}_{g,n}} \frac{t^{|E(\Gamma)|}}{|{\rm Aut}(\Gamma)|} \zeta_{\Gamma *} \left(s_{g(v_0)}^{1,\pm}(t) \bigotimes_{v\neq v_0} s_{g(v)}^\pm(t)\right), \text{and} \\
    \widetilde{s}^{\pm}_g(t)&=& \widetilde{s}^{0,\pm}_g(t)+ \widetilde{s}^{1,\pm}_g(t).
\end{eqnarray*}
Note that the first and the second sum could be indexed by ${\rm Tree}_{g,n}^\star\subset {\rm Tree}_{g,n}$ as the other graphs have a vertex of genus 0, and thus contribute trivially. We keep these trivial summands to simplify the algebraic manipulations. 

\begin{proposition}
     For all $g\geq0,$ we have \begin{equation}\label{for:pm10}
\widetilde{s}^{\pm}_g(t)= 2^{g}\frac{\Lambda(-2t)\Lambda(t)}{1-t\psi_1}.
\end{equation}
\end{proposition}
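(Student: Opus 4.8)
The plan is to identify both sides of \ref{for:pm10} as the signed Segre class of the cone $\wH_{g}^\half[P]$ for a suitable choice of $P$, and to exploit the freedom to choose $P$ in two different ways. Concretely, I would work with $P = (p, 0^{n-1})$ for $p \geq 1$ (so that the ``meromorphic'' direction is concentrated at the first marking), write
$$
\widetilde{s}_g^{\pm}(t) = \sum_{c \geq 0} t^c \, \mathbf{sq}_*\!\left(\xi^{c}\, [\PP\wH_g^\half[P]]^\pm \right)
$$
once the signed class $[\PP\wH_g^\half[P]]^\pm$ has been defined via the parity splitting coming from $\oM_{g,n}^{1/2,+} \sqcup \oM_{g,n}^{1/2,-}$ (and the extra factor $2\epsilon_*(- \cdot [\pm])$ as in the rest of the paper). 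The decomposition of $\wH_g^\half[p,0^{n-1}]$ into components $\zeta_\Gamma(\oH_\Gamma^{1/2}[p] \cup \oH_{\Gamma,\mu}^{1/2}[0])$ indexed by ${\rm Tree}_{g,n}$, together with the fact that ${\rm sq}$ restricted to $\PP\oH_\Gamma^\half[p]$ is finite onto $\PP\oSQ$ of the appropriate type, lets me match the combinatorial sum on the left-hand side of \ref{for:pm10}: the $\oH_\Gamma^{1/2}[p]$-pieces contribute $s^{1,\pm}_{g(v_0)}$ on the central vertex carrying the first leg (because a meromorphic section at $x_1$ with $\mathbf{sq}$ landing in $\PP\oSQ_g(-1,1^{n-1})$-type strata is exactly what $s^{1,\pm}_g$ records), and the $\oH_{\Gamma,\mu}^{1/2}[0]$-pieces contribute $s^\pm_{g(v_0)}$. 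This is precisely $\widetilde{s}^{1,\pm}_g(t) + \widetilde{s}^{0,\pm}_g(t)$. The multiplicities along the boundary are $1$ by the transversality/normal-crossings statements quoted from \cite{CMSZ} and the reducedness arguments already used for $\oSQ$.

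Next I would compute the same Segre class on the nose using the resolution $\wH = \ker(r_P\colon E \to F_P)$ over the alteration $\M \to \oM_{g,n}^{1/2}$ from \ref{ssec:resolution}. Since $\wH$ has the expected dimension ${\rm vdim}[P]$ (established just above, using purity of dimension of $\wH_g^\half[p,0^{n-1}]$), the Segre class of the projectivized cone is given by the Segre class of the two-term complex, i.e.
$$
\sum_{c} t^c \, \mathbf{sq}_* \xi^{c}[\PP\wH]^\pm = \frac{s(E)}{s(F_P)}\Big|_{\text{signed}}
$$
up to the usual $\PP$-bundle bookkeeping. The point is then to evaluate $\epsilon_*\!\left([\pm]\cdot s_*(R^\bullet\pi_*\mathcal L)\right)$: as recalled in the introduction (and as in \cite{GKL,GKLSau}), this equals $2^{g-1}\Lambda(-1)\Lambda(1/2)$, and after the Chiodo/twisting computation the $t$-graded version produces $2^{g}\Lambda(-2t)\Lambda(t)$ for the $E$-part, while the jet bundles $J_i^{M,p_i}$ and $J_i^{M,0}$ contribute the denominator: the only $t$-nontrivial factor is the one at the marking $x_1$ with a genuine pole, giving $\tfrac{1}{1-t\psi_1}$ (the residues at the auxiliary markings $x_{n+1},\dots,x_{n+m}$ and at the unused $x_2,\dots,x_n$ cancel against the destabilization corrections, exactly as in the unsigned computation of \cite{Sau}). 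Assembling these gives the right-hand side $2^{g}\frac{\Lambda(-2t)\Lambda(t)}{1-t\psi_1}$.

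The last step is to reconcile the two computations: I must verify that the ``signed Segre class of $\ker(r_P)$'' genuinely computes $\widetilde{s}^\pm_g(t)$ and not some correction term, i.e. that there are no excess contributions from the locus where the rank of $r_P$ drops. This is where \ref{pr:pm1main} enters — it is the identity relating $s_g^{1,\pm}$ and $s_g^\pm$ through back-bone graphs, and it is exactly what is needed to absorb the discrepancy between ``$\wH$ has a component $\oH_\Gamma^{1/2}[0]$ in addition to $\oH_\Gamma^{1/2}[p]$'' and the naive Chiodo computation which does not see the $p$-versus-$0$ distinction. So the argument is: compute the Chiodo side (clean, $p$-independent for the ``square'' pushforward modulo the $\tfrac{1}{1-t\psi_1}$ factor), compute the geometric side as $\widetilde{s}^{0,\pm}_g + \widetilde{s}^{1,\pm}_g$, and equate. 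Using \ref{pr:pm1main} one checks these are consistent; in fact the cleanest route may be to take $p=1$ so that the pole is minimal and the back-bone analysis of \ref{lem:pm1}/\ref{pr:pm1main} applies most directly.

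I expect the main obstacle to be the bookkeeping in the resolution step — specifically, making precise that the signed Segre class of the projectivized kernel cone equals $\mathbf{sq}_*$ of the corresponding class and that the excess/boundary contributions are accounted for by the $\widetilde s^{1,\pm}$ term rather than producing spurious extra terms. This requires care because $\wH$ is not irreducible (it has the $[p]$ and $[0]$ components over each tree graph), so the equality of cycle classes rests on the purity-of-dimension statement plus the multiplicity-one computations, and the parity decomposition has to be tracked through the alteration $\M \to \oM_{g,n}^{1/2}$ and the destabilization $\beta^\#$. The Chiodo-class evaluation $\epsilon_*([\pm]\cdot s_*(R^\bullet\pi_*\mathcal L)) = 2^{g-1}\Lambda(-1)\Lambda(1/2)$ and its $t$-graded analog I would simply cite from \cite{GKL}; the genuinely new content is wiring it into the cone-of-sections framework together with \ref{pr:pm1main}.
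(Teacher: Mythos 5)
Your first step --- identifying the geometric side by decomposing $\wH_g^\half[1,0^{n-1}]$ into its tree-indexed components and pushing forward along the squaring morphism --- is the same as the paper's, up to bookkeeping you omit but which matters for the final constants: ${\rm sq}$ has degree $1/2$ onto its image and ${\rm sq}^*\cO(1)=\cO(2)$, so what one actually gets is $\sum_c \epsilon_*p_*\big((t\xi)^c[\PP\wH_g^\half[1,0^{n-1}]]^\pm\big)=\tfrac{1}{2}\big(\widetilde s^{0,\pm}_g(t/2)+\widetilde s^{1,\pm}_g(t/2)\big)$, not $\widetilde s^\pm_g(t)$ on the nose. The genuine gap is in your computation of the other side. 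From the fact that $\ker(r_P\colon E\to F_P)$ has pure expected dimension you cannot conclude that its signed Segre class is $s(E)/s(F_P)$: the kernel is the abelian cone of $\operatorname{coker}(F_P^\vee\to E^\vee)$, and Segre classes of abelian cones are not K-theoretic invariants --- they receive corrections supported exactly on the rank-drop loci of $r_P$, which are the boundary components whose contribution is the whole point of the formula. Your proposed repair, invoking \ref{pr:pm1main} to ``absorb the discrepancy,'' does not close this: \ref{pr:pm1main} is a relation between $s^{1,\pm}_g$ and $s^\pm_g$, not a computation of the excess terms of $r_P$, and ``checking consistency'' with it cannot establish the new identity. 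In the paper its role is different: \ref{for:pm10} is proved independently, \ref{pr:pm1main} produces the second relation \ref{for:relationpm1}, and only the combination of the two yields \ref{thm:segre}; importing \ref{pr:pm1main} into the proof of \ref{for:pm10} without an independent handle on the degeneracy-locus corrections is, for this purpose, circular. You would also need to descend the signed Segre class through the alteration $\M\to\oM^{1/2}_{g,n}$ and the destabilization, with the attendant degree corrections, and your appeal to the transversality of \cite{CMSZ} concerns $\PP\oSQ$ inside $\PP\oH$, not the multiplicities of the components of $\wH^{1/2}$.

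The paper avoids all of this by specializing to $P=(1,0^{n-1})$ and observing that $\pi_*\cL(-x_1)=0$ (it is torsion-free and generically zero), so that by the representability statement of \ref{sec:cones_of_sections} the cone $\wH_g^\half[1,0^{n-1}]$ is globally the abelian cone ${\rm Spec}\,{\rm Sym}^\bullet R^1\pi_*\cL(-x_1)$ over $\oM^{1/2}_{g,n}$ itself --- no alteration, no jet bundles, no two-term complex, hence no excess analysis of a kernel presentation. Its signed Segre class is then evaluated directly from Chiodo's formula for ${\rm ch}\big(R\pi_*\cL(-x_1)\big)$ (the twist by $-x_1$ is what produces the factor $1/(1-t\psi_1)$), the boundary terms with even weight being killed by \ref{prop:cancellation} after capping with $[\pm]$ and pushing along $\epsilon$, and Mumford's relation $\Lambda(t)\Lambda(-t)=1$ finishing the computation. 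The resolution $E\to F_P$ of \ref{ssec:resolution} is used in the paper only for the dimension estimate feeding into the component decomposition, not for the Segre-class evaluation. To salvage your route you would need an actual computation of the Segre-class corrections along the degeneracy loci of $r_P$ (together with Cohen--Macaulay/multiplicity control and the descent through the alteration); as written, the central step is missing.
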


\begin{proof} The restriction of the squaring morphism to $\wH_{g}^{\half}[P]$  is of degree $1/2$ on its image due to presence of automorphisms. Besides, the pull-back of $\mathcal{O}(-1)$ is equal to $\mathcal{O}(-1)^{\otimes 2}$ so 
    \begin{eqnarray*}
        \sum_{c\geq 0} \epsilon_* p_*\left((t\xi)^c[\PP\wH_{g}^{\half}[1,0^{n-1}]]^\pm\right) &=& \sum_{\Gamma \in {\rm Star}_{g,n} } \frac{1}{|{\rm Aut}(\Gamma)|} \zeta_{\Gamma*}\left((t\xi)^c\left([\PP\oH_\Gamma[0]]^\pm+ [\PP\oH_\Gamma[1]]^\pm\right)\right)\\&=&\frac{1}{2}\left(\widetilde{s}^{0,\pm}_g(t/2)+\widetilde{s}^{1,\pm}_g(t/2)\right).
    \end{eqnarray*}
    Then, we consider the line bundle $\mathcal{L}(-x_1)\to \oC_{g,n}^\half\overset{\pi}{\to} \oM_{g,n}^\half$. As $h^0(C, L(-x_1))=0$ for a generic point of each component of $\oM_{g,n}^\half$, the sheaf $\pi_*\mathcal{L}(-x_1)$ is trivial. By \ref{eq:representing_cone_of_sections} we know that 
    \begin{equation}
        \wH_g^\half[1,0^{n-1}] = Spec (Sym^\bullet R^1\pi_*\ca L(-x_1)), 
    \end{equation}
    hence according to \cite[Example 4.1.7]{Fulton} the Segre classes for $\wH_g^\half[1,0^{n-1}]$ are given by those of $Sym^\bullet R^1\pi_*\ca L(-x_1)$. The same then holds for the Chern classes, yielding 

    \begin{align*}
        ch_d(\wH_{g}^{\half}[1,0^{n-1}])=(-1)^{d}ch_d(R^1\pi_* \mathcal{L}(-x_1)) = (-1)^{d+1} ch_d(R\pi_*\L(-x_1)),
    \end{align*}
    and we can use Chiodo's formula to compute these characters:
    \begin{eqnarray*}
        ch_d\big( R\pi_* \mathcal{L}(-x_1)\big)&=& \frac{B_{d+1}(1/2)}{(d+1)!} \left(\kappa_d -\sum_{i=2}^n \psi_i^d \right) - \frac{B_{d+1}(3/2)}{(d+1)!} \psi_1^d + \sum_{i=0,1}\sum_{j=0}^{d-1} \frac{B_{d+1}(i/2)}{(d+1)!} \zeta_{i*}(\psi_h^j\psi_{h'}^{d-1-j})\\
        &=&\frac{(2^{-d}-1)B_{d+1}}{(d+1)!} \left(\kappa_d -\sum_{i=1}^n \psi_i^d \right) - \frac{2^{-d}}{d!} \psi_1^d + \sum_{i=0,1}\sum_{j=0}^{d-1} \frac{B_{d+1}(i/2)}{(d+1)!} \zeta_{i*}(\psi_h^j\psi_{h'}^{d-1-j})
    \end{eqnarray*}
where $\zeta_i$ is the  gluing morphism from the union of boundaries defined by a graph with a single edge of weight $i$, and $\psi_h$ and $\psi_{h'}$ are the $\psi$-classes attached to each half-edge (see~\cite{Chiodo2006TowardsAE}). From the first to the second line we have used the identities $B_{d+1}(3/2)-B_{d+1}(1/2)=(d+1)2^{-d}$ and $B_{d+1}(1/2)=(2^{-d}-1)B_{d+1}$. After multiplying this expression by the parity cycle and  pushing-forward to $\oM_{g,n}$ the contribution of $\zeta_0$ becomes trivial by~\ref{prop:cancellation} so we obtain
\begin{eqnarray*}
    \widetilde{s}_g^\pm(t/2)&=& 2\epsilon_* \left( [\pm]\cdot  {\rm exp}\left(\sum_{d\geq 1} \frac{(-t)^d}{(d-1)!} ch_d(\wH^{1/2}[1,0^{n-1}])\right)\right)\\
&=&   2 \epsilon_* \left( [\pm]\cdot  {\rm exp}\left(\sum_{d\geq 1} \frac{(t\psi_1/2)^d}{d} -\frac{(2^{-d}-1)B_{d+1}t^d}{d(d+1)}\left(\kappa_d - \sum_{i=1}^n \psi_i^d + \sum_{j=0}^{d-1} \zeta_{0*}(\psi_h^j\psi_{h'}^{d-1-j}) \right)\right)\right) \\ 
    &=& 2^{g}\cdot {\rm exp}(-{\rm log}(t\psi_1/2)) \cdot (\Lambda(t/2)\Lambda(-t))^{-1} =  2^{g}\frac{\Lambda(t)\Lambda(-t/2)}{1-t\psi_1/2},
\end{eqnarray*}
where the last equality is obtained from Mumford's identity $\Lambda(t)^{-1}=\Lambda(-t)$~\cite{Mum}. 
\end{proof}

This proposition provides a first relation between $\widetilde{s}^{0,\pm}_g$ and $\widetilde{s}^{1,\pm}_g$. 
Now we use~\ref{pr:pm1main} from the previous section to prove an independent relation.

\begin{proposition}
    For all $(g,n),$ we have
    \begin{equation}\label{for:relationpm1}
    (1-t\psi_1)\widetilde{s}^{1,\pm}_g(t)=(1+t\psi_1)\widetilde{s}^{0,\pm}_g(t) +2^{2g}. 
    \end{equation}
\end{proposition}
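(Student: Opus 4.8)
The plan is to deduce \ref{for:relationpm1} from \ref{pr:pm1main} applied vertex‑by‑vertex to the trees occurring in $\widetilde{s}^{0,\pm}_g$ and $\widetilde{s}^{1,\pm}_g$, and then to reorganize the resulting ``backbone corrections'' into a single alternating sum over trees that collapses, by inclusion–exclusion, to the constant $2^{2g}$.

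First I would pull $(1-t\psi_1)$ inside the sum defining $\widetilde{s}^{1,\pm}_g(t)$. For $\Gamma\in{\rm Tree}_{g,n}$ the central vertex $v_0$ carries leg $1$, and $\zeta_\Gamma^*\psi_1$ is the cotangent class at the corresponding marking on the factor $\oM_{g(v_0),n(v_0)}$; moreover in $\widetilde{s}^{1,\pm}_g$ that marking plays the distinguished ``$-1$'' rôle of $s^{1,\pm}_{g(v_0)}$, so its $\psi$‑class is exactly the $\psi_1$ appearing in \ref{pr:pm1main}. Hence by the projection formula
\[
(1-t\psi_1)\,\widetilde{s}^{1,\pm}_g(t)=\sum_{\Gamma\in{\rm Tree}_{g,n}}\frac{t^{|E(\Gamma)|}}{|{\rm Aut}(\Gamma)|}\,\zeta_{\Gamma *}\Big((1-t\psi_1)s^{1,\pm}_{g(v_0)}(t)\otimes\bigotimes_{v\neq v_0}s^\pm_{g(v)}(t)\Big).
\]
Substituting \ref{pr:pm1main} for $(1-t\psi_1)s^{1,\pm}_{g(v_0)}(t)$, the piece $(1+t\psi_1)s^\pm_{g(v_0)}(t)$ reassembles (again by the projection formula, pulling $1+t\psi_1$ back out) into $(1+t\psi_1)\widetilde{s}^{0,\pm}_g(t)$. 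So it remains to show that the remaining contribution
\[
R_g(t):=\sum_{\Gamma\in{\rm Tree}_{g,n}}\ \sum_{\Gamma'\in{\rm BB}_{g(v_0),n(v_0)}}\frac{t^{|E(\Gamma)|}(-t)^{|E(\Gamma')|}\,2^{2g(v_0')}}{|{\rm Aut}(\Gamma)|\,|{\rm Aut}(\Gamma')|}\big(\zeta_\Gamma\circ(\zeta_{\Gamma'}\times\mathrm{id})\big)_*\Big([\oM_{g(v_0'),n(v_0')}]\otimes\bigotimes_{w\in V_{\rm out}(\Gamma')}\!\!s^\pm_{g(w)}(t)\otimes\bigotimes_{v\neq v_0}s^\pm_{g(v)}(t)\Big)
\]
is equal to $2^{2g}$, where $v_0'$ denotes the central vertex of $\Gamma'$.

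The crux is the reorganization of $R_g(t)$. Inserting the backbone $\Gamma'$ at the vertex $v_0$ of $\Gamma$ produces a tree $\hat\Gamma\in{\rm Tree}_{g,n}$ with $\zeta_{\hat\Gamma}=\zeta_\Gamma\circ(\zeta_{\Gamma'}\times\mathrm{id})$; the centre $v_0'$ becomes the central vertex $\hat v_0$ of $\hat\Gamma$, carries the decoration $[\oM_{g(\hat v_0),n(\hat v_0)}]$ and the weight $2^{2g(\hat v_0)}$, every other vertex is decorated by $s^\pm$, and the only edges weighted by $-t$ rather than $t$ are those of $\Gamma'$, which form a subset $D$ of the edges of $\hat\Gamma$ incident to $\hat v_0$. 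Conversely a pair $(\hat\Gamma,D)$ with $D\subseteq E_{\hat v_0}(\hat\Gamma)$ recovers $\Gamma'$ (the star on $\hat v_0$ spanned by $D$) and $\Gamma$ (contract $D$); one checks that stability of the vertices is preserved in both directions, that $g(v_0')=g(\hat v_0)$ and $n(v_0')=n(\hat v_0)$, that $t^{|E(\Gamma)|}(-t)^{|E(\Gamma')|}=(-1)^{|D|}t^{|E(\hat\Gamma)|}$, and, via a standard groupoid‑cardinality count, that $\frac{1}{|{\rm Aut}(\Gamma)|\,|{\rm Aut}(\Gamma')|}$ summed over the insertions becomes $\frac{1}{|{\rm Aut}(\hat\Gamma)|}$ summed over all $D$. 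Therefore
\[
R_g(t)=\sum_{\hat\Gamma\in{\rm Tree}_{g,n}}\frac{2^{2g(\hat v_0)}\,t^{|E(\hat\Gamma)|}}{|{\rm Aut}(\hat\Gamma)|}\Big(\sum_{D\subseteq E_{\hat v_0}(\hat\Gamma)}(-1)^{|D|}\Big)\,\zeta_{\hat\Gamma *}\Big([\oM_{g(\hat v_0),n(\hat v_0)}]\otimes\bigotimes_{v\neq\hat v_0}s^\pm_{g(v)}(t)\Big),
\]
and $\sum_{D\subseteq E_{\hat v_0}(\hat\Gamma)}(-1)^{|D|}=(1-1)^{|E_{\hat v_0}(\hat\Gamma)|}$ vanishes unless $\hat v_0$ has no incident edge, i.e. unless $\hat\Gamma$ is the one‑vertex graph, in which case the summand is $2^{2g}[\oM_{g,n}]=2^{2g}$. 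Hence $R_g(t)=2^{2g}$, which gives \ref{for:relationpm1}.

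The main obstacle is exactly the bijective reorganization of $R_g(t)$: one must check carefully that ``insert a backbone at the central vertex'' is, after accounting for leg‑labels, a bijection between pairs $(\Gamma,\Gamma')$ and pairs $(\hat\Gamma,D)$ with $D$ a set of edges at the central vertex of $\hat\Gamma$, that the composed gluing map is literally $\zeta_{\hat\Gamma}$, that the edge‑weights and the $2^{2g(\cdot)}$‑factor transform as claimed, and above all that the automorphism weights combine correctly. Everything else --- moving $(1-t\psi_1)$ through $\zeta_{\Gamma *}$, recognizing $(1+t\psi_1)\widetilde{s}^{0,\pm}_g(t)$, and the final binomial collapse --- is routine, and one may sanity‑check the statement in degree $0$ against \ref{pr:pm1}, where it reduces to $d(g,1)=d(g,0)+2^{2g}$.
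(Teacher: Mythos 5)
Your proposal is correct and is essentially the paper's own argument: both apply \ref{pr:pm1main} at the central vertex of each tree and then re-index the pairs $(\Gamma,\Gamma')$ as pairs consisting of a tree together with a subset of edges at its root, so that the alternating sum $\sum_{D}(-1)^{|D|}=0^{|E(v_0)|}$ kills every contribution except the one-vertex graph, leaving $2^{2g}$. The only difference is organizational (you substitute into $(1-t\psi_1)\widetilde{s}^{1,\pm}_g$ and isolate the remainder $R_g(t)$, while the paper computes the difference of the two sides directly), which does not change the proof.
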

    
\begin{proof} For all $(g,n)$ we write
\begin{eqnarray*}
    &&(1-t\psi_1)\widetilde{s}^{1,\pm}_g(t)-(1+t\psi_1)\widetilde{s}^{0,\pm}_g(t) \\
    &=& \sum_{\Gamma\in {\rm Tree}_{g,n}} \frac{t^{|E(\Gamma)|}}{|{\rm Aut}(\Gamma)|}\zeta_{\Gamma*}\left( \left((1-t\psi_1)s^{1,\pm}_{g(v_0)}(t)-(1+t\psi_1) s_{g(v_0)}^\pm(t) \right) \right) \bigotimes_{v\neq v_0} s_{g(v)}^{\pm}(t)\\
    &=& \sum_{\Gamma\in {\rm Tree}_{g,n}} \sum_{\Gamma'\in {\rm BB}_{g(v_0),n(v_0)}} \frac{t^{|E(\Gamma)|}(-t)^{|E(\Gamma')|}}{|{\rm Aut}(\Gamma)|\cdot|{\rm Aut}(\Gamma')|}\zeta_{\Gamma*}\left( \zeta_{\Gamma'*}\left(2^{2g(v_0')} \bigotimes_{v'\neq v_0'} s_{g(v')}^\pm(t) \right)  \bigotimes_{v\neq v_0} s_{g(v)}^{\pm}(t)\right).
\end{eqnarray*}
From the first line to the second we have used~\ref{pr:pm1main} at the vertex $v_0$, and 
in the second sum the notation $v_0'$ stands for the central vertex of the back-bone graphs.  This sum can be indexed differently as the data of the pair $(\Gamma,\Gamma')$ is equivalent to the data of $(\Gamma'', E'')$ where $\Gamma''$ is a graph in ${\rm Tree}_{g,n}$ and $E''\subset E(v_0'')$ is a subset of the edges incident to the root (the vertex carrying the first leg).  Indeed, the graph $\Gamma''$ is constructed by replacing the root of $\Gamma$ by $\Gamma'$, and the set of $E''$ is the set of edges coming from $\Gamma'$. Conversely, the graph $\Gamma$ is constructed from $\Gamma''$ by contracting the edges in $E''$, and $\Gamma'$ is the connected graph containing $v_0''$ and the edges in $E''$. Then the last line can be re-written as \begin{eqnarray*}
    &&\sum_{\Gamma''\in {\rm Tree}_{g,n}} \frac{t^{|E(\Gamma'')|}}{|{\rm Aut}(\Gamma'')|} \left(\sum_{E''\subset E(v_0'')} (-1)^{|E''|}\right)\cdot \zeta_{\Gamma*}\left(2^{2g(v_0)} \bigotimes_{v\neq v_0''} s_{g(v)}^\pm(t) \right)\\
    &=& \sum_{\Gamma''\in {\rm Tree}_{g,n}} \frac{t^{|E(\Gamma'')|}}{|{\rm Aut}(\Gamma'')|} 0^{|E(v_0'')|} \zeta_{\Gamma*}\left(2^{2g(v_0)} \bigotimes_{v\neq v_0''} s_{g(v)}^\pm(t) \right)  = 2^{2g}.
\end{eqnarray*}
The last equality is due to the presence of the factor $0^{|E(v_0'')|}$. It vanishes unless $\Gamma''$ is trivial.  
\end{proof}

With these two propositions we are now ready to complete the proof of~\ref{thm:segre}.

\begin{proof}[Proof of~\ref{thm:segre}]
    We fix $g\geq 0$, and we multiply formula~\ref{for:pm10} by $(1-t\psi_1)$ to get
    \begin{eqnarray*}
2^{g}\Lambda(-2t)\Lambda(t)&=& (1-t\psi_1)\widetilde{s}^{1,\pm}_g(t)+(1-t\psi_1)\widetilde{s}^{0,\pm}_g(t)\\
        &=& 2^{2g} +2 \widetilde{s}^{0,\pm}_g(t) \\ &=& 2\left(2^{2g-1}+ \sum_{\Gamma \in {\rm Tree}_{g,n}} \frac{t^{|E(\Gamma)|}}{|{\rm Aut}(\Gamma)|}\zeta_{\Gamma *}\left(\bigotimes_{v\in V} s_{g(v)}(t) \right)\right).
    \end{eqnarray*}
    From the first to the second line we have used the identity~\ref{for:relationpm1}, and from the second to the third we have used the fact that the sum defining  $\widetilde{s}^{0,\pm}_g(t)$ can be restricted to graphs in ${\rm Tree}^\star_{g,n}\subset {\rm Tree}_{g,n}$ as graphs with a vertex of genus 0 contribute trivially. This completes the proof of the first identity of the theorem.
    
    \smallskip
    
    The second identity is derived from the first one by induction the so-called Möbius inversion. More precisely, we work by induction on $(g,n)$ (the base case $(g,n)=(0,3)$ is trivial). For simplicity, we write all sums over ${\rm Trees}_{g,n}^\star$ as sums over ${\rm Trees}_{g,n}$ as both $s_0=L_0=0$. We remark that the data of a graph in ${\rm Tree}_{g,n}$ is equivalent to the data of $(\Gamma',(\Gamma_v)_{v\neq v_0})$ where $\Gamma'$ is a graph in ${\rm BB}_{g,n}$ and $\Gamma_v$ is a graph in ${\rm Tree}_{g(v),n(v)}$ for each vertex $v\neq v_0$ (we simply say that a rooted tree is the same as a root attached to a set of rooted trees). Therefore we can write 
    \begin{eqnarray*}
        L_g(t)\!\!\!\! &=&{\sum_{\Gamma \in {\rm Tree}_{g,n}}} \frac{t^{|E(\Gamma)|}}{|{\rm Aut}(\Gamma)|}\zeta_{\Gamma *}\left(\bigotimes_{v\in V} s^\pm_{g(v)}(t) \right) \\
         &=&{\sum_{\Gamma' \in {\rm BB}_{g,n}}} \frac{t^{|E(\Gamma')|}}{|{\rm Aut}(\Gamma')|}\zeta_{\Gamma' *} \left(s^\pm_{g(v_0)}(t) \bigotimes_{v\neq v_0} \left( \sum_{\Gamma_v \in {\rm Tree}_{g(v),n(v)}} \frac{t^{|E(\Gamma_v)|}}{|{\rm Aut}(\Gamma_v)|} \zeta_{\Gamma_v*}\left(\bigotimes_{v' \in V(\Gamma_v)}s_{g(v')}^\pm(t) \right) \right)\right)\\
         &=& {\sum_{\Gamma' \in {\rm BB}_{g,n}}} \frac{t^{|E(\Gamma')|}}{|{\rm Aut}(\Gamma')|}\zeta_{\Gamma' *} \left(s^\pm_{g(v_0)}(t) \bigotimes_{v\neq v_0} L_{g(v)}(t) \right) \\
         &=& s_g^\pm(t)+\!\!\!\! \underset{\text{non-trivial}}{{\sum_{\Gamma' \in {\rm BB}_{g,n}}}}\sum_{\Gamma''\in {\rm Tree}_{g(v_0),n(v_0)}} \frac{t^{|E(\Gamma')|}(-t)^{|E(\Gamma'')|}}{|{\rm Aut}(\Gamma'')||{\rm Aut}(\Gamma_0)|}
         \zeta_{\Gamma' *} \left(\left(\bigotimes_{v'\neq v_0''} L_{g(v')}(t)\right)  \bigotimes_{v \in V(\Gamma'')}\! L_{g(v)}(t) \right). 
    \end{eqnarray*}
    From the second to the third line we have applied the first identity of the theorem to each vertex $v\neq v_0$ in $\Gamma'$, while for the last equality we have applied the induction hypothesis to the central vertex. Here we re-index  again this sum by observing that the datum of $(\Gamma',\Gamma'')$ is equivalent to the datum of $(\Gamma,V'')$ where $\Gamma$ is non-trivial graph in ${\rm Tree}_{g,n}$ and $V''$ is a non-trivial subset of the leaves of this graph (considered as a rooted tree). Then we can write
    \begin{eqnarray*}
        s_g^\pm(t) &=& L_g(t) -  \underset{\text{non-trivial}}{{\sum_{\Gamma \in {\rm Tree}_{g,n}}}} \left(\underset{V''\neq \emptyset}{\sum_{V''\subset \text{leaves($\Gamma$)}}} (-1)^{|V''|} \right) \frac{(-t)^{|E(\Gamma)|}}{|{\rm Aut}(\Gamma)|}
         \zeta_{\Gamma *} \left(\bigotimes_{v \in V(\Gamma'')} L_{g(v)}(t) \right)\\
         &=&L_g(t) +  \underset{\text{non-trivial}}{{\sum_{\Gamma \in {\rm Tree}_{g,n}}}}  \frac{(-t)^{|E(\Gamma)|}}{|{\rm Aut}(\Gamma)|}
         \zeta_{\Gamma *} \left(\bigotimes_{v \in V(\Gamma'')} L_{g(v)}(t) \right).
    \end{eqnarray*}
\end{proof}


\newcommand{\etalchar}[1]{$^{#1}$}

\end{document}